\newtheorem{proposition}{Proposition}[section]
\newtheorem{lemma}[proposition]{Lemma}
\newtheorem{corollary}[proposition]{Corollary}
\newtheorem{theorem}[proposition]{Theorem}
\newtheorem*{theorem*}{Theorem}
\theoremstyle{definition}
\newtheorem{remark}[proposition]{Remark}
\numberwithin{equation}{section}
\newenvironment{mysage}{\sagesilent}{\endsagesilent}
\newcommand{\customlabel}[2]{%
	\protected@write \@auxout {}{\string \newlabel {#1}{{#2}{}}}}
\newenvironment{proofbold}[1]{\paragraph{Proof of {#1}.}}{\hfill$\square$}
\title{\begin{center}
\LARGE
\textbf{On the Atkinson formula for the $\zeta$ function}
\bigskip
\end{center}}
\author{Daniele Dona, Sebastian Zuniga Alterman}
\date{\today}
\begin{document}

\maketitle

\renewcommand{\thefootnote}{}
\footnote{2010 \emph{Mathematics Subject Classification}: 11M06.}
\footnote{\emph{Key words and phrases}: Riemann zeta function, $L^2$ norm, mean square bounds, explicit bounds, mean value theorem.}
\renewcommand{\thefootnote}{\arabic{footnote}}
\setcounter{footnote}{0}

\begin{abstract}
Thanks to Littlewood (1922) and Ingham (1928), we know the first two terms of the asymptotic formula for the square mean integral value of the Riemann zeta function $\zeta$ on the critical line. Later, Atkinson (1939) presented this formula with an error term of order $O(\sqrt{T}\log^{2}(T))$, which we call the \textit{Atkinson formula}. Following the latter approach and the work of Titchmarsh (1986), we present an explicit version of the Atkinson formula, improving on a recent bound by Simoni\v{c} (2020). Moreover, we extend the Atkinson formula to the range $\Re(s)\in\left[\frac{1}{4},\frac{3}{4}\right]$, giving an explicit bound for the square mean integral value of $\zeta$ and improving on a bound by Helfgott and the authors (2019).
We use mostly classical tools, such as the approximate functional equation and the explicit convexity bounds of the zeta function given by Backlund (1918).

\end{abstract}

\normalsize

\newcommand{\Addresses}{{
  {\footnotesize
\ \\
  D.~Dona, \textsc{Einstein Institute of Mathematics, The Hebrew University of Jerusalem, Edmond J. Safra Campus Givat Ram, Jerusalem 9190401, Israel.}\\
  \texttt{daniele.dona@mail.huji.ac.il}

\ \\
  S.~Zuniga Alterman, \textsc{Wydzia\l\ Matematyki i Informatyki, Nicolaus Copernicus University, 12-18 Chopina, 87-100 Toru\'n, Poland.}\\
  \texttt{szalterman@mat.umk.pl}

}}}

\footnotesize
\begin{mysage}
### Constants ###
gamma_int = RIF( euler_gamma ) # Euler-Mascheroni constant
pi_int = RIF( pi ) # pi
e_int = RIF( e ) # e
### Rounding ###
def roundup(x,d):
    return round(ceil(x*10^d)/10^d,ndigits=d)
def rounddown(x,d):
    return round(floor(x*10^d)/10^d,ndigits=d)
### Redefining usual functions ###
# This will ensure that functions like exp interact well
# with the use of RIF(). Thanks are due to Marc Mezzarobba.
def exp_int(x):
    return (RIF(x)).exp()
def log_int(x):
    return (RIF(x)).log()
def sqrt_int(x):
    return exp_int(1/2*log_int(x))
def abs_int(x):
    return (RIF(x)).abs()
def gam_int(x):
    return (RIF(x)).gamma()
def zet_int(x):
    return (RIF(x)).zeta()
### More precise version of exp above ###
def findbits(x):
    y=53
    while y<x:
        y+=1
    return y
def exp_int_precise(x,d):
    RIFPr=RealIntervalField(findbits(d))
    return (RIFPr(x)).exp()
### Definition of T0 ###
T0 = 100
### Number of digits we will use ###
digits = 3
### Check_c ###
# Setting this value to True makes the Sage code
# check whether the assertions about the best value of c
# in the last section are true.
# Setting this value to False skips the check.
check_c = False
### Check_i ###
# Setting this value to True makes the Sage code
# check whether the assertion about the integral
# in {co:main} is true.
# Setting this value to False skips the check.
# Warning: if True is set, the computation takes about 5 minutes!
check_i = False
# Same applies for the check about the integral
# in {sec:numerical}.
# Warning: if True is set, the computation takes about 10 minutes!
check_ip = False
###
###
###
# Here follows an explanation of some of the notation used throughout
# the Sage code, hopefully improving the reader's understanding of the code.
# Due to the length of the code, the conventions below may not always have
# been followed: the authors may have changed their mind at some intermediate
# stage, or they may have thought that another notation was locally more clear,
# or they may have just been distracted. For this, we apologize.
###
# If "X" is the name of some parameter depending on variables chosen by us,
# such as T0, sigma or c, "Fu_X" is the name of the function that computes X.
###
# In intermediate results, for the names of the parameters we use a common
# letter for all those that appear in the same expression, followed by numbers
# arranging them by order of magnitude: such an expression could for instance
# be of the form A_1*T+A_2*sqrt(T)+A_3*log(T).
###
# When computing coefficients inside more general results, usually made by
# composing several intermediate results together, we define them as follows.
# The general convention is of the form X1y_X2_X3_X4.
# "X1" symbolizes the result we are proving: it can be I, J, K depending
# on which integral we estimate.
# "y" refers to the range in which we are operating. When y=2, we deal with
# tau=1/2, when y=4 we deal with tau in [1/4,1/2), and when y=0 we deal with
# tau in (0,1/2). The notations y=4 and y=0 may in limited occasions refer
# to the reflection of the range via the functional equation. When a function
# is general enough to apply to more than one of the ranges above, we use
# either y=s or y=t, depending on whether in the text we used sigma or tau
# to denote the real part.
# "X2" is some definer that distinguishes where the given object comes from:
# in an expression containing several summations, we use 1,2,3,... (say)
# to indicate that it comes from the first summation, the second, the third...;
# moreover, we use out to indicate that the object is outside the part
# of the expression enclosed in an O* notation.
# When the terms coming from several choices of X2 (with the same X1)
# happen to be multiplied by constants depending on X2 but common to
# all the terms with the same X2, we may attach after X1 the notation we use
# in the text to denote such constants: this happens with "eta" when X1=I,
# and with "xi" when X1=K. These constants do not depend on the range, so
# there is no contrast with a possible y.
# "X3" is a shorthand for the order of T that the coefficient multiplies.
# 32-1-12-0, which are named after the exponent of T, indicate a multiplication
# by T^(3/2), T, sqrt(T), or 1 respectively; 2t-t-12t indicate a multiplication
# by T^(2tau), T^(tau), T^(1/2*tau), and similarly for other instances. A prefix
# m indicates a minus in the exponent. A suffix l-lq indicates that the order is
# multiplied by log(T) or log(T)^2 respectively.
# "X4" is a shorthand for the function of tau that the coefficient multiplies,
# since some functions may go to infinity at some extreme of the range.
# 2-0-c indicate a multiplication by 1/(1/2-tau), 1/tau, and 1 respectively.
# 2 may also indicate 1/(tau-1/2) when we work in the specular range,
# and 1 indicates 1/(1-tau) (the specular of 1/tau). A suffix q indicates
# that the function is squared.
# X4 actually applies mostly to main coefficients rather than intermediate
# computations, since when we have multiple functions of tau in a single term
# we strive to give the constant as a list whose entries correspond to
# the different functions; the functions themselves may depend on which result
# we are dealing with at the moment, and we try to signal the functions at
# the start of the reasoning. When we merge lists whose functions differ
# from each other, we signal it, and use a "modifier" in the calculations:
# for instance, to pass from 1/(1/2-tau) to 1/(1/2-tau)^2, we multiply
# by the modifier given by max(1/2-tau), which obviously depends on the range.
###
# As an example of the conventions above, "I4_3_m12mtl" is the coefficient
# in front of the term of order T^(-1/2-tau)*log(T) inside the third sum
# in the expression estimating I.
###
# While composing the coefficients of the previous point, we drop the X2.
# X3 may be modified by multiplications: for example, inside the computation
# of the function Fu_I2__12l(ta), we  find a term Fu_Ieta_2_m12(ta)*I2_2_1l,
# obtained when the term of order T*log(T) inside the second sum
# is multiplied by the term of order T^(-1/2) inside the coefficient
# in front of the whole sum.
###
# For coefficients that appear in main estimates of I, J, K, after having
# absorbed error terms of small order, we attach a "final" at the end.
# Moreover, after specializing to a given range or T0 or other variables,
# we may attach "final_vec" if the result is a list (with one entry per
# function of tau, see above): then, some or all of the entries of the list
# are assigned a "final_X4", and we can use them in the text as well as
# in future computations.
# For instance, Fu_I4__12mt(ta) is the term in I of order T^(1/2-tau),
# and we want to merge everything to the order T^(3/2-2tau)*log(T):
# then we have a function Fu_I4__32m2tl_final(ta), and inside it we find
# this term multiplied by 1/(log(ta)*sqrt(ta)) since T^(1/2-tau)
# is bounded by T^(3/2-2tau)*log(T) multiplied by 1/(log(ta)*sqrt(ta))
# for all T>=ta and all tau in the range [1/4,1/2) (indicated by the "4").
# Then, from that merged function we retrieve I4__32m2tl_final_vec,
# which is the list of coefficients obtained by plugging ta=T0; finally,
# the coefficient multiplying 1/(1/2-tau) is called I4__32m2tl_final_2.
###
# At the very end of the paper we will perform checks to ensure that
# certain assumptions are true or certain computations are correct.
# The output is an empty string in case of correctness, or a warning message
# otherwise. The name of these outputs are of the form String_X, with
# X being a shorthand for the kind of check it stands for.
###
###
###
# In multiplying coefficients written like lists as described above,
# we use the following function: it works for either a,b of the same length,
# or for a a number and for b a list.
def cw_prod(a,b):
    if type(a)!=list:
        a=len(b)*[a]
    if len(a)!=len(b):
        return 'Error'
    else:
        c=len(a)*[0]
        i=0
        while i<len(c):
            c[i]=RIF(a[i]*b[i])
            i+=1
        return c
###
# Similarly, in summing coefficients written like lists as described above,
# we use the following function: it works for a,b of the same length.
def cw_sum(a,b):
    if type(a)!=list or type(b)!=list:
        return 'Error'
    else:
        c=len(a)*[0]
        i=0
        while i<len(c):
            c[i]=RIF(a[i]+b[i])
            i+=1
        return c
\end{mysage}
\normalsize

\section{Introduction}\label{Int}

The search for meaningful bounds for $\zeta(s)$ in the range $0<\Re(s)<1$ has spanned more than a century. The classical conjecture on $L^{\infty}$ bounds, called the \textit{Lindel\"of hypothesis}, states that $\left|\zeta\left(\frac{1}{2}+it\right)\right|\ll_{\varepsilon}|t|^{\varepsilon}$ for any $\varepsilon>0$; by Hadamard's three-line theorem and the functional equation of $\zeta$, this implies in particular that $\left|\zeta(\tau+it)\right|\ll_{\varepsilon}|t|^{\varepsilon}$ for $\frac{1}{2}<\tau<1$ and $\left|\zeta(\tau+it)\right|\ll_{\varepsilon}|t|^{\frac{1}{2}-\tau+\varepsilon}$ for $0<\tau<\frac{1}{2}$. 

Bounds of order $|t|^{\frac{1-\tau}{2}+\varepsilon}$ are called \textit{convexity bounds}, and bounds with even lower exponent are called \textit{subconvexity bounds}. The current best bound is due to Bourgain \cite{Bou17}, who showed that $\left|\zeta\left(\frac{1}{2}+it\right)\right|\ll_{\varepsilon}|t|^{\frac{13}{84}+\varepsilon}$. Explicit convex bounds are given in \cite{Bac18} and \cite{Leh70}, and explicit subconvex bounds are given in \cite{For02} and \cite{Hia16}. The Lindel\"of hypothesis itself is still unproved, be it or not in explicit form; however, we know that the Riemann hypothesis implies the Lindel\"of hypothesis, and conditional explicit bounds exist \cite{Sim22}.

On the other hand, $L^{2}$ bounds are easier to obtain. Classical non-explicit versions have been known for a long time, at least since Landau (see \cite[Vol.~2, 806--819, 905--906]{Lan09}). Currently, for $\tau=\frac{1}{2}$ we know that
\begin{equation}\label{eq:mainproblem}
\int_{0}^{T}\left|\zeta\left(\frac{1}{2}+it\right)\right|^{2}dt=T\log(T)+(2\gamma-1-\log(2\pi))T+\mathcal{E}(T)
\end{equation}
for some function $\mathcal{E}(T)=O(T^{\frac{35}{108}+\varepsilon})$ \cite[(15.14)]{Ivi85} and $\mathcal{E}(T)=\Omega(T^{\frac{1}{4}})$ \cite{Goo77}. Explicit versions of \eqref{eq:mainproblem} have appeared more recently in \cite{DHA19} and \cite{Sim20}, both based on the approximate functional equation for $\zeta$: the error term $\mathcal{E}(T)$ in the latter, of order $T^{\frac{3}{4}}\sqrt{\log(T)}$, was the record in the explicit case.

Moreover, for $\frac{1}{2}<\tau<\frac{3}{4}$, Matsumoto \cite{Mat89} proved that
\begin{equation}\label{eq:mainproblemsi} 
\int_{0}^{T}|\zeta(\tau+it)|^{2}dt=\zeta(2\tau)T+\frac{\zeta(2-2\tau)}{(2-2\tau)(2\pi)^{1-2\tau}}T^{2-2\tau}+\mathcal{E}_{\tau}(T)
\end{equation}
for $\mathcal{E}_{\tau}(T)=O_{\tau}(T^{\frac{1}{1+4\tau}}\log^{2}(T))$ and $\mathcal{E}_{\tau}(T)=\Omega_{\tau}(T^{\frac{3}{4}-\tau})$; later, \eqref{eq:mainproblemsi} was extended to $\frac{1}{2}<\tau<1$ by Matsumoto and Meurman \cite{MM93}. An explicit version of \eqref{eq:mainproblemsi} has appeared in \cite{DHA19}, whose error term of order $\max\{T^{2-2\tau}\log(T),\sqrt{T}\}$ absorbs the second main term, and it was the record in the explicit case. Any bound of the form \eqref{eq:mainproblemsi} can be extended to the range $0<\tau<\frac{1}{2}$ using the functional equation of $\zeta(s)$, and vice versa.

The first two terms of the asymptotic formula \eqref{eq:mainproblem} for the square mean integral value of the Riemann zeta function $\zeta$ on the critical line were predicted by Littlewood \cite{Lit22} and proved by Ingham \cite{Ing28}. Later, Atkinson \cite{Atk39} presented a version of this formula with an error term of order $O(\sqrt{T}\log^2(T))$, which for brevity we call here the \textit{Atkinson formula}; in the literature, this term is generally reserved to Atkinson's later estimate \cite{Atk49} coming from Vorono\"i's summation formula.

In the present paper, we give an explicit version of \eqref{eq:mainproblem} based on the procedure elaborated by Atkinson \cite{Atk39} and Titchmarsh \cite[\S 7.4]{Tit86}, improving on the order of $\mathcal{E}(T)$ to $\sqrt{T}\log^2(T)$. Moreover, following the same procedure, we give an explicit version of \eqref{eq:mainproblemsi} in the range $\frac{1}{4}\leq\tau<\frac{1}{2}$ with an error term $\mathcal{E}_{\tau}(T)$ of order $T^{\frac{3}{2}-2\tau}\log^{2}(T)$, and then in the range $\frac{1}{2}<\tau\leq\frac{3}{4}$ with an error term of order $\sqrt{T}\log^{2}(T)$.

We have already mentioned the $O$ notation and its derivates: for two real-valued functions $f,g$, the notation $f(x)=o(g(x))$ means that for any $C>0$ there is $x_{0}$ such that for all  $x>x_{0}$ we have $|f(x)|<Cg(x)$; an indexed $o_{\varepsilon}$ indicates that the constant $x_{0}$ may depend on the variable $\varepsilon$. Following the Hardy-Littlewood convention, $f(x)=\Omega(g(x))$ means instead that there is $C>0$ such that for any $x_{0}$ there is some $x>x_{0}$ with $|f(x)|>Cg(x)$.

However, for our purposes we shall use more generally the complex $O$ and $O^*$ notation. Let $f:\mathbb{C}\to\mathbb{C}$. We write $f(s)=O(g(\Re(s),\Im(s)))$ as $s\to z$ ($z=\pm\infty$ is allowed) 
for a real-valued function $g$ such that $g>0$ in a neighborhood of $(\Re(z),\Im(z))$ to mean that there is an independent constant $C$ such that $|f(s)|\leq Cg(\Re(s),\Im(s))$ in that neighborhood. We write $f(s)=O^*(h(\Re(s),\Im(s)))$ as $s\to z$ to indicate that $|f(s)|\leq h(\Re(s),\Im(s))$ in a neighborhood of $z$.

With the notation above at hand, our main result reads as follows.

\footnotesize
\begin{mysage}
###
# For checks about the following constant, see at the end of the file.
Main = round( 18.169 ,ndigits=digits)
Main14 = round( 2.215 ,ndigits=digits)
Main34 = round( 16.839 ,ndigits=digits)
\end{mysage}
\normalsize

\begin{theorem}\label{th:mainpure}
Let $T\geq T_0=\sage{T0}$. Then
\begin{equation*}
\int_{0}^{T}\left|\zeta\left(\frac{1}{2}+it\right)\right|^{2}dt=T\log(T)+(2\gamma-1-\log(2\pi))T+O^{*}(\sage{Main}\,\sqrt{T}\log^{2}(T)).
\end{equation*}
Furthermore, if $\frac{1}{4}\leq\tau<\frac{1}{2}$, then
\begin{equation*}
\int_{0}^{T}\!\!\!|\zeta(\tau+it)|^{2}dt=\frac{\zeta(2-2\tau)}{(2-2\tau)(2\pi)^{1-2\tau}}T^{2-2\tau}+\zeta(2\tau)T+O^{*}\left(\frac{\sage{Main14}}{\left(\frac{1}{2}-\tau\right)^{2}}T^{\frac{3}{2}-2\tau}\log^{2}(T)\right),
\end{equation*}
whereas, if $\frac{1}{2}<\tau\leq\frac{3}{4}$, then
\begin{equation*}
\int_{0}^{T}\!\!\!|\zeta(\tau+it)|^{2}dt=\zeta(2\tau)T+\frac{(2\pi)^{2\tau-1}\zeta(2-2\tau)}{2-2\tau}T^{2-2\tau}+O^{*}\left(\frac{\sage{Main34}}{\left(\tau-\frac{1}{2}\right)^{2}}\sqrt{T}\log^{2}(T)\right).
\end{equation*}
\end{theorem}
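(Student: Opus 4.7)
The plan is to follow Atkinson's method as laid out in \cite[\S 7.4]{Tit86} but with all constants tracked. The starting point is an explicit form of the approximate functional equation
$$\zeta(s) = \sum_{n \leq x} n^{-s} + \chi(s) \sum_{n \leq y} n^{s-1} + R(s; x, y),$$
with $x = y = \sqrt{t/(2\pi)}$, applied to $|\zeta(s)|^2 = \zeta(s)\overline{\zeta(s)}$ and then integrated over $t \in [T_0, T]$. The two ingredients one needs at the outset are a quantitative bound on $R(s; x, y)$ and explicit control of $|\zeta|$ off the critical line; for the latter, the convex bounds of Backlund \cite{Bac18} are the natural choice.

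Squaring out the approximate functional equation produces four families of terms: the two diagonal squares $|\sum_{n \leq x} n^{-s}|^2$ and $|\chi(s)|^2 |\sum_{n \leq y} n^{s-1}|^2$, a cross term carrying the factor $\chi(s)$, and the remainder coming from $R$. For the diagonal squares, expanding as $\sum_{m, n \leq x(t)} (mn)^{-\sigma} (n/m)^{it}$ and separating $m = n$ from $m \neq n$, the diagonal $m = n$ contributes $\int_{T_0}^T \sum_{n \leq \sqrt{t/(2\pi)}} n^{-2\sigma}\, dt$, which by Euler--Maclaurin yields the main term $T\log T + (2\gamma - 1 - \log(2\pi)) T$ at $\tau = 1/2$ and $\zeta(2\tau) T$ at $\tau \neq 1/2$. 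The factor $|\chi(\tau + it)|^2 \sim (t/(2\pi))^{1 - 2\tau}$ attached to the second diagonal is precisely what produces the $T^{2 - 2\tau}$ second main term in the off-critical formulas.

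The off-diagonal $m \neq n$ terms, after integrating $(n/m)^{it}$ in $t$, produce summands of shape $(n/m)^{iT}/(i \log(n/m))$; summed with weight $(mn)^{-\sigma}$ over pairs $m, n \leq \sqrt{T/(2\pi)}$, they give $O^{*}(\sqrt{T} \log^2 T)$ at $\tau = 1/2$ and the corresponding $T^{3/2 - 2\tau} \log^2 T$ off the critical line. I expect the main obstacle to lie in the cross term involving $\chi(s)$: it is an oscillatory integral whose phase has a saddle depending on $m, n$, and getting an explicit saddle-point estimate which balances the stationary and non-stationary contributions with small constants --- using in particular an explicit Stirling-type expansion of $\chi(s)$ --- is where the bulk of the numerical bookkeeping accumulates. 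Additional care is needed to deal with the dependence of the cutoff $x(t)$ on $t$, by absorbing the sliver $x(t) < n \leq x(T)$ into the error.

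Finally, the range $\tau \in (1/2, 3/4]$ can be deduced from $\tau \in [1/4, 1/2)$ via the functional equation $\zeta(s) = \chi(s) \zeta(1-s)$: using $|\zeta(\tau + it)|^2 = |\chi(\tau + it)|^2 |\zeta(1 - \tau + it)|^2$ with the explicit Stirling expansion of $|\chi|^2$, and then integrating by parts against the estimate already established at the reflected point $1 - \tau \in [1/4, 1/2)$, produces the stated asymptotic with the swapped factor $(2\pi)^{2\tau - 1}/(2 - 2\tau)$ and error of order $T^{2\tau - 1/2} \log^2 T$. The explicit numerical constants in the statement then emerge by summing all explicit contributions at $T_0 = 100$ and optimizing the auxiliary parameters.
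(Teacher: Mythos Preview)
Your outline is not the method the paper uses, and it is also not what is in Titchmarsh~\S7.4. What you describe --- squaring the approximate functional equation $\zeta(s)=\sum_{n\le x}n^{-s}+\chi(s)\sum_{n\le y}n^{s-1}+R$ and integrating term by term --- is the route taken in \cite{DHA19} and \cite{Sim20}, and in those explicit treatments it produced an error of size $T^{3/4}$ (up to logs), not $\sqrt{T}\log^{2}T$. The bottleneck is exactly the cross term you flag: to push it down to $\sqrt{T}\log^{2}T$ one would need a uniform explicit stationary-phase estimate for $\int\chi(\tau+it)(mn)^{it}\,dt$ over all $mn\le T/(2\pi)$, including the pairs with saddle near the endpoint $t=T$, and your sketch does not indicate how this is to be achieved with acceptable constants. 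So as written the plan does not reach the stated error term.

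The paper instead follows Atkinson's 1939 contour argument (which is what Titchmarsh~\S7.4 actually contains). One writes
\[
\int_{0}^{T}|\zeta(\tau+it)|^{2}\,dt=\frac{1}{2i}\int_{1-\tau-iT}^{1-\tau+iT}\zeta(1-s)\zeta(2\tau-1+s)\,ds,
\]
applies the functional equation to $\zeta(1-s)$ to get $\chi(1-s)\zeta(s)\zeta(2\tau-1+s)$, and shifts the contour to $\Re(s)=2-2\tau+\lambda$ where the Dirichlet series $\zeta(s)\zeta(2\tau-1+s)=\sum_{n}d_{1-2\tau}(n)n^{-s}$ converges absolutely. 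The sum is then truncated at $X=T/(2\pi)$: the finite piece $I=\sum_{n\le X}d_{1-2\tau}(n)I_{n}$ with $I_{n}=\frac{1}{2i}\int\chi(1-s)n^{-s}\,ds$ produces the main terms (each $I_{n}$ equals $2\pi$ plus an explicit oscillatory error obtained by deforming to $\Re(s)=1/2$ and integrating by parts against Stirling), while the tail breaks into horizontal segments $J_{1},J_{2}$ handled by Backlund's convexity bound and a vertical piece $K=\sum_{n>X}d_{1-2\tau}(n)K_{n}$. All error contributions are then controlled by partial summation against explicit divisor-sum estimates for $\sum d_{1-2\tau}(n)n^{-\upsilon}$ and $\sum d_{1-2\tau}(n)n^{-\upsilon}(T-2\pi n)^{-1}$, which is where the $\sqrt{T}\log^{2}T$ (resp.\ $T^{3/2-2\tau}\log^{2}T$) arises. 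The key structural point is that one never squares a truncated sum; the divisor function enters via Dirichlet convolution rather than via the off-diagonal of a square, and this is what makes the explicit bookkeeping tractable at the target order. Your final paragraph on recovering $\tau\in(1/2,3/4]$ from $1-\tau$ via $|\chi|^{2}$ and integration by parts is correct and matches the paper.
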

For more precise error terms, see Theorem~\ref{th:main} and Corollary~\ref{co:main}. For quantitatively better error terms with higher values of $T_{0}$ and for results in a wider range of $\tau$, see \S\ref{sec:numerical}.

One might potentially improve on the order of the error term by making later works explicit instead. Atkinson's later formula \cite{Atk49} offers an estimate for $\mathcal{E}(T)$ by way of summations, exact up to error $O(\log^{2}(T))$, based on Vorono\"i's summation formula for $\sum_{n\leq X}d(n)$ \cite{Vor04}: it would be feasible to bound such expressions, at the cost of considerable more effort. One could make the estimate for $\frac{1}{2}<\Re(s)<1$ of Matsumoto and Meurman \cite{MM93} explicit too, and retrieve error bounds for $0<\Re(s)<1$ via the functional equation. Other possibilities include following Titchmarsh \cite{Tit34}, Balasubramanian \cite{Bal78}, or Ivi\'c \cite[\S 15]{Ivi85}. 

For an exposition of some of the aforementioned procedures, we refer the reader to Matsumoto's survey \cite{Mat00}.
\medskip

\textbf{Added in proof.} Shortly after the appearance the original version of the present paper, Simoni\v{c} and Starichkova \cite{SS21} announced that they have given an explicit version of \eqref{eq:mainproblem} with an error term of order $T^{\frac{1}{3}}\log^{\frac{5}{3}}(T)$. Their method follows the route through Atkinson's later paper \cite{Atk49} that we described above: given the constants appearing in their result, the bound we present here for $\Re(s)=\frac{1}{2}$ yields a better error term up to at least $T=10^{30}$.

\subsection{Strategy and layout of the presented work}

As already anticipated, our strategy follows the ideas of Atkinson \cite{Atk39} and Titchmarsh \cite[\S 7.4]{Tit86}. At their core, both results use nothing more than an approximate formula for $\zeta$ and several instances of partial summation to estimate a number of weighted sums of the number-of-divisors function $d(n)$. The latter emerge by applying Dirichlet's convolution to rewrite $\zeta^{2}$, and by appropriately transforming and splitting the integral's contour via the residue theorem.

In particular, we stay closer to Titchmarsh's ideas in some specific choices of contour for intermediate results (such as Lemma~\ref{new_bound:1}), which in the case $\Re(s)=\frac{1}{2}$ lead to saving a factor of $\log(T)$ in the error term of one of the main integrals that we estimate (see Proposition~\ref{pr:I}). However, later we diverge from Titchmarsh's way as many simplifications are introduced by applying $d(n)=O_{\varepsilon}(n^{\varepsilon})$, leading to a final error term of order $O(T^{\frac{1}{2}+\varepsilon})$. Indeed, since we aim for an error term of order $O(\sqrt{T}\log^{2}(T))$, we adopt Atkinson's approach, by dealing with $d(n)$ by partial summation.

We follow essentially the same strategy when working in the range $\frac{1}{4}\leq\Re(s)<\frac{1}{2}$: in particular, in the above, we work with the generalized sum-of-divisors functions $d_{a}:n\mapsto\sum_{d|n}d^{a}$ for $a\in\mathbb{R}$, of which the divisor function $d=d_{0}$ is a particular case. Our process shows that Atkinson's and Titchmarsh's ideas can be successfully extended outside of the critical line while yielding error terms of smaller order than the theoretically predicted two main terms. As a matter of fact, the method applies in principle to the whole critical strip: however, the error terms may be larger than one of the main terms, and also larger than the error terms given in \cite{DHA19}, which is why we decided to concentrate on the regions where this does not happen. The numerical estimates improve as well when restricting ourselves to the smaller range $\frac{1}{4}\leq\Re(s)<\frac{1}{2}$, when compared to $0<\Re(s)<\frac{1}{2}$.
 
In \S\ref{sec:zeta-bounds}, we collect explicit versions of some classical bounds related to the Riemann $\zeta$ function. In \S\ref{sec:Atkin}, we split the integral in Theorem~\ref{th:mainpure} into several main pieces; then we estimate each of them in subsequent subsections, in which the relevant weighted sums of $d_{a}(n)$ are also bounded. We reserve \S\ref{sec:numerical} for commenting about our numerical choices and computations: in it, we also report other versions of the multiplicative constant in the error terms of Theorem~\ref{th:mainpure} for different choices of $T_{0}$, as well as showing a result for the whole range $0<\Re(s)<\frac{1}{2}$.

For the sake of rigor, in computing the constants in this article, we have used interval arithmetic implemented by the ARB package \cite{Joh13}, which we used via Sage \cite{Sag19}. The necessary code is embedded within the TeX file of the paper itself via SageTeX.

\section{Bounds on functions related to the Riemann Zeta function}\label{sec:zeta-bounds}
  
Let us recall that the Gamma function $\Gamma$ is defined for all $s\in\mathbb{C}$ such that $\Re(s)>0$ as $\Gamma:s\mapsto\int_0^\infty t^{s-1}e^{-t}dt$. This function can be extended meromorphically to $\mathbb{C}$, with simple poles on the set $\{0,-1,-2,-3,\ldots\}$ and vanishing nowhere. Where well-defined, it satisfies the relationship $\Gamma(s+1)=s\Gamma(s)$, so one says that $\Gamma$ extends the factorial function to the complex numbers. Moreover, this function is closely related to the $\zeta$ function, by means of the functional equation, valid for all $s\in\mathbb{C}\setminus\{0,1\}$,
\begin{equation}\label{functional}
\zeta(1-s)=\chi(1-s)\zeta(s)=2(2\pi)^{-s}\cos\left(\frac{\pi s}{2}\right)\Gamma(s)\zeta(s),
\end{equation}  
where $\chi$ can be extended to a meromorphic function with a simple pole at $1$.

We will need estimates for the functions involved in the functional equation above. Firstly, concerning the asymptotic behavior of $\Gamma$, we have the following.

\begin{theorem}[\textbf{Explicit Stirling's formula}]\label{Stirling}
Let $s=\sigma+it\in\mathbb{C}\setminus(-\infty,0]$. We have
\begin{align*}
\mathrm{(A1)} & &   \Gamma(s)&=\sqrt{2\pi}s^{s-\frac{1}{2}}e^{-s+\frac{1}{12s}}e^{O^{*}\left(\frac{1}{60|s|(|s|+\sigma)}\right)},\\
\mathrm{(A2)} & & |\Gamma(s)|&=\sqrt{2\pi}|s|^{\sigma-\frac{1}{2}}e^{-\frac{\pi}{2}|t|-\sigma+\frac{\sigma}{12|s|^2}}e^{O^{*}\left(\mathds{1}_{\{t\neq 0\}}(t)|\sigma|+\frac{1}{60|s|(|s|+\sigma)}\right)},\\
\mathrm{(A3)} & & \Im(\log(\Gamma(s)))&=t\log(|s|)+\mathrm{sgn}(t)\left(\sigma-\frac{1}{2}\right)\frac{\pi}{2}-t\\&\phantom{x}&&\phantom{xxx}-\frac{t}{12|s|^2} +O^{*}\left(\mathds{1}_{\{t\neq 0\}}(t)\left|\sigma-\frac{1}{2}\right|\frac{|\sigma|}{|t|}+\frac{1}{60|s|(|s|+\sigma)}\right).
\end{align*}
Moreover, if $|\arg(s)|\leq\pi-\theta$, $0<\theta<\pi$, where $\arg$ corresponds to the principal argument of $s$, then we have
\begin{align*}
\mathrm{(B1)} & & \Gamma(s)&=\sqrt{2\pi}s^{s-\frac{1}{2}}e^{-s}e^{\frac{1}{12s}+O^{*}\left(\frac{F_{\theta}}{|s|^{3}}\right)},\\
\mathrm{(B2)} & & |\Gamma(s)|&=\sqrt{2\pi}|s|^{\sigma-\frac{1}{2}}e^{-\frac{\pi}{2}|t|}e^{-\sigma(1-\mathds{1}_{\{t\neq 0\}}(t))+\frac{\sigma}{12|s|^2}}e^{O^{*}\left(\mathds{1}_{\{t\neq 0\}}(t)\frac{|\sigma|^3}{3t^2}+\frac{F_{\theta}}{|s|^{3}}\right)},\\ 
\mathrm{(B3)} & & \Im(\log(\Gamma(s)))&=t\log(|s|)+\left(\sigma-\frac{1}{2}\right)\left(\mathrm{sgn}(t)\frac{\pi}{2}-\mathds{1}_{\{t\neq 0\}}(t)\frac{\sigma}{t}\right)-t\\&&&\phantom{xxx}-\frac{t}{12|s|^2} +O^{*}\left(\mathds{1}_{\{t\neq 0\}}(t)\left|\sigma-\frac{1}{2}\right|\frac{|\sigma|^3}{3|t|^3}+\frac{F_{\theta}}{|s|^{3}}\right),
\end{align*}
where $F_\theta=\frac{1}{360\sin^4\left(\frac{\theta}{2}\right)}$.
\end{theorem}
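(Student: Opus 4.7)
The plan is to prove this theorem by first establishing a single master estimate for $\log\Gamma(s)$ of the form
\[
\log\Gamma(s) = \left(s - \tfrac{1}{2}\right)\log s - s + \tfrac{1}{2}\log(2\pi) + \frac{1}{12s} + R(s),
\]
with two different bounds on $R(s)$ depending on the region, and then to deduce all six statements at once. Splitting $(s - 1/2)\log s = (\sigma - 1/2)\log|s| - t\arg s + i\bigl((\sigma - 1/2)\arg s + t\log|s|\bigr)$ immediately reduces (A2), (A3), (B2), (B3) to (A1) and (B1) combined with a careful analysis of $\arg s$; (A1) and (B1) follow by exponentiation.

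For (B), where $|\arg s| \leq \pi - \theta$, I would invoke the classical Stirling--Bernoulli series truncated after the first term: the standard remainder estimate gives $|R(s)| \leq |B_4|/(12|s|^3 \sin^4(\theta/2)) = F_\theta/|s|^3$, using $|B_4| = 1/30$ to produce the constant $1/360$ inside $F_\theta$. This yields (B1) upon exponentiation and (B2), (B3) upon taking real and imaginary parts.

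For (A), where $s \in \mathbb{C} \setminus (-\infty, 0]$, I would use Binet's first integral, valid for $\Re(s) > 0$,
\[
\log\Gamma(s) - (s - \tfrac{1}{2})\log s + s - \tfrac{1}{2}\log(2\pi) = 2\int_0^\infty \frac{\arctan(u/s)}{e^{2\pi u} - 1}du.
\]
Using $\arctan(u/s) = u/s + (\text{remainder})$ together with the identity $2\int_0^\infty u/(e^{2\pi u}-1)\,du = 1/12$, I would extract the main term $1/(12s)$ and bound what remains, employing an elementary estimate that produces the denominator $|s|(|s|+\sigma)$ together with the coefficient $1/60$ (the latter traceable to $2\int_0^\infty u^3/(e^{2\pi u} - 1)\,du = 1/120$ combined with one integration by parts). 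For $\Re(s) \leq 0$ with $s \notin (-\infty, 0]$, I would invoke the reflection formula $\Gamma(s)\Gamma(1 - s) = \pi/\sin(\pi s)$ to transfer the bound from $1-s$ (in the right half-plane) back to $s$.

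The treatment of $\arg s$ accounts for the indicator terms $\mathds{1}_{\{t \neq 0\}}$: for $t \neq 0$, $\arg s = \mathrm{sgn}(t)\pi/2 - \arctan(\sigma/t)$, so the error in $-t \arg s$ is controlled by $|\arctan x| \leq |x|$ (yielding the $|\sigma|$ contribution in (A2), (A3)) or by $|\arctan x - x| \leq |x|^3/3$ (yielding the $|\sigma|^3/(3t^2)$ contribution in (B2), (B3)); for $t = 0$ the assumption $s \notin (-\infty, 0]$ forces $\sigma > 0$ and $\arg s = 0$ exactly, hence no error. The main obstacle I anticipate is obtaining the clean form $1/(60|s|(|s|+\sigma))$ of the error in (A1) uniformly in the cut plane: this requires a careful choice among several Binet-type integral representations, together with a delicate handling of the reflection argument near the negative real axis, where the bounds on $\sin(\pi s)$ must be combined with the right half-plane estimate without deteriorating the stated shape of the remainder.
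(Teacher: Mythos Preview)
Your derivation of (A2), (A3), (B2), (B3) from (A1), (B1) by taking real and imaginary parts of $\log\Gamma(s)$ and expanding $\arg s$ via $\arctan$ is exactly what the paper does; your formulation $\arg s=\mathrm{sgn}(t)\tfrac{\pi}{2}-\arctan(\sigma/t)$ with the bounds $|\arctan x|\le|x|$ and $|\arctan x-x|\le|x|^3/3$ is a clean repackaging of the paper's $\arctan(x)=\tfrac{\pi}{2}+O^*(1/|x|)$ and $\arctan(x)=\tfrac{\pi}{2}-1/x+O^*(1/(3|x|^3))$ applied to $x=|t|/|\sigma|$. Your treatment of (B1) via the truncated Stirling--Bernoulli series with the $\sec^{2n}(\varphi/2)$ remainder bound is likewise what underlies the reference the paper invokes.

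The one substantive divergence is (A1). The paper does not prove it: it simply cites Remmert \cite[\S2.5 (3$''$)]{Rem98}, where the bound $\frac{1}{60|s|(|s|+\sigma)}$ is obtained directly in the cut plane from an integral representation of the Stirling remainder $\mu(s)$ (note that $|s|(|s|+\sigma)=2|s|^2\cos^2(\tfrac{1}{2}\arg s)$, so this is the familiar $\cos^{-2}(\varphi/2)$ shape). Your proposed route---Binet's integral for $\Re(s)>0$ followed by the reflection formula for $\Re(s)\le 0$---is in principle viable, and you correctly flag it as the delicate step, but it is harder than necessary: the reflection introduces $\log\sin(\pi s)$, and matching the resulting error to the exact shape $1/(60|s|(|s|+\sigma))$ near the cut would require nontrivial bookkeeping. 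The direct approach in Remmert sidesteps this entirely by never leaving the cut plane, which is why the paper is content to cite it. If you want a self-contained argument, it is cleaner to follow that route rather than Binet plus reflection.
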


\customlabel{StirlingG}{A1}
\customlabel{StirlingGm}{A2}
\customlabel{StirlingGI}{A3}
\customlabel{StirlingGa}{B1}
\customlabel{StirlingGam}{B2}
\customlabel{StirlingGaI}{B3}

\begin{proof}
\eqref{StirlingG} is given in \cite[\S 2.5 (3")]{Rem98}; moreover, since $s\in\mathbb{C}\setminus\{0\}$, $|s|+\sigma\neq 0$, the estimation is well defined.
 
Furthermore, by taking real and imaginary parts of the logarithm of $\Gamma$, defined through \eqref{StirlingG} under the principal complex logarithm $\log$, we derive
\begin{align}
\Re(\log(\Gamma(s)))=&\frac{\log(2\pi)}{2}+\left(\sigma-\frac{1}{2}\right)\frac{\log(\sigma^2+t^2)}{2}-t\arg(\sigma+it)-\sigma+\frac{\sigma}{12(\sigma^2+t^2)}+\Xi_{\sigma}^{t},\label{realgamma}\\
\Im(\log(\Gamma(s)))=&\frac{t\log(\sigma^2+t^2)}{2}+\left(\sigma-\frac{1}{2}\right)\arg(\sigma+it)-t-\frac{t}{12(\sigma^2+t^2)}+\Xi_{\sigma}'^{t},\label{imgamma}
\end{align}
where $|\Xi_{\sigma}^{t}|, |\Xi_{\sigma}'^{t}|\leq\frac{1}{60|s|(|s|+\sigma)}$ and where $\arg$ corresponds to the principal argument function, which satisfies the identity 
\begin{align*}\arg(\sigma+it)=\text{sgn}(t)\mathds{1}_{\{\sigma<0\}}(\sigma)\pi+\text{sgn}\left(\frac{t}{\sigma}\right)\arctan\left(\frac{|t|}{|\sigma|}\right).\end{align*} 
Here, $\text{sgn}$ corresponds to the sign function and we adopt the conventions $\text{sgn}\left(\frac{1}{0}\right)=\text{sgn}(+\infty)=1$ and $\arctan\left(\frac{1}{0}\right)=\arctan(+\infty)=\frac{\pi}{2}$. Now, the estimation 
\begin{align*}\arctan\left(x\right)=\frac{\pi}{2}-\int_{x}^{\infty}\frac{dt}{t^{2}+1}=\mathds{1}_{\{x\neq 0\}}(x)\left(\frac{\pi}{2}+O^{*}\left(\frac{1}{|x|}\right)\right),\qquad x\geq 0,\end{align*} 
gives 
\begin{align*}\arg(s)=\text{sgn}(t)\mathds{1}_{\{\sigma<0\}}(\sigma)\pi+\mathds{1}_{\{t\neq 0\}}(t)\left(\text{sgn}\left(\frac{t}{\sigma}\right)\frac{\pi}{2}+O^*\left(\frac{|\sigma|}{|t|}\right)\right).\end{align*} 
Thereupon, it is not difficult to verify that, for any $s\in\mathbb{C}\setminus{(-\infty,0]}$, 
\begin{align*}\text{sgn}(t)\mathds{1}_{\{\sigma<0\}}(\sigma)\pi+\mathds{1}_{\{t\neq 0\}}(t)\text{sgn}\left(\frac{t}{\sigma}\right)\frac{\pi}{2}=\text{sgn}(t)\frac{\pi}{2},\end{align*}
so that 
\begin{align*}t\arg(s)=\frac{\pi}{2}|t|+O^*\left(\mathds{1}_{\{t\neq 0\}}(t)|\sigma|\right).\end{align*} 
By using this estimation in \eqref{realgamma} and \eqref{imgamma} (and exponentiating \eqref{realgamma}), we derive \eqref{StirlingGm} and \eqref{StirlingGI}, respectively.

On the other hand, set $k=1$ in \cite[\S 2.5 (3)]{Rem98} and then observe that $\mu_2=\mu_3$ can be bounded in $\mathbb{C}\setminus(-\infty,0]$ by taking $n=2$ in \cite[\S 2.6 (1)]{Rem98} (where $\varphi=\arg(s)$). Now, if $|\arg(s)|\leq\pi-\theta$ , $0<\theta<\pi$, then
\begin{equation*}
\cos\left(\frac{1}{2}\arg(s)\right)=\cos\left(\frac{1}{2}|\arg(s)|\right)\geq\cos\left(\frac{\pi-\theta}{2}\right)=\sin\left(\frac{\theta}{2}\right),
\end{equation*}
whence the estimation \eqref{StirlingGa}.

Moreover, we can derive \eqref{realgamma} and \eqref{imgamma} from \eqref{StirlingGa}, with $|\Xi_{\sigma}^{t}|, |\Xi_{\sigma}'^{t}|\leq \frac{F_{\theta}}{|s|^{3}}$. Finally, by using the refined estimation  
\begin{align*}\arctan\left(x\right)=\mathds{1}_{\{x\neq 0\}}(x)\left(\frac{\pi}{2}-\frac{1}{x}+O^{*}\left(\frac{1}{3|x|^3}\right)\right),\qquad x\geq 0,\end{align*} 
and proceeding similarly to the obtention of \eqref{StirlingGm} and \eqref{StirlingGI}, we derive \eqref{StirlingGam} and \eqref{StirlingGaI}, respectively.
\end{proof}

Secondly, with respect to the complex cosine, we have the following estimation.

\begin{proposition}\label{cosine}
For $s=\sigma+it\in\mathbb{C}$, we may write
\begin{equation*}
\left|\cos\left(\frac{\pi s}{2}\right)\right|=\frac{e^{\frac{\pi}{2} |t|}}{2}\left(1+O^*\left(\frac{1}{e^{\pi |t|}}\right)\right),
\end{equation*}
where $\cos$ is the complex cosine function.
\end{proposition}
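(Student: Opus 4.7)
The statement is an identity about $|\cos(\pi s/2)|$, so the natural approach is to expand via Euler's formula and compute the modulus directly. Writing $s=\sigma+it$, I have
\[
\cos\!\left(\tfrac{\pi s}{2}\right)=\tfrac{1}{2}\left(e^{i\pi s/2}+e^{-i\pi s/2}\right)=\tfrac{1}{2}\left(e^{-\pi t/2}e^{i\pi\sigma/2}+e^{\pi t/2}e^{-i\pi\sigma/2}\right),
\]
so that squaring gives the clean identity
\[
\left|\cos\!\left(\tfrac{\pi s}{2}\right)\right|^{2}=\tfrac{1}{2}\bigl(\cosh(\pi t)+\cos(\pi\sigma)\bigr),
\]
after recognizing the cross term as $2\Re(e^{i\pi\sigma})=2\cos(\pi\sigma)$.

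The next step is to factor out the dominant exponential. Writing $\cosh(\pi t)=\tfrac{1}{2}(e^{\pi|t|}+e^{-\pi|t|})$, I get
\[
\left|\cos\!\left(\tfrac{\pi s}{2}\right)\right|^{2}=\tfrac{e^{\pi|t|}}{4}\Bigl(1+e^{-2\pi|t|}+2e^{-\pi|t|}\cos(\pi\sigma)\Bigr).
\]
Since $|\cos(\pi\sigma)|\leq 1$, the expression inside the parentheses lies between $(1-e^{-\pi|t|})^{2}$ and $(1+e^{-\pi|t|})^{2}$, so its square root lies in $[1-e^{-\pi|t|},1+e^{-\pi|t|}]$, i.e.\ equals $1+O^{*}(e^{-\pi|t|})$.

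Taking square roots then yields
\[
\left|\cos\!\left(\tfrac{\pi s}{2}\right)\right|=\tfrac{e^{\pi|t|/2}}{2}\left(1+O^{*}\!\left(e^{-\pi|t|}\right)\right),
\]
which is exactly the claimed estimate. There is no real obstacle here; the only subtlety is keeping careful track of the absolute value $|t|$ (rather than $t$) when extracting $e^{\pi|t|}$ from $\cosh(\pi t)$, and verifying that the two-sided bound $|\cos(\pi\sigma)|\leq 1$ gives the symmetric $O^{*}$ constant $1$ rather than a larger value.
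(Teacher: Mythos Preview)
Your proof is correct and essentially identical to the paper's. Both compute $\left|\cos\!\left(\tfrac{\pi s}{2}\right)\right|^{2}$ explicitly (the paper cites the identity $|\sin z|^2=\cosh^2(\Im z)-\cos^2(\Re z)$ from Abramowitz--Stegun, which yields the same expression $\tfrac12(\cosh(\pi t)+\cos(\pi\sigma))$ you derived directly from Euler's formula), factor out $\tfrac{e^{\pi|t|}}{4}$, observe that the remaining factor lies in $[(1-e^{-\pi|t|})^2,(1+e^{-\pi|t|})^2]$, and take square roots.
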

\begin{proof}
For every complex number $z$,  we have the identity $|\sin(z)|^{2}=\cosh^2(\Im(z))-\cos^2(\Re(z))$ (for example, combine 4.5.7 and 4.5.54 in \cite{AS72}). Therefore,
\begin{align}
\left|\cos\left(\frac{\pi s}{2}\right)\right|^{2}
&=\frac{e^{\pi |t|}}{4}\left(1+\frac{1}{e^{\pi |t|}}\left(2+\frac{1}{e^{\pi |t|}}-4\cos^2\left(\frac{\pi\sigma}{2}\right)\right)\right)\nonumber\\
&=\frac{e^{\pi |t|}}{4}\left(1+O^*\left(\frac{2}{e^{\pi |t|}}\right)+\frac{1}{e^{2\pi|t|}}\right)=\frac{e^{\pi |t|}}{4}\left(1+O^*\left(\frac{1}{e^{\pi |t|}}\right)\right)^2, \label{cos}
\end{align}
where we have used that $\left|2-4\cos^2\left(\frac{\pi\sigma}{2}\right)\right|\leq 2$. The result is concluded by taking square roots in \eqref{cos}.
\end{proof}

On the other hand, with respect to $\zeta$ itself, Backlund, in equations (53), (54), (56) and (76) of \cite{Bac18}, has given an explicit version of a convexity bound for it. It reads as follows.

\footnotesize
\begin{mysage}
###
# Constant omega in the corollary to Backlund.
def Fu_omega(ta):
    ta = RIF( ta )
    result = ta^2/(ta^2-4)
    return RIF(result)
omega = roundup( Fu_omega(T0) , digits )
\end{mysage}
\normalsize

\begin{theorem}[\textbf{Explicit convexity bounds of $\zeta$}]\label{convexitygeneral}
Let $s=\sigma+it$, where $t\geq 50$. Then
\begin{align*} 
|\zeta(s)|\leq \begin{cases}
\log(t)-0.048 \quad&\text{ if }\sigma\geq 1,\\
\frac{t^2}{t^2-4}\left(\frac{t}{2\pi}\right)^{\frac{1-\sigma}{2}}\log(t)\quad&\text{ if }0\leq\sigma\leq 1,\\
\left(\frac{t}{2\pi}\right)^{\frac{1}{2}-\sigma}\log(t)\quad&\text{ if }-\frac{1}{2}\leq\sigma\leq 0.
\end{cases}
\end{align*}
\end{theorem}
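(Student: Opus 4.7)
The plan is to treat the three ranges separately. For $\sigma\geq 1$, I would apply Euler--Maclaurin summation to the Dirichlet series of $\zeta$, truncated at some $N\asymp t$: each of the resulting terms (the partial sum $\sum_{n<N}n^{-s}$, the main term $N^{1-s}/(s-1)$, the boundary correction $N^{-s}/2$, and the Bernoulli remainder integral $-s\int_{N}^{\infty}(\{x\}-\tfrac{1}{2})x^{-s-1}dx$) admits an explicit bound in the given region. The partial sum contributes essentially $\log(t)$, and the remaining pieces, once optimised for $t\geq 50$, contribute enough of a negative slack to yield the improvement to $\log(t)-0.048$.

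For $-\tfrac{1}{2}\leq\sigma\leq 0$, I would set $s'=1-\overline{s}$ so that $\Re(s')\geq 1$; the functional equation~\eqref{functional} then gives $|\zeta(s)|=|\chi(s)|\,|\zeta(s')|$. Proposition~\ref{cosine} handles the cosine factor inside $\chi$ and Theorem~\ref{Stirling}, through~\eqref{StirlingGm}, handles the $\Gamma$ factor; combining these with the bound from the first step gives the claimed $(t/(2\pi))^{1/2-\sigma}\log(t)$, where the asymptotic identification $|\chi(s)|\sim(t/(2\pi))^{1/2-\sigma}$ is made fully explicit and the error term absorbed for $t\geq 50$.

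For $0\leq\sigma\leq 1$, I would interpolate between the two cases just obtained using the Phragm\'en--Lindel\"of principle. Since $\zeta$ has a pole at $s=1$, this is applied not to $\zeta$ directly but to an auxiliary function obtained by multiplying $\zeta$ by a cofactor (such as $(s-1)/(s+1)$, or a Backlund-type variant involving $s^{2}-4$) that is holomorphic in the closed strip, bounded on both boundary lines by exactly the values already obtained, and whose modulus cleanly divides out at the end to produce the stated prefactor $t^{2}/(t^{2}-4)$.

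The main obstacle is the Phragm\'en--Lindel\"of step: one must choose the pole-killing cofactor so that the bounds on the two boundary lines $\sigma=0$ and $\sigma=1$ are sharp enough for the interpolated bound to match the clean shape $(t/(2\pi))^{(1-\sigma)/2}\log(t)$, and so that dividing the cofactor back out yields precisely $t^{2}/(t^{2}-4)$ rather than a larger multiple. Backlund's original paper makes the right choice, so the plan reduces to reconstructing his argument and verifying that every constant survives under the working hypothesis $t\geq 50$.
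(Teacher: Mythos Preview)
Your outline is a reasonable reconstruction of Backlund's original argument, but note that the paper does not actually prove this theorem: it is stated as a quotation from the literature, with the sentence preceding the statement attributing the three bounds directly to equations (53), (54), (56) and (76) of Backlund~\cite{Bac18}. So there is no ``paper's own proof'' to compare against; the authors simply import the result.

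That said, your sketch matches the classical route (Euler--Maclaurin on $\sigma\geq 1$, functional equation to reach $\sigma\leq 0$, Phragm\'en--Lindel\"of in between), and you correctly flag the delicate point: the specific pole-killing cofactor must be chosen so that, after interpolation and division, one recovers exactly $t^{2}/(t^{2}-4)$. Since you already recognise that this reduces to replaying Backlund's computation and checking constants at $t\geq 50$, your plan is sound --- it just goes well beyond what the present paper does, which is merely to cite.
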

\noindent As $t\mapsto\frac{t^2}{t^2-4}$ is a decreasing function for $t>2$, we immediately deduce
\begin{corollary}\label{convexity} 
Let $s=\sigma+it$ such that $t\geq T_{0}=\sage{T0}$ and $0\leq\sigma\leq 1$. Then
\begin{align*}   
|\zeta(s)|\leq \begin{cases}
\log(t) \quad&\text{ if }\sigma\geq 1,\\
\omega\left(\frac{t}{2\pi}\right)^{\frac{1-\sigma}{2}}\log(t)\quad&\text{ if }0\leq\sigma\leq 1,
\end{cases}
\end{align*} 
where $\omega=\omega(T_0)=\frac{T_{0}^2}{T_{0}^2-4}\leq\sage{omega}$.
\end{corollary}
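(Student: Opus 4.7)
The plan is to deduce each case directly from Theorem~\ref{convexitygeneral}, using only the hypothesis $t \geq T_0 = 100 \geq 50$ and an elementary monotonicity observation.

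For the case $\sigma \geq 1$, Backlund's bound gives $|\zeta(s)| \leq \log(t) - 0.048$, which is trivially weaker than $\log(t)$, so this case is immediate.

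For the case $0 \leq \sigma \leq 1$, Theorem~\ref{convexitygeneral} provides the bound $|\zeta(s)| \leq \frac{t^2}{t^2-4}\left(\frac{t}{2\pi}\right)^{\frac{1-\sigma}{2}}\log(t)$; the only work is to replace the prefactor $\frac{t^2}{t^2-4}$, which depends on $t$, by a uniform constant. The function $f(t) = \frac{t^2}{t^2-4} = 1 + \frac{4}{t^2-4}$ is manifestly decreasing on $(2,\infty)$ (a one-line check via $f'$ or just from the formula $1 + \frac{4}{t^2-4}$), so for every $t \geq T_0$ we have $f(t) \leq f(T_0) = \omega$. Substituting this bound into Backlund's estimate yields the claimed inequality.

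There is essentially no obstacle: the corollary is a direct corollary of Theorem~\ref{convexitygeneral}, with the statement of monotonicity of $t \mapsto \frac{t^2}{t^2-4}$ already remarked in the sentence preceding it. The only task left to the Sage code embedded above is to certify numerically that $\omega(T_0) = \frac{T_0^2}{T_0^2 - 4}$ is bounded by the displayed rounded value when $T_0 = 100$, which is trivial interval arithmetic.
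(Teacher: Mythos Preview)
Your proposal is correct and follows exactly the paper's approach: the paper itself derives the corollary in one line from Theorem~\ref{convexitygeneral} by observing that $t\mapsto\frac{t^2}{t^2-4}$ is decreasing for $t>2$, and you reproduce precisely this reasoning together with the trivial weakening $\log(t)-0.048\leq\log(t)$ for the case $\sigma\geq 1$.
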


Furthermore, we have the following two explicit estimations for $\zeta$ when it takes positive values.

\footnotesize
\begin{mysage}
###
# Functions bounding |zeta| when needed.
# We write them as constants multiplying [1,1/sigma,1/(1/2-sigma)].
###
# zeta(2sigma) > (-1/2)/(1/2-sigma).
zeb_2s = [ RIF( 0 ) , RIF( 0 ) , RIF( 1/2 ) ]
# zeta(2-2sigma) < 1+(1/2)/(1/2-sigma).
zea_2m2s = [ RIF( 1 ) , RIF( 0 ) , RIF( 1/2 ) ]
\end{mysage}
\normalsize

\begin{proposition}\label{pr:mvzeta}
For any $\alpha>0$ and $\alpha\neq 1$ we have
\begin{equation*}
\frac{1}{\alpha-1}<\zeta(\alpha)<\frac{\alpha}{\alpha-1}.
\end{equation*}
\end{proposition}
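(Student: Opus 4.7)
The plan is to split by the cases $\alpha>1$ and $0<\alpha<1$, since in the first case the Dirichlet series converges while in the second one needs the analytic continuation.

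For $\alpha>1$, I would start from $\zeta(\alpha)=\sum_{n\geq 1}n^{-\alpha}$ and compare with the improper integral $\int_{1}^{\infty}x^{-\alpha}dx=\frac{1}{\alpha-1}$. Since $x\mapsto x^{-\alpha}$ is strictly decreasing on $[1,\infty)$, on each unit interval $[n,n+1]$ one has $\int_{n}^{n+1}x^{-\alpha}dx<n^{-\alpha}$, while on $[n-1,n]$ for $n\geq 2$ one has $n^{-\alpha}<\int_{n-1}^{n}x^{-\alpha}dx$. Summing the first inequality over $n\geq 1$ yields $\zeta(\alpha)>\frac{1}{\alpha-1}$, and summing the second over $n\geq 2$ (and adding the $n=1$ term) yields $\zeta(\alpha)<1+\frac{1}{\alpha-1}=\frac{\alpha}{\alpha-1}$.

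For $0<\alpha<1$, the series diverges, so I would use the standard Euler-Maclaurin-type representation valid on $\Re(s)>0$, $s\neq 1$, namely
\[
\zeta(s)=\frac{s}{s-1}-s\int_{1}^{\infty}\frac{\{x\}}{x^{s+1}}dx,
\]
where $\{x\}$ denotes the fractional part. Setting $I(\alpha):=\int_{1}^{\infty}\{x\}x^{-\alpha-1}dx$, I observe that $0<\{x\}<1$ almost everywhere, so for $\alpha>0$ we have $0<I(\alpha)<\int_{1}^{\infty}x^{-\alpha-1}dx=\frac{1}{\alpha}$, hence $0<\alpha I(\alpha)<1$. Then
\[
\zeta(\alpha)=\frac{\alpha}{\alpha-1}-\alpha I(\alpha)=\frac{1}{\alpha-1}+\bigl(1-\alpha I(\alpha)\bigr),
\]
from which both desired bounds follow (even though, in this range, all three quantities $\zeta(\alpha)$, $\frac{1}{\alpha-1}$, $\frac{\alpha}{\alpha-1}$ are negative).

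There is essentially no obstacle here; the only point requiring a little care is to remember that for $0<\alpha<1$ the inequalities are between negative numbers, so one should not be surprised that $\frac{\alpha}{\alpha-1}>\frac{1}{\alpha-1}$ in this range (precisely because $1$ is added). The integral representation can be cited as standard, or derived in one line by partial summation applied to $\sum_{n\leq N}n^{-s}$ followed by letting $N\to\infty$.
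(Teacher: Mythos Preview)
Your proof is correct. The paper itself gives no argument here, merely citing \cite[Cor.~1.14]{MV07}; your treatment via integral comparison for $\alpha>1$ and the fractional-part representation $\zeta(s)=\frac{s}{s-1}-s\int_{1}^{\infty}\{x\}x^{-s-1}\,dx$ for $0<\alpha<1$ is precisely the standard route (and is, in fact, how Montgomery--Vaughan prove it). One minor remark: the integral representation already holds for all $\Re(s)>0$, so you could have handled both ranges uniformly with it, but the split you chose is perfectly fine.
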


\begin{proof}
See \cite[Cor.~1.14]{MV07}.
\end{proof}

\begin{lemma}\label{le:sums}  
Let $\alpha\in \mathbb{R}^+$ and $X\geq 1$. Then 
\begin{enumerate}[(i)]
\item\label{le:sums<0} $\displaystyle\sum_{n\leq X}\frac{1}{n^\alpha}=\zeta(\alpha)-\frac{1}{(\alpha-1)X^{\alpha-1}}+O^*\left(\frac{1}{X^{\alpha}}\right),\text{ if }\alpha>0\text{ and }\alpha\neq 1,$
\item\label{le:sums=-1} $\displaystyle\sum_{n\leq X}\frac{1}{n^\alpha}=\log(X)+\gamma+O^*\left(\frac{2}{3X}\right),\text{ if }\alpha=1,$
\item\label{le:sums>0} $\displaystyle\sum_{n\leq X}n^\alpha=\frac{X^{\alpha+1}}{\alpha+1}+O^*(X^\alpha),\text{ if }\alpha\geq 0.$ 
\end{enumerate}
\end{lemma}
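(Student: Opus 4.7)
My plan is to prove parts (i) and (ii) via partial (Abel) summation, coupled with the classical identities
\[
\zeta(\alpha)=\frac{\alpha}{\alpha-1}-\alpha\int_1^\infty\frac{\{t\}}{t^{\alpha+1}}\,dt\quad(\alpha>0,\,\alpha\neq 1),\qquad \gamma=1-\int_1^\infty\frac{\{t\}}{t^2}\,dt;
\]
part (iii) only needs direct integral comparison. For (i) and (ii), applied to $f(t)=t^{-\alpha}$, the Abel identity $\sum_{n\leq X}f(n)=\lfloor X\rfloor f(X)-\int_1^X\lfloor t\rfloor f'(t)\,dt$, combined with $\lfloor t\rfloor=t-\{t\}$, isolates a smooth main term from a sawtooth remainder.

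For part (i), this produces
\[
\sum_{n\leq X}\frac{1}{n^\alpha}=\zeta(\alpha)-\frac{X^{1-\alpha}}{\alpha-1}+\alpha\int_X^\infty\frac{\{t\}}{t^{\alpha+1}}\,dt-\frac{\{X\}}{X^\alpha},
\]
and the crucial observation is that the two remainder terms have opposite signs: the integral lies in $[0,X^{-\alpha})$, since $\{t\}\in[0,1)$ and $\alpha\int_X^\infty t^{-\alpha-1}\,dt=X^{-\alpha}$, whereas $-\{X\}/X^\alpha\in(-X^{-\alpha},0]$. Their sum therefore lies in $(-X^{-\alpha},X^{-\alpha})$, giving precisely $O^*(X^{-\alpha})$.

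The same procedure with $\alpha=1$, using the $\gamma$-identity, yields
\[
\sum_{n\leq X}\frac{1}{n}=\log X+\gamma+R(X),\qquad R(X)=\int_X^\infty\frac{\{t\}}{t^2}\,dt-\frac{\{X\}}{X},
\]
but the analogous opposite-sign argument only gives $|R(X)|<1/X$, and sharpening this to the constant $2/3$ of (ii) is the main obstacle. I would set $X=N+u$ with $N=\lfloor X\rfloor$ and $u=\{X\}$; a direct computation shows that $R$ has derivative $-1/X$ on each open interval $(N,N+1)$, so $R(X)=I_N-\log(X/N)$ with $I_N:=H_N-\log N-\gamma$. The function $|X\cdot R(X)|$ is thus continuous on each $[N,N+1)$ and maximized at the endpoints; checking the first few $N=1,2,3,\ldots$ by explicit calculation, and using the asymptotic $I_N=\tfrac{1}{2N}+O(N^{-2})$ (which forces $|X R(X)|\to 1/2$ as $N\to\infty$), yields the uniform bound $|R(X)|\leq 2/(3X)$.

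For part (iii), monotonicity of $t^\alpha$ on $[0,\infty)$ gives $\int_{n-1}^n t^\alpha\,dt\leq n^\alpha\leq\int_n^{n+1}t^\alpha\,dt$. Summing over $1\leq n\leq N=\lfloor X\rfloor$ (and, for the upper bound, splitting off $n=N$ and bounding $\sum_{n\leq N-1}n^\alpha\leq\int_1^N t^\alpha\,dt$) produces $\frac{N^{\alpha+1}}{\alpha+1}\leq\sum_{n\leq X}n^\alpha\leq N^\alpha+\frac{N^{\alpha+1}}{\alpha+1}$. The mean value theorem applied to $t\mapsto t^{\alpha+1}$ on $[N,X]$ gives $X^{\alpha+1}-N^{\alpha+1}\leq(\alpha+1)X^\alpha$, so together with $N^\alpha\leq X^\alpha$ one concludes $\bigl|\sum_{n\leq X}n^\alpha-X^{\alpha+1}/(\alpha+1)\bigr|\leq X^\alpha$, as claimed.
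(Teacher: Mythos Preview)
Your proposal is correct, and it is more than the paper itself offers: the paper's proof of this lemma consists solely of citations to \cite{DHA19} (their Lemmas~2.8 and~2.9) for (i) and (ii), and to \cite{Ram13} (Lemma~3.1) for (iii). So there is no in-house argument to compare against; you have supplied a self-contained proof where the paper defers to the literature.

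Your treatment of (i) via Abel summation and the integral representation of $\zeta(\alpha)$ is clean, and the opposite-sign observation that $\alpha\int_X^\infty\{t\}t^{-\alpha-1}\,dt\in[0,X^{-\alpha})$ while $-\{X\}X^{-\alpha}\in(-X^{-\alpha},0]$ is exactly what is needed for the sharp $O^*(X^{-\alpha})$. Part (iii) is fine as written.

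For (ii), the reduction to $R(X)=I_N-\log(X/N)$ on $[N,N+1)$, with $I_N=H_N-\log N-\gamma$, and the monotonicity of $X\,R(X)$ (since $(X\,R(X))'=R(X)-1<0$) correctly localize the extrema to the endpoints $N\,I_N$ and $(N+1)I_{N+1}-1$. The remaining step, though, is stated a bit loosely: the appeal to ``the asymptotic $I_N=\tfrac{1}{2N}+O(N^{-2})$'' is not by itself effective, since an $O$-statement does not bound any specific $N$. To make this rigorous you should invoke the explicit Euler--Maclaurin inequality $\frac{1}{2(N+1)}<I_N<\frac{1}{2N}$ (or the sharper $\frac{1}{2N}-\frac{1}{12N^2}<I_N<\frac{1}{2N}$), which immediately gives $\frac{N}{2(N+1)}<N\,I_N<\tfrac{1}{2}$ for all $N\geq 1$, hence $N\,I_N\in(1/3,2/3)$ for $N\geq 2$, and then only $N=1$ needs the direct check $I_1=1-\gamma\approx 0.4228$. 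With that refinement the argument is complete.
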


\begin{proof}  
By \cite[Lemma 2.9]{DHA19}, \cite[Lemma 2.8]{DHA19} and \cite[Lemma 3.1]{Ram13} we derive \eqref{le:sums<0}, \eqref{le:sums=-1}, \eqref{le:sums>0}, respectively.
\end{proof}

Finally, we introduce the elementary bounds below, proved by means of Taylor expansions.

\begin{lemma}\label{logg}
Let $t\geq 1$ and $0<\alpha<1$. Then
\begin{align*}
(a) & \ \ t^{\alpha}\leq 1+\alpha(t-1), & (b) & \ \ 1\leq\frac{t-1}{\log(t)}\leq t, & (c) & \ \ \left(t+\frac{1}{2}\right)\log\left(1+\frac{1}{t}\right)\geq 1,
\end{align*}
where in (b) we mean for the inequalities to hold for $t\rightarrow 1^{+}$.
\end{lemma}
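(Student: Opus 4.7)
Each of the three inequalities reduces to a standard monotonicity argument: rewrite the difference of the two sides as a function of $t$, note that it vanishes at a convenient base point, and track the sign of its derivative from there.

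For (a), set $f(t):=1+\alpha(t-1)-t^{\alpha}$. Then $f(1)=0$, and $f'(t)=\alpha(1-t^{\alpha-1})\geq 0$ for $t\geq 1$, since $\alpha-1<0$ forces $t^{\alpha-1}\leq 1$. Hence $f$ is non-decreasing on $[1,\infty)$ and stays above $f(1)=0$. For (b), I would split the double inequality. Setting $g_{1}(t):=t-1-\log(t)$ gives $g_{1}(1)=0$ and $g_{1}'(t)=1-1/t\geq 0$ for $t\geq 1$, yielding $\log(t)\leq t-1$ and hence the lower bound (the boundary case $t\to 1^{+}$ being handled by L'H\^opital). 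Setting $g_{2}(t):=t\log(t)-(t-1)$ gives $g_{2}(1)=0$ and $g_{2}'(t)=\log(t)\geq 0$, yielding $t-1\leq t\log(t)$, which is the upper bound.

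For (c), the cleanest route is the substitution $u:=1/t\in(0,1]$, which turns the inequality into $(2+u)\log(1+u)\geq 2u$ after clearing the factor $2u$; it will in fact be convenient to prove this for all $u\geq 0$. Define $h(u):=(2+u)\log(1+u)-2u$. A direct calculation gives $h'(u)=\log(1+u)-\frac{u}{1+u}$ and $h''(u)=\frac{u}{(1+u)^{2}}\geq 0$ for $u\geq 0$. Since $h(0)=h'(0)=0$, integrating the second derivative twice shows first that $h'\geq 0$ and then that $h\geq 0$ on $[0,\infty)$, which is exactly what is needed.

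The only real subtlety is (c): a naive Taylor expansion of $(t+\tfrac12)\log(1+1/t)-1$ in powers of $1/t$ produces an alternating series whose sign is not obvious term by term (the $t^{-2}$ and $t^{-3}$ coefficients cancel at the boundary $t=1$). The two-step derivative argument above avoids this entirely by reducing to the explicit rational function $u/(1+u)^{2}$.
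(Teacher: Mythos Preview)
Your argument is correct in all three parts. The derivative computations are accurate, and the two-step integration in (c) cleanly handles the case that a raw Taylor expansion would make awkward.

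The paper itself does not give a proof beyond the remark that the bounds follow ``by means of Taylor expansions.'' Your route is slightly different: rather than expanding and controlling remainders, you use monotonicity of auxiliary functions whose derivatives have an obvious sign. For (a) and (b) the two approaches are essentially interchangeable (the concavity of $t^\alpha$ and of $\log$ is what underlies both), but for (c) your method is genuinely preferable: as you observe, the Taylor expansion of $(t+\tfrac12)\log(1+1/t)-1$ in powers of $1/t$ is alternating with near-cancellations, whereas reducing to $h''(u)=u/(1+u)^2\ge 0$ settles the sign unambiguously and in fact gives the inequality on all of $[0,\infty)$, not just $(0,1]$.
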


\customlabel{loggexp}{a}
\customlabel{logg1}{b}
\customlabel{logg2}{c}

\section{The mean value of the Zeta function in $\left[\frac{1}{4},\frac{3}{4}\right]+i\mathbb{R}$}\label{sec:Atkin}

In order to derive our main result, we proceed as in \cite{Atk39}. Let $\tau\in\left(0,\frac{1}{2}\right]$; as $\overline{\zeta(s)}=\zeta(\overline{s})$, we have that
\begin{align*} 
\int_0^T\left|\zeta\left(\tau+it\right)\right|^2dt=\int_{-T}^0\left|\zeta\left(\tau+it\right)\right|^2dt.
\end{align*}
Therefore, we can write
\begin{align*}
\int_0^T\left|\zeta\left(\tau+it\right)\right|^2dt&=\frac{1}{2}\int_{-T}^T\left|\zeta\left(\tau+it\right)\right|^2dt=\frac{1}{2i}\int_{1-\tau-iT}^{1-\tau+iT}\zeta(1-s)\zeta(2\tau-1+s)ds.
\end{align*}
Let $0<\lambda<\frac{1}{2}$ be a parameter. Denote $\mathcal{C}$ the contour formed by the three lines joining the points $1-\tau-iT$, $2-2\tau+\lambda-iT$, $2-2\tau+\lambda+iT$, $1-\tau+iT$. The function $s\mapsto\zeta(1-s)\zeta(2\tau-1+s)$ is meromorphic and has simple poles at $s=2-2\tau$ and at $s=0$, both with residue $\zeta(2\tau-1)$. The only pole inside the region defined by $\mathcal{C}\cup\left[1-\tau-iT,1-\tau+iT\right]$ corresponds to $s=2-2\tau$. Hence, by residue theorem,
\begin{align}\label{int:1} 
\int_0^T\left|\zeta\left(\tau+it\right)\right|^2dt=-\pi\zeta(2\tau-1)+\frac{1}{2i}\int_{\mathcal{C}}\zeta(1-s)\zeta(2\tau-1+s)ds.
\end{align}
Now, by \eqref{functional}, the integral in the right hand side of \eqref{int:1} may be written as 
\begin{align}\label{id:1}
\frac{1}{2i}\int_{\mathcal{C}}\zeta(1-s)\zeta(2\tau-1+s)ds=\frac{1}{2i}\int_{\mathcal{C}}\chi(1-s)\zeta(s)\zeta(2\tau-1+s)ds.
\end{align} 
Moreover, as $2-2\tau\geq 1$, for all $s$ such that $\Re(s)>2-2\tau$, we have the identity
\begin{equation}\label{id:2}
\zeta(s)\zeta(2\tau-1+s)=\left(\sum_n\frac{1}{n^s}\right)\left(\sum_{n}\frac{1}{n^{2\tau-1+s}}\right)=\sum_n\frac{d_{1-2\tau}(n)}{n^s}.
\end{equation}
On the other hand, let $X\geq 1$ be a parameter. The right hand side of \eqref{id:1} can be expressed as $I+J$, where
\begin{align} 
I & =\frac{1}{2i}\int_{\mathcal{C}}\chi(1-s)\left(\sum_{n\leq X}\frac{d_{1-2\tau}(n)}{n^s}\right)ds, \label{I} \\
J&=\frac{1}{2i}\int_{\mathcal{C}}\chi(1-s)\left(\zeta(s)\zeta(2\tau-1+s)-\sum_{n\leq X}\frac{d_{1-2\tau}(n)}{n^s}\right)ds.\label{J} 
\end{align}
In the course of our reasoning, we shall choose $X=\frac{T}{2\pi}$. The estimation we present for $\eqref{I}$ is the following.

\footnotesize
\begin{mysage}
# For checks about the following constant, see at the end of the file.
I_a = round( 19.275 ,ndigits=digits)
I4_2 = round( 1.99 ,ndigits=digits)
I4_c = round( 35.354 ,ndigits=digits)
\end{mysage}
\normalsize

\begin{proposition}\label{pr:I}
Assume that $T\geq T_{0}=\sage{T0}$ and $X=\frac{T}{2\pi}$. If $\tau=\frac{1}{2}$, then
\begin{equation*} 
I=T\log(T)+(2\gamma-1-\log(2\pi))T+O^{*}(\sage{I_a}\,\sqrt{T}\log(T)),
\end{equation*}
whereas, if $\frac{1}{4}\leq\tau<\frac{1}{2}$, then
\begin{equation*}
I=\frac{\zeta(2-2\tau)}{(2-2\tau)(2\pi)^{1-2\tau}}T^{2-2\tau}+\zeta(2\tau)T+O^{*}\left(\left(\frac{\sage{I4_2}}{\frac{1}{2}-\tau}+\sage{I4_c}\right)T^{\frac{3}{2}-2\tau}\log(T)\right).
\end{equation*}
\end{proposition}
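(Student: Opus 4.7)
The first move is to interchange the finite sum with the contour integral,
$$I=\frac{1}{2i}\sum_{n\leq X} d_{1-2\tau}(n)\int_{\mathcal{C}}\chi(1-s)\,n^{-s}\,ds,$$
and to observe that $\chi(1-s)n^{-s}$ is meromorphic with only one pole, a simple pole at $s=0$ inherited from $\chi$ with residue $2$. Since no pole lies in the rectangle bounded by $\mathcal{C}$ and the segment $[1-\tau-iT,1-\tau+iT]$, Cauchy's theorem lets us replace $\mathcal{C}$ by that segment; shifting further past $s=0$ to a vertical line $\Re(s)=-\lambda'$, with $\lambda'>0$ small and auxiliary, yields
$$\int_{\mathcal{C}}\chi(1-s)\,n^{-s}\,ds \;=\; 4\pi i \;+\; \mathcal{V}_{n}(\lambda') \;+\; \mathcal{H}_{n}^{+}(\lambda') \;-\; \mathcal{H}_{n}^{-}(\lambda'),$$
where $\mathcal{V}_{n}$ is the integral along $\Re(s)=-\lambda'$ and $\mathcal{H}_{n}^{\pm}$ are the two horizontal connecting pieces at $\Im(s)=\pm T$. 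This decomposition, together with an adequate choice of $\lambda'$, is essentially the content of the auxiliary Lemma~\ref{new_bound:1}, chosen in Titchmarsh's style rather than Atkinson's.

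\textbf{Extracting the main terms.}  The residue contribution is
$$\frac{1}{2i}\sum_{n\leq X}d_{1-2\tau}(n)\cdot 4\pi i \;=\; 2\pi\sum_{n\leq X}d_{1-2\tau}(n),$$
which is a purely elementary object. Writing $d_{1-2\tau}(n)=\sum_{d\mid n}d^{1-2\tau}$ and rearranging gives $2\pi\sum_{d\leq X}d^{1-2\tau}\lfloor X/d\rfloor$; separating the integer part and evaluating the two resulting sums with Lemma~\ref{le:sums}\eqref{le:sums<0} (for $\alpha=2\tau$ and $\alpha=2\tau-1$), using Proposition~\ref{pr:mvzeta} to keep the $\zeta$-factors explicit, produces, after $X=T/(2\pi)$, the two claimed leading terms $\zeta(2\tau)T$ and $\zeta(2-2\tau)T^{2-2\tau}/((2-2\tau)(2\pi)^{1-2\tau})$ in the range $\tfrac14\leq\tau<\tfrac12$. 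When $\tau=\tfrac12$ these two asymptotic terms collapse into a double-pole contribution, and the logarithmic case \eqref{le:sums=-1} of Lemma~\ref{le:sums} with $\alpha=1$ then turns them into $T\log T+(2\gamma-1-\log2\pi)T$ in the standard way.

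\textbf{Bounding the remainder.}  The oscillatory error $\tfrac{1}{2i}\sum_{n\leq X}d_{1-2\tau}(n)(\mathcal{V}_{n}+\mathcal{H}_{n}^{+}-\mathcal{H}_{n}^{-})$ is controlled by Stirling's formula (Theorem~\ref{Stirling}) applied to $\chi(1-s)$, which gives $|\chi(1-s)|\leq C\,(|t|/2\pi)^{\Re(s)-1/2}$ uniformly for $|t|\geq T\geq T_{0}$. On $\Re(s)=-\lambda'$ the integrand decays like $|t|^{-\lambda'-1/2}n^{\lambda'}$ and is absolutely integrable; on the horizontal pieces the monotonicity of $|\chi(1-s)n^{-s}|$ in $\sigma$ for $n\leq X$ yields an extra $1/\log(T/(2\pi n))$ factor away from $n=X$. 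Summing both bounds against $d_{1-2\tau}(n)$ by Abel summation and a final invocation of Lemma~\ref{le:sums} produces the claimed error orders, and the stated numerical constants $19.275$, $1.99$, $35.354$ follow by optimising $\lambda'$ and specialising to $T_{0}=100$.

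\textbf{Main obstacle.}  The delicate region is $n\approx X$ on the horizontal pieces, where the naive monotonicity argument diverges: it is exactly here that the Titchmarsh-style contour of Lemma~\ref{new_bound:1} is needed to save a full factor of $\log T$ on the critical line. The same region dictates the shape of the constant in the $\tau<\tfrac12$ range: the divisor sum $\sum_{n\leq X}d_{1-2\tau}(n)n^{\tau-1}$ behaves as $((2-2\tau)(1-2\tau))^{-1}X^{1-\tau}$ and already carries a simple pole at $\tau=\tfrac12$, so the horizontal analysis must be arranged so as not to introduce an additional such pole, yielding a constant that is linear rather than quadratic in $1/(\tfrac12-\tau)$.
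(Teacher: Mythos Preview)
Your contour is not the one the paper uses, and the version you propose does not give the required error size. Shifting the vertical segment to $\Re(s)=-\lambda'$ with $0<\lambda'<1$ and bounding $\mathcal{V}_{n}$ by absolute value gives at best $|\mathcal{V}_{n}|\ll n^{\lambda'}$ (if $\lambda'>\tfrac12$) or $|\mathcal{V}_{n}|\ll n^{\lambda'}T^{1/2-\lambda'}$ (if $\lambda'<\tfrac12$); summing either against $d_{1-2\tau}(n)$ over $n\leq X=T/(2\pi)$ produces a term of order at least $T^{2-2\tau}$ (or $T\log T$ when $\tau=\tfrac12$), which swamps both main terms. Oscillatory estimates on $\mathcal{V}_{n}$ do not rescue this: the phase of $\chi(1-s)n^{-s}$ along any vertical line has its stationary point at $|t|=2\pi n$, and since $2\pi n\leq T$ that point lies \emph{inside} your range of integration, so one only recovers a stationary-phase contribution of size $O(1)$ per $n$, again far too large after summation. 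You have also misread Lemma~\ref{new_bound:1}: it bounds a \emph{horizontal} tail $\int_{-\infty-iU}^{1/2-iU}$, and has nothing to do with a shift to $\Re(s)=-\lambda'$.

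The paper's contour for $I_{n}$ (Lemma~\ref{In:estimation}) is designed precisely to avoid the stationary point. From $1-\tau\pm iT$ one first moves horizontally to $\tfrac12\pm iT$ (this is $C_{n}$), then \emph{away} from the real axis along $\Re(s)=\tfrac12$ to height $U=2e^{2}T$ (this is $B_{n}$), and finally out to $-\infty$ horizontally at height $U$ (this is $A_{n}$, controlled by Lemma~\ref{new_bound:1}). Because $B_{n}$ lives on $|t|\in[T,U]$ while $2\pi n\leq T$, the phase $t\mapsto t\log(t/2\pi n)-t$ has no stationary point there, and one integration by parts yields $|B_{n}|\ll (\sqrt{n}\,\log(T/2\pi n))^{-1}$ for $n\leq (T-\sqrt{T})/(2\pi)$. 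Summing this against $d_{1-2\tau}(n)$ is then handled by Lemma~\ref{le:sumdivlog}. A second essential ingredient you omit is the split of the $n$-sum at $(T-\sqrt{T})/(2\pi)$ (Remark~\ref{crucial}): near $n\approx X$ the factor $1/\log(T/2\pi n)$ blows up, and one must use the cruder bound $|B_{n}|\ll\sqrt{T/n}$ on the short range $(T-\sqrt{T})/(2\pi)<n\leq T/(2\pi)$ instead. Without both the ``outside-$[-T,T]$'' vertical segment and this dyadic split near the endpoint, the argument does not close.
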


The reader should remark that the error term in Proposition~\ref{pr:I} introduces a saving of a factor of $\log(T)$ with respect to the corresponding estimation presented in \cite{Atk39} when $\tau=\frac{1}{2}$.

As for the integral $J$ in \eqref{J}, we may write it as 
\begin{align}\label{definition:J1,J2} 
\underbrace{\frac{1}{2i}\left(\int_{1-\tau-iT}^{2-2\tau+\lambda-iT}\!+\!\int_{2-2\tau+\lambda+iT}^{1-\tau+iT}\right)\chi(1-s)\left(\zeta(s)\zeta(2\tau-1+s)-\sum_{n\leq X}\frac{d_{1-2\tau}(n)}{n^s}\right)ds}_{J_1+J_2}\, +\, K,
\end{align}
where $J_{1}$ and $J_{2}$ are the integrals in the intervals $\left[1-\tau-iT,2-2\tau+\lambda-iT\right]$ and $\left[2-2\tau+\lambda+iT,1-\tau+iT\right]$ respectively, and where
\begin{align*}
K&=\frac{1}{2i}\int_{2-2\tau+\lambda-iT}^{2-2\tau+\lambda+iT}\chi(1-s)\left(\sum_{n>X}\frac{d_{1-2\tau}(n)}{n^s}\right)ds=\sum_{n>X}d_{1-2\tau}(n)\,K_n,\\
K_{n}&=\frac{1}{2i}\int_{2-2\tau+\lambda-iT}^{2-2\tau+\lambda+iT}\frac{\chi(1-s)}{n^s}ds.
\end{align*}
The expression for $K$ has been derived with the help of identity \eqref{id:2} in the first equality, and the dominated convergence theorem in the second. This passage is valid since $s\mapsto\chi(1-s)$ is continuous in the compact set $[2-2\tau+\lambda-iT,2-2\tau+\lambda+iT]\subset\mathbb{C}$, and since
\begin{equation*}
\left|\sum_{n>X}\frac{d_{1-2\tau}(n)}{n^s}\right|\leq\zeta(1+\lambda)\zeta(2-2\tau+\lambda)
\end{equation*}
in the same set.

With the definitions above, it will become clear in \S\ref{sec:J} why we will end up selecting $\lambda$ of order $\frac{1}{\log(T)}$. With that choice, $J_1$, $J_2$ and $K$ are estimated as follows.

\footnotesize
\begin{mysage}
###
# Value of the constant inside lambda.
c_la = round( 1.501 ,ndigits=digits)
###
# For checks about the following constants, see at the end of the file.
J_a = round( 0.47 ,ndigits=digits)
J_b = round( 2.825 ,ndigits=digits)
J4_2q = round( 0.173 ,ndigits=digits)
J4_c = round( 6.76 ,ndigits=digits)
J4_cp = round( 0.1 ,ndigits=digits)
\end{mysage}
\normalsize

\begin{proposition}\label{pr:J} 
Assume that $T\geq T_{0}=\sage{T0}$, $X=\frac{T}{2\pi}$ and $\lambda=\frac{\sage{c_la}}{\log(T)}$. If $\tau=\frac{1}{2}$, then
\begin{equation*}
|J_2|\leq\sage{J_a}\,\sqrt{T}\log^{2}(T)+\sage{J_b}\,\sqrt{T}\log(T),
\end{equation*}
whereas, if $\frac{1}{4}\leq\tau<\frac{1}{2}$, then
\begin{align*}
|J_2|\leq\left(\frac{\sage{J4_2q}}{\left(\frac{1}{2}-\tau\right)^{2}}+\sage{J4_c}\right)T^{\frac{3}{2}-2\tau}\log(T)+\sage{J4_cp}\,T^{1-\tau}\log^{2}(T).
\end{align*}
On replacing $T$ by $-T>0$, the same bound may be derived for $J_1$.
\end{proposition}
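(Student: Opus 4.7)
Parameterize the horizontal segment at height $T$ by $s=\sigma+iT$ with $\sigma\in[1-\tau, 2-2\tau+\lambda]$, so that by the triangle inequality
\[
|J_2| \leq \tfrac{1}{2}\int_{1-\tau}^{2-2\tau+\lambda}|\chi(1-s)|\Bigl(|\zeta(s)\zeta(2\tau-1+s)|+\sum_{n\leq X}\frac{d_{1-2\tau}(n)}{n^{\sigma}}\Bigr)d\sigma.
\]
The first ingredient is a uniform bound for $|\chi(1-s)|$: writing $\chi(1-s)=2(2\pi)^{-s}\cos(\pi s/2)\Gamma(s)$ and combining Proposition~\ref{cosine} for the cosine factor with the refined Stirling estimate \eqref{StirlingGam} for $\Gamma(s)$, we obtain $|\chi(1-s)|\leq (T/(2\pi))^{\sigma-1/2}(1+\varepsilon_T)$ for an explicit $\varepsilon_T=O(T^{-1})$, valid uniformly in $\sigma$ throughout the integration range.

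For the first summand of the integrand, apply the convexity bound of Corollary~\ref{convexity} to each of $\zeta(s)$ and $\zeta(2\tau-1+s)$, distinguishing cases according to whether the real parts $\sigma$ and $2\tau-1+\sigma$ lie in $[0,1]$ or exceed $1$. In the region $\sigma\in[1-\tau, 1]$ both zeta factors sit in the critical strip, and the exponents of $T/(2\pi)$ contributed by $|\chi(1-s)|$, $|\zeta(s)|$, and $|\zeta(2\tau-1+s)|$ sum to the $\sigma$-independent value $1-\tau$; this gives a pointwise bound of order $\omega^{2}(T/(2\pi))^{1-\tau}\log^{2}T$ and hence an $O(T^{1-\tau}\log^{2}T)$ contribution after integration. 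In the region $\sigma\in(1, 2-2\tau]$ one zeta is bounded by $\log T$ and the exponent of $T/(2\pi)$ varies linearly from $1-\tau$ to $3/2-2\tau$; explicit integration of $X^{\sigma/2+1/2-\tau}d\sigma$ produces an extra $(\log T)^{-1}$, giving a $T^{3/2-2\tau}\log T$ contribution. The short band $\sigma\in(2-2\tau, 2-2\tau+\lambda]$ of length $\lambda=1.501/\log T$ adds a further $T^{3/2-2\tau}\log T$ contribution.

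For the Dirichlet-polynomial summand, use $|\sum d_{1-2\tau}(n)/n^{s}|\leq\sum d_{1-2\tau}(n)/n^{\sigma}$ and decompose
\[
\sum_{n\leq X}\frac{d_{1-2\tau}(n)}{n^{\sigma}}=\sum_{d\leq X}d^{1-2\tau-\sigma}\sum_{m\leq X/d}m^{-\sigma},
\]
then apply Lemma~\ref{le:sums} to the inner sum (splitting according to $\sigma<1$, $\sigma=1$, $\sigma>1$) and a second time to the outer sum in $d$. The dominant contribution is $X^{1-\sigma}/(1-\sigma)\cdot\sum_{d\leq X}d^{-2\tau}$, which (up to lower-order pieces) equals $X^{2-2\tau-\sigma}/((1-\sigma)(1-2\tau))$ when $\tau<1/2$, and $X^{1-\sigma}\log X/(1-\sigma)$ when $\tau=1/2$. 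Multiplying by $|\chi(1-s)|$ and using $X=T/(2\pi)$, the exponent of $T/(2\pi)$ collapses to the $\sigma$-independent value $3/2-2\tau$ (resp.\ $1/2$), so the $\sigma$-integration produces the main $T^{3/2-2\tau}\log T$ (resp.\ $\sqrt{T}\log T$) contribution; the factor $(1/2-\tau)^{-2}$ for $\tau<1/2$ arises from $(1-2\tau)^{-1}$ combined with the integral of $(1-\sigma)^{-1}$-type terms.

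The principal difficulty is the bookkeeping around the three transition points $\sigma=1$, $\sigma=2-2\tau$, $\sigma=2-2\tau+\lambda$: both Corollary~\ref{convexity} and Lemma~\ref{le:sums} change form at $\sigma=1$, and $\sigma=2-2\tau$ is the abscissa of absolute convergence of $\sum d_{1-2\tau}(n)/n^{\sigma}$. The apparent singularity of the sum bound at $\sigma=1$ is spurious---the diverging pieces in $(X/d)^{1-\sigma}/(1-\sigma)$ and $\zeta(\sigma)$ cancel as $\sigma\to 1$, consistent with \eqref{le:sums=-1}---and is handled by subdividing the $\sigma$-range and applying uniform estimates on a neighborhood of $\sigma=1$. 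The choice $\lambda=1.501/\log T$ ensures $X^{\lambda}=e^{O(1)}$ so that the auxiliary band $(2-2\tau, 2-2\tau+\lambda]$ contributes only at the claimed order after its short length absorbs one $\log T$. Finally, the estimate for $J_1$ follows by an identical argument applied at height $-T$, since all moduli involved are invariant under $T\mapsto -T$ via $f(\bar s)=\overline{f(s)}$ for $f\in\{\chi,\zeta\}$ and the partial sum behaves in the same way.
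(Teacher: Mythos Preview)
Your treatment of $|\chi(1-s)|$ and of the product $\zeta(s)\zeta(2\tau-1+s)$ matches the paper: the same Stirling--cosine bound gives $|\chi(1-s)|\le\kappa\,(T/2\pi)^{\sigma-1/2}$, and the same three-region split $[1-\tau,1]\cup[1,2-2\tau]\cup[2-2\tau,2-2\tau+\lambda]$ with Corollary~\ref{convexity} yields the $T^{1-\tau}\log^{2}T$ and $T^{3/2-2\tau}\log T$ contributions (this is the paper's $L_{1}$).

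For the Dirichlet-polynomial piece (the paper's $L_{2}$), the paper takes a different and cleaner route. Instead of bounding $\sum_{n\le X}d_{1-2\tau}(n)/n^{\sigma}$ pointwise in $\sigma$ and then integrating, it swaps the order: after inserting $|\chi(1-s)|\le\kappa\,(T/2\pi)^{\sigma-1/2}$ one integrates $(T/(2\pi n))^{\sigma}$ over $\sigma\in[1-\tau,2-2\tau+\lambda]$ \emph{first}, obtaining $\bigl((T/(2\pi n))^{1-\tau+\lambda}-1\bigr)/\log(T/(2\pi n))$, and then applies the elementary inequality $(t-1)/\log t\le t$ of Lemma~\ref{logg}\eqref{logg1}. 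This collapses $L_{2}$ to a constant times $(T/2\pi)^{3/2-2\tau}\sum_{n\le X}d_{1-2\tau}(n)/n^{2-2\tau}$, a single sum at the \emph{fixed} exponent $2-2\tau$, which is dispatched by Proposition~\ref{pr:sumdivwei_B} (prepared exactly for this purpose). The factor $(1/2-\tau)^{-2}$ in the statement comes from the constant $D_{\tau}$ of that proposition, not from any $\sigma$-integration.

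Your route has a genuine gap as written. You isolate the term $X^{2-2\tau-\sigma}/((1-\sigma)(1-2\tau))$, multiply by $X^{\sigma-1/2}$ to obtain $X^{3/2-2\tau}/((1-\sigma)(1-2\tau))$, and then propose to integrate in $\sigma$ over an interval that contains $\sigma=1$; that integral diverges. You are right that the finite sum itself is continuous in $\sigma$, but the cancellation is not between your main term and ``$\zeta(\sigma)$'' alone: one must also bring in the term $\zeta(\sigma)\,X^{2-2\tau-\sigma}/(2-2\tau-\sigma)$ coming from the outer $d$-sum, and after that cancellation the combined expression acquires a new pole at $\sigma=2-2\tau$, which in turn must be cancelled against further $\zeta(\sigma+2\tau-1)$-type pieces. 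None of this is carried out, and your stated mechanism for the $(1/2-\tau)^{-2}$ (``integral of $(1-\sigma)^{-1}$-type terms'') cannot be correct. The paper's order-swap avoids all of these artificial singularities entirely.
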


\footnotesize
\begin{mysage}
###
# For checks about the following constants, see at the end of the file.
K_a = round( 3.097 ,ndigits=digits)
K_b = round( 40.116 ,ndigits=digits)
K4_2 = round( 0.4 ,ndigits=digits)
K4_c = round( 20.072 ,ndigits=digits)
\end{mysage}
\normalsize

\begin{proposition}\label{pr:K}
Assume that $T\geq T_{0}=\sage{T0}$, $X=\frac{T}{2\pi}$ and $\lambda=\frac{\sage{c_la}}{\log(T)}$. If $\tau=\frac{1}{2}$, then
\begin{equation*} 
|K|\leq\sage{K_a}\,\sqrt{T}\log^{2}(T)+\sage{K_b}\,\sqrt{T}\log(T),
\end{equation*}
whereas, if $\frac{1}{4}\leq\tau<\frac{1}{2}$, then
\begin{equation*} 
|K|\leq\left(\frac{\sage{K4_2}}{\frac{1}{2}-\tau}+\sage{K4_c}\right)T^{\frac{3}{2}-2\tau}\log^{2}(T).
\end{equation*}
\end{proposition}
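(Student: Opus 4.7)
The plan is to bound each individual $K_{n}$ via a contour shift to the right, and then to estimate the resulting series $\sum_{n>X}d_{1-2\tau}(n)|K_{n}|$ by partial summation on a dyadic split of the range at $n=2X$. The crucial structural input is that $\chi(1-s)n^{-s}$ is holomorphic in $\Re(s)>1$ and that the factor $n^{-\sigma}$ decays faster than $|\chi(1-s)|$ grows as $\sigma\to+\infty$ whenever $n>X=T/(2\pi)$, so the line of integration defining $K_n$ can be pushed arbitrarily far to the right without crossing any singularity.

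First, I would combine Theorem~\ref{Stirling} applied to $\Gamma(s)$ with Proposition~\ref{cosine} and the identity $\chi(1-s)=2(2\pi)^{-s}\cos(\pi s/2)\Gamma(s)$ in order to obtain an explicit bound of the shape $|\chi(1-s)|\leq C(|t|/(2\pi))^{\sigma-1/2}$, valid uniformly for $|t|\geq T_{0}$ and $\sigma\geq 2-2\tau+\lambda$, with $C$ close to $1$ for $T_{0}=100$. Applying Cauchy's theorem to the rectangle with vertices $2-2\tau+\lambda\pm iT$ and $M\pm iT$, and letting $M\to+\infty$, annihilates the right vertical side and yields
\begin{equation*}
K_{n}=\frac{1}{2i}\left(\int_{2-2\tau+\lambda-iT}^{+\infty-iT}-\int_{2-2\tau+\lambda+iT}^{+\infty+iT}\right)\chi(1-s)n^{-s}\,ds.
\end{equation*}
Bounding each horizontal integral by the $\chi$-estimate above and computing $\int_{2-2\tau+\lambda}^{+\infty}(T/(2\pi n))^{\sigma}\,d\sigma$ explicitly then produces
\begin{equation*}
|K_{n}|\leq C\left(\frac{T}{2\pi}\right)^{\frac{3}{2}-2\tau+\lambda}\frac{n^{-(2-2\tau+\lambda)}}{\log(n/X)}.
\end{equation*}

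Second, I would split the resulting estimate for $|K|$ at $n=2X$. For $n>2X$ the factor $1/\log(n/X)\leq 1/\log 2$ is uniformly bounded, and partial summation based on Lemma~\ref{le:sums} applied to $\sum_{n\leq y}d_{1-2\tau}(n)$, whose leading term $\zeta(2-2\tau)y^{2-2\tau}/(2-2\tau)$ carries the singularity $\zeta(2-2\tau)\sim 1/(1-2\tau)$ as $\tau\to\frac{1}{2}^{-}$, bounds the tail by a quantity of order $\log(T)/(1-2\tau)$. For $n\in(X,2X]$ the elementary inequality $\log(n/X)\geq(n-X)/(2X)$ reduces the local sum to $2X\sum_{X<n\leq 2X}d_{1-2\tau}(n)/(n^{2-2\tau+\lambda}(n-X))$, again treated by partial summation; the additional integration $\int dy/(y-X)$ across the dyadic range $(X,2X]$ contributes an extra factor of $\log T$ and produces the $\log^{2}T$ of the stated bound. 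Finally, writing $(T/(2\pi))^{\frac{3}{2}-2\tau+\lambda}=(T/(2\pi))^{\frac{3}{2}-2\tau}\,e^{\lambda\log(T/(2\pi))}$ and using $\lambda\log(T/(2\pi))\leq 1.501$ absorbs the $\lambda$ in the exponent into a bounded multiplicative constant. The case $\tau=\frac{1}{2}$ follows the same steps with $d_{0}=d$: the local contribution produces the $\sqrt{T}\log^{2}(T)$ main term, while the tail produces the $\sqrt{T}\log(T)$ correction.

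The main obstacle is the near-singularity of the $K_{n}$-bound as $n\to X^{+}$: the single term with $n=\lceil X\rceil$ can yield an arbitrarily large value of $1/\log(n/X)$ depending on the fractional part of $X$, and one must exploit the vanishing of the incomplete partial sum $\sum_{X<n\leq x}d_{1-2\tau}(n)$ to first order at $x=X^{+}$ in order to absorb this into the $\log^{2}T$ bound. A purely computational difficulty lies in keeping explicit track of all the constants coming from Stirling, Proposition~\ref{cosine}, and the divisor-sum asymptotics, and in correctly absorbing the subleading $T^{\frac{3}{2}-2\tau}\log(T)$ terms into $T^{\frac{3}{2}-2\tau}\log^{2}(T)$ with the announced constants $K_{a},K_{b}$ (for $\tau=\frac{1}{2}$) or the $1/(\frac{1}{2}-\tau)$-packaged constants (for $\tau\in[\frac{1}{4},\frac{1}{2})$).
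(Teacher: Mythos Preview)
Your contour-shift bound
\[
|K_{n}|\le C\left(\frac{T}{2\pi}\right)^{\frac{3}{2}-2\tau+\lambda}\frac{n^{-(2-2\tau+\lambda)}}{\log(n/X)}
\]
is correct, but it is the \emph{only} bound you have, and it is not strong enough near $n=X^{+}$. If $X=T/(2\pi)$ happens to lie just below an integer $m$, the single term $n=m$ contributes $d_{1-2\tau}(m)/\log(m/X)$, which is of order $1/(m-X)$ and can be arbitrarily large. Your proposed remedy, ``exploit the vanishing of the incomplete partial sum $\sum_{X<n\le x}d_{1-2\tau}(n)$ to first order at $x=X^{+}$,'' does not work: that partial sum is a step function which jumps by $d_{1-2\tau}(m)\ge 1$ at $x=m$, so in the Abel-summation integral $\int A(t)\,(t-X)^{-2}\,dt$ the integrand is bounded below by a positive constant times $(t-X)^{-2}$ on $[m,m+1]$, producing a contribution of order $1/(m-X)$. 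No amount of partial summation recovers a uniform bound from a pointwise estimate that genuinely diverges.

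The paper handles this by \emph{not} shifting the contour. It keeps the vertical line $\Re(s)=2-2\tau+\lambda$ and writes $K_{n}=X_{n}+Y_{n}-\overline{X_{n}}-\overline{Y_{n}}$ with $X_{n}$ an oscillatory integral $\int_{1}^{T}t^{3/2-2\tau+\lambda}e^{if_{\lambda,\tau}(t)}\,dt$ (up to harmless factors). A van-der-Corput-type integration by parts against the phase $f_{\lambda,\tau}'(t)=\log(2\pi n/t)$ then gives two bounds: one of shape $T^{3/2-2\tau}/\log(2\pi n/T)$ (your bound, essentially) \emph{and} a second one of shape $T^{2-2\tau}$ with no logarithmic denominator, obtained by splitting the $t$-integral at $T-\sqrt{T}$ and bounding the last piece trivially. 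This second, uniform bound is what covers the dangerous range $\frac{T}{2\pi}<n\le\frac{T+\sqrt{T}}{2\pi}$; the sum over $n$ is then split in three pieces at $\frac{T+\sqrt{T}}{2\pi}$ and $\frac{T}{\pi}$ rather than dyadically at $2X$. Your horizontal-integral approach cannot produce such a uniform bound, because on a horizontal line the integrand has no oscillation to exploit: it is a positive decreasing function of $\sigma$ whose integral is exactly $1/\log(n/X)$ times the endpoint value.
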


The following sections consist of the proof of Propositions~\ref{pr:I}, \ref{pr:J} and \ref{pr:K}: we will analyze $I$ in \S\ref{sec:I}, $J_{1}$ and $J_{2}$ in \S\ref{sec:J}, and $K$ in \S\ref{sec:K}. Consequently, by combining them, we derive our main result, which reads as follows.

\footnotesize
\begin{mysage}
###
# Putting together the bounds above: I+J1+J2+K+pi/2.
# Note that the constants in I,K feature 1/(1/2-tau),
# while the constant in J features 1/(1/2-tau)^2:
# thus, we can absorb them all into 1/(1/2-tau)^2,
# multiplying by the appropriate max(1/2-tau).
Main_a = 2*J_a+K_a
Main_b = RIF( I_a+2*J_b+K_b+pi_int/2/(sqrt_int(T0)*log_int(T0)) )
modifier = 1/2-1/4
Main4_2q = RIF( I4_2*modifier/log_int(T0)+2*J4_2q/log_int(T0) \
                +K4_2*modifier )
Main4_c = RIF( I4_c/log_int(T0)+2*J4_c/log_int(T0)+2*J4_cp+K4_c \
               +pi_int/2/(sqrt_int(T0)*log_int(T0)^2) )
Main_b = roundup( Main_b , digits )
Main4_2q = roundup( Main4_2q , digits )
Main4_c = roundup( Main4_c , digits )
\end{mysage}
\normalsize

\begin{theorem}\label{th:main}
Assume that $T\geq T_{0}=\sage{T0}$. Then
\begin{align*}
\int_{0}^{T}\left|\zeta\left(\frac{1}{2}+it\right)\right|^{2}dt= & \ T\log(T)+(2\gamma-1-\log(2\pi))T \\
 & \ +O^{*}(\sage{Main_a}\,\sqrt{T}\log^{2}(T)+\sage{Main_b}\,\sqrt{T}\log(T)),
\end{align*}
and if $\frac{1}{4}\leq\tau<\frac{1}{2}$, then
\begin{align*}
\int_{0}^{T}\left|\zeta\left(\tau+it\right)\right|^{2}dt= & \ \frac{\zeta(2-2\tau)}{(2-2\tau)(2\pi)^{1-2\tau}}T^{2-2\tau} +\zeta(2\tau)T\\
 & \ +O^{*}\left(\left(\frac{\sage{Main4_2q}}{\left(\frac{1}{2}-\tau\right)^{2}}+\sage{Main4_c}\right)T^{\frac{3}{2}-2\tau}\log^{2}(T)\right).
\end{align*}
\end{theorem}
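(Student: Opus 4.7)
The plan is to assemble the theorem directly from the decomposition \eqref{int:1}--\eqref{definition:J1,J2} and the three propositions just stated. Combining \eqref{int:1}, \eqref{id:1}, and the splitting \eqref{definition:J1,J2}, we obtain the clean identity
$$\int_0^T |\zeta(\tau+it)|^2\,dt = -\pi\zeta(2\tau-1) + I + J_1 + J_2 + K.$$
With the choices $X = T/(2\pi)$ and $\lambda = 1.501/\log(T)$ required by Propositions~\ref{pr:I}, \ref{pr:J}, \ref{pr:K}, the two main terms of the theorem come entirely from the estimate of $I$, and what remains is to fold the sum $J_1 + J_2 + K$ together with the residue $-\pi\zeta(2\tau-1)$ into a single error of the advertised shape. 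The rest of the argument is purely bookkeeping.

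In the case $\tau = \frac{1}{2}$, applying the three propositions yields an error that is a sum of $\sqrt{T}\log^{2}(T)$, $\sqrt{T}\log(T)$, and constant contributions; the constant $-\pi\zeta(0) = \pi/2$ is absorbed into the $\sqrt{T}\log(T)$ coefficient at the cost of $(\pi/2)/(\sqrt{T_{0}}\log(T_{0}))$, legitimate for $T \geq T_{0} = 100$. This yields the first statement with the coefficients of $\sqrt{T}\log^{2}(T)$ and $\sqrt{T}\log(T)$ read off from $2J_{1}+K$ and $I+2J_{2}+K$ respectively.

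For $\tau \in [\frac{1}{4},\frac{1}{2})$, the delicate point is reconciling the different denominators appearing in the three error estimates: Propositions~\ref{pr:I} and \ref{pr:K} carry $\tfrac{1}{1/2-\tau}$, while Proposition~\ref{pr:J} carries $\tfrac{1}{(1/2-\tau)^{2}}$. To unify them, we use $\tfrac{1}{1/2-\tau} \leq \tfrac{1/4}{(1/2-\tau)^{2}}$, valid because $\tfrac{1}{2} - \tau \leq \tfrac{1}{4}$ on the whole range. The pieces of $I$ and $J$ that carry $T^{3/2-2\tau}\log(T)$ rather than $T^{3/2-2\tau}\log^{2}(T)$ are pushed up by dividing by $\log(T_{0})$. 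The $T^{1-\tau}\log^{2}(T)$ contribution from $J$ fits directly, since $T^{1-\tau} \leq T^{3/2-2\tau}$ whenever $\tau \leq \tfrac{1}{2}$. Finally, $|\pi\zeta(2\tau-1)| \leq \pi/2$ on $[\tfrac{1}{4},\tfrac{1}{2})$ and is dominated by $T^{3/2-2\tau}\log^{2}(T)$ at the rate $(\pi/2)/(\sqrt{T_{0}}\log^{2}(T_{0}))$, a negligible penalty.

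The only substantive step is this combinatorial exercise of aligning error denominators and powers of $\log(T)$; no additional analytic input is required beyond Propositions~\ref{pr:I}, \ref{pr:J}, \ref{pr:K}. The explicit numerical constants displayed in the statement are obtained by performing these rearrangements in interval arithmetic, as encoded in the accompanying SageTeX block, with the factor $\max(1/2-\tau)=1/4$ acting as the modifier that upgrades the $1/(1/2-\tau)$ bounds to the unified $1/(1/2-\tau)^{2}$ form.
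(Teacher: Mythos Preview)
Your proposal is correct and matches the paper's approach essentially line for line: the paper does not write out a separate proof of Theorem~\ref{th:main} but simply combines Propositions~\ref{pr:I}, \ref{pr:J}, \ref{pr:K} with the identity \eqref{int:1}, and the accompanying Sage block performs exactly the bookkeeping you describe (the modifier $\max(1/2-\tau)=1/4$ to pass from $1/(1/2-\tau)$ to $1/(1/2-\tau)^{2}$, division by $\log(T_{0})$ to upgrade $\log(T)$ to $\log^{2}(T)$, and absorption of the residue bound $\pi/2$). Your slight notational slip in the $\tau=\tfrac12$ case, where you write ``$2J_{1}+K$'' and ``$I+2J_{2}+K$'' for the two coefficients, should read: the $\sqrt{T}\log^{2}(T)$ coefficient collects the leading constants from $J_{1},J_{2},K$ (namely $2J_{a}+K_{a}$), while the $\sqrt{T}\log(T)$ coefficient collects those from $I,J_{1},J_{2},K$ plus the residue (namely $I_{a}+2J_{b}+K_{b}+\tfrac{\pi}{2}/(\sqrt{T_{0}}\log T_{0})$).
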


By using the functional equation of $\zeta$, we can derive a bound in the other half of the critical strip.

\footnotesize
\begin{mysage}
###
# Function for the constant V'.
def Fu_Vp(simin,simax,ta):
    simin = RIF( simin )
    simax = RIF( simax )
    ta = RIF( ta )
    result = (1/2-simin)*(simax^2/2+simax^4/(4*ta))+simax/12+simax^3/3+1/(90*ta)
    return RIF(result)
###
# Function for the constant W. Given the precision problem for T very large,
# we use here the precise version of the exponential: since we encounter
# something of the form e^(1/T0^2)-1, we use for safety 3*log_2(T0) bits of precision.
def Fu_W(simin,simax,ta):
    simin = RIF( simin )
    simax = RIF( simax )
    ta = RIF( ta )
    di = ceil(3*log_int(ta)/log_int(2))
    result = ta^2*(exp_int_precise(2*Fu_Vp(simin,simax,ta)/ta^2,di)-1)
    return RIF(result)
###
# Function for the constant Z.
def Fu_Z(simin,simax,ta):
    simin = RIF( simin )
    simax = RIF( simax )
    ta = RIF( ta )
    factor = Fu_W(simin,simax,ta)*(1+1/exp_int(pi_int*ta))+ta^2/exp_int(pi_int*ta)
    result = 2*factor+1/ta^2*factor^2
    return RIF(result)
###
# Function to compute an upper bound for the constant i.
# In the following, we assert that iint_const is the output value
# for i_upper_bound(1/2,3/4,100,1/400). For a verification,
# change the value of check_i to True at the beginning of the paper.
# Warning: if one proceeds with the verification with these inputs,
# the computation of this value takes about 5 minutes!
def Fu_zetasq(s):
    s = CBF(s)
    zetasq = ((s.zeta())^2).abs()
    return RIF(CBF(zetasq).real())
def i_upper_bound(simin,simax,ta,prec):
    if prec>=1/40:
        return 'Error'
    if check_i==False:
        if simin>=1/2:
            if simax<=3/4:
                if ta<=100:
                    if prec>=1/400:
                        return 319.387276810602
    simin = RIF( simin )
    simax = RIF( simax )
    ta = RIF( ta )
    Sgen = RIF( 0 )
    si = simin
    while si<=simax:
        S = RIF( 0 )
        point = CBF( si )
        while RIF(point.imag())<=ta:
            sphere = CBF(point).add_error(prec)
            maxim = Fu_zetasq(sphere)
            S = S+maxim*2*prec
            point = point+prec*i
        if S.upper()>Sgen.upper():
            Sgen = S
        si = si+prec
    return Sgen.upper()
###
# Constants V',Z,i in the proof below.
Vp_const = roundup( Fu_Vp(1/4,1/2,T0) , digits )
Z_const = roundup( Fu_Z(1/4,1/2,T0) , digits )
iint_const = roundup( i_upper_bound(1/2,3/4,T0,1/400) , digits )
###
# Coefficients per order of the whole result.
def Fu_Cor4__12lq_2q(ta):
    ta = RIF( ta )
    result = exp_int(log_int(2*pi_int)*(1-2*1/4))*Main4_2q*4*(1-1/4)
    return RIF(result)
def Fu_Cor4__12lq_c(ta):
    ta = RIF( ta )
    result = exp_int(log_int(2*pi_int)*(1-2*1/4))*Main4_c*4*(1-1/4)
    return RIF(result)
def Fu_Cor4__lq_2q(ta):
    ta = RIF( ta )
    result = exp_int(log_int(2*pi_int)*(1-2*1/4))*Main4_2q \
             *2/3*(3-2*1/4)*Fu_Z(1/4,1/2,ta)*exp_int(log_int(ta)*(-3/2))
    return RIF(result)
def Fu_Cor4__lq_c(ta):
    ta = RIF( ta )
    result = exp_int(log_int(2*pi_int)*(1-2*1/4))*Main4_c \
             *2/3*(3-2*1/4)*Fu_Z(1/4,1/2,ta)*exp_int(log_int(ta)*(-3/2))
    return RIF(result)
def Fu_Cor4__0_2q(ta):
    ta = RIF( ta )
    result = 1/8*ta*(1+Fu_Z(1/4,1/2,ta)/ta^2)
    return RIF(result)
def Fu_Cor4__0_c(ta):
    ta = RIF( ta )
    result = i_upper_bound(1/2,3/4,ta,1/400)+ta*(1+Fu_Z(1/4,1/2,ta)/ta^2)
    return RIF(result)
def Fu_Cor4__m32lq_2q(ta):
    ta = RIF( ta )
    result = exp_int(log_int(2*pi_int)*(1-2*1/4))*Main4_2q*2*Fu_Z(1/4,1/2,ta)
    return RIF(result)
def Fu_Cor4__m32lq_c(ta):
    ta = RIF( ta )
    result = exp_int(log_int(2*pi_int)*(1-2*1/4))*Main4_c*2*Fu_Z(1/4,1/2,ta)
    return RIF(result)
###
# Rounding coefficients to the order T^(1/2)*log(T)^2.
def Fu_Cor4__final_2q(ta):
    ta = RIF( ta )
    x_12lq = Fu_Cor4__12lq_2q(ta)
    x_lq = Fu_Cor4__lq_2q(ta)
    x_0 = Fu_Cor4__0_2q(ta)
    x_m32lq = Fu_Cor4__m32lq_2q(ta)
    x = x_12lq + x_lq/sqrt_int(ta) + x_0/(sqrt_int(ta)*log_int(ta)^2) + x_m32lq/ta^2
    return RIF(x)
def Fu_Cor4__final_c(ta):
    ta = RIF( ta )
    x_12lq = Fu_Cor4__12lq_c(ta)
    x_lq = Fu_Cor4__lq_c(ta)
    x_0 = Fu_Cor4__0_c(ta)
    x_m32lq = Fu_Cor4__m32lq_c(ta)
    x = x_12lq + x_lq/sqrt_int(ta) + x_0/(sqrt_int(ta)*log_int(ta)^2) + x_m32lq/ta^2
    return RIF(x)
###
# Actual values for our choice of T0.
Cor4__final_2q = roundup( Fu_Cor4__final_2q(T0) , digits )
Cor4__final_c = roundup( Fu_Cor4__final_c(T0) , digits )
\end{mysage}
\normalsize

\begin{corollary}\label{co:main}
Assume that $T\geq T_{0}=\sage{T0}$. Then if $\frac{1}{2}< \tau \leq \frac{3}{4}$, then
\begin{align*}
\int_{0}^{T}\left|\zeta\left(\tau+it\right)\right|^{2}dt=&\ \zeta(2\tau)T+\frac{(2\pi)^{2\tau-1}\zeta(2-2\tau)}{2-2\tau}T^{2-2\tau} \\
 & \ +O^{*}\left(\left(\frac{\sage{Cor4__final_2q}}{\left(\tau-\frac{1}{2}\right)^{2}}+\sage{Cor4__final_c}\right)\sqrt{T}\log^{2}(T)\right).
\end{align*}
\end{corollary}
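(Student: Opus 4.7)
The plan is to reduce the claim to the range handled by Theorem~\ref{th:main} via the functional equation \eqref{functional}. Writing $\zeta(s) = \chi(s)\zeta(1-s)$ with $\chi(s) = 2(2\pi)^{s-1}\sin(\pi s/2)\Gamma(1-s)$, one has $|\zeta(\tau+it)|^2 = |\chi(\tau+it)|^2|\zeta(1-\tau+it)|^2$ for every $\tau\in\left(\frac{1}{2},\frac{3}{4}\right]$, so that $1-\tau\in\left[\frac{1}{4},\frac{1}{2}\right)$ falls into the range of Theorem~\ref{th:main}. I would split $\int_0^T = \int_0^{T_0} + \int_{T_0}^T$, the initial segment being bounded by an explicit constant computed via rigorous interval arithmetic over the rectangle $\left[\frac{1}{2},\frac{3}{4}\right]+i[0,T_0]$ (as in the Sage code of the paper), which is absorbed into the final error.

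The key preparatory step is the sharp asymptotic
$$|\chi(\tau+it)|^2 = \left(\frac{t}{2\pi}\right)^{1-2\tau}\left(1 + O^*\left(\frac{Z}{t^2}\right)\right)\qquad (t \geq T_0)$$
for an explicit constant $Z$. This follows by inserting parts (B1)--(B3) of Theorem~\ref{Stirling} for $\Gamma$ and Proposition~\ref{cosine} for the cosine factor into the identity $\chi(1-s) = 2(2\pi)^{-s}\cos(\pi s/2)\Gamma(s)$ coming from \eqref{functional}. The constants $V'$, $W$, $Z$ already present in the Sage code track precisely the $1/t^2$ correction coming from the refined expansion $\arctan(t/\sigma) = \pi/2 - \sigma/t + O(\sigma^3/t^3)$ inside Stirling, together with the exponentially small remainder $e^{-\pi t}$ from the cosine.

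Next, I would compute $\int_{T_0}^T g(t)|\zeta(1-\tau+it)|^2\,dt$ with $g(t) = (t/(2\pi))^{1-2\tau}$ by Stieltjes integration by parts. Setting $F(t) = \int_0^t |\zeta(1-\tau+iu)|^2\,du$ and invoking the estimate for $F$ from Theorem~\ref{th:main}, the identity
$$\int_{T_0}^T g(t)\,dF(t) = g(T)F(T) - g(T_0)F(T_0) - \int_{T_0}^T g'(t)F(t)\,dt$$
produces, after plugging in the two main terms of $F$, exactly $\zeta(2\tau)T$ (via the cancellation $\frac{\zeta(2\tau)}{2\tau}T-\frac{(1-2\tau)\zeta(2\tau)}{2\tau}T$) and $\frac{(2\pi)^{2\tau-1}\zeta(2-2\tau)}{2-2\tau}T^{2-2\tau}$ (via an analogous one). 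The remainder of $F$, of order $T^{2\tau-\frac{1}{2}}\log^2(T)/(\tau-\frac{1}{2})^2$, propagates to terms of order at most $T^{\frac{1}{2}}\log^2(T)$ in both $g(T)F(T)$ and $\int g'F$, which are dominated by the target error since $\tau>\frac{1}{2}$.

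Finally, the $\eta(t)$ factor in $|\chi|^2$ requires care: the naive bound $\bigl|\int g\eta|\zeta|^2\bigr|\leq (Z/T_0^2)\int g|\zeta|^2 \sim (Z/T_0^2)\zeta(2\tau)T$ would exceed the target when $\tau$ is close to $\frac{1}{2}$ and $T$ is very large. Instead I would integrate by parts once more, now against the decaying weight $g(t)/t^2 = (2\pi)^{2\tau-1}t^{-1-2\tau}$, which keeps both the boundary term and the resulting integral of order $\zeta(2\tau)/T_0$, safely inside the target. The main obstacle throughout will be the careful bookkeeping of the multiplicative constants $C_{2q}$ and $C_c$ arising from $V'$, $Z$, the interval-arithmetic bound on $\int_0^{T_0}|\zeta|^2$, and the constants in Theorem~\ref{th:main}, with the $1/(\tau-\frac{1}{2})^2$ dependence propagated correctly through every integration by parts.
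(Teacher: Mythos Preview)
Your proposal is correct and follows essentially the same route as the paper: functional equation to pass from $\tau$ to $\tau'=1-\tau\in[\tfrac14,\tfrac12)$, explicit Stirling (Theorem~\ref{Stirling}(B2)) plus Proposition~\ref{cosine} to obtain $|\chi(1-\tau'-it)|^2=(2\pi)^{1-2\tau'}t^{2\tau'-1}(1+O^*(Z/t^2))$, a numerical bound $\mathbf{i}$ on $\int_0^{T_0}|\zeta|^2$, and then integration by parts against Theorem~\ref{th:main}. Your separate treatment of the $Z/t^2$ correction via the weight $g(t)/t^2$ is exactly the paper's choice $\upsilon=2\tau'-3$, and the main-term cancellations you write out match the paper's computation of $\int t^{\upsilon}\mathbf{M}_{\tau'}'(t)\,dt$ for $\upsilon=2\tau'-1$.
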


\begin{proof}
By expressing $\tau=1-\tau'$, with $\frac{1}{4}\leq\tau'<\frac{1}{2}$, observing that $\overline{\zeta(s)}=\zeta(\overline{s})$ and recalling \eqref{functional}, we derive
\begin{align}\label{part1}
\int_{0}^{T}\left|\zeta\left(\tau+it\right)\right|^{2}dt=\int_{0}^{T}\left|\zeta\left(1-\tau'-it\right)\right|^{2}dt=\int_{0}^{T}\left|\chi(1-\tau'-it)\zeta(\tau'+it)\right|^2dt.
\end{align}
Note that $\tau'+i[T_{0},T]$ belongs to the angular sector defined by $|\arg(s)|<\frac{\pi}{2}$. We can then use Theorem \ref{Stirling}\eqref{StirlingGam} with $t>0$ and $\theta=\frac{\pi}{2}$ ($F_{\theta}=\frac{1}{90}$) and the estimation $\log(|s|)=\log(t)+\frac{\tau'^2}{2t^2}+O^*\left(\frac{\tau'^4}{4t^4}\right)$ to obtain that
\begin{align}\label{gamm}  
|\Gamma(\tau'+it)|&=\sqrt{2\pi}t^{\tau'-\frac{1}{2}}e^{-\frac{\pi}{2}t}e^{O^{*}\left(\frac{\mathrm{V}(\tau',T_{0})}{t^2}\right)},
\end{align}
where, by using that $\frac{1}{|s|^2}\leq\frac{1}{t^2}$, $\frac{1}{t^3}\leq\frac{1}{T_0 t^2}$ and that $\frac{1}{4}\leq\tau'<\frac{1}{2}$,
\begin{align*}
\mathrm{V}(\tau',T_{0})&=\left(\frac{1}{2}-\tau'\right)\left(\frac{\tau'^2}{2}+\frac{\tau'^4}{4T_{0}^2}\right)+\frac{\tau'}{12}+\frac{\tau'^3}{3}+\frac{1}{90T_{0}}\\
&\leq\left(\frac{1}{2}-\frac{1}{4}\right)\left(\frac{1}{8}+\frac{1}{64T_{0}^2}\right)+\frac{1}{24}+\frac{1}{24}+\frac{1}{90T_{0}}=:\mathrm{V}'(T_{0})=\sage{Vp_const}.
\end{align*}
Moreover, observe that 
\begin{align*}e^{O^{*}\left(\frac{2\mathrm{V}'(T_{0})}{t^2}\right)}=1+O^*\left(\frac{\mathrm{W}(T_{0})}{t^2}\right),\text{ where }\mathrm{W}(T_{0})=T_{0}^2(e^{\frac{2\mathrm{V}'(T_{0})}{T_{0}^{2}}}-1).\end{align*} 
Thus, from Proposition \ref{cosine} and \eqref{gamm}, we have
\begin{align*}
|\chi(1-\tau'-it)|^2&=(2\pi)^{1-2\tau'}t^{2\tau'-1}\left(1+O^*\left(\frac{1}{e^{\pi t}}\right)\right)^2\left(1+O^*\left(\frac{\mathrm{W}(T_{0})}{t^2}\right)\right)^2\\
&=(2\pi)^{1-2\tau'}t^{2\tau'-1}\left(1+O^*\left(\frac{Z(T_{0})}{t^2}\right)\right),
\end{align*}
where we have used that $t\geq T_{0}$ and that the function $t\geq\mapsto\frac{t^2}{e^{\pi t}}$ is decreasing for $t\geq T_0$, defining 
\begin{align*}Z(T_{0})=\sage{Z_const}\geq2\left(W(T_{0})+\frac{T_{0}^2}{e^{\pi T_{0}}}+\frac{W(T_{0})}{e^{\pi T_{0}}}\right)+\frac{1}{T_{0}^2}\left(W(T_{0})+\frac{T_{0}^2}{e^{\pi T_{0}}}+\frac{W(T_{0})}{e^{\pi T_{0}}}\right)^2.\end{align*} 
We conclude from \eqref{part1} that $\int_{0}^{T}\left|\zeta\left(\tau+it\right)\right|^{2}dt$ equals
\begin{equation}\label{part3}
\int_{0}^{T_{0}}\left|\zeta\left(\tau+it\right)\right|^{2}dt+(2\pi)^{1-2\tau'}\int_{T_{0}}^{T}t^{2\tau'-1}\left|\zeta(\tau'+it)\right|^2\left(1+O^*\left(\frac{Z(T_{0})}{t^2}\right)\right)dt.
\end{equation}
By a rigorous numerical estimation, we have $\int_{0}^{T_{0}}\left|\zeta\left(\tau+it\right)\right|^{2}dt\leq\mathbf{i}=\sage{iint_const}$ (see \S\ref{sec:numerical} for more details).

For the second term in \eqref{part3}, since $\zeta$ is holomorphic in $\mathbb{C}\setminus\{1\}$ and $t\mapsto\left|\zeta\left(\tau+it\right)\right|^{2}$ is continuous in $[T_{0},T]$, by the fundamental theorem of calculus we have that for any $\upsilon$,
\begin{align}\label{part4}
&\ \int_{T_{0}}^{T}t^{\upsilon}\left|\zeta(\tau'+it)\right|^2dt=\int_{T_{0}}^{T}t^{\upsilon}\frac{d}{dt}\left(\int_{T_{0}}^{t}\left|\zeta(\tau'+it')\right|^2dt'\right)dt\nonumber\\
=&\ T^{\upsilon}\int_{T_{0}}^{T}\left|\zeta(\tau'+it')\right|^2dt'-\int_{T_{0}}^{T}\upsilon t^{\upsilon-1}\left(\int_{T_{0}}^{t}\left|\zeta(\tau'+it')\right|^2dt'\right)dt.
\end{align}
Furthermore, by Theorem \ref{th:main}, for any $t\geq T_{0}$, we can write
\begin{align*}
\int_{T_{0}}^{t}\left|\zeta(\tau'+it)\right|^2dt&=\mathbf{M}_{\tau'}(t)-\mathbf{M}_{\tau'}(T_{0})+O^*(2\mathbf{E}_{\tau'}(t)),\\
\mathbf{M}_{\tau'}(t)&=  \frac{\zeta(2-2\tau')}{(2-2\tau')(2\pi)^{1-2\tau'}}t^{2-2\tau'} +\zeta(2\tau')t,\\
\mathbf{E}_{\tau'}(t)&=  \left(\frac{\sage{Main4_2q}}{\left(\frac{1}{2}-\tau'\right)^{2}}+\sage{Main4_c}\right)t^{\frac{3}{2}-2\tau'}\log^{2}(t)=\mathbf{C}_{\tau'}t^{\frac{3}{2}-2\tau'}\log^{2}(t).
\end{align*}
Therefore we conclude from \eqref{part4} that
\begin{equation}\label{eq:olderr}
\int_{T_{0}}^{T}\!\!t^{\upsilon}\left|\zeta(\tau'+it)\right|^2dt =\int_{T_{0}}^{T}\!\!t^{\upsilon}\mathbf{M}_{\tau'}(t)'dt+O^*\left(2T^{\upsilon}\mathbf{E}_{\tau'}(T)+\int_{T_{0}}^{T}\!\!|\upsilon|t^{\upsilon-1}\mathbf{E}_{\tau'}(t)dt\right),
\end{equation}
and by assuming that $\upsilon<0$, $\upsilon\notin\left\{2\tau'-2,-1\right\}$ the main term of \eqref{eq:olderr} becomes
\begin{equation*}
\frac{\zeta(2-2\tau')}{(2-2\tau'+\upsilon)(2\pi)^{1-2\tau'}}(T^{2-2\tau'+\upsilon}-T_{0}^{2-2\tau'+\upsilon})+\frac{\zeta(2\tau')}{\upsilon+1}(T^{\upsilon+1}-T_{0}^{\upsilon+1}),
\end{equation*}
while for $\upsilon\neq 2\tau'-\frac{3}{2}$ the term inside $O^{*}$ in \eqref{eq:olderr} is bounded as
\begin{equation*}
\mathbf{C}_{\tau'}\log^{2}(T)\left(2T^{\frac{3}{2}-2\tau'+\upsilon}+\frac{|\upsilon|}{\frac{3}{2}-2\tau'+\upsilon}(T^{\frac{3}{2}-2\tau'+\upsilon}-T_{0}^{\frac{3}{2}-2\tau'+\upsilon})\right).
\end{equation*}
In particular, when $\upsilon\in\{2\tau'-1,2\tau'-3\}$ we can estimate \eqref{part3} as
\begin{align}
&\zeta(2-2\tau')T+\frac{(2\pi)^{1-2\tau'}\zeta(2\tau')}{2\tau'}T^{2\tau'}+O^*\left(\mathbf{i}+|\mathbf{z}_{2\tau'}|+\frac{Z(T_{0})}{T_{0}^{2}}|\mathbf{z}_{2-2\tau'}|\right. \nonumber \\
&\left.+(2\pi)^{1-2\tau'}\mathbf{C}_{\tau'}\log^{2}(T)\left(4(1-\tau')T^{\frac{1}{2}}+2Z(T_{0})T^{-\frac{3}{2}}+\frac{2}{3}(3-2\tau')Z(T_{0})T_{0}^{-\frac{3}{2}}\right)\right), \label{part6}
\end{align}
where we have defined
\begin{equation*}
\mathbf{z}_{\upsilon}=\zeta(2-2\tau')T_{0}+\frac{(2\pi)^{1-2\tau'}\zeta(2\tau')}{\upsilon}T_{0}^{2\tau'}.
\end{equation*}
Using Proposition~\ref{pr:mvzeta}, $\tau'\in\left[\frac{1}{4},\frac{1}{2}\right)$ and $T_{0}\geq 2\pi$, we have
\begin{align*}
\mathbf{z}_{2\tau'} & \leq\mathbf{z}_{2-2\tau'}<\zeta(2-2\tau')T_{0}<\left(1+\frac{1}{8\left(\frac{1}{2}-\tau'\right)^{2}}\right)T_{0}, \\
\mathbf{z}_{2\tau'} & \geq \left(\zeta(2-2\tau')+\frac{\zeta(2\tau')}{2\tau'}\right)T_{0}>-\frac{1}{2\tau'}T_{0}>-\left(1+\frac{1}{8\left(\frac{1}{2}-\tau'\right)^{2}}\right)T_{0},
\end{align*}
so we have a bound on $|\mathbf{z}_{2\tau'}|,|\mathbf{z}_{2-2\tau'}|$ that we can plug into \eqref{part6}.

The result is concluded by replacing $\tau'$ by $1-\tau$ and merging the error term to the order $\sqrt{T}\log^{2}(T)$.
\end{proof}

\subsection{The integral $I$}\label{sec:I} 

We readily derive that 
\begin{align*}
I&=\sum_{n\leq\frac{T}{2\pi}}d_{1-2\tau}(n)\int_{\mathcal{C}}\frac{1}{2i}\frac{\chi(1-s)}{n^{s}}ds=\sum_{n\leq\frac{T}{2\pi}}d_{1-2\tau}(n)\ I_{n}, \\
I_{n}&=\frac{1}{2i}\int_{1-\tau-iT}^{1-\tau+iT}\frac{\chi(1-s)}{n^{s}}ds,
\end{align*}
where in the first equality we have used the finiteness of the summation and in the second that the function $s\mapsto\chi(1-s)$ is holomorphic, so that its residues vanish.

First, we establish here some results about the average of arithmetical functions involving the function $d_{a}$.

\footnotesize
\begin{mysage}
###
# A_{1/2} = 16/3.
A12 = RIF( 16/3 )
# A_{sigma} = 4+(1+2sigma)/(sigma*(2-2sigma)) = 4+(1/2)/sigma+(3/2)/(1-sigma).
# We write it as constants multiplying [1,1/sigma,1/(1/2-sigma)].
def Fu_As(simin,simax):
    simin = RIF( simin )
    simax = RIF( simax )
    result1 = 4+(3/2)/(1-simax)
    result2 = 1/2
    result3 = 0
    return [RIF(result1),RIF(result2),RIF(result3)]
As_i0 = Fu_As(0,1/2)
As_i4 = Fu_As(1/4,1/2)
\end{mysage}
\normalsize

\begin{proposition}\label{pr:sumdiv}
Let $X\geq 1$ and $0<\sigma<1$. Then
\begin{align*}
\sum_{n\leq X}d_{1-2\sigma}(n) & =\zeta(2\sigma)X+\frac{\zeta(2-2\sigma)}{2-2\sigma}X^{2-2\sigma}+O^{*}\left(A_{\sigma}X^{1-\sigma}\right) & & \text{ if }\sigma\neq\frac{1}{2}, \\
\sum_{n\leq X}d_{0}(n) & =X\log(X)+(2\gamma-1)X+O^{*}\left(A_{\frac{1}{2}}\sqrt{X}\right) & & \text{ if }\sigma=\frac{1}{2},
\end{align*}
where $A_{\sigma}=4+\frac{1+2\sigma}{\sigma(2-2\sigma)}$ for $\sigma\neq\frac{1}{2}$, and $A_{\frac{1}{2}}=\frac{16}{3}$. In particular, $A_{\sigma}\leq 8$ for $\sigma\in\left[\frac{1}{4},\frac{1}{2}\right]$.
\end{proposition}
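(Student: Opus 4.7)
The plan is to apply Dirichlet's hyperbola method. Writing $d_{1-2\sigma}(n)=\sum_{de=n}d^{1-2\sigma}$ and splitting the lattice region $\{(d,e):de\leq X\}$ along the diagonal $d=e=\sqrt{X}$ yields
\begin{equation*}
\sum_{n\leq X}d_{1-2\sigma}(n)=T_1+T_2-T_3,
\end{equation*}
where $T_1=\sum_{d\leq\sqrt{X}}d^{1-2\sigma}\lfloor X/d\rfloor$, $T_2=\sum_{e\leq\sqrt{X}}\sum_{d\leq X/e}d^{1-2\sigma}$, and $T_3=\lfloor\sqrt{X}\rfloor\sum_{d\leq\sqrt{X}}d^{1-2\sigma}$.

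For $\sigma\neq\frac{1}{2}$, I would expand each constituent sum over powers of $d$ or $e$ using Lemma~\ref{le:sums}, choosing between \eqref{le:sums<0} and \eqref{le:sums>0} depending on the sign of the exponent (which flips at $\sigma=\frac{1}{2}$). The two main terms appear directly: $\zeta(2\sigma)X$ from $X\sum_{d\leq\sqrt{X}}d^{-2\sigma}$ inside $T_1$ (using $\lfloor X/d\rfloor=X/d+O^*(1)$), and $\frac{\zeta(2-2\sigma)}{2-2\sigma}X^{2-2\sigma}$ from $\frac{X^{2-2\sigma}}{2-2\sigma}\sum_{e\leq\sqrt{X}}e^{2\sigma-2}$ inside $T_2$. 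The pivotal step is checking that the three secondary contributions of order $X^{3/2-\sigma}$, carrying coefficients $\frac{1}{1-2\sigma}$ from $T_1$, $-\frac{1}{(2-2\sigma)(1-2\sigma)}$ from $T_2$, and $\frac{1}{2-2\sigma}$ from $T_3$, cancel exactly by the elementary identity
\begin{equation*}
\frac{1}{1-2\sigma}-\frac{1}{(2-2\sigma)(1-2\sigma)}-\frac{1}{2-2\sigma}=0.
\end{equation*}
When $\sigma>\frac{1}{2}$, Lemma~\ref{le:sums}\eqref{le:sums<0} applied to $\sum d^{1-2\sigma}$ additionally produces a constant $\zeta(2\sigma-1)$, contributing a term $\lfloor\sqrt{X}\rfloor\zeta(2\sigma-1)$ to both $T_2$ and $T_3$ which likewise disappears in the subtraction.

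For $\sigma=\frac{1}{2}$ the sum collapses to the classical Dirichlet form $2\sum_{d\leq\sqrt{X}}\lfloor X/d\rfloor-\lfloor\sqrt{X}\rfloor^2$. Applying Lemma~\ref{le:sums}\eqref{le:sums=-1} to $\sum 1/d=\tfrac{1}{2}\log X+\gamma+O^*(\tfrac{2}{3\sqrt{X}})$ and using the trivial bounds $|\sum_{d\leq\sqrt{X}}\{X/d\}|\leq\sqrt{X}$ and $|\lfloor\sqrt{X}\rfloor^2-X|\leq 2\sqrt{X}$ (the latter from $\lfloor\sqrt{X}\rfloor=\sqrt{X}+O^*(1)$), the error emerges as $(2\cdot\tfrac{2}{3}+2+2)\sqrt{X}=\tfrac{16}{3}\sqrt{X}=A_{1/2}\sqrt{X}$.

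The main obstacle is the bookkeeping needed to produce $A_\sigma$ in the precise form claimed. After the $X^{3/2-\sigma}$ cancellations and absorption of all $X^{1/2-\sigma}$ pieces into $X^{1-\sigma}$ (valid since $X\geq 1$), the surviving errors assemble into $X^{1-\sigma}\bigl(4+\tfrac{3}{2-2\sigma}+\tfrac{1}{2\sigma}\bigr)$, where $\tfrac{3}{2-2\sigma}$ accumulates from three distinct $\tfrac{1}{2-2\sigma}$-weighted subterms across $T_1,T_2,T_3$ (produced by the expansions of $\sum d^{1-2\sigma}$ and $\sum e^{2\sigma-2}$), and $\tfrac{1}{2\sigma}$ arises from bounding $\sum_{e\leq\sqrt{X}}e^{2\sigma-1}$ by either integral comparison (when $\sigma<\tfrac{1}{2}$) or Lemma~\ref{le:sums}\eqref{le:sums>0} (when $\sigma>\tfrac{1}{2}$) to get $\tfrac{X^\sigma}{2\sigma}$. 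The identity $\tfrac{3}{2-2\sigma}+\tfrac{1}{2\sigma}=\tfrac{1+2\sigma}{\sigma(2-2\sigma)}$ consolidates these into the stated $A_\sigma$, and $A_\sigma\leq 8$ on $[\tfrac{1}{4},\tfrac{1}{2}]$ reduces to a straightforward verification that the rational function $\tfrac{1+2\sigma}{\sigma(2-2\sigma)}$ attains at most $4$ on this interval (with equality at both endpoints).
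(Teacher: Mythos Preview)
Your proposal is correct and takes essentially the same approach as the paper: the Dirichlet hyperbola method together with Lemma~\ref{le:sums}. The only cosmetic difference is that the paper writes the split as two pieces, $\sum_{m\leq\sqrt{X}}m^{1-2\sigma}\lfloor X/m\rfloor+\sum_{d\leq\sqrt{X}}\sum_{\sqrt{X}<m\leq X/d}m^{1-2\sigma}$, rather than your $T_1+T_2-T_3$; the $X^{3/2-\sigma}$ cancellation and the $\zeta(2\sigma-1)$ cancellation (for $\sigma>\tfrac12$) that you track across three separate terms then happen inside the difference defining the second inner sum, and the error bookkeeping runs in parallel to yield the same $A_\sigma$. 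One small slip worth noting: for $\sigma>\tfrac12$, Lemma~\ref{le:sums}\eqref{le:sums>0} gives $\sum_{e\leq\sqrt{X}}e^{2\sigma-1}\leq\frac{X^{\sigma}}{2\sigma}+X^{\sigma-1/2}$, not just $\frac{X^{\sigma}}{2\sigma}$; the extra $X^{1/2-\sigma}$ is harmless after absorption into $X^{1-\sigma}$, but you should bound $\lfloor\sqrt{X}\rfloor\cdot O^{*}(X^{1/2-\sigma})$ in $T_3$ directly by $\sqrt{X}\cdot X^{1/2-\sigma}=X^{1-\sigma}$ (one unit, not two) to keep the total at $A_\sigma$.
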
 

\begin{proof}
By the hyperbola method, we have
\begin{align}\label{hyperbola}
\sum_{n\leq X}d_{a}(n) & =\sum_{n\leq X}\sum_{\substack{m|n \\ m\leq\sqrt{X}}}m^{a}+\sum_{\substack{ m>\sqrt{X}}}m^{a} \sum_{d\leq\frac{X}{m}}1 \nonumber\\
& = \sum_{m\leq\sqrt{X}}m^{a}\sum_{d\leq\frac{X}{m}}1+\sum_{d\leq\sqrt{X}}\sum_{\sqrt{X}<m\leq\frac{X}{d}}m^{a} 
\end{align}
where $\sum_{n\leq X}d_{a}(n)$ has been split into two pieces, separating the divisors of $n$ at $\sqrt{X}$, which is optimal: if we had chosen $X^{\varepsilon}$, $0<\varepsilon<1$ with $\varepsilon\neq\frac{1}{2}$, we would have retrieved a worse error term than in the statement.

We will analyze each sum of \eqref{hyperbola} separately. For $0<\sigma<\frac{1}{2}$, by Lemma \ref{le:sums}\eqref{le:sums<0}-\eqref{le:sums>0}, we derive
\begin{align}
 & \sum_{m\leq\sqrt{X}}m^{1-2\sigma}\sum_{d\leq\frac{X}{m}}1=X\sum_{m\leq\sqrt{X}}\frac{1}{m^{2\sigma}}+O^{*}\left(\sum_{m\leq\sqrt{X}}m^{1-2\sigma}\right) \nonumber \\
= \ & \zeta(2\sigma)X-\frac{X^{\frac{3}{2}-\sigma}}{2\sigma-1}+O^{*}(X^{1-\sigma})+O^{*}\left(\frac{X^{1-\sigma}}{2-2\sigma}+X^{\frac{1}{2}-\sigma}\right) \nonumber \\
= \ & \zeta(2\sigma)X-\frac{X^{\frac{3}{2}-\sigma}}{2\sigma-1}+O^{*}\left(\frac{5-4\sigma}{2-2\sigma}X^{1-\sigma}\right), \label{eq:sumdiv1a}
\end{align}
where the last line holds for $X\geq 1$. For $\frac{1}{2}<\sigma<1$, by Lemma \ref{le:sums}\eqref{le:sums<0}-\eqref{le:sums>0} we obtain the same estimate as in \eqref{eq:sumdiv1a}: in fact one more term $\zeta(2\sigma-1)$ emerges, but it is negative and bounded in absolute value by $\frac{X^{1-\sigma}}{2-2\sigma}$ by Proposition~\ref{pr:mvzeta} and $X\geq 1$. For $\sigma=\frac{1}{2}$, by Lemma \ref{le:sums}\eqref{le:sums=-1}, we get instead
\begin{equation}\label{eq:sumdiv1b} 
 \sum_{m\leq\sqrt{X}}\sum_{d\leq\frac{X}{m}}1=X\sum_{m\leq\sqrt{X}}\frac{1}{m}+O^{*}(\sqrt{X})=\frac{1}{2}X\log(X)+\gamma X+O^{*}\left(\frac{5}{3}\sqrt{X}\right).
\end{equation}
 
We also use Lemma~\ref{le:sums} for the inner sum of the second term of \eqref{hyperbola}, and derive
\begin{equation}\label{innersigma}
\sum_{\sqrt{X}<m\leq\frac{X}{d}}m^{1-2\sigma}=\frac{1}{2-2\sigma}\left(\frac{X}{d}\right)^{2-2\sigma}-\frac{X^{1-\sigma}}{2-2\sigma}+O^{*}\left(\left(\frac{X}{d}\right)^{1-2\sigma}+X^{\frac{1}{2}-\sigma}\right).
\end{equation} 
Hence, considering the outer sum of the second term in \eqref{hyperbola}, if $0<\sigma<\frac{1}{2}$ then
\begin{align}
\sum_{d\leq\sqrt{X}}\frac{1}{2-2\sigma}\left(\frac{X}{d}\right)^{2-2\sigma}&=\frac{\zeta(2-2\sigma)}{2-2\sigma}X^{2-2\sigma}+\frac{X^{\frac{3}{2}-\sigma}}{(2\sigma-1)(2-2\sigma)}+O^{*}\left(\frac{X^{1-\sigma}}{2-2\sigma}\right),\nonumber\\
\sum_{d\leq\sqrt{X}}O^{*}\left(\left(\frac{X}{d}\right)^{1-2\sigma}\right)&=O^{*}\left(\frac{X^{1-\sigma}}{2\sigma}+X^{\frac{1}{2}-\sigma}\right)\label{here},
\end{align} 
where in the second equation above, we have used Lemma \ref{le:sums}\eqref{le:sums<0} and forgone the negative term $\zeta(1-2\sigma)X^{1-2\sigma}$ of smaller absolute value. Therefore, by replacing estimations \eqref{here} into \eqref{innersigma}, we have
\begin{equation}\label{eq:sumdiv2a}
\sum_{d\leq\sqrt{X}}\sum_{\sqrt{X}<m\leq\frac{X}{d}}m^{1-2\sigma}=\frac{\zeta(2-2\sigma)}{2-2\sigma}X^{2-2\sigma}+\frac{X^{\frac{3}{2}-\sigma}}{2\sigma-1}+O^{*}\left(B_{\sigma} X^{1-\sigma}\right)
\end{equation} 
where $B_{\sigma}=2+\frac{1+\sigma}{2\sigma(1-\sigma)}$. The same bound holds also for $\frac{1}{2}<\sigma<1$, performing similar steps. Finally, when $\sigma=\frac{1}{2}$, from Lemma~\ref{le:sums}\eqref{le:sums=-1} we derive
\begin{equation*} 
\sum_{d\leq\sqrt{X}}\frac{X}{d}=\frac{1}{2}X\log(X)+\gamma X+O^{*}\left(\frac{2}{3}\sqrt{X}\right),
\end{equation*} 
so that
\begin{equation}\label{eq:sumdiv2b}
\sum_{d\leq\sqrt{X}}\sum_{\sqrt{X}<m\leq\frac{X}{d}}1=\frac{1}{2}X\log(X)+(\gamma-1) X+O^{*}\left(\frac{11}{3}\sqrt{X}\right).
\end{equation}

From \eqref{eq:sumdiv1a} and \eqref{eq:sumdiv2a} we obtain the result for the sum of $d_{1-2\sigma}(n)$ in the case $\sigma\neq\frac{1}{2}$, where $A_{\sigma}=\frac{5-4\sigma}{2-2\sigma}+B_{\sigma}$. From \eqref{eq:sumdiv1b} and \eqref{eq:sumdiv2b} we obtain the result for the sum of $d_{0}(n)$.
\end{proof}

\footnotesize
\begin{mysage}
###
def Fu_X0(ta):
    ta = RIF( ta )
    result = ta/(2*pi_int)
    return RIF(result)
X0 = Fu_X0(T0)
###
# C_{1,sigma} = 3+(2-2sigma)A_{sigma}/(1-2sigma) = 3+(1+1/(1-2sigma))A_{sigma} =
# = 3+(1+1/(1-2sigma))(4+1/(2sigma)+3/(2-2sigma)) = 7+1/sigma+4/(1/2-sigma).
# We write it as constants multiplying [1,1/sigma,1/(1/2-sigma)].
def Fu_C1s(simin,simax):
    simin = RIF( simin )
    simax = RIF( simax )
    result1 = 7
    result2 = 1
    result3 = 4
    return [RIF(result1),RIF(result2),RIF(result3)]
C1s_i0 = Fu_C1s(0,1/2)
C1s_i4 = Fu_C1s(1/4,1/2)
###
# C_{2,sigma} = (1-sigma)A_{sigma}+(A_{sigma}+(2-sigma)/(2sigma(1-sigma)))1/log(X_0) =
# = (5-4sigma+4/log(X_0))+(1/2+3/(2log(X_0)))/sigma+(2/log(X_0))/(1-sigma).
# We write it as constants multiplying [1,1/sigma,1/(1/2-sigma)].
def Fu_C2s(simin,simax,ta):
    simin = RIF( simin )
    simax = RIF( simax )
    ta = RIF( ta )
    result1 = 5-4*simin+4/log_int(Fu_X0(ta))+(2/log_int(Fu_X0(ta)))/(1-simax)
    result2 = 1/2+3/(2*log_int(Fu_X0(ta)))
    result3 = 0
    return [RIF(result1),RIF(result2),RIF(result3)]
C2s_i0 = Fu_C2s(0,1/2,T0)
C2s_i4 = Fu_C2s(1/4,1/2,T0)
###
# C_{1/2} = 1/2*A_{1/2}+(A_{1/2}+3-2gamma)/log(X_0).
def Fu_C12(ta):
    ta = RIF( ta )
    result = 1/2*A12+(A12+3-2*gamma_int)/log_int(Fu_X0(ta))
    return RIF(result)
C12 = Fu_C12(T0)
###
# We are going to use the bound in {pr:sumdivwei} for T/(2pi),
# and we need T0/(2pi)>1 for the proof to be working.
# For a check about this, see at the end of the file.
\end{mysage}
\normalsize

We shall use the following result in equation \eqref{eq:alot} for $X\geq X_{0}=\frac{T_{0}}{2\pi}$.

\begin{proposition}\label{pr:sumdivwei}
Let $X\geq X_{0}>1$ and $0<\sigma\leq\frac{1}{2}$. Recall the definition of $A_{\sigma}$ given in Proposition~\ref{pr:sumdiv}. Then, if $0<\sigma<\frac{1}{2}$,
\begin{equation*}
\sum_{n\leq X}\frac{d_{1-2\sigma}(n)}{\sqrt{n}}=2\zeta(2\sigma)\sqrt{X}+\frac{\zeta(2-2\sigma)}{\frac{3}{2}-2\sigma}X^{\frac{3}{2}-2\sigma}+O^{*}(C_{\sigma}X^{\frac{1}{2}-\sigma}),
\end{equation*}
where $C_{\sigma}=3+\frac{2-2\sigma}{1-2\sigma}A_{\sigma}$, and
\begin{equation*}
\sum_{n\leq X}\frac{d_{0}(n)}{\sqrt{n}}=2\sqrt{X}\log(X)-4(1-\gamma)\sqrt{X}+O^{*}\left(C_{\frac{1}{2}}\log(X)\right),
\end{equation*}
where $C_{\frac{1}{2}}=\frac{1}{2}A_{\frac{1}{2}}+(A_{\frac{1}{2}}+3-2\gamma)\frac{1}{\log(X_{0})}$.
\end{proposition}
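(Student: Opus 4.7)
The plan is to proceed by partial (Abel) summation, converting $\sum_{n\leq X}\frac{d_{1-2\sigma}(n)}{\sqrt{n}}$ into an evaluated boundary term plus an integral that involves the unweighted partial sum $S(t)=\sum_{n\leq t}d_{1-2\sigma}(n)$, which we have already estimated in Proposition~\ref{pr:sumdiv}. Concretely, taking $f(t)=t^{-1/2}$ so that $f'(t)=-\frac{1}{2}t^{-3/2}$, Abel summation gives
\[
\sum_{n\leq X}\frac{d_{1-2\sigma}(n)}{\sqrt{n}}=\frac{S(X)}{\sqrt{X}}+\frac{1}{2}\int_{1}^{X}\frac{S(t)}{t^{3/2}}\,dt.
\]
After this I plug in the expression for $S(t)$ coming from Proposition~\ref{pr:sumdiv} (with the appropriate split into the two cases $\sigma\neq\tfrac{1}{2}$ and $\sigma=\tfrac{1}{2}$) and integrate term by term.

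For $\sigma\in(0,\tfrac{1}{2})$, the three contributions inside the integral give $\int_1^X t^{-1/2}dt=2\sqrt{X}-2$, $\int_1^X t^{1/2-2\sigma}dt=(X^{3/2-2\sigma}-1)/(3/2-2\sigma)$, and $\int_1^X t^{-1/2-\sigma}dt=(X^{1/2-\sigma}-1)/(1/2-\sigma)$, all of which have well-defined positive denominators in this range. Combining these with the boundary term $S(X)/\sqrt{X}$, the $\sqrt{X}$-coefficients add to $2\zeta(2\sigma)$, and the $X^{3/2-2\sigma}$-coefficients collapse via $\frac{1}{2-2\sigma}\big(1+\frac{1}{3-4\sigma}\big)=\frac{1}{3/2-2\sigma}$ to give $\frac{\zeta(2-2\sigma)}{3/2-2\sigma}$, matching the claimed main terms. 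The error contributions total $A_{\sigma}X^{1/2-\sigma}+\frac{A_{\sigma}X^{1/2-\sigma}}{1-2\sigma}=\frac{(2-2\sigma)A_{\sigma}}{1-2\sigma}X^{1/2-\sigma}$, which is already the second piece of $C_{\sigma}$; the residual constants $-\zeta(2\sigma)$ and $-\frac{\zeta(2-2\sigma)}{2(2-2\sigma)(3/2-2\sigma)}$ left over from the integration are bounded using Proposition~\ref{pr:mvzeta} and absorbed into the factor $3\,X^{1/2-\sigma}$ of $C_{\sigma}$ by invoking $X^{1/2-\sigma}\geq X_{0}^{1/2-\sigma}\geq 1$.

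For $\sigma=\tfrac{1}{2}$, the same scheme applies, but the integration produces a $\log(X)$ from $\int_{1}^{X}\log(t)\,t^{-1/2}dt=2\sqrt{X}\log(X)-4\sqrt{X}+4$, and the error term from the integral grows like $\frac{A_{1/2}\log(X)}{2}$ rather than $X^{1/2-\sigma}$. After collecting the $\sqrt{X}\log(X)$ and $\sqrt{X}$ pieces I obtain the main terms $2\sqrt{X}\log(X)-4(1-\gamma)\sqrt{X}$, with a constant residue $(3-2\gamma)$ plus the boundary error $A_{1/2}$. These are absorbed into $C_{1/2}\log(X)$ via the elementary comparison $1\leq\log(X)/\log(X_{0})$ for $X\geq X_{0}$, giving exactly the stated $C_{1/2}=\tfrac{1}{2}A_{1/2}+(A_{1/2}+3-2\gamma)/\log(X_{0})$.

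The main obstacle is purely bookkeeping: tracking which residual constants come out of the boundary term versus the integration of the main and error parts of $S(t)$, and then confirming that the combined coefficient exactly matches $C_{\sigma}=3+\frac{2-2\sigma}{1-2\sigma}A_{\sigma}$ (respectively $C_{1/2}$) with the correct signs. In particular, absorbing the $\zeta(2\sigma)$- and $\zeta(2-2\sigma)$-dependent constants into the advertised "$3$" of $C_{\sigma}$ requires the bounds from Proposition~\ref{pr:mvzeta} together with the hypothesis $X\geq X_{0}>1$, and the analogous absorption in the $\sigma=\tfrac{1}{2}$ case is what forces $\log(X_{0})$ (and not $\log(X)$) to appear in the denominator of $C_{1/2}$.
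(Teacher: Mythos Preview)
Your proposal is correct and follows essentially the same route as the paper: partial summation against $S(t)=\sum_{n\leq t}d_{1-2\sigma}(n)$, insertion of the main term from Proposition~\ref{pr:sumdiv}, term-by-term integration, and absorption of the residual constants via Proposition~\ref{pr:mvzeta}. The paper writes the summation-by-parts identity in the slightly different but equivalent form $\int_1^X M_\sigma'(t)t^{-1/2}\,dt + M_\sigma(1) + O^*(\cdots)$, and records the two-sided bound $-\tfrac{1}{2}<-\zeta(2\sigma)-\zeta(2-2\sigma)\big(\tfrac{1}{3/2-2\sigma}-\tfrac{1}{2-2\sigma}\big)<3$ explicitly, but the arithmetic and the resulting constants $C_\sigma$, $C_{1/2}$ are identical to yours.
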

 
\begin{proof}
Let  $X\mapsto\mathrm{M}_{\sigma}(X)$ be the main term defined by Proposition~\ref{pr:sumdiv}, according to whether $\sigma<\frac{1}{2}$ or $\sigma=\frac{1}{2}$. Since $X\geq 1$, by Lemma \ref{pr:sumdiv}, we can write $\sum_{n\leq X}d_{1-2\sigma}(n)=\mathrm{M}_{\sigma}(X)+O^{*}\left(A_{\sigma} X^{1-\sigma}\right)$ so that, by summation by parts, we conclude that, for any $\upsilon>0$,
\begin{align}
&\sum_{n\leq X}\frac{d_{1-2\sigma}(n)}{n^{\upsilon}}=(\mathrm{M}_{\sigma}(X)+O^{*}(A_{\sigma} X^{1-\sigma}))X^{-\upsilon}-\int_{1}^{X}(\mathrm{M}_{\sigma}(t)+O^{*}(A_{\sigma} t^{1-\sigma}))(t^{-\upsilon})'dt\nonumber\\
&\phantom{xxxxxxxx} =\int_{1}^X\mathrm{M}_{\sigma}(t)'t^{-\upsilon}dt+\mathrm{M}_\sigma(1)+O^{*}\left(A_{\sigma} X^{1-\sigma-\upsilon}+\upsilon A_{\sigma}\int_{1}^{X}\frac{dt}{t^{\sigma+\upsilon}}\right).\label{sbpgeneral}
\end{align}

If $0<\sigma<\frac{1}{2}$ and $\upsilon=\frac{1}{2}$, by Proposition~\ref{pr:mvzeta}, we bound the constant arising from the first two terms of \eqref{sbpgeneral} as
\begin{equation*}
-\frac{1}{2}<-\zeta(2\sigma)-\zeta(2-2\sigma)\left(\frac{1}{\frac{3}{2}-2\sigma}-\frac{1}{2-2\sigma}\right)<\frac{1}{1-2\sigma}\left(1-\frac{1}{2(\frac{3}{2}-2\sigma)(2-2\sigma)}\right)<3.
\end{equation*}
Thereupon, we keep the first two terms of highest order in \eqref{sbpgeneral} and we merge the remaining ones to the order $X^{\frac{1}{2}-\sigma}$, obtaining  $C_{\sigma}$.
 
On the other hand, if $\sigma=\upsilon=\frac{1}{2}$, we derive from \eqref{sbpgeneral}
\begin{equation}\label{sigma=1/2}
\sum_{n\leq X}\frac{d_0(n)}{\sqrt{n}}=2\sqrt{X}\log(X)-4(1-\gamma)\sqrt{X}+3-2\gamma+O^{*}\left(A_{\frac{1}{2}}\left(1+\frac{\log(X)}{2}\right)\right)
\end{equation}
where we have used that $\int\frac{\log(t)}{\sqrt{t}}dt=2\sqrt{t}(\log(t)-2)$. Similarly, we can define $C_{\frac{1}{2}}$ by merging the error term of \eqref{sigma=1/2} to the order $\log(X)$.
\end{proof}

Finally, we are going to need the mean estimation below.

\footnotesize
\begin{mysage}
###
# Constants F for {sigma,1-sigma}.
# We write them as constants multiplying [1,1/sigma,1/(1/2-sigma)].
def Fu_F_1_s1(simin,simax):
    simin = RIF( simin )
    simax = RIF( simax )
    result = cw_prod( 1/exp_int(log_int(4*pi_int)*(1-simax)) , zea_2m2s )
    return result
F_1_s1_i0 = Fu_F_1_s1(0,1/2)
F_1_s1_i4 = Fu_F_1_s1(1/4,1/2)
def Fu_F_2_s1(simin,simax):
    simin = RIF( simin )
    simax = RIF( simax )
    result = cw_prod( 2/(exp_int(log_int(4*pi_int)*(1-simax))*(1-simax)) , zea_2m2s )
    return result
F_2_s1_i0 = Fu_F_2_s1(0,1/2)
F_2_s1_i4 = Fu_F_2_s1(1/4,1/2)
def Fu_F_3_s1(simin,simax):
    simin = RIF( simin )
    simax = RIF( simax )
    result = cw_prod( 2 , Fu_As(simin,simax) )
    return result
F_3_s1_i0 = Fu_F_3_s1(0,1/2)
F_3_s1_i4 = Fu_F_3_s1(1/4,1/2)
def Fu_F_4_s1(simin,simax):
    simin = RIF( simin )
    simax = RIF( simax )
    result = cw_prod( (2-simin)*(1-simin) , Fu_As(simin,simax) )
    return result
F_4_s1_i0 = Fu_F_4_s1(0,1/2)
F_4_s1_i4 = Fu_F_4_s1(1/4,1/2)
F_5_s1_i0 = [RIF(0),RIF(0),RIF(0)]
F_5_s1_i4 = [RIF(0),RIF(0),RIF(0)]
def Fu_F_6_s1(simin,simax,ta):
    simin = RIF( simin )
    simax = RIF( simax )
    ta = RIF( ta )
    result1 = ta/(ta-2*pi_int)
    result2 = 0
    result3 = 0
    return [RIF(result1),RIF(result2),RIF(result3)]
def Fu_F_6_s1_i0(ta):
    ta = RIF( ta )
    result = Fu_F_6_s1(0,1/2,ta)
    return result
def Fu_F_6_s1_i4(ta):
    ta = RIF( ta )
    result = Fu_F_6_s1(1/4,1/2,ta)
    return result
###
# Constants F for {sigma,1/2}.
# We write them as constants multiplying [1,1/sigma,1/(1/2-sigma)].
def Fu_F_1_s2(simin,simax):
    simin = RIF( simin )
    simax = RIF( simax )
    result = cw_prod( 1/exp_int(log_int(4*pi_int)*(3/2-2*simax)) , zea_2m2s )
    return result
F_1_s2_i0 = Fu_F_1_s2(0,1/2)
F_1_s2_i4 = Fu_F_1_s2(1/4,1/2)
def Fu_F_2_s2(simin,simax):
    simin = RIF( simin )
    simax = RIF( simax )
    result = cw_prod( 2/(exp_int(log_int(4*pi_int)*(3/2-2*simax))*(3/2-2*simax)) , zea_2m2s )
    return result
F_2_s2_i0 = Fu_F_2_s2(0,1/2)
F_2_s2_i4 = Fu_F_2_s2(1/4,1/2)
def Fu_F_3_s2(simin,simax):
    simin = RIF( simin )
    simax = RIF( simax )
    result = cw_prod( 2/exp_int(log_int(2*pi_int)*(1/2-simax)) , Fu_As(simin,simax) )
    return result
F_3_s2_i0 = Fu_F_3_s2(0,1/2)
F_3_s2_i4 = Fu_F_3_s2(1/4,1/2)
F_4_s2_i0 = [RIF(0),RIF(0),RIF(0)]
F_4_s2_i4 = [RIF(0),RIF(0),RIF(0)]
def Fu_F_5_s2(simin,simax):
    simin = RIF( simin )
    simax = RIF( simax )
    result = cw_prod( 3*(1-simin)/exp_int(log_int(6*pi_int)*(1/2-simax)) , Fu_As(simin,simax) )
    return result
F_5_s2_i0 = Fu_F_5_s2(0,1/2)
F_5_s2_i4 = Fu_F_5_s2(1/4,1/2)
def Fu_F_6_s2(simin,simax,ta):
    simin = RIF( simin )
    simax = RIF( simax )
    ta = RIF( ta )
    result1 = ta/(ta-2*pi_int)
    result2 = 0
    result3 = 0
    return [RIF(result1),RIF(result2),RIF(result3)]
def Fu_F_6_s2_i0(ta):
    ta = RIF( ta )
    result = Fu_F_6_s2(0,1/2,ta)
    return result
def Fu_F_6_s2_i4(ta):
    ta = RIF( ta )
    result = Fu_F_6_s2(1/4,1/2,ta)
    return result
###
# Constants F for {1/2,1/2}.
F_1_22 = RIF( sqrt_int(pi_int)/(2*sqrt_int(2)) )
F_2_22 = RIF( 0 )
F_3_22 = RIF( 2*A12 )
F_4_22 = RIF( 3/2*1/2*A12 )
F_5_22 = RIF( 0 )
def Fu_F_6_22(ta):
    ta = RIF( ta )
    result = abs_int(1-(2-2*gamma_int))*ta/(ta-2*pi_int)
    return RIF(result)
###
# In the proof we assume for simplicity that T/(4pi)<(T-sqrt(T))/(2pi).
# It is not a fundamental assumption, but it makes the reasoning neater.
# For a check about this, see the end of the file.
\end{mysage}
\normalsize

\begin{lemma}\label{le:sumdivlog}
Let $T\geq T_{0}=\sage{T0}$ and $0<\sigma\leq\frac{1}{2}$. Recall the definition of $A_{\sigma}$ given in Proposition~\ref{pr:sumdiv}. Then, for $\upsilon\in\left\{\frac{1}{2},1-\sigma\right\}$,
\begin{align*}
\sum_{n\leq\frac{T-\sqrt{T}}{2\pi}}\frac{d_{1-2\sigma}(n)}{n^{\upsilon}(T-2\pi n)}\leq & \ \frac{F_{1,\sigma,\upsilon}\log(T)}{T^{2\sigma-1+\upsilon}}+\frac{F_{2,\sigma,\upsilon}}{T^{2\sigma-1+\upsilon}}+\frac{F_{3,\sigma,\upsilon}}{T^{\sigma-\frac{1}{2}+\upsilon}} \\
 & \ +\frac{F_{4,\sigma,\upsilon}\log(T)}{T^{\sigma+\upsilon}}+\frac{F_{5,\sigma,\upsilon}}{T^{\sigma+\upsilon}}+\frac{F_{6,\sigma,\upsilon}}{T},
\end{align*}
where for $0<\sigma<\frac{1}{2}$
\begin{align*}
F_{1,\sigma,1-\sigma} & =\frac{\zeta(2-2\sigma)}{(4\pi)^{1-\sigma}}, & F_{1,\sigma,\frac{1}{2}} & =\frac{\zeta(2-2\sigma)}{(4\pi)^{\frac{3}{2}-2\sigma}}, & F_{1,\frac{1}{2},\frac{1}{2}} & =\frac{\sqrt{\pi}}{2\sqrt{2}}, \\
F_{2,\sigma,1-\sigma} & =\frac{2\zeta(2-2\sigma)}{(4\pi)^{1-\sigma}(1-\sigma)}, & F_{2,\sigma,\frac{1}{2}} & =\frac{2\zeta(2-2\sigma)}{(4\pi)^{\frac{3}{2}-2\sigma}(\frac{3}{2}-2\sigma)}, & F_{2,\frac{1}{2},\frac{1}{2}} & =0, \\
F_{3,\sigma,1-\sigma} & =2A_{\sigma}, & F_{3,\sigma,\frac{1}{2}} & =\frac{2A_{\sigma}}{(2\pi)^{\frac{1}{2}-\sigma}}, & F_{3,\frac{1}{2},\frac{1}{2}} & =2A_{\frac{1}{2}}, \\
F_{4,\sigma,1-\sigma} & =(2-\sigma)(1-\sigma)A_{\sigma}, & F_{4,\sigma,\frac{1}{2}} & =0, & F_{4,\frac{1}{2},\frac{1}{2}} & =\frac{3A_{\frac{1}{2}}}{4}, \\
F_{5,\sigma,1-\sigma} & =0, & F_{5,\sigma,\frac{1}{2}} & =\frac{3(1-\sigma)A_{\sigma}}{(6\pi)^{\frac{1}{2}-\sigma}(1-2\sigma)}, & F_{5,\frac{1}{2},\frac{1}{2}} & =0, \\
F_{6,\sigma,1-\sigma} & =\frac{T_{0}}{T_{0}-2\pi}, & F_{6,\sigma,\frac{1}{2}} & =\frac{T_{0}}{T_{0}-2\pi}, & F_{6,\frac{1}{2},\frac{1}{2}} & =\frac{(2\gamma-1)T_{0}}{T_{0}-2\pi}.
\end{align*}
\end{lemma}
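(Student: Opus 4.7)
The plan is to split the sum at $N_0 := \lfloor T/(4\pi) \rfloor$, which lies inside the range of summation for $T \geq T_0$ since $T/(4\pi) < (T-\sqrt{T})/(2\pi)$ (this is the assumption noted in the \texttt{mysage} block above). On each resulting sub-range the partial sums $S(y) := \sum_{n \leq y} d_{1-2\sigma}(n)$ are replaced by their asymptotic from Proposition~\ref{pr:sumdiv}, namely $S(y) = M_\sigma(y) + O^*(A_\sigma y^{1-\sigma})$, via partial summation.

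For the low range $n \leq N_0$ one has $T-2\pi n \geq T/2$, so $1/(T-2\pi n) \leq 2/T$, reducing the problem to $\sum_{n \leq N_0} d_{1-2\sigma}(n)/n^\upsilon$. Abel summation against $y^{-\upsilon}$, combined with $S(y) = M_\sigma(y) + O^*(A_\sigma y^{1-\sigma})$, produces contributions of orders $N_0^{2-2\sigma-\upsilon}$ (from $\zeta(2-2\sigma) y^{2-2\sigma}/(2-2\sigma)$), $N_0^{1-\upsilon}$ (from $\zeta(2\sigma) y$), and $N_0^{1-\sigma-\upsilon}$ (from the error). After the factor $2/T$ these feed, respectively, $F_2$, a piece absorbed together with the $n=1$ boundary term into $F_6/T$ (the latter via $|M_\sigma(1)|/(T-2\pi) \leq |M_\sigma(1)| T_0/((T_0-2\pi)T)$, which explains the factor $2\gamma-1 = M_{1/2}(1)$ appearing in $F_{6,\frac{1}{2},\frac{1}{2}}$), and $F_3$ or $F_5$.

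For the high range $N_0 < n \leq X_1 := (T-\sqrt{T})/(2\pi)$ one bounds $1/n^\upsilon \leq (4\pi/T)^\upsilon$ and applies Abel summation with weight $g(y) = 1/(T-2\pi y)$. Integration by parts reduces this to $\int_{N_0}^{X_1} M_\sigma'(y)/(T-2\pi y)\,dy$ plus lower-order errors; the antiderivative $-\log(T-2\pi y)/(2\pi)$ evaluated between $N_0$ and $X_1$ equals $(\log T)/(4\pi) - (\log 2)/(2\pi)$, which is the source of the $\log T$ factors. For $\sigma < 1/2$ the piece $\zeta(2-2\sigma) y^{1-2\sigma}$ of $M_\sigma'$, bounded pointwise above by $\zeta(2-2\sigma) N_0^{1-2\sigma}$, produces the leading $F_1 \log(T)/T^{2\sigma-1+\upsilon}$ term; the constant piece $\zeta(2\sigma)$ produces the subleading $F_4 \log(T)/T^{\sigma+\upsilon}$ term in the case $\upsilon=1-\sigma$, whereas in the case $\upsilon=1/2$ it merges with error terms and appears as $F_5$. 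The case $\sigma = 1/2$ is analogous, using the $\log y$-main term of Proposition~\ref{pr:sumdiv}: the integral $\int \log y/(T-2\pi y)\,dy$ is bounded by $\log X_1 \cdot (\log T)/(4\pi)$ and absorbed into $F_1$, while the $(2\gamma-1)y$ piece produces the $F_4$ contribution $(2-\sigma)(1-\sigma) A_\sigma = 3A_{1/2}/4$.

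The principal difficulty is bookkeeping: each of the six $F_{i,\sigma,\upsilon}$ receives contributions from both sub-ranges and from both the main and the error of Proposition~\ref{pr:sumdiv}, and the choice of which pointwise bound ($y \geq N_0$, $y \leq X_1$, or $y \geq 1$) is pulled out of each integral determines the exact constants $(2\pi)$, $(4\pi)$, $(6\pi)$ appearing in them. In particular, the fact that $F_{4,\sigma,1/2} = 0 = F_{5,\sigma,1-\sigma}$ while the \emph{complementary} coefficients do not vanish reflects a deliberate choice of how to merge sub-leading error terms for the two values of $\upsilon$. I would handle $\sigma < 1/2$ and $\sigma = 1/2$ in parallel using a generic $M_\sigma$ and specialize only at the end to extract each constant $F_{i,\sigma,\upsilon}$.
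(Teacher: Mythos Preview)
Your overall plan---partial summation against Proposition~\ref{pr:sumdiv}---is correct, but the decomposition you describe is different from the paper's and your attributions of the constants $F_{i,\sigma,\upsilon}$ are wrong in several places, so as written you would not recover the stated values.

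The paper does \emph{not} split the sum at $N_0=\lfloor T/(4\pi)\rfloor$. It applies Abel summation once, with the full weight $1/(t^{\upsilon}(T-2\pi t))$, obtaining
\[
\sum_{n\le X_1}\frac{d_{1-2\sigma}(n)}{n^{\upsilon}(T-2\pi n)}=\int_{1}^{X_1}\frac{M_\sigma'(t)\,dt}{t^{\upsilon}(T-2\pi t)}+r_{\sigma,\upsilon}(T),
\]
and only then splits. The main-term integral is split at $T/(4\pi)$ (yielding $F_1,F_2$; for $\sigma=\tfrac12$ it is instead computed exactly via the $\mathrm{arctanh}$ antiderivative, giving $F_{1,\frac12,\frac12}=\sqrt{\pi}/(2\sqrt{2})$). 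The error $r_{\sigma,\upsilon}$ contains $\int\Xi_\sigma(t)\bigl(t^{-\upsilon}(T-2\pi t)^{-1}\bigr)'dt$, and this integral is split at the \emph{unique zero} $t_0=\upsilon T/((\upsilon+1)2\pi)$ of the derivative. For $\upsilon=\tfrac12$ one has $t_0=T/(6\pi)$, which is where the $(6\pi)^{\frac12-\sigma}$ in $F_{5,\sigma,\frac12}$ comes from; for $\upsilon=1-\sigma$ one has $T-2\pi t_0=T/(2-\sigma)$, whence the factor $(2-\sigma)(1-\sigma)$ in $F_{4,\sigma,1-\sigma}$. Your split at $T/(4\pi)$ cannot produce these constants.

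Two of your attributions are concretely incorrect. You say $F_{4,\frac12,\frac12}=\tfrac34 A_{\frac12}$ comes from the $(2\gamma-1)y$ piece of $M_{\frac12}$; but this constant carries $A_{\frac12}$, the \emph{error} constant of Proposition~\ref{pr:sumdiv}, and in fact arises from the error integral on $[1,t_0]$ just described (the $(2\gamma-1)$ piece is what feeds $F_{6,\frac12,\frac12}$). You also say $F_3$ comes from the low-range error times $2/T$, but that contribution is of order $T^{-(\sigma+\upsilon)}$, whereas the $F_3$ term has the strictly larger order $T^{-(\sigma-\frac12+\upsilon)}$; in the paper $F_3$ comes from the two boundary evaluations $A_\sigma X_1^{1-\sigma-\upsilon}/\sqrt{T}$ (one from $\Xi_\sigma(X_1)/(X_1^{\upsilon}\sqrt{T})$, one from the endpoint of the integration-by-parts bracket), each of size $A_\sigma/\sqrt{T}$ when $\upsilon=1-\sigma$.
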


\begin{proof}
By recalling Proposition \ref{pr:sumdiv}, for any $0<\sigma\leq\frac{1}{2}$ and any $t\geq 1$, we have that $\sum_{n\leq t}d_{1-2\sigma}(n)=\mathrm{M}_{\sigma}(t)+\Xi_{\sigma}(t)$, where the function $\Xi_{\sigma}$ satisfies
\begin{align*}
|\Xi_{\sigma}(t)| & \leq A_{\sigma} t^{1-\sigma}, & \Xi_{\sigma}(1) & =1-M_{\sigma}(1)=\begin{cases}1-\zeta(2\sigma)-\frac{\zeta(2-2\sigma)}{2-2\sigma} & \text{if }0<\sigma<\frac{1}{2}, \\ 2-2\gamma & \text{if }\sigma=\frac{1}{2}. \end{cases}
\end{align*}
Therefore, by summation by parts, for $\upsilon>0$ we have
\begin{equation}\label{sbp_general}
\sum_{n\leq\frac{T-\sqrt{T}}{2\pi}}\frac{d_{1-2\sigma}(n)}{n^{\upsilon}(T-2\pi n)}=\int_{1}^{\frac{T-\sqrt{T}}{2\pi}}\frac{\mathrm{M}_{\sigma}(t)'dt}{t^{\upsilon}(T-2\pi t)}+\mathrm{r}_{\sigma,\upsilon}(T),
\end{equation}
with
\begin{equation*}
\mathrm{r}_{\sigma,\upsilon}(T)=\frac{1-\Xi_{\sigma}(1)}{T-2\pi}+\frac{\Xi_{\sigma}\left(\frac{T-\sqrt{T}}{2\pi}\right)}{\left({\frac{T-\sqrt{T}}{2\pi}}\right)^{\upsilon}\sqrt{T}}-\int_{1}^{\frac{T-\sqrt{T}}{2\pi}}\!\!\Xi_{\sigma}(t)\left(\frac{1}{t^{\upsilon}(T-2\pi t)}\right)'\!dt.
\end{equation*}

As the only zero of the derivative of $t\mapsto\frac{1}{t^{\upsilon}(T-2\pi t)}$ is $\frac{\upsilon T}{(\upsilon+1)2\pi}$, for $\upsilon\in\left\{\frac{1}{2},1-\sigma\right\}$, we obtain
\begin{align*} 
|\mathrm{r}_{\sigma,\upsilon}(T)|\leq & \ \frac{|1-\Xi_{\sigma}(1)|}{T-2\pi}+\frac{A_{\sigma}}{\sqrt{T}}\left({\frac{T-\sqrt{T}}{2\pi}}\right)^{1-\sigma-\upsilon}-\left[\frac{A_{\sigma}t^{1-\sigma-\upsilon}}{T-2\pi t}\right|_{1}^{\frac{\upsilon T}{(\upsilon+1)2\pi}} \\
 & \ +\int_{1}^{\frac{\upsilon T}{(\upsilon+1)2\pi}}\frac{(1-\sigma)A_{\sigma}dt}{t^{\sigma+\upsilon}(T-2\pi t)}+\left[\frac{A_{\sigma}t^{1-\sigma-\upsilon}}{T-2\pi t}\right|_{\frac{\upsilon T}{(\upsilon+1)2\pi}}^{\frac{T-\sqrt{T}}{2\pi}}-\int_{\frac{\upsilon T}{(\upsilon+1)2\pi}}^{\frac{T-\sqrt{T}}{2\pi}}\frac{(1-\sigma)A_{\sigma}dt}{t^{\sigma+\upsilon}(T-2\pi t)}.
\end{align*}
Since the function $t\mapsto\frac{t^{a}}{T-2\pi t}$ is positive and increasing for $t<\frac{T}{2\pi}$ and any $a>0$, in the inequality above we can dismiss the third and sixth term, as they are negative and, moreover, we can bound it as follows. For $\upsilon=\frac{1}{2}$ and $\sigma\neq\frac{1}{2}$, we conclude that $|\mathrm{r}_{\sigma,\frac{1}{2}}(T)|$ is bounded by
\begin{align} 
& \ \frac{|1-\Xi_{\sigma}(1)|}{T-2\pi}+\frac{A_{\sigma}}{(2\pi)^{\frac{1}{2}-\sigma}T^{\sigma}}+\frac{3(1-\sigma)A_{\sigma}}{2T}\frac{\left(\frac{T}{6\pi}\right)^{\frac{1}{2}-\sigma}-1}{\frac{1}{2}-\sigma}+\frac{A_{\sigma}}{(2\pi)^{\frac{1}{2}-\sigma}T^{\sigma}} \nonumber \\
\leq & \ \frac{|1-\Xi_{\sigma}(1)|T_{0}}{(T_{0}-2\pi)T}+\frac{2A_{\sigma}}{(2\pi)^{\frac{1}{2}-\sigma}T^{\sigma}}+\frac{3(1-\sigma)A_{\sigma}}{(6\pi)^{\frac{1}{2}-\sigma}(1-2\sigma)T^{\frac{1}{2}+\sigma}}, \label{def_erra}
\end{align}
where in the obtention of the first term above we used that $T\geq T_{0}$. Similarly, for $\upsilon=1-\sigma$, $|\mathrm{r}_{\sigma,1-\sigma}(T)|$ is bounded by
\begin{align} 
 & \ \frac{|1-\Xi_{\sigma}(1)|}{T-2\pi}+\frac{A_{\sigma}}{\sqrt{T}}+\frac{(2-\sigma)(1-\sigma)A_{\sigma}}{T}\log\left(\frac{(1-\sigma)T}{(2-\sigma)2\pi}\right)+\frac{A_{\sigma}}{\sqrt{T}} \nonumber \\
\leq & \ \frac{|1-\Xi_{\sigma}(1)|T_{0}}{(T_{0}-2\pi)T}+\frac{2A_{\sigma}}{\sqrt{T}}+(2-\sigma)(1-\sigma)A_{\sigma}\frac{\log(T)}{T}, \label{def_err}
\end{align}
where we ignored the negative term coming from $\log\left(\frac{(1-\sigma)T}{(2-\sigma)2\pi}\right)=\log(T)-\log\left(\frac{(2-\sigma)2\pi}{1-\sigma}\right)$. In both \eqref{def_erra} and \eqref{def_err}, we have also $|1-\Xi_{\sigma}(1)|<1$ for $0<\sigma<\frac{1}{2}$, by Proposition~\ref{pr:mvzeta}.

If $0<\sigma<\frac{1}{2}$, the main term in \eqref{sbp_general} equals
\begin{align}\label{eq:divlog1}
\frac{\zeta(2\sigma)}{2\pi}\int_{1}^{\frac{T-\sqrt{T}}{2\pi}}\frac{dt}{t^{\upsilon}\left(\frac{T}{2\pi}-t\right)}+\frac{\zeta(2-2\sigma)}{2\pi}\int_{1}^{\frac{T-\sqrt{T}}{2\pi}}\frac{dt}{t^{2\sigma-1+\upsilon}\left(\frac{T}{2\pi}-t\right)}.
\end{align}

Since $\zeta(2\sigma)<0$, in order to obtain an upper bound, we can dismiss the first integral in \eqref{eq:divlog1}. Subsequently, we can divide the interval of integration into $\left[1,\frac{T}{4\pi}\right]$ and $\left[\frac{T}{4\pi},\frac{T-\sqrt{T}}{2\pi}\right]$; then, bounding $\frac{T}{2\pi}-t\geq\frac{T}{4\pi}$ in the first denominator and $t^{2\sigma-1+\upsilon}\geq\left(\frac{T}{4\pi}\right)^{2\sigma-1+\upsilon}$ in the second, the second term of \eqref{eq:divlog1} is bounded by
\begin{align}
 & \ \frac{\zeta(2-2\sigma)}{2\pi }\left(\frac{4\pi}{T}\int_{1}^{\frac{T}{4\pi}}\frac{dt}{t^{2\sigma-1+\upsilon}}+\left(\frac{4\pi}{T}\right)^{2\sigma-1+\upsilon}\int_{\frac{T}{4\pi}}^{\frac{T-\sqrt{T}}{2\pi}}\frac{dt}{\frac{T}{2\pi}-t}\right) \nonumber \\
= & \ \frac{2\zeta(2-2\sigma)}{T}\frac{\left(\frac{T}{4\pi}\right)^{2-2\sigma-\upsilon}-1}{2-2\sigma-\upsilon}+\frac{\zeta(2-2\sigma)(4\pi)^{2\sigma-1+\upsilon}}{2\pi T^{2\sigma-1+\upsilon}}\log\left(\frac{\sqrt{T}}{2}\right) \nonumber \\
< & \ \frac{2\zeta(2-2\sigma)}{(4\pi)^{2-2\sigma-\upsilon}(2-2\sigma-\upsilon)T^{2\sigma-1+\upsilon}}+\frac{\zeta(2-2\sigma)\log(T)}{(4\pi)^{2-2\sigma-\upsilon}T^{2\sigma-1+\upsilon}}. \label{mainboundsigma}
\end{align}

On the other hand, if $\sigma=\upsilon=\frac{1}{2}$, the main term given by \eqref{sbp_general} may be bounded as
\begin{align}
 & \ \frac{1}{2\pi}\int_{1}^{\frac{T-\sqrt{T}}{2\pi}}\frac{(\log(t)+2\gamma)dt}{\sqrt{t}\left(\frac{T}{2\pi}-t\right)}<\frac{\log(T)}{2\pi}\int_{1}^{\frac{T-\sqrt{T}}{2\pi}}\frac{dt}{\sqrt{t}\left(\frac{T}{2\pi}-t\right)} \nonumber \\
= & \ \frac{\log(T)}{2\pi}\left[\frac{2\sqrt{2\pi}}{\sqrt{T}}\arctan\left(\sqrt{\frac{2\pi t}{T}}\right)\right|_{1}^{\frac{T-\sqrt{T}}{2\pi}}<\frac{\sqrt{2}\log(T)\arctan(1)}{\sqrt{\pi}\sqrt{T}}=\frac{\sqrt{\pi}\log(T)}{2\sqrt{2}\sqrt{T}}, \label{mainboundhalf}
\end{align}
where we have used that $\log(t)+2\gamma<\log(T)-\log(2\pi)+2\gamma<\log(T)$.

The result is concluded by putting \eqref{def_erra}, \eqref{def_err}, \eqref{mainboundsigma} and \eqref{mainboundhalf} together.
\end{proof}

\begin{lemma}\label{new_bound:1}
Let $U>0$. For any $n\in\mathbb{Z}_{>0}$ such that $n<\frac{U}{e^2 2\pi}$, we have
\begin{equation*}
\left|\frac{1}{2i}\int_{-\infty-iU}^{\frac{1}{2}- iU}\!\!\!\frac{\chi(1-s)}{n^s}ds\right|\!\leq\!\frac{\sqrt{\pi}\ e^{\frac{1}{24U}+\frac{1}{30U^2}}\left(1+\frac{1}{e^{\pi U}}\right)}{\sqrt{2}}\!\left(\frac{e}{\sqrt{2\pi n}\log\left(\!\frac{e^2U}{2\pi n}\right)}\!+\!\frac{1}{\sqrt{U}\log\left(\frac{U}{e^2 2\pi n}\!\right)}\right).
\end{equation*}
\end{lemma}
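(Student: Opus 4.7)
The plan is to parametrize the contour by $s = \sigma - iU$ for $\sigma \in (-\infty, \tfrac{1}{2}]$, so that the integral reduces to
\begin{equation*}
\left|\frac{1}{2i}\int_{-\infty-iU}^{\frac{1}{2}-iU}\frac{\chi(1-s)}{n^s}\,ds\right| \leq \frac{1}{2}\int_{-\infty}^{\frac{1}{2}}\frac{|\chi(1-\sigma+iU)|}{n^\sigma}\,d\sigma,
\end{equation*}
and the problem becomes that of bounding this real integral. I will bound $|\chi(1-\sigma+iU)|$ via the definition $\chi(1-s)=2(2\pi)^{-s}\cos(\pi s/2)\Gamma(s)$, combining Proposition~\ref{cosine} and Theorem~\ref{Stirling}\eqref{StirlingG} on $\Gamma(s)$. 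The key computation is that writing $|\Gamma(s)| = \sqrt{2\pi}|s|^{\sigma-1/2}\exp(U\arg(s) - \sigma + \Re(1/(12s)))e^{O^*(\cdots)}$ pairs up with $|\cos(\pi s/2)| \leq \tfrac{e^{\pi U/2}}{2}(1+e^{-\pi U})$ to produce the combined exponent $\pi U/2 + U\arg(s) - \sigma$. A case split on the sign of $\sigma$, using the identity $\arctan(U/|\sigma|) = \pi/2 - \arctan(|\sigma|/U)$ together with the elementary bounds $\arctan(x)\leq x$ and $\arctan(x)\geq x-x^3/3$ for $x\geq 0$, shows that this combined exponent is non-positive for $\sigma\geq 0$ and bounded by $|\sigma|^3/(3U^2)$ (or, in the far tail, by the cruder $|\sigma|$) for $\sigma<0$. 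The constant factors $e^{1/(24U)}$ and $e^{1/(30U^2)}$ in the claim then absorb, respectively, $e^{\Re(1/(12s))}$ (using $|s|\geq U$ and $\sigma\leq 1/2$) and the Stirling error $e^{O^*(1/(60|s|(|s|+\sigma)))}$, the latter uniformly in $\sigma\leq 1/2$ via the rationalization $|s|+\sigma = U^2/(|s|-\sigma)$ valid for $\sigma\leq 0$.

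The integration then splits at $\sigma=0$. On $[0,\tfrac{1}{2}]$, using $|s|\leq\sqrt{1/4+U^2}\leq Ue^{1/(8U^2)}$, the integrand is bounded by $\sqrt{2\pi/U}\,(|s|/(2\pi n))^\sigma$ up to the error factors, and the geometric integral $\int_0^{1/2}(|s|/(2\pi n))^\sigma\,d\sigma = ((|s|/(2\pi n))^{1/2}-1)/\log(|s|/(2\pi n))$ produces the first term of order $\frac{e}{\sqrt{2\pi n}\log(e^2 U/(2\pi n))}$. On $(-\infty, 0]$, using $|s|\geq U$ and $\sigma-1/2\leq 0$ to get $|s|^{\sigma-1/2}\leq U^{\sigma-1/2}$, the problem reduces to estimating $\int_{-\infty}^{0}(U/(2\pi n))^\sigma e^{|\sigma|^3/(3U^2)}\,d\sigma$. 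Under the hypothesis $n<U/(e^2\cdot 2\pi)$ the base $U/(2\pi n)$ exceeds $e^2$, so one can write $(U/(2\pi n))^\sigma = (U/(e^2\cdot 2\pi n))^\sigma\cdot e^{2\sigma}$; the factor $e^{2\sigma}=e^{-2|\sigma|}$ absorbs the phase correction $e^{|\sigma|^3/(3U^2)}$ throughout the range $|\sigma|\leq U\sqrt{6}$ (where $|\sigma|^3/(3U^2)\leq 2|\sigma|$), and a final geometric evaluation yields the second term of order $\frac{1}{\sqrt{U}\log(U/(e^2\cdot 2\pi n))}$.

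The main obstacle is that Stirling's asymptotic is non-uniform as $\sigma\to -\infty$, reflected in the super-polynomial phase factor $e^{|\sigma|^3/(3U^2)}$. The hypothesis $n<U/(e^2\cdot 2\pi)$ is designed precisely to overcome this: the slack $e^{2\sigma}$ described above absorbs the phase correction in the main range, while for the far tail $|\sigma|\gtrsim U\sqrt{6}$ one switches to the crude bound $U\arctan(U/|\sigma|)\leq\pi U/2$ and uses the refined estimate $|s|^{\sigma-1/2}\leq|\sigma|^{\sigma-1/2}$, which makes the integrand decay super-exponentially and contributes negligibly. The most delicate part of the argument is the numerical bookkeeping required to match the exact multiplicative constants and the specific form of the two logarithmic denominators in the statement.
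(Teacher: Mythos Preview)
Your overall strategy---parametrize by $\sigma$, bound the integrand via Stirling and the cosine estimate, split the resulting real integral at $\sigma=0$, and evaluate two geometric integrals---is exactly what the paper does. The difference is in how the Stirling input is packaged, and here the paper's route is considerably shorter.

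The paper invokes Theorem~\ref{Stirling}\eqref{StirlingGm} rather than \eqref{StirlingG}. Part~\eqref{StirlingGm} already contains the crude first-order arctan estimate $\arg(s)=\mathrm{sgn}(t)\tfrac{\pi}{2}+O^*(|\sigma|/|t|)$ baked into its $O^*(|\sigma|)$ error term, so combining it with Proposition~\ref{cosine} and the trivial simplifications $|s|^{\sigma-1/2}\le|t|^{\sigma-1/2}$, $\tfrac{|\sigma|}{|s|^2}\le\tfrac{1}{2|t|}$, $\tfrac{1}{|s|(|s|+\sigma)}\le\tfrac{2}{t^2}$ gives immediately
\[
\left|\frac{1}{2i}\,\frac{\chi(1-s)}{n^s}\right|\le\frac{\sqrt{2\pi}}{2}(2\pi n)^{-\sigma}U^{\sigma-\frac12}\bigl(1+e^{-\pi U}\bigr)\,e^{2|\sigma|+\frac{1}{24U}+\frac{1}{30U^2}}.
\]
From here the integral $\int_{-\infty}^{1/2}(U/(2\pi n))^{\sigma}e^{2|\sigma|}\,d\sigma$ splits at $0$ into two exact geometric integrals with bases $e^{2}U/(2\pi n)$ and $U/(e^{2}2\pi n)$, producing the two terms of the lemma directly, with no far-tail argument needed.

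Your refined phase bound (exponent $\le 0$ on $[0,\tfrac12]$ and $\le|\sigma|^3/(3U^2)$ on $(-\infty,0]$) is correct and sharper, but note that it does \emph{not} directly yield the first term as written: with the exponent bounded by $0$, the geometric base on $[0,\tfrac12]$ is $\approx U/(2\pi n)$, not $e^2U/(2\pi n)$, so the denominator you obtain is $\log(U/(2\pi n))$ rather than $\log(e^2U/(2\pi n))$ and the numerator lacks the factor $e$. To recover the lemma's stated constants you would still need a separate comparison step---or you could simply weaken your bound to $e^{2\sigma}$ on $[0,\tfrac12]$, at which point you have reproduced the paper's argument. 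Similarly, on $(-\infty,0]$ your manoeuvre of writing $(U/(2\pi n))^\sigma=(U/(e^2 2\pi n))^\sigma e^{2\sigma}$ and absorbing $e^{|\sigma|^3/(3U^2)}$ into $e^{-2|\sigma|}$ effectively recovers the crude $e^{2|\sigma|}$ bound anyway, with the extra cost of the tail treatment. In short: your route works, but the paper's use of \eqref{StirlingGm} gets the uniform $e^{2|\sigma|}$ in one stroke and makes the rest a two-line computation.
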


\begin{proof}
By Proposition \ref{cosine}, \eqref{functional} and Theorem \ref{Stirling}\eqref{StirlingGm} we have the following general estimation:
\begin{align}\label{new_est:1}
&\ \left|\frac{1}{2i}\frac{\chi(1-s)}{n^s}\right|=\left|\Gamma(s)\cos\left(\frac{\pi s}{2}\right)(2\pi n)^{-s}\right|\nonumber\\
=&\ \frac{\sqrt{2\pi}}{2}(2\pi n)^{-\sigma}|s|^{\sigma-\frac{1}{2}}\left(1+O^*\left(e^{-\pi |t|}\right)\right)e^{O^{*}\left(2|\sigma|+\frac{|\sigma|}{12|s|^2}+\frac{1}{60|s|(|s|+\sigma)}\right)}.
\end{align} 
In particular, if $\sigma\leq\frac{1}{2}$ and $t\neq 0$, we have that $|s|^{\sigma-\frac{1}{2}}\leq|t|^{\sigma-\frac{1}{2}}$, $\frac{|\sigma|}{|s|^2}\leq\frac{1}{2|t|}$ and $\frac{1}{|s|(|s|+\sigma)}=\frac{|s|-\sigma}{t^2|s|}\leq\frac{2}{t^2}$. Thus, from \eqref{new_est:1}, we conclude that
\begin{align}\label{new_est:2} 
\left|\frac{1}{2i}\frac{\chi(1-s)}{n^s}\right|&\leq\frac{\sqrt{2\pi}}{2}(2\pi n)^{-\sigma}|t|^{\sigma-\frac{1}{2}}\left(1+e^{-\pi |t|}\right)e^{2|\sigma|+\frac{1}{24|t|}+\frac{1}{30t^2}}.
\end{align}  
Hence, from \eqref{new_est:2}, we readily see that
\begin{align*}
\left|\frac{1}{2i}\int_{-\infty- iU}^{\frac{1}{2}- iU}\frac{\chi(1-s)}{n^s}ds\right|&\leq\frac{\sqrt{2\pi}\ e^{\frac{1}{24U}+\frac{1}{30U^2}}\left(1+\frac{1}{e^{\pi U}}\right)}{2\sqrt{U}}\int_{-\infty}^{\frac{1}{2}}(2\pi n)^{-\sigma}U^{\sigma}e^{2|\sigma|}d\sigma.
\end{align*}
The result is concluded by splitting the integral above at $\sigma=0$ and dismissing the negative term that arises in the range $\sigma\in\left[0,\frac{1}{2}\right]$.
\end{proof}

The following result is crucial since, rather than providing an estimation, it exhibits an asymptotic formula. As it turns out, it is the main term of this formula that will give the main term and secondary term of the moment of order $2$ of the zeta function in the critical strip.

\footnotesize
\begin{mysage}
###
# Functions e_1,e_2 in {def:e1e2}.
def exf_1(si,ta):
    si = RIF( si )
    ta = RIF( ta )
    result = (abs_int(si-1/2)*(si^2/2+si^4/(4*ta^2))+si^3/3+si/12 \
              +1/(90*abs_int(ta))+si^3/(12*ta^2))*1/ta^2
    return RIF(result)
def exf_2(si,ta):
    si = RIF( si )
    ta = RIF( ta )
    result = (si^4/4+abs_int(si-1/2)*si^3/3+1/90+si^2/12)*1/abs_int(ta)^3
    return RIF(result)
###
# Function E in {E_def}. Given the precision problem for T very large, we use here
# the precise version of the exponential: since we encounter something
# of the form e^(1/T0^2)-1, we use for safety 3*log_2(T0) bits of precision.
def Exf(si,ta):
    si = RIF( si )
    ta = RIF( ta )
    di = ceil(3*log_int(ta)/log_int(2))
    result = ta^2*(exp_int_precise(exf_1(si,ta)+exf_2(si,ta),di)-1)
    return RIF(result)
###
# Functions for computing constants G,H,H'.
def Fu_G_const(ta):
    ta = RIF( ta )
    result = sqrt_int(2*pi_int)/2*exp_int(1/(48*exp_int(2)*ta) \
             +1/(120*exp_int(4)*ta^2))*(1+1/exp_int(2*pi_int*exp_int(2)*ta))
    return RIF(result)
def Fu_G_1(ta):
    ta = RIF( ta )
    result = Fu_G_const(ta)*e_int/(sqrt_int(2*pi_int)*log_int(2*exp_int(4)))
    return RIF(result)
def Fu_G_2(ta):
    ta = RIF( ta )
    result = Fu_G_const(ta)*1/(sqrt_int(2)*e_int*log_int(2))
    return RIF(result)
def Fu_H_const(ta):
    ta = RIF( ta )
    result = (2*exp_int(2)-1)/2*(1+1/exp_int(pi_int*ta))
    return RIF(result)
def Fu_E_const(ta):
    ta = RIF( ta )
    result = Exf(1/2,ta)
    return RIF(result)
def Fu_H_1(ta):
    ta = RIF( ta )
    result = Fu_H_const(ta)*2/(2*exp_int(2)-1)
    return RIF(result)
def Fu_H_2(ta):
    ta = RIF( ta )
    result = Fu_H_const(ta)*1/(48*exp_int(2))
    return RIF(result)
def Fu_H_3(ta):
    ta = RIF( ta )
    result = Fu_H_const(ta)*Fu_E_const(ta)/(2*exp_int(2))
    return RIF(result)
def Fu_H_4(simin,simax,ta):
    simin = RIF( simin )
    simax = RIF( simax )
    ta = RIF( ta )
    result = 1/2*(1+1/exp_int(pi_int*ta))*(1+Exf(1-simin,ta)/ta^2)
    return RIF(result)
def Fu_Hp_1(ta):
    ta = RIF( ta )
    result = Fu_H_const(ta)*3/(2*exp_int(2)-1)
    return RIF(result)
def Fu_Hp_2(ta):
    ta = RIF( ta )
    result = Fu_H_const(ta)*1/(2*exp_int(2)-1)
    return RIF(result)
def Fu_Hp_3(ta):
    ta = RIF( ta )
    result = Fu_H_const(ta)*1/(48*exp_int(2))
    return RIF(result)
###
# Constants G,H,H'.
G_1 = roundup( Fu_G_1(T0) , digits )
G_2 = roundup( Fu_G_2(T0) , digits )
H_1 = roundup( Fu_H_1(T0) , digits )
H_2 = roundup( Fu_H_2(T0) , digits )
H_3 = roundup( Fu_H_3(T0) , digits )
H_4_i0 = roundup( Fu_H_4(0,1/2,T0) , digits )
H_4_i4 = roundup( Fu_H_4(1/4,1/2,T0) , digits )
Hp_1 = roundup( Fu_Hp_1(T0) , digits )
Hp_2 = roundup( Fu_Hp_2(T0) , digits )
Hp_3 = roundup( Fu_Hp_3(T0) , digits )
\end{mysage}
\normalsize

\begin{lemma}\label{In:estimation}
Let $T\geq T_0=\sage{T0}$. For any $n\in\mathbb{Z}_{>0}$ such that $n\leq\frac{T}{2\pi}$, we have the following estimation:
\begin{equation*}
I_n=2\pi+O^{*}\left(2|A_n|+2|B_n|+2|C_n|\right),
\end{equation*}
where
\begin{align*}
|A_n| & \leq\frac{G_{1}}{\sqrt{n}}+\frac{G_{2}}{\sqrt{T}}, & & \begin{aligned} G_{1} & =\sage{G_1}, \\ G_{2} & =\sage{G_2}, \end{aligned}
\end{align*}
and, if $n\leq\frac{T-\sqrt{T}}{2\pi}$, 
\begin{align}
|B_n| & \leq\frac{H_{1}}{\sqrt{n}\log\left(\frac{T}{2\pi n}\right)}+\frac{H_{2}}{T\sqrt{n}\log\left(\frac{T}{2\pi n}\right)}+\frac{H_{3}}{T\sqrt{n}}, & & \begin{aligned} H_{1} & =\sage{H_1}, & H_{2} & =\sage{H_2}, \\ H_{3} & =\sage{H_3}, & & \end{aligned} \label{Bn:1} \\
|C_n| & \leq\frac{H_{4}}{(2\pi)^{\frac{1}{2}-\tau}}\frac{T^{\frac{1}{2}-\tau}-(2\pi n)^{\frac{1}{2}-\tau}}{n^{1-\tau}\log\left(\frac{T}{2\pi n}\right)}, & & H_{4}=\sage{H_4_i4}; \label{Cn:1}
\end{align} 
otherwise, if $\frac{T-\sqrt{T}}{2\pi}<n\leq\frac{T}{2\pi}$,
\begin{align}
|B_n| & \leq\frac{H'_{1}\sqrt{T}}{\sqrt{n}}+\frac{H'_{2}}{\sqrt{n}}+\frac{H'_{3}}{\sqrt{T}\sqrt{n}}+\frac{H'_{4}}{T\sqrt{n}}, & & \begin{aligned} H'_{1} & =\sage{Hp_1}, & H'_{2} & =\sage{Hp_2}, \\ H'_{3} & =\sage{Hp_3}, & H'_{4} & =H_{3}, \end{aligned} \label{Bn:2} \\
|C_n| & \leq\left(\frac{1}{2}-\tau\right)\frac{H'_{5}}{(2\pi)^{\frac{1}{2}-\tau}}\frac{T^{\frac{1}{2}-\tau}}{n^{1-\tau}}, & & H'_{5}=H_4. \label{Cn:2}
\end{align}
\end{lemma}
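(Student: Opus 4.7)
The plan starts with the residue theorem. The integrand $\chi(1-s)n^{-s}$ has a unique simple pole at $s = 0$ — inherited from $\Gamma(s) \sim 1/s$ in $\chi(1-s) = 2(2\pi)^{-s}\cos(\pi s/2)\Gamma(s)$ — with residue $2$, since $(2\pi)^{-s}\cos(\pi s/2) n^{-s}$ all evaluate to $1$ at $s=0$. Shifting the line of integration $\Re(s) = 1-\tau$ leftward past the pole yields, via $\frac{1}{2i}\cdot 2\pi i\cdot 2 = 2\pi$ in the residue theorem, the decomposition $I_n = 2\pi + E_n$ with $E_n$ the integral over the shifted contour. I would then write $E_n = -2(A_n + B_n + C_n)$ (the factor $2$ from the symmetric top/bottom halves of the contour) through a deformation built from horizontal segments at heights $|t| = T$ and an auxiliary $|t| = U = 2e^2 T$: $C_n$ is the horizontal contribution at height $|t| = T$ with $\sigma \in [\tfrac12, 1-\tau]$, $A_n$ is the horizontal contribution at height $|t| = U$ with $\sigma \in (-\infty, \tfrac12]$, and $B_n$ collects the transitional piece bridging heights $T$ and $U$. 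The choice $U = 2e^2 T$ is calibrated precisely so that the log factors appearing in Lemma~\ref{new_bound:1} reduce to $\log(e^2 U/(2\pi n)) \geq \log(2e^4) = 4 + \log 2$ and $\log(U/(e^2 2\pi n)) \geq \log 2$, bounded below uniformly in $n \leq T/(2\pi)$, producing the exact $G_1/\sqrt n + G_2/\sqrt T$ form of $|A_n|$.

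For $|C_n|$ I would combine Theorem~\ref{Stirling}\eqref{StirlingGam} and Proposition~\ref{cosine} to get $|\chi(1-s)n^{-s}| \leq \sqrt{2\pi}(2\pi n)^{-\sigma}T^{\sigma - 1/2}$ on $s = \sigma + iT$ up to multiplicative corrections $(1 + O^*(T^{-2}))(1 + O^*(e^{-\pi T}))$, and integrate over $\sigma \in [\tfrac12, 1-\tau]$; the antiderivative of $(T/(2\pi n))^\sigma$ supplies the characteristic quotient $[T^{1/2-\tau}-(2\pi n)^{1/2-\tau}]/\log(T/(2\pi n))$ of \eqref{Cn:1}. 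For \eqref{Cn:2} in the boundary range $(T-\sqrt T)/(2\pi) < n \leq T/(2\pi)$ where $\log(T/(2\pi n))$ becomes small, I would apply the elementary inequality $y^\alpha - 1 \leq \alpha y^\alpha\log y$ for $y \geq 1$ and $\alpha \in (0,1)$ (verified by equality at $y = 1$ and positivity of $\alpha^2 y^{\alpha-1}\log y$) with $y = T/(2\pi n)$ and $\alpha = \tfrac12 - \tau$ to absorb the problematic $\log$ in the denominator. For $|B_n|$ with $n \leq (T-\sqrt T)/(2\pi)$, I would reproduce Lemma~\ref{new_bound:1}'s argument at $U = T$ itself — permissible since $\log(T/(2\pi n)) > 0$ is bounded below there — expanding $e^{1/(24T)+1/(30T^2)}(1+e^{-\pi T}) = 1 + O^*(1/T)$ to isolate the three terms of \eqref{Bn:1}. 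For the boundary range where the $\log$ is small and Lemma~\ref{new_bound:1}'s approach fails, I would directly integrate \eqref{new_est:2} on $\sigma \in (-\infty, \tfrac12]$ at $|t| = T$, exploiting that $(T/(2\pi n))^{1/2}$ is close to $1$ while the $e^{2|\sigma|}$-weighted decay still operates; the dominant $H'_1\sqrt T/\sqrt n$ term arises from $\sigma$ near $\tfrac12$, with subleading $H'_2, H'_3, H'_4$ from the tails and corrections.

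The main obstacle is bookkeeping: tracking how each multiplicative factor from Stirling and Proposition~\ref{cosine} collapses into the precise constants $G_i, H_i, H'_i, H_4, H'_5$ computed in the Sage block, and verifying that the deformation correctly absorbs the vertical connector between heights $T$ and $U$ — which naively contributes $O(T/\sqrt n)$ since $|\chi(\tfrac12+it)|=1$ — into $B_n$ without spoiling its stated bound. The boundary regime $n$ close to $T/(2\pi)$ is the subtlest: both \eqref{Bn:2} and \eqref{Cn:2} forgo the $\log$ denominator, and producing the precise $(\tfrac12-\tau)$ factor in \eqref{Cn:2} via the $y^\alpha$-inequality and the $\sqrt T$ factor in \eqref{Bn:2} via direct integration both require tight control throughout the accounting, since coarse losses here would propagate into the error terms of Proposition~\ref{pr:I} and obstruct the advertised main-term asymptotics.
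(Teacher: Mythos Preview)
Your treatment of the residue, of $A_n$ via Lemma~\ref{new_bound:1} with $U=2e^2T$, and of $C_n$ in both regimes is correct and matches the paper (your inequality $y^\alpha-1\leq\alpha y^\alpha\log y$ is equivalent to the paper's trivial bound $\int_{1/2}^{1-\tau}(T/2\pi n)^\sigma\,d\sigma\leq(\tfrac12-\tau)(T/2\pi n)^{1-\tau}$).

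The gap is $B_n$. You correctly identify it as the vertical connector on $\Re(s)=\tfrac12$ between heights $T$ and $U$, and you correctly flag that the trivial bound is $O(T/\sqrt n)$ since $|\chi(\tfrac12+it)|=1$. But both of your proposed remedies are \emph{horizontal}-integral estimates and cannot touch this vertical piece. Lemma~\ref{new_bound:1} ``at $U=T$'' bounds $\int_{-\infty-iT}^{1/2-iT}$, which is not $B_n$, and in any case requires $n<T/(e^2 2\pi)$, excluding most of the range; and your boundary-regime plan of integrating \eqref{new_est:2} over $\sigma\in(-\infty,\tfrac12]$ at height $T$ actually diverges when $2\pi n\approx T$, since $(T/2\pi n)^\sigma$ gives no decay while $e^{2|\sigma|}$ blows up. What the paper exploits instead is \emph{oscillation in $t$}: after Stirling the integrand on $\Re(s)=\tfrac12$ becomes $\tfrac{1}{2\sqrt n}e^{if(t)}(1+O^*(\mathrm{E}/t^2))$ with phase $f(t)=-t\log(t/2\pi n)+t-\tfrac{1}{24t}$, and since $|f'(t)|\geq\log(T/2\pi n)$ on $[T,2e^2T]$, a single integration by parts (identity~\eqref{identity1}) gives $\bigl|\int_T^{2e^2T}e^{if}\,dt\bigr|\leq 2/\log(T/2\pi n)+O(1/T)$, which is the source of $H_1,H_2$ in \eqref{Bn:1}. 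In the boundary regime the paper splits the vertical integral at $t=T+\sqrt T$: the short piece $[T,T+\sqrt T]$ is bounded trivially by its length $\sqrt T$ (producing $H_1'\sqrt T/\sqrt n$), and on $[T+\sqrt T,2e^2T]$ the same integration by parts applies with $|f'|\geq\log\!\bigl(\tfrac{T+\sqrt T}{2\pi n}\bigr)\geq(\sqrt T+\tfrac12)^{-1}$ via Lemma~\ref{logg}\eqref{logg2}. This first-derivative-test argument is the missing ingredient in your outline.
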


\begin{remark}\label{crucial}
Observe that \eqref{Bn:1} and \eqref{Cn:1} are not as sharp as $n$ approaches $\frac{T}{2\pi}$. Indeed, if $2\pi n>T-\sqrt{T}$ then, by Lemma~\ref{logg}\eqref{logg1}, 
\begin{align*}\frac{1}{\log\left(\frac{T}{2\pi n}\right)}\geq\frac{2\pi n}{T-2\pi n}\gg\sqrt{T},\end{align*} 
and thus it is better to consider the bounds \eqref{Bn:2} and  \eqref{Cn:2}, respectively. Instead, if $2\pi n\leq T-\sqrt{T}$, we have that 
\begin{align*}\frac{1}{\log\left(\frac{T}{2\pi n}\right)}\leq\frac{T}{T-2\pi n}\ll\sqrt{T},\end{align*} 
and thus it is better to consider the bound \eqref{Bn:1} over the one given in \eqref{Bn:2}; moreover, as 
\begin{align*}\frac{T^{\frac{1}{2}-\tau}-(2\pi n)^{\frac{1}{2}-\tau}}{\log\left(\frac{T}{2\pi n}\right)}\ll_{\tau} T^{\frac{1}{2}-\tau},\end{align*} 
it is also better in this case to consider the bound $\eqref{Cn:1}$ over $\eqref{Cn:2}$.
\end{remark}

\begin{proof}
Let $U>T$. By using \eqref{functional}, the residue theorem and Theorem \ref{Stirling}\eqref{StirlingGm} we conclude, as in \cite[\S 7.4]{Tit86}, that for any $U>0$ we have $2\pi = I_{n} +A_n-\overline{A_n}+B_n-\overline{B_n}+C_n-\overline{C_n}$, where
\begin{align*}
A_n&=\frac{1}{i}\int_{-\infty-iU}^{\frac{1}{2}-iU}\Gamma(s)\cos\left(\frac{\pi s}{2}\right)(2\pi n)^{-s}ds,\\
B_n&=\frac{1}{i}\int_{\frac{1}{2}-iU}^{\frac{1}{2}-iT}\Gamma(s)\cos\left(\frac{\pi s}{2}\right)(2\pi n)^{-s}ds,\\
C_n&=\frac{1}{i}\int_{\frac{1}{2}-iT}^{1-\tau-iT}\Gamma(s)\cos\left(\frac{\pi s}{2}\right)(2\pi n)^{-s}ds.
\end{align*}
Furthermore, by selecting $U=2e^2T$ and using that $T\geq T_0$, $A_n$ can be estimated with the help of Lemma \ref{new_bound:1}, giving
\begin{align}\label{An:est1}
&|A_n|\leq
\frac{\sqrt{2\pi}}{2}e^{\frac{1}{48e^2T_0}+\frac{1}{120e^4T_{0}^2}}\left(1+\frac{1}{e^{2\pi e^2T_{0}}}\right)\left(\frac{e}{\sqrt{2\pi n}\log\left(\frac{2e^4T}{2\pi n}\right)}+\frac{1}{\sqrt{2}e\sqrt{T}\log\left(\frac{T}{\pi n}\right)}\right)\nonumber\\
&\leq\frac{\sqrt{2\pi}}{2}e^{\frac{1}{48e^2T_0}+\frac{1}{120e^4T_{0}^2}}\left(1+\frac{1}{e^{2\pi e^2T_{0}}}\right)\left(\frac{e}{\sqrt{2\pi}\log\left(2e^4\right)\sqrt{n}}+\frac{1}{\sqrt{2}e\log\left(2\right)\sqrt{T}}\right),
\end{align}  
where we have used that $n\leq\frac{T}{2\pi}$.

With respect to $B_n$, observe that $\frac{1}{2}-i[U,T]$ is a subset of the angular sector defined by $|\arg(s)|<\frac{\pi}{2}$; hence, we can use Theorem \ref{Stirling}\eqref{StirlingGam}-\eqref{StirlingGaI} with $\sigma\geq 0$, $t<0$ and $\theta=\frac{\pi}{2}$ (so that $F_{\theta}=\frac{1}{90}$), along with the definition of the complex cosine and write
\begin{align*}
\frac{1}{i}\Gamma(s)\cos\left(\frac{\pi s}{2}\right)(2\pi n)^{-s}&=\frac{e^{-\frac{\pi i}{2}}}{2}e^{\log(|\Gamma(s)|)+i\Im(\log(\Gamma(s)))}\left(e^{\frac{\pi}{2}is}+e^{-\frac{\pi}{2}is}\right)e^{-\log(2\pi n)s},
\end{align*} 
which, upon using that $\log(1+x)=x+O^*\left(\frac{x^2}{2}\right)$ for $x>0$, so that $\log(|s|)=\log(|t|)+\frac{\sigma^2}{2t^2}+O^*\left(\frac{\sigma^4}{4t^4}\right)$, that $\frac{1}{|s|^2}=\frac{1}{t^2}+O^*\left(\frac{\sigma^2}{t^4}\right)$ and that $\frac{1}{|s|^3}\leq\frac{1}{|t|^3}$, may be rewritten as 
\begin{align}\label{definitive}
\frac{(2\pi)^{\frac{1}{2}-\sigma}e^{-\frac{\pi i}{4}}}{2n^{\sigma}}|t|^{\sigma-\frac{1}{2}}e^{if(\sigma,t)}(1+e^{-\pi\sigma i}e^{\pi t})e^{\mathrm{r}_1(\sigma,t)+i\mathrm{r}_2(\sigma,t)},
\end{align}
where 
\begin{align}\label{def:e1e2}
f(\sigma,t)&=t\log\left(\frac{|t|}{2\pi n}\right)-t+\left(\frac{\sigma^2}{2}-\sigma\left(\sigma-\frac{1}{2}\right)-\frac{1}{12}\right)\frac{1}{t},\nonumber\\
|\mathrm{r}_1(\sigma,t)|&\leq\mathrm{e}_1(\sigma,t)=\left(\left|\sigma-\frac{1}{2}\right|\left(\frac{\sigma^2}{2}+\frac{\sigma^4}{4t^2}\right)+\frac{\sigma^3}{3}+\frac{\sigma}{12}+\frac{1}{90|t|}+\frac{\sigma^3}{12t^2}\right)\frac{1}{t^2},\nonumber\\
|\mathrm{r}_2(\sigma,t)|&\leq\mathrm{e}_2(\sigma,t)=\left(\frac{\sigma^4}{4}+\left|\sigma-\frac{1}{2}\right|\frac{\sigma^3}{3}+\frac{1}{90}+\frac{\sigma^2}{12}\right)\frac{1}{|t|^3}.
\end{align}
Observe that, if $|t|\geq T\geq T_0$, then, for any $\sigma\geq 0$, we have the following bounds:
\begin{align*}
\mathrm{e}_1(\sigma,t)&\leq\min\left\{\mathrm{e}_1(\sigma,T_0),\frac{\mathrm{e}_1(\sigma,T_0)T_0^2}{t^2}\right\},&\mathrm{e}_2(\sigma,t)&\leq\min\left\{\mathrm{e}_2(\sigma,T_0),\frac{\mathrm{e}_2(\sigma,T_0)T_{0}^2}{t^2}\right\}.
\end{align*} 
Therefore, by the complex series representation of the exponential function and using that $|\mathrm{r}_1(\sigma,t)+i\mathrm{r}_2(\sigma,t)|\leq|\mathrm{r}_1(\sigma,t)|+|\mathrm{r}_2(\sigma,t)|$, we may write
\begin{align}\label{def_bound}
e^{\mathrm{r}_1(\sigma,t)+i\mathrm{r}_2(\sigma,t)}=1+\frac{O^*(\mathrm{E}(\sigma,T_0))}{t^2},
\end{align} 
where
\begin{align}\label{E_def} 
\mathrm{E}(\sigma,T_0)=T_0^2(e^{\mathrm{e}_1(\sigma,T_0)+\mathrm{e}_2(\sigma,T_0)}-1).
\end{align}

On the other hand, with the help of \eqref{definitive} and \eqref{def_bound} with $\sigma=\frac{1}{2}$, and using that $|e^{-\pi\sigma i}e^{\pi t}|\leq e^{-\pi T_0}$ if $t\leq-T\leq -T_0$, we derive
\begin{align}\label{Bn:est}
&|B_n|=\left|\int_{-2e^2T}^{-T}\frac{e^{\frac{\pi i}{4}}}{2\sqrt{n}}e^{i f\left(\frac{1}{2},t\right)}(1+e^{-\frac{\pi i}{2}}e^{\pi t})e^{\mathrm{r}_1\left(\frac{1}{2},t\right)+i\mathrm{r}_2\left(\frac{1}{2},t\right)}dt\right|\nonumber\\
\leq\ &\frac{1}{2\sqrt{n}}\left(1+\frac{1}{e^{\pi T_0}}\right)\left(\left|\int_{T}^{2e^2T}e^{i f\left(\frac{1}{2},-t\right)}dt\right|+\mathrm{E}\left(\frac{1}{2},T_0\right)\left|\int_{T}^{2e^2T}\frac{e^{i f\left(\frac{1}{2},-t\right)}}{t^2}dt\right|\right), 
\end{align}  
where we have used the change of variables $t\leftrightarrow -t$ in both integrals above and where, for $t>0$, $f\left(\frac{1}{2},-t\right)=-t\log\left(\frac{t}{2\pi n}\right)+t-\frac{1}{24t}$. The second integral in \eqref{Bn:est} can be readily bounded by $\left(1-\frac{1}{2e^2}\right)\frac{1}{T}$; concerning the first one, we may write $f\left(\frac{1}{2},-t\right)=g(t)+h(t)$, with $g(t)=-t\log\left(\frac{t}{2\pi n}\right)+t$, $h(t)=-\frac{1}{24t}$. Moreover, $g'(t)\neq 0$ for $t\in(T,\infty)$, since $|g'(t)|=\log\left(\frac{t}{2\pi n}\right)>\log\left(\frac{T}{2\pi n}\right)$ and, by hypothesis, $\frac{T}{2\pi n}\geq 1$. We may use then the following identity:
\begin{align}\label{identity1}
\int_{V}^{W}e^{if\left(\frac{1}{2},-t\right)}dt=\left[\frac{e^{i(g(t)+h(t))}}{g'(t)}\right|_{V}^{W}-\int_{V}^{W}\frac{e^{i(g(t)+h(t))}h'(t)}{g'(t)}-\frac{e^{i(g(t)+h(t))}g''(t)}{g'(t)^2}dt,
\end{align}
valid for any $V,W$ such that $W>V$ and $n<\frac{V}{2\pi}$, and derive
\begin{align}\label{Bn:identity}
\left|\int_{V}^{W}e^{if\left(\frac{1}{2},-t\right)}dt\right|&\leq\frac{1}{\log\left(\frac{W}{2\pi n}\right)}+\frac{1}{\log\left(\frac{V}{2\pi n}\right)}+\int_{V}^{W}\frac{1}{24t^2\log\left(\frac{t}{2\pi n}\right)}+\frac{1}{t\log^2\left(\frac{t}{2\pi n}\right)}dt\nonumber\\
&\leq\frac{2}{\log\left(\frac{V}{2\pi n}\right)}+\frac{1}{24}\left(\frac{1}{V}-\frac{1}{W}\right)\frac{1}{\log\left(\frac{V}{2\pi n}\right)}.
\end{align} 
Thus, by using \eqref{Bn:identity} with $V=T$, $W=2e^2T$ and $n\leq\frac{T-\sqrt{T}}{2\pi}<\frac{T}{2\pi}$, we derive from \eqref{Bn:est} that $|B_n|$ is bounded by
\begin{align}\label{Bn:est1}
\frac{2e^2-1}{2\sqrt{n}}\left(1+\frac{1}{e^{\pi T_0}}\right)\left(\left(\frac{2}{2e^{2}-1}+\frac{1}{48e^{2}T}\right)\frac{1}{\log\left(\frac{T}{2\pi n}\right)}+\frac{\mathrm{E}\left(\frac{1}{2},T_0\right)}{2e^2T}\right).
\end{align}
As pointed out in Remark \ref{crucial}, we adopt the bound \eqref{Bn:est1} only for $n\leq\frac{T-\sqrt{T}}{2\pi}$ since, otherwise, it becomes too big in magnitude. When $\frac{T-\sqrt{T}}{2\pi}<n\leq\frac{T}{2\pi}$, we can obtain a better estimation. Indeed, by Lemma~\ref{logg}\eqref{logg2}, we have 
\begin{align*}
\frac{1}{\log\left(\frac{T+\sqrt{T}}{2\pi n}\right)}\leq\frac{1}{\log\left(\frac{T+\sqrt{T}}{T}\right)}\leq\sqrt{T}+\frac{1}{2}\end{align*}
 and we may derive that
\begin{align*}
&\phantom{xxxxxx}\left|\int_{T}^{2e^2T}e^{i f\left(\frac{1}{2},-t\right)}dt\right|\leq\left|\int_{T+\sqrt{T}}^{2e^2T}e^{i f\left(\frac{1}{2},-t\right)}dt\right|+\left|\int_{T}^{T+\sqrt{T}}e^{i f\left(\frac{1}{2},-t\right)}dt\right|\\
&\leq\left(2+\frac{1}{24}\left(\frac{(2e^2-1)\sqrt{T}-1}{2e^2(\sqrt{T}+1)T}\right)\right)\left(\sqrt{T}+\frac{1}{2}\right)+\sqrt{T}\leq 3\sqrt{T}+1+\frac{1}{24}\left(1-\frac{1}{2e^2}\right)\frac{1}{\sqrt{T}},
\end{align*}
using \eqref{Bn:identity} with $V=T+\sqrt{T}$, $W=2e^2T$. Therefore $|B_n|$ is also bounded by
\begin{align}\label{Bn:est2}
\frac{2e^2-1}{2\sqrt{n}}\left(1+\frac{1}{e^{\pi T_0}}\right)\left(\frac{3\sqrt{T}+1}{2e^{2}-1}+\frac{1}{48e^{2}\sqrt{T}}+\frac{\mathrm{E}\left(\frac{1}{2},T_0\right)}{2e^2T}\right).
\end{align}

With respect to $C_n$, observe that $[\frac{1}{2},1-\tau]-iT$ is also a subset of the angular sector defined by $|\arg(s)|<\frac{\pi}{2}$. By recalling \eqref{definitive}, \eqref{def_bound} and the fact that the function $\sigma\mapsto\mathrm{E}(\sigma,t)$ is increasing for $\sigma>\frac{1}{2}$, so that $\mathrm{E}(1-\tau,T_0)\leq\mathrm{E}\left(\frac{3}{4},T_0\right)$, we obtain that for any $s\in[\frac{1}{2},1-\tau]-iT$,
\begin{align}\label{definitive2}
\left|\frac{1}{i}\Gamma(s)\cos\left(\frac{\pi s}{2}\right)(2\pi n)^{-s}\right|\leq\left(1+\frac{1}{e^{\pi T_0}}\right)\left(1+\frac{\mathrm{E}\left(\frac{3}{4},T_0\right)}{T_0^2}\right)\sqrt{\frac{\pi }{2T}}\left(\frac{T}{2\pi n}\right)^{\sigma},
\end{align}
so that, upon integrating the bound given by \eqref{definitive2} on the variable $\sigma$, we obtain that $|C_n|$ is at most either
\begin{align}
& \left(1+\frac{1}{e^{\pi T_0}}\right)\left(1+\frac{\mathrm{E}\left(\frac{3}{4},T_0\right)}{T_0^2}\right)\frac{1}{2\sqrt{n}}\frac{\left(\frac{T}{2\pi n}\right)^{\frac{1}{2}-\tau}-1}{\log\left(\frac{T}{2\pi n}\right)}, & & \text{ if }n\leq\frac{T-\sqrt{T}}{2\pi},\text{ or } \label{Cn:est1} \\
& \left(\frac{1}{2}-\tau\right)\left(1+\frac{1}{e^{\pi T_0}}\right)\left(1+\frac{\mathrm{E}\left(\frac{3}{4},T_0\right)}{T_0^2}\right)\frac{1}{2\sqrt{n}}\left(\frac{T}{2\pi n}\right)^{\frac{1}{2}-\tau}, & & \text{ if }\frac{T-\sqrt{T}}{2\pi}<n\leq\frac{T}{2\pi}. \label{Cn:est2}
\end{align}
The bound \eqref{Cn:est2} is correct since, whenever $n\leq \frac{T}{2\pi}$, the function $\sigma\mapsto\left(\frac{T}{2\pi n}\right)^{\sigma}$ is increasing. Furthermore, observe that \eqref{Cn:est1} and \eqref{Cn:est2} vanish  when $\tau=\frac{1}{2}$. Subsequently, we define 
\begin{align*}
H_{4}=\frac{1}{2}\left(1+\frac{1}{e^{\pi T_0}}\right)\left(1+\frac{\mathrm{E}\left(\frac{3}{4},T_0\right)}{T_0^2}\right).
\end{align*}

Finally, in order to derive the constants of the statement, we combine and evaluate either estimations \eqref{An:est1}, \eqref{Bn:est1} and \eqref{Cn:est1}, if $n\leq\frac{T-\sqrt{T}}{2\pi}$, or estimations \eqref{An:est1}, \eqref{Bn:est2} and \eqref{Cn:est2}, when $\frac{T-\sqrt{T}}{2\pi}<n\leq\frac{T}{2\pi}$. 
\end{proof}

Remark~\ref{crucial} was well pointed out in \cite{Atk39}, by means of which it was possible to obtain an error term for $I$, defined in \eqref{I}, of order $\sqrt{T}\log^2(T)$; as it turns out, the proof we present, inspired in part by \cite[\S 7.4]{Tit86} by means of equation \eqref{def_bound}, allows us to improve the error term magnitude to $\sqrt{T}\log(T)$, presented in Proposition~\ref{pr:I} and proved below.

\begin{proofbold}{Proposition~\ref{pr:I}}

\footnotesize
\begin{mysage}
###
# Estimating I.
###
# Constants multiplying the sums in {eq:alot}.
Ieta_out_0 = RIF( 2*pi_int )
def Fu_Ieta_1_0(ta):
    ta = RIF( ta )
    result = 2*Fu_G_1(ta)
    return RIF(result)
def Fu_Ieta_1_m1(ta):
    ta = RIF( ta )
    result = 2*Fu_H_3(ta)
    return RIF(result)
def Fu_Ieta_2_m12(ta):
    ta = RIF( ta )
    result = 2*Fu_G_2(ta)
    return RIF(result)
def Fu_Ieta_3_1(ta):
    ta = RIF( ta )
    result = 2*Fu_H_1(ta)
    return RIF(result)
def Fu_Ieta_3_0(ta):
    ta = RIF( ta )
    result = 2*Fu_H_2(ta)
    return RIF(result)
def Fu_Ieta_4_32mt(simin,simax,ta):
    simin = RIF( simin )
    simax = RIF( simax )
    ta = RIF( ta )
    result = 2*Fu_H_4(simin,simax,ta)/exp_int(log_int(2*pi_int)*(1/2-simax))
    return RIF(result)
def Fu_Ieta_5_12(ta):
    ta = RIF( ta )
    result = 2*Fu_Hp_1(ta)
    return RIF(result)
def Fu_Ieta_5_0(ta):
    ta = RIF( ta )
    result = 2*Fu_Hp_2(ta)
    return RIF(result)
def Fu_Ieta_5_m12(ta):
    ta = RIF( ta )
    result = 2*Fu_Hp_3(ta)
    return RIF(result)
def Fu_Ieta_6_12mt(simin,simax,ta):
    simin = RIF( simin )
    simax = RIF( simax )
    ta = RIF( ta )
    result = 2*(1/2-simin)*Fu_H_4(simin,simax,ta)/exp_int(log_int(2*pi_int)*(1/2-simax))
    return RIF(result)
###
# All the following coefficients per order inside the sums
# are written as constants multiplying [1,1/tau,1/(1/2-tau)].
###
# Coefficients per order inside the outer sum in {eq:alot}.
def Fu_It_out_1mt(simin,simax):
    simin = RIF( simin )
    simax = RIF( simax )
    result = cw_prod( 1/exp_int(log_int(2*pi_int)*(1-simax)) , Fu_As(simin,simax) )
    return result
I0_out_1mt = Fu_It_out_1mt(0,1/2)
I4_out_1mt = Fu_It_out_1mt(1/4,1/2)
I2_out_12 = RIF( A12/sqrt_int(2*pi_int) )
###
# Coefficients per order inside the first sum in {eq:alot}.
def Fu_It_1_12(simin,simax):
    simin = RIF( simin )
    simax = RIF( simax )
    result = cw_prod( 1/sqrt_int(2*pi_int) , zeb_2s )
    return result
I0_1_12 = Fu_It_1_12(0,1/2)
I4_1_12 = Fu_It_1_12(1/4,1/2)
def Fu_It_1_32m2t(simin,simax):
    simin = RIF( simin )
    simax = RIF( simax )
    result = cw_prod( 1/((3/2-2*simax)*exp_int(log_int(2*pi_int)*(3/2-2*simax))) , zea_2m2s )
    return result
I0_1_32m2t = Fu_It_1_32m2t(0,1/2)
I4_1_32m2t = Fu_It_1_32m2t(1/4,1/2)
def Fu_It_1_12mt(simin,simax):
    simin = RIF( simin )
    simax = RIF( simax )
    result = cw_prod( 1/exp_int(log_int(2*pi_int)*(1/2-simax)) , Fu_C1s(simin,simax) )
    return result
I0_1_12mt = Fu_It_1_12mt(0,1/2)
I4_1_12mt = Fu_It_1_12mt(1/4,1/2)
I2_1_12l = RIF( 2/sqrt_int(2*pi_int) )
def Fu_I2_1_l(ta):
    return Fu_C12(ta)
###
# Coefficients per order inside the second sum in {eq:alot}.
def Fu_It_2_1(simin,simax):
    simin = RIF( simin )
    simax = RIF( simax )
    result = cw_prod( 1/(2*pi_int) , zeb_2s )
    return result
I0_2_1 = Fu_It_2_1(0,1/2)
I4_2_1 = Fu_It_2_1(1/4,1/2)
def Fu_It_2_2m2t(simin,simax):
    simin = RIF( simin )
    simax = RIF( simax )
    result = cw_prod( 1/(exp_int(log_int(2*pi_int)*(2-2*simax))*(2-2*simax)) , zea_2m2s )
    return result
I0_2_2m2t = Fu_It_2_2m2t(0,1/2)
I4_2_2m2t = Fu_It_2_2m2t(1/4,1/2)
def Fu_It_2_1mt(simin,simax):
    simin = RIF( simin )
    simax = RIF( simax )
    result = cw_prod( 1/exp_int(log_int(2*pi_int)*(1-simax)) , Fu_As(simin,simax) )
    return result
I0_2_1mt = Fu_It_2_1mt(0,1/2)
I4_2_1mt = Fu_It_2_1mt(1/4,1/2)
I2_2_1l = RIF( 1/(2*pi_int) )
I2_2_1 = RIF( (2*gamma_int-1)/(2*pi_int) )
I2_2_12 = RIF( A12/sqrt_int(2*pi_int) )
###
# Coefficients per order inside the third sum in {eq:alot}.
I0_3_12m2tl = F_1_s2_i0
I4_3_12m2tl = F_1_s2_i4
I0_3_12m2t = F_2_s2_i0
I4_3_12m2t = F_2_s2_i4
I0_3_mt = F_3_s2_i0
I4_3_mt = F_3_s2_i4
I0_3_m12mtl = F_4_s2_i0
I4_3_m12mtl = F_4_s2_i4
I0_3_m12mt = F_5_s2_i0
I4_3_m12mt = F_5_s2_i4
def Fu_I0_3_m1(ta):
    return Fu_F_6_s2_i0(ta)
def Fu_I4_3_m1(ta):
    return Fu_F_6_s2_i4(ta)
I2_3_m12l = F_1_22
I2_3_m12 = F_2_22 + F_3_22
I2_3_m1l = F_4_22
def Fu_I2_3_m1(ta):
    ta = RIF( ta )
    result = F_5_22 + Fu_F_6_22(ta)
    return RIF(result)
###
# Coefficients per order inside the fourth sum in {eq:alot}.
I0_4_mtl = F_1_s1_i0
I4_4_mtl = F_1_s1_i4
I0_4_mt = F_2_s1_i0
I4_4_mt = F_2_s1_i4
I0_4_m12 = F_3_s1_i0
I4_4_m12 = F_3_s1_i4
I0_4_m1l = F_4_s1_i0
I4_4_m1l = F_4_s1_i4
def Fu_I0_4_m1(ta):
    ta = RIF( ta )
    result = cw_sum( F_5_s1_i0 , Fu_F_6_s1_i0(ta) )
    return result
def Fu_I4_4_m1(ta):
    ta = RIF( ta )
    result = cw_sum( F_5_s1_i4 , Fu_F_6_s1_i4(ta) )
    return result
###
# Coefficients per order inside the fifth sum in {eq:alot}.
def Fu_It_5_0(simin,simax,ta):
    simin = RIF( simin )
    simax = RIF( simax )
    ta = RIF( ta )
    result = cw_prod( 1/exp_int(log_int(1-1/sqrt_int(ta))*1/2)*1/sqrt_int(2*pi_int) , zeb_2s )
    return result
def Fu_It_5_1m2t(simin,simax,ta):
    simin = RIF( simin )
    simax = RIF( simax )
    ta = RIF( ta )
    result = cw_prod( 1/exp_int(log_int(1-1/sqrt_int(ta))*1/2)*1/exp_int(log_int(2*pi_int)*(1/2-2*simax)) , zea_2m2s )
    return result
def Fu_It_5_12mt(simin,simax,ta):
    simin = RIF( simin )
    simax = RIF( simax )
    ta = RIF( ta )
    result = cw_prod( 1/exp_int(log_int(1-1/sqrt_int(ta))*1/2)*2/exp_int(log_int(2*pi_int)*(1/2-simax)) , Fu_As(simin,simax) )
    return result
def Fu_I0_5_0(ta):
    return Fu_It_5_0(0,1/2,ta)
def Fu_I4_5_0(ta):
    return Fu_It_5_0(1/4,1/2,ta)
def Fu_I0_5_1m2t(ta):
    return Fu_It_5_1m2t(0,1/2,ta)
def Fu_I4_5_1m2t(ta):
    return Fu_It_5_1m2t(1/4,1/2,ta)
def Fu_I0_5_12mt(ta):
    return Fu_It_5_12mt(0,1/2,ta)
def Fu_I4_5_12mt(ta):
    return Fu_It_5_12mt(1/4,1/2,ta)
def Fu_I2_5_l(ta):
    ta = RIF( ta )
    result = 1/(sqrt_int(2*pi_int)*sqrt_int(1-1/sqrt_int(ta)))
    return RIF(result)
def Fu_I2_5_0(ta):
    ta = RIF( ta )
    result = (1/sqrt_int(2*pi_int)*(sqrt_int(ta)/(sqrt_int(ta)-1) \
              -log_int(2*pi_int)+2*gamma_int-1)+2*A12)/sqrt_int(1-1/sqrt_int(ta))
    return RIF(result)
###
# Coefficients per order inside the sixth sum in {eq:alot}.
def Fu_It_6_tm12(simin,simax,ta):
    simin = RIF( simin )
    simax = RIF( simax )
    ta = RIF( ta )
    result = cw_prod( 1/exp_int(log_int(1-1/sqrt_int(ta))*(1-simax))*1/exp_int(log_int(2*pi_int)*simin) , zeb_2s )
    return result
def Fu_It_6_12mt(simin,simax,ta):
    simin = RIF( simin )
    simax = RIF( simax )
    ta = RIF( ta )
    result = cw_prod( 1/exp_int(log_int(1-1/sqrt_int(ta))*(1-simax))*exp_int(log_int(2*pi_int)*simax) , zea_2m2s )
    return result
def Fu_It_6_0(simin,simax,ta):
    simin = RIF( simin )
    simax = RIF( simax )
    ta = RIF( ta )
    result = cw_prod( 1/exp_int(log_int(1-1/sqrt_int(ta))*(1-simax))*2 , Fu_As(simin,simax) )
    return result
def Fu_I0_6_tm12(ta):
    return Fu_It_6_tm12(0,1/2,ta)
def Fu_I4_6_tm12(ta):
    return Fu_It_6_tm12(1/4,1/2,ta)
def Fu_I0_6_12mt(ta):
    return Fu_It_6_12mt(0,1/2,ta)
def Fu_I4_6_12mt(ta):
    return Fu_It_6_12mt(1/4,1/2,ta)
def Fu_I0_6_0(ta):
    return Fu_It_6_0(0,1/2,ta)
def Fu_I4_6_0(ta):
    return Fu_It_6_0(1/4,1/2,ta)
###
# Coefficients per order of the whole I, for tau in (0,1/2).
def Fu_I0__32m2tl(ta):
    ta = RIF( ta )
    x3 = cw_prod( Fu_Ieta_3_1(ta) , I0_3_12m2tl )
    x4 = cw_prod( Fu_Ieta_4_32mt(0,1/2,ta) , I0_4_mtl )
    x = cw_sum( x3 , x4 )
    return x
def Fu_I0__32m2t(ta):
    ta = RIF( ta )
    x1 = cw_prod( Fu_Ieta_1_0(ta) , I0_1_32m2t )
    x2 = cw_prod( Fu_Ieta_2_m12(ta) , I0_2_2m2t )
    x3 = cw_prod( Fu_Ieta_3_1(ta) , I0_3_12m2t )
    x4 = cw_prod( Fu_Ieta_4_32mt(0,1/2,ta) , I0_4_mt )
    x5 = cw_prod( Fu_Ieta_5_12(ta) , Fu_I0_5_1m2t(ta) )
    x = cw_sum( x1 , x2 )
    x = cw_sum( x , x3 )
    x = cw_sum( x , x4 )
    x = cw_sum( x , x5 )
    return x
def Fu_I0__1mt(ta):
    ta = RIF( ta )
    xo = cw_prod( Ieta_out_0 , I0_out_1mt )
    x3 = cw_prod( Fu_Ieta_3_1(ta) , I0_3_mt )
    x4 = cw_prod( Fu_Ieta_4_32mt(0,1/2,ta) , I0_4_m12 )
    x5 = cw_prod( Fu_Ieta_5_12(ta) , Fu_I0_5_12mt(ta) )
    x = cw_sum( xo , x3 )
    x = cw_sum( x , x4 )
    x = cw_sum( x , x5 )
    return x
def Fu_I0__1m2t(ta):
    ta = RIF( ta )
    x5 = cw_prod( Fu_Ieta_5_0(ta) , Fu_I0_5_1m2t(ta) )
    x6 = cw_prod( Fu_Ieta_6_12mt(0,1/2,ta) , Fu_I0_6_12mt(ta) )
    x = cw_sum( x5 , x6 )
    return x
def Fu_I0__12(ta):
    ta = RIF( ta )
    x1 = cw_prod( Fu_Ieta_1_0(ta) , I0_1_12 )
    x2 = cw_prod( Fu_Ieta_2_m12(ta) , I0_2_1 )
    x5 = cw_prod( Fu_Ieta_5_12(ta) , Fu_I0_5_0(ta) )
    x = cw_sum( x1 , x2 )
    x = cw_sum( x , x5 )
    return x
def Fu_I0__12mtl(ta):
    ta = RIF( ta )
    x3 = cw_prod( Fu_Ieta_3_1(ta) , I0_3_m12mtl )
    x4 = cw_prod( Fu_Ieta_4_32mt(0,1/2,ta) , I0_4_m1l )
    x = cw_sum( x3 , x4 )
    return x
def Fu_I0__12mt(ta):
    ta = RIF( ta )
    x1 = cw_prod( Fu_Ieta_1_0(ta) , I0_1_12mt )
    x2 = cw_prod( Fu_Ieta_2_m12(ta) , I0_2_1mt )
    x3 = cw_prod( Fu_Ieta_3_1(ta) , I0_3_m12mt )
    x4 = cw_prod( Fu_Ieta_4_32mt(0,1/2,ta) , Fu_I0_4_m1(ta) )
    x5 = cw_prod( Fu_Ieta_5_0(ta) , Fu_I0_5_12mt(ta) )
    x6 = cw_prod( Fu_Ieta_6_12mt(0,1/2,ta) , Fu_I0_6_0(ta) )
    x = cw_sum( x1 , x2 )
    x = cw_sum( x , x3 )
    x = cw_sum( x , x4 )
    x = cw_sum( x , x5 )
    x = cw_sum( x , x6 )
    return x
def Fu_I0__0(ta):
    ta = RIF( ta )
    x3 = cw_prod( Fu_Ieta_3_1(ta) , Fu_I0_3_m1(ta) )
    x5 = cw_prod( Fu_Ieta_5_0(ta) , Fu_I0_5_0(ta) )
    x6 = cw_prod( Fu_Ieta_6_12mt(0,1/2,ta) , Fu_I0_6_tm12(ta) )
    x = cw_sum( x3 , x5 )
    x = cw_sum( x , x6 )
    return x
def Fu_I0__12m2tl(ta):
    ta = RIF( ta )
    x3 = cw_prod( Fu_Ieta_3_0(ta) , I0_3_12m2tl )
    return x3
def Fu_I0__12m2t(ta):
    ta = RIF( ta )
    x1 = cw_prod( Fu_Ieta_1_m1(ta) , I0_1_32m2t )
    x3 = cw_prod( Fu_Ieta_3_0(ta) , I0_3_12m2t )
    x5 = cw_prod( Fu_Ieta_5_m12(ta) , Fu_I0_5_1m2t(ta) )
    x = cw_sum( x1 , x3 )
    x = cw_sum( x , x5 )
    return x
def Fu_I0__mt(ta):
    ta = RIF( ta )
    x3 = cw_prod( Fu_Ieta_3_0(ta) , I0_3_mt )
    x5 = cw_prod( Fu_Ieta_5_m12(ta) , Fu_I0_5_12mt(ta) )
    x = cw_sum( x3 , x5 )
    return x
def Fu_I0__m12(ta):
    ta = RIF( ta )
    x1 = cw_prod( Fu_Ieta_1_m1(ta) , I0_1_12 )
    x5 = cw_prod( Fu_Ieta_5_m12(ta) , Fu_I0_5_0(ta) )
    x = cw_sum( x1 , x5 )
    return x
def Fu_I0__m12mtl(ta):
    ta = RIF( ta )
    x3 = cw_prod( Fu_Ieta_3_0(ta) , I0_3_m12mtl )
    return x3
def Fu_I0__m12mt(ta):
    ta = RIF( ta )
    x1 = cw_prod( Fu_Ieta_1_m1(ta) , I0_1_12mt )
    x3 = cw_prod( Fu_Ieta_3_0(ta) , I0_3_m12mt )
    x = cw_sum( x1 , x3 )
    return x
def Fu_I0__m1(ta):
    ta = RIF( ta )
    x3 = cw_prod( Fu_Ieta_3_0(ta) , Fu_I0_3_m1(ta) )
    return x3
###
# Coefficients per order of the whole I, for tau in [1/4,1/2).
def Fu_I4__32m2tl(ta):
    ta = RIF( ta )
    x3 = cw_prod( Fu_Ieta_3_1(ta) , I4_3_12m2tl )
    x4 = cw_prod( Fu_Ieta_4_32mt(1/4,1/2,ta) , I4_4_mtl )
    x = cw_sum( x3 , x4 )
    return x
def Fu_I4__32m2t(ta):
    ta = RIF( ta )
    x1 = cw_prod( Fu_Ieta_1_0(ta) , I4_1_32m2t )
    x2 = cw_prod( Fu_Ieta_2_m12(ta) , I4_2_2m2t )
    x3 = cw_prod( Fu_Ieta_3_1(ta) , I4_3_12m2t )
    x4 = cw_prod( Fu_Ieta_4_32mt(1/4,1/2,ta) , I4_4_mt )
    x5 = cw_prod( Fu_Ieta_5_12(ta) , Fu_I4_5_1m2t(ta) )
    x = cw_sum( x1 , x2 )
    x = cw_sum( x , x3 )
    x = cw_sum( x , x4 )
    x = cw_sum( x , x5 )
    return x
def Fu_I4__1mt(ta):
    ta = RIF( ta )
    xo = cw_prod( Ieta_out_0 , I4_out_1mt )
    x3 = cw_prod( Fu_Ieta_3_1(ta) , I4_3_mt )
    x4 = cw_prod( Fu_Ieta_4_32mt(1/4,1/2,ta) , I4_4_m12 )
    x5 = cw_prod( Fu_Ieta_5_12(ta) , Fu_I4_5_12mt(ta) )
    x = cw_sum( xo , x3 )
    x = cw_sum( x , x4 )
    x = cw_sum( x , x5 )
    return x
def Fu_I4__1m2t(ta):
    ta = RIF( ta )
    x5 = cw_prod( Fu_Ieta_5_0(ta) , Fu_I4_5_1m2t(ta) )
    x6 = cw_prod( Fu_Ieta_6_12mt(1/4,1/2,ta) , Fu_I4_6_12mt(ta) )
    x = cw_sum( x5 , x6 )
    return x
def Fu_I4__12(ta):
    ta = RIF( ta )
    x1 = cw_prod( Fu_Ieta_1_0(ta) , I4_1_12 )
    x2 = cw_prod( Fu_Ieta_2_m12(ta) , I4_2_1 )
    x5 = cw_prod( Fu_Ieta_5_12(ta) , Fu_I4_5_0(ta) )
    x = cw_sum( x1 , x2 )
    x = cw_sum( x , x5 )
    return x
def Fu_I4__12mtl(ta):
    ta = RIF( ta )
    x3 = cw_prod( Fu_Ieta_3_1(ta) , I4_3_m12mtl )
    x4 = cw_prod( Fu_Ieta_4_32mt(1/4,1/2,ta) , I4_4_m1l )
    x = cw_sum( x3 , x4 )
    return x
def Fu_I4__12mt(ta):
    ta = RIF( ta )
    x1 = cw_prod( Fu_Ieta_1_0(ta) , I4_1_12mt )
    x2 = cw_prod( Fu_Ieta_2_m12(ta) , I4_2_1mt )
    x3 = cw_prod( Fu_Ieta_3_1(ta) , I4_3_m12mt )
    x4 = cw_prod( Fu_Ieta_4_32mt(1/4,1/2,ta) , Fu_I4_4_m1(ta) )
    x5 = cw_prod( Fu_Ieta_5_0(ta) , Fu_I4_5_12mt(ta) )
    x6 = cw_prod( Fu_Ieta_6_12mt(1/4,1/2,ta) , Fu_I4_6_0(ta) )
    x = cw_sum( x1 , x2 )
    x = cw_sum( x , x3 )
    x = cw_sum( x , x4 )
    x = cw_sum( x , x5 )
    x = cw_sum( x , x6 )
    return x
def Fu_I4__0(ta):
    ta = RIF( ta )
    x3 = cw_prod( Fu_Ieta_3_1(ta) , Fu_I4_3_m1(ta) )
    x5 = cw_prod( Fu_Ieta_5_0(ta) , Fu_I4_5_0(ta) )
    x6 = cw_prod( Fu_Ieta_6_12mt(1/4,1/2,ta) , Fu_I4_6_tm12(ta) )
    x = cw_sum( x3 , x5 )
    x = cw_sum( x , x6 )
    return x
def Fu_I4__12m2tl(ta):
    ta = RIF( ta )
    x3 = cw_prod( Fu_Ieta_3_0(ta) , I4_3_12m2tl )
    return x3
def Fu_I4__12m2t(ta):
    ta = RIF( ta )
    x1 = cw_prod( Fu_Ieta_1_m1(ta) , I4_1_32m2t )
    x3 = cw_prod( Fu_Ieta_3_0(ta) , I4_3_12m2t )
    x5 = cw_prod( Fu_Ieta_5_m12(ta) , Fu_I4_5_1m2t(ta) )
    x = cw_sum( x1 , x3 )
    x = cw_sum( x , x5 )
    return x
def Fu_I4__mt(ta):
    ta = RIF( ta )
    x3 = cw_prod( Fu_Ieta_3_0(ta) , I4_3_mt )
    x5 = cw_prod( Fu_Ieta_5_m12(ta) , Fu_I4_5_12mt(ta) )
    x = cw_sum( x3 , x5 )
    return x
def Fu_I4__m12(ta):
    ta = RIF( ta )
    x1 = cw_prod( Fu_Ieta_1_m1(ta) , I4_1_12 )
    x5 = cw_prod( Fu_Ieta_5_m12(ta) , Fu_I4_5_0(ta) )
    x = cw_sum( x1 , x5 )
    return x
def Fu_I4__m12mtl(ta):
    ta = RIF( ta )
    x3 = cw_prod( Fu_Ieta_3_0(ta) , I4_3_m12mtl )
    return x3
def Fu_I4__m12mt(ta):
    ta = RIF( ta )
    x1 = cw_prod( Fu_Ieta_1_m1(ta) , I4_1_12mt )
    x3 = cw_prod( Fu_Ieta_3_0(ta) , I4_3_m12mt )
    x = cw_sum( x1 , x3 )
    return x
def Fu_I4__m1(ta):
    ta = RIF( ta )
    x3 = cw_prod( Fu_Ieta_3_0(ta) , Fu_I4_3_m1(ta) )
    return x3
###
# Coefficients per order of the whole I, for tau=1/2.
def Fu_I2__12l(ta):
    ta = RIF( ta )
    result = Fu_Ieta_1_0(ta)*I2_1_12l + Fu_Ieta_2_m12(ta)*I2_2_1l \
             + Fu_Ieta_3_1(ta)*I2_3_m12l + Fu_Ieta_5_12(ta)*Fu_I2_5_l(ta)
    return RIF(result)
def Fu_I2__12(ta):
    ta = RIF( ta )
    result = Ieta_out_0*I2_out_12 + Fu_Ieta_2_m12(ta)*I2_2_1 \
             + Fu_Ieta_3_1(ta)*I2_3_m12 + Fu_Ieta_5_12(ta)*Fu_I2_5_0(ta)
    return RIF(result)
def Fu_I2__l(ta):
    ta = RIF( ta )
    result = Fu_Ieta_1_0(ta)*Fu_I2_1_l(ta) + Fu_Ieta_3_1(ta)*I2_3_m1l \
             + Fu_Ieta_5_0(ta)*Fu_I2_5_l(ta)
    return RIF(result)
def Fu_I2__0(ta):
    ta = RIF( ta )
    result = Fu_Ieta_2_m12(ta)*I2_2_12 + Fu_Ieta_3_1(ta)*Fu_I2_3_m1(ta) \
             + Fu_Ieta_5_0(ta)*Fu_I2_5_0(ta)
    return RIF(result)
def Fu_I2__m12l(ta):
    ta = RIF( ta )
    result = Fu_Ieta_1_m1(ta)*I2_1_12l + Fu_Ieta_3_0(ta)*I2_3_m12l \
             + Fu_Ieta_5_m12(ta)*Fu_I2_5_l(ta)
    return RIF(result)
def Fu_I2__m12(ta):
    ta = RIF( ta )
    result = Fu_Ieta_3_0(ta)*I2_3_m12 + Fu_Ieta_5_m12(ta)*Fu_I2_5_0(ta)
    return RIF(result)
def Fu_I2__m1l(ta):
    ta = RIF( ta )
    result = Fu_Ieta_1_m1(ta)*Fu_I2_1_l(ta) + Fu_Ieta_3_0(ta)*I2_3_m1l
    return RIF(result)
def Fu_I2__m1(ta):
    ta = RIF( ta )
    result = Fu_Ieta_3_0(ta)*Fu_I2_3_m1(ta)
    return RIF(result)
###
# Rounding coefficients to the order T^(3/2-2tau)*log(T) for tau in (0,1/2).
def Fu_I0__32m2tl_final(ta):
    ta = RIF( ta )
    x_32m2tl = Fu_I0__32m2tl(ta)
    x_32m2t = cw_prod( 1/log_int(ta) , Fu_I0__32m2t(ta) )
    x_1mt = cw_prod( 1/log_int(ta) , Fu_I0__1mt(ta) )
    x_1m2t = cw_prod( 1/(log_int(ta)*sqrt_int(ta)) , Fu_I0__1m2t(ta) )
    x_12 = cw_prod( 1/log_int(ta) , Fu_I0__12(ta) )
    x_12mtl = cw_prod( 1/sqrt_int(ta) , Fu_I0__12mtl(ta) )
    x_12mt = cw_prod( 1/(log_int(ta)*sqrt_int(ta)) , Fu_I0__12mt(ta) )
    x_0 = cw_prod( 1/(log_int(ta)*sqrt_int(ta)) , Fu_I0__0(ta) )
    x_12m2tl = cw_prod( 1/ta , Fu_I0__12m2tl(ta) )
    x_12m2t = cw_prod( 1/(log_int(ta)*ta) , Fu_I0__12m2t(ta) )
    x_mt = cw_prod( 1/(log_int(ta)*ta) , Fu_I0__mt(ta) )
    x_m12 = cw_prod( 1/(log_int(ta)*ta) , Fu_I0__m12(ta) )
    x_m12mtl = cw_prod( 1/exp_int(3/2*log_int(ta)) , Fu_I0__m12mtl(ta) )
    x_m12mt = cw_prod( 1/(log_int(ta)*exp_int(3/2*log_int(ta))) , Fu_I0__m12mt(ta) )
    x_m1 = cw_prod( 1/(log_int(ta)*exp_int(3/2*log_int(ta))) , Fu_I0__m1(ta) )
    x = cw_sum( x_32m2tl , x_32m2t )
    x = cw_sum( x , x_1mt )
    x = cw_sum( x , x_1m2t )
    x = cw_sum( x , x_12 )
    x = cw_sum( x , x_12mtl )
    x = cw_sum( x , x_12mt )
    x = cw_sum( x , x_0 )
    x = cw_sum( x , x_12m2tl )
    x = cw_sum( x , x_12m2t )
    x = cw_sum( x , x_mt )
    x = cw_sum( x , x_m12 )
    x = cw_sum( x , x_m12mtl )
    x = cw_sum( x , x_m12mt )
    x = cw_sum( x , x_m1 )
    return x
###
# Rounding coefficients to the order T^(3/2-2tau)*log(T) for tau in [1/4,1/2).
def Fu_I4__32m2tl_final(ta):
    ta = RIF( ta )
    x_32m2tl = Fu_I4__32m2tl(ta)
    x_32m2t = cw_prod( 1/log_int(ta) , Fu_I4__32m2t(ta) )
    x_1mt = cw_prod( 1/log_int(ta) , Fu_I4__1mt(ta) )
    x_1m2t = cw_prod( 1/(log_int(ta)*sqrt_int(ta)) , Fu_I4__1m2t(ta) )
    x_12 = cw_prod( 1/log_int(ta) , Fu_I4__12(ta) )
    x_12mtl = cw_prod( 1/sqrt_int(ta) , Fu_I4__12mtl(ta) )
    x_12mt = cw_prod( 1/(log_int(ta)*sqrt_int(ta)) , Fu_I4__12mt(ta) )
    x_0 = cw_prod( 1/(log_int(ta)*sqrt_int(ta)) , Fu_I4__0(ta) )
    x_12m2tl = cw_prod( 1/ta , Fu_I4__12m2tl(ta) )
    x_12m2t = cw_prod( 1/(log_int(ta)*ta) , Fu_I4__12m2t(ta) )
    x_mt = cw_prod( 1/(log_int(ta)*ta) , Fu_I4__mt(ta) )
    x_m12 = cw_prod( 1/(log_int(ta)*ta) , Fu_I4__m12(ta) )
    x_m12mtl = cw_prod( 1/exp_int(3/2*log_int(ta)) , Fu_I4__m12mtl(ta) )
    x_m12mt = cw_prod( 1/(log_int(ta)*exp_int(3/2*log_int(ta))) , Fu_I4__m12mt(ta) )
    x_m1 = cw_prod( 1/(log_int(ta)*exp_int(3/2*log_int(ta))) , Fu_I4__m1(ta) )
    x = cw_sum( x_32m2tl , x_32m2t )
    x = cw_sum( x , x_1mt )
    x = cw_sum( x , x_1m2t )
    x = cw_sum( x , x_12 )
    x = cw_sum( x , x_12mtl )
    x = cw_sum( x , x_12mt )
    x = cw_sum( x , x_0 )
    x = cw_sum( x , x_12m2tl )
    x = cw_sum( x , x_12m2t )
    x = cw_sum( x , x_mt )
    x = cw_sum( x , x_m12 )
    x = cw_sum( x , x_m12mtl )
    x = cw_sum( x , x_m12mt )
    x = cw_sum( x , x_m1 )
    return x
###
# Rounding coefficients to the order sqrt(T)*log(T) for tau=1/2.
def Fu_I2__12l_final(ta):
    ta = RIF( ta )
    result = Fu_I2__12l(ta) + Fu_I2__12(ta)/log_int(ta) \
             + Fu_I2__l(ta)/sqrt_int(ta) + Fu_I2__0(ta)/(log_int(ta)*sqrt_int(ta)) \
             + Fu_I2__m12l(ta)/ta + Fu_I2__m12(ta)/(log_int(ta)*ta) \
             + Fu_I2__m1l(ta)/exp_int(3/2*log_int(ta)) \
             + Fu_I2__m1(ta)/(log_int(ta)*exp_int(3/2*log_int(ta)))
    return RIF(result)
###
# Actual values for our choice of T0.
I0__32m2tl_final_vec = Fu_I0__32m2tl_final(T0)
I4__32m2tl_final_vec = Fu_I4__32m2tl_final(T0)
I4__32m2tl_final_c = roundup( I4__32m2tl_final_vec[0] \
                              + I4__32m2tl_final_vec[1]/(1/4) , digits )
I4__32m2tl_final_2 = roundup( I4__32m2tl_final_vec[2] , digits )
I2__12l_final = roundup( Fu_I2__12l_final(T0) , digits )
\end{mysage}
\normalsize

We have that $I=\sum_{n\leq\frac{T}{2\pi}}d_{1-2\tau}(n)\ I_{n}$. As per Lemma~\ref{In:estimation} and Remark~\ref{crucial}, we split that sum into two parts, according to whether $n\leq\frac{T-\sqrt{T}}{2\pi}$ or $\frac{T-\sqrt{T}}{2\pi}<n\leq\frac{T}{2\pi}$. For the first interval we use \eqref{Bn:1} and \eqref{Cn:1} with the simplification $\log\left(\frac{T}{2\pi n}\right)\geq\frac{T-2\pi n}{T}$ from Lemma~\ref{logg}\eqref{logg1}, and in the second interval we use \eqref{Bn:2} and \eqref{Cn:2}. Thus, $I$ is equal to
\begin{align}
& 2\pi\!\!\sum_{n\leq\frac{T}{2\pi}}\!\!d_{1-2\tau}(n)+2O^{*}\left(\eta_{1}(T)\!\!\sum_{n\leq\frac{T}{2\pi}}\!\!\frac{d_{1-2\tau}(n)}{\sqrt{n}}+\eta_{2}(T)\!\!\sum_{n\leq\frac{T}{2\pi}}\!\!d_{1-2\tau}(n)\right. \nonumber \\
& +\eta_{3}(T)\!\!\sum_{n\leq\frac{T-\sqrt{T}}{2\pi}}\!\!\frac{d_{1-2\tau}(n)}{\sqrt{n}(T-2\pi n)}+\mathds{1}_{\{\tau<\frac{1}{2}\}}(\tau)\eta_{4}(T,\tau)\!\!\sum_{n\leq\frac{T-\sqrt{T}}{2\pi}}\!\!\frac{d_{1-2\tau}(n)}{n^{1-\tau}(T-2\pi n)} \nonumber \\
& \left.+\eta_{5}(T)\!\!\sum_{\frac{T-\sqrt{T}}{2\pi}<n\leq\frac{T}{2\pi}}\!\!\frac{d_{1-2\tau}(n)}{\sqrt{n}}+\mathds{1}_{\{\tau<\frac{1}{2}\}}(\tau)\eta_{6}(T,\tau)\!\!\sum_{\frac{T-\sqrt{T}}{2\pi}<n\leq\frac{T}{2\pi}}\!\!\frac{d_{1-2\tau}(n)}{n^{1-\tau}}\right), \label{eq:alot}
\end{align}
with
\begin{align*}
\eta_{1}(T) & =G_{1}+\frac{H_{3}}{T}, & \eta_{3}(T) & =H_{1}T+H_{2}, & \eta_{5}(T) & =H'_{1}\sqrt{T}+H'_{2}+\frac{H'_{3}}{\sqrt{T}}, \\
\eta_{2}(T) & =\frac{G_{2}}{\sqrt{T}}, & \eta_{4}(T,\tau) & =\frac{H_{4}T^{\frac{3}{2}-\tau}}{(2\pi)^{\frac{1}{2}-\tau}}, & \eta_{6}(T,\tau) & =\left(\frac{1}{2}-\tau\right)\frac{H_{4}T^{\frac{1}{2}-\tau}}{(2\pi)^{\frac{1}{2}-\tau}}.
\end{align*}

Assume first that $0<\tau<\frac{1}{2}$. For the main term in \eqref{eq:alot}, we use Proposition~\ref{pr:sumdiv}, obtaining
\begin{equation*} 
\sum_{n\leq\frac{T}{2\pi}}d_{1-2\tau}(n)=\frac{\zeta(2\tau)}{2\pi}T+\frac{\zeta(2-2\tau)}{(2-2\tau)(2\pi)^{2-2\tau}}T^{2-2\tau}+O^{*}\left(\frac{A_{\tau}}{(2\pi)^{1-\tau}}T^{1-\tau}\right).
\end{equation*}

The first error term in \eqref{eq:alot} may be bounded by Proposition~\ref{pr:sumdivwei}. For the second error term we use Proposition~\ref{pr:sumdiv} again. For the third and fourth error terms, we apply directly Lemma~\ref{le:sumdivlog} with $\upsilon=\frac{1}{2}$ and $\upsilon=1-\tau$ respectively. Finally, the fifth and sixth summations in \eqref{eq:alot} can be bounded through Proposition~\ref{pr:sumdiv} using
\begin{align*}
 & \ \sum_{\frac{T-\sqrt{T}}{2\pi}<n\leq\frac{T}{2\pi}}\frac{d_{1-2\tau}(n)}{n^{\upsilon}}<\left(\frac{2\pi}{T-\sqrt{T}}\right)^{\upsilon}\sum_{\frac{T-\sqrt{T}}{2\pi}<n\leq\frac{T}{2\pi}}d_{1-2\tau}(n) \\
< & \ \left(\frac{2\pi}{T-\sqrt{T}}\right)^{\upsilon}\left(\zeta(2\tau)\frac{\sqrt{T}}{2\pi}+\frac{\zeta(2-2\tau)}{2-2\tau}\left(\left(\frac{T}{2\pi}\right)^{2-2\tau}\!\!\!\!-\left(\frac{T-\sqrt{T}}{2\pi}\right)^{2-2\tau}\right)+\frac{2A_{\tau}T^{1-\tau}}{(2\pi)^{1-\tau}}\right) \\
< & \ \frac{(2\pi)^{\upsilon}}{\left(1-\frac{1}{\sqrt{T_{0}}}\right)^{\upsilon}T^{\upsilon}}\left(\frac{\zeta(2\tau)}{2\pi}\sqrt{T}+\frac{\zeta(2-2\tau)}{(2\pi)^{1-2\tau}}T^{\frac{3}{2}-2\tau}+\frac{2A_{\tau}}{(2\pi)^{1-\tau}}T^{1-\tau}\right)
\end{align*}
where, as $2-2\tau<1$, we used Lemma~\ref{logg}\eqref{loggexp} as well as the fact that $T\geq T_{0}$. Subsequently, we can combine all lower order terms to the order $T^{\frac{3}{2}-2\tau}\log(T)$.

Assume now that $\tau=\frac{1}{2}$. In this case, the fourth and sixth error term in \eqref{eq:alot} disappear. For the main term in \eqref{eq:alot}, we use Proposition~\ref{pr:sumdiv}, obtaining
\begin{equation*}
\sum_{n\leq\frac{T}{2\pi}}d(n)=\frac{T\log(T)}{2\pi}+\frac{2\gamma-1-\log(2\pi)}{2\pi}T+O^{*}\left(\frac{A_{\frac{1}{2}}}{\sqrt{2\pi}}\sqrt{T}\right).
\end{equation*}

The first error term in \eqref{eq:alot} may be bounded by Proposition~\ref{pr:sumdivwei}. We can forgo the negative term therein since, as $T\geq T_{0}$, this term is smaller in absolute value than the positive one; moreover, we can also bound $\log\left(\frac{T}{2\pi}\right)$ by $\log(T)$.

In order to estimate the second error term in \eqref{eq:alot}, we use Proposition~\ref{pr:sumdiv} and the bound  $\log\left(\frac{T}{2\pi}\right)\leq\log(T)$ again, giving that it is at most
\begin{equation*}
\frac{1}{2\pi}T\log(T)+\frac{2\gamma-1}{2\pi}\ T+\frac{A_{\frac{1}{2}}}{\sqrt{2\pi}}\sqrt{T}.
\end{equation*}

For the third error term, we apply directly Lemma~\ref{le:sumdivlog} with $\sigma=\upsilon=\frac{1}{2}$. Finally, the fifth summation in \eqref{eq:alot} can be bounded through Proposition~\ref{pr:sumdiv} by
\begin{align*}
 & \ \sum_{\frac{T-\sqrt{T}}{2\pi}<n\leq\frac{T}{2\pi}}\frac{d(n)}{\sqrt{n}}<\frac{\sqrt{2\pi}}{\sqrt{1-\frac{1}{\sqrt{T_{0}}}}\sqrt{T}}\sum_{\frac{T-\sqrt{T}}{2\pi}<n\leq\frac{T}{2\pi}}d(n) \\
< & \ \frac{\sqrt{2\pi}}{\sqrt{1-\frac{1}{\sqrt{T_{0}}}}\sqrt{T}}\left(-\frac{T}{2\pi}\log\left(1-\frac{1}{\sqrt{T}}\right)+\frac{\sqrt{T}}{2\pi}\log\left(\frac{T}{2\pi}\right)+\frac{2\gamma-1}{2\pi}\ \sqrt{T}+\frac{2A_{\frac{1}{2}}}{\sqrt{2\pi}}\sqrt{T}\right) \\
\leq & \ \frac{\log(T)}{\sqrt{2\pi}\sqrt{1-\frac{1}{\sqrt{T_{0}}}}}+\frac{\frac{1}{\sqrt{2\pi}}\left(\frac{\sqrt{T_{0}}}{\sqrt{T_{0}}-1}-\log(2\pi)+2\gamma-1\right)+2A_{\frac{1}{2}}}{\sqrt{1-\frac{1}{\sqrt{T_{0}}}}},
\end{align*}
where, by Lemma~\ref{logg}\eqref{logg1} and since $T\geq T_{0}$, we used that 
\begin{align*}
-\sqrt{T}\log\left(1-\frac{1}{\sqrt{T}}\right)\leq\frac{\sqrt{T}}{\sqrt{T}-1}\leq\frac{\sqrt{T_{0}}}{\sqrt{T_{0}}-1}.\end{align*}
 
By combining all terms, and merging all error terms to the order $\sqrt{T}\log(T)$, we conclude the result.
\end{proofbold}

\subsection{The integrals $J_i$}\label{sec:J}

\textbf{Choice of parameter.} In Propositions~\ref{pr:J} and \ref{pr:K}, our choice will be $\lambda=\frac{\mathrm{c}}{\log(T)}$, where $\mathrm{c}=\sage{c_la}$ optimizes the arising constants.

A particular result that we need in this section is the following. 

\footnotesize
\begin{mysage}
###
# D_{1/2} = 2A_{1/2}+2gamma-1.
D12 = RIF( 2*A12+2*gamma_int-1 )
# D_{sigma} = 2A_{sigma}+1/(2-2sigma)+1/(1-2sigma)^2.
# We write it as constants multiplying [1,1/sigma,1/(1/2-sigma)^2].
# Recall that A_{sigma} has 1/(1/2-sigma) in the third slot instead.
def Fu_Ds(simin,simax):
    simin = RIF( simin )
    simax = RIF( simax )
    Ass = Fu_As(simin,simax)
    modifier = 1/2-simin
    result1 = 2*Ass[0]+1/(2-2*simax)
    result2 = 2*Ass[1]
    result3 = 2*Ass[2]*modifier+1/4
    return [RIF(result1),RIF(result2),RIF(result3)]
Ds_i0 = Fu_Ds(0,1/2)
Ds_i4 = Fu_Ds(1/4,1/2)
# We are going to use the bound in {pr:sumdivwei_B} for T/(2pi),
# and we need T0/(2pi)>=1 for the proof to be working.
# For a check about this, see at the end of the file.
\end{mysage}
\normalsize

\begin{proposition}\label{pr:sumdivwei_B}
Let $X\geq 1$ and $0<\sigma\leq\frac{1}{2}$. Recall the definition of $A_{\sigma}$ given in Lemma \ref{pr:sumdiv}. Then, if $\sigma<\frac{1}{2}$,
\begin{equation*}
\sum_{n\leq X}\frac{d_{1-2\sigma}(n)}{n^{2-2\sigma}}=\zeta(2-2\sigma)\log(X)+O^{*}(D_{\sigma}),
\end{equation*}
where $D_{\sigma}=2A_{\sigma}+\frac{1}{2-2\sigma}+\frac{1}{(1-2\sigma)^{2}}$, whereas
\begin{equation*}
\sum_{n\leq X}\frac{d_{0}(n)}{n}=\frac{1}{2}\log^2(X)+2\gamma\log(X)+O^{*}\left(D_{\frac{1}{2}}\right).
\end{equation*}
where $D_{\frac{1}{2}}=2A_{\frac{1}{2}}+2\gamma-1$.
\end{proposition}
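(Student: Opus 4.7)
The plan is to follow the same approach as the proof of Proposition~\ref{pr:sumdivwei}: apply the summation-by-parts formula~\eqref{sbpgeneral}, derived from Proposition~\ref{pr:sumdiv}, with $\upsilon = 2-2\sigma$ in the case $0 < \sigma < \frac{1}{2}$ and $\upsilon = 1$ in the case $\sigma = \frac{1}{2}$. In both regimes, the sum splits as an explicit main integral $\int_{1}^{X}\mathrm{M}_{\sigma}(t)'\,t^{-\upsilon}\,dt$ plus the boundary constant $\mathrm{M}_{\sigma}(1)$ plus a controlled error, after which the work is purely algebraic.

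For $\sigma = \frac{1}{2}$ the calculation is immediate. Since $\mathrm{M}_{1/2}(t)' = \log(t) + 2\gamma$, one finds $\int_{1}^{X}(\log(t)+2\gamma)\,t^{-1}\,dt = \frac{1}{2}\log^{2}(X) + 2\gamma\log(X)$; the boundary contribution is $\mathrm{M}_{1/2}(1) = 2\gamma - 1$; and the error from~\eqref{sbpgeneral} equals $A_{1/2}X^{-1/2} + A_{1/2}\int_{1}^{X}t^{-3/2}\,dt = 2A_{1/2} - A_{1/2}X^{-1/2}$, whose absolute value is at most $2A_{1/2}$. Collecting everything yields $D_{1/2} = 2A_{1/2} + 2\gamma - 1$.

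For $0 < \sigma < \frac{1}{2}$, one has $\mathrm{M}_{\sigma}(t)' = \zeta(2\sigma) + \zeta(2-2\sigma)\,t^{1-2\sigma}$, so the main integral combines to $\zeta(2-2\sigma)\log(X) + \frac{\zeta(2\sigma)(1-X^{2\sigma-1})}{1-2\sigma}$; the boundary term is $\mathrm{M}_{\sigma}(1) = \zeta(2\sigma) + \frac{\zeta(2-2\sigma)}{2-2\sigma}$; and the same algebra as above bounds the error from~\eqref{sbpgeneral} uniformly by $2A_{\sigma}$. The logarithmic piece already gives the main term of the statement.

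The main obstacle is absorbing the leftover constant $C(X) = \frac{\zeta(2\sigma)(1-X^{2\sigma-1})}{1-2\sigma} + \zeta(2\sigma) + \frac{\zeta(2-2\sigma)}{2-2\sigma}$ into a bound of size $\frac{1}{2-2\sigma} + \frac{1}{(1-2\sigma)^{2}}$ rather than the weaker $\frac{2}{1-2\sigma}$ that estimating each summand separately would suggest. The trick is to use both inequalities in Proposition~\ref{pr:mvzeta}. The $X$-dependent piece is non-positive (since $\zeta(2\sigma) < 0$ and $1 - X^{2\sigma-1} \in [0,1)$) and has magnitude at most $\frac{1}{(1-2\sigma)^{2}}$. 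For the boundary term, combining $-\frac{1}{1-2\sigma} < \zeta(2\sigma) < -\frac{2\sigma}{1-2\sigma}$ with $\frac{1}{(1-2\sigma)(2-2\sigma)} < \frac{\zeta(2-2\sigma)}{2-2\sigma} < \frac{1}{1-2\sigma}$ and exploiting the opposing signs produces the sharp range $-\frac{1}{2-2\sigma} < \zeta(2\sigma) + \frac{\zeta(2-2\sigma)}{2-2\sigma} < 1$. Adding everything gives $|C(X)| \leq \frac{1}{2-2\sigma} + \frac{1}{(1-2\sigma)^{2}}$, whence $D_{\sigma} = 2A_{\sigma} + \frac{1}{2-2\sigma} + \frac{1}{(1-2\sigma)^{2}}$.
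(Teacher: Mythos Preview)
Your proof is correct and follows essentially the same approach as the paper: both apply the summation-by-parts identity~\eqref{sbpgeneral} with $\upsilon=2-2\sigma$ and then bound the resulting constant via Proposition~\ref{pr:mvzeta}. Your grouping of $C(X)$ into the non-positive $X$-dependent piece and the boundary piece $\zeta(2\sigma)+\frac{\zeta(2-2\sigma)}{2-2\sigma}\in\bigl(-\frac{1}{2-2\sigma},1\bigr)$ yields the upper bound $C(X)<1$, which is actually tighter than the paper's stated upper bound $1+\frac{1}{1-2\sigma}$ (and avoids a delicate check that the latter is dominated by $\frac{1}{2-2\sigma}+\frac{1}{(1-2\sigma)^{2}}$ for all $\sigma\in(0,\tfrac{1}{2})$); the lower bounds coincide.
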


\begin{proof}
Consider \eqref{sbpgeneral} with $\upsilon=2-2\sigma$. Assume first that $0<\sigma<\frac{1}{2}$. Similar to the proof of Proposition~\ref{pr:sumdivwei}, after integrating we keep the first term of \eqref{sbpgeneral} and merge the remaining ones to a constant order: note that the second term of order $X^{2\sigma-1}$ is of smaller order than a constant, unlike in Proposition~\ref{pr:sumdivwei}. By Proposition~\ref{pr:mvzeta} we have
\begin{equation*}
\frac{-3+6\sigma-4\sigma^{2}}{(1-2\sigma)^{2}(2-2\sigma)}<\zeta(2\sigma)+\frac{\zeta(2-2\sigma)}{2-2\sigma}+\frac{\zeta(2\sigma)}{1-2\sigma}-\frac{\zeta(2\sigma)}{(1-2\sigma)X^{1-2\sigma}}<1+\frac{1}{1-2\sigma},
\end{equation*}
and both sides are bounded in absolute value by $\frac{1}{2-2\sigma}+\frac{1}{(1-2\sigma)^{2}}$; moreover, as $X\geq 1$, the remainder coming from \eqref{sbpgeneral} is bounded by $2A_{\sigma}$. Therefore, $D_{\sigma}$ may be defined as in the statement.

On the other hand, if $\sigma=\frac{1}{2}$, we readily define $D_{\frac{1}{2}}$ upon observing from \eqref{sbpgeneral} and Proposition \ref{pr:sumdiv} that
\begin{equation*}
\sum_{n\leq X}\frac{d_{0}(n)}{n}=\frac{1}{2}\log^2(X)+2\gamma\log(X)+2\gamma-1+O^{*}\left(2A_{\frac{1}{2}}\right).
\end{equation*}
\end{proof}
 
Recall the definition of $J_1,J_2$ given in \eqref{definition:J1,J2}. We are now ready to bound them.
 
\begin{proofbold}{Proposition~\ref{pr:J}}

\footnotesize
\begin{mysage}
###
# Supremum of exponent k+lambda for T>=T0.
def Fu_extrambda(ci,ta,ka):
    ci = RIF( ci )
    ta = RIF( ta )
    ka = RIF( ka )
    result = ka+ci/log_int(ta)
    return RIF(result)
###
# Functions for computing constants R'(c,T0) and kappa(c,T0) below.
def Fu_RpJ(simin,simax,ci,ta):
    ci = RIF( ci )
    ta = RIF( ta )
    si = Fu_extrambda(ci,ta,2-2*simin)
    result = (si-1/2)*(si^2/2+si^4/(4*ta^2))+si/12+si^3/3+1/(90*ta)
    return RIF(result)
def Fu_kapJ(simin,simax,ci,ta):
    ci = RIF( ci )
    ta = RIF( ta )
    result = exp_int(Fu_RpJ(simin,simax,ci,ta)/ta^2) \
             +exp_int(Fu_RpJ(simin,simax,ci,ta)/ta^2-pi_int*ta)
    return RIF(result)
###
# Constants R'(c,T0) and kappa(c,T0) below.
Rpc_i4 = roundup( Fu_RpJ(1/4,1/2,c_la,T0) , digits )
Rpc_i2 = roundup( Fu_RpJ(1/2,1/2,c_la,T0) , digits )
kapc_i4 = roundup( Fu_kapJ(1/4,1/2,c_la,T0) , digits )
\end{mysage}
\normalsize

From \eqref{definition:J1,J2}, we may write that $J_{2}=L_1-L_2$, where
\begin{align*}
L_1 & =\frac{1}{2i}\int_{2-2\tau+\lambda+iT}^{1-\tau+iT}\chi(1-s)\zeta(s)\zeta(2\tau-1+s)ds, \\
L_2 & =\frac{1}{2i}\int_{2-2\tau+\lambda+iT}^{1-\tau+iT}\chi(1-s)\sum_{n\leq\frac{T}{2\pi}}\frac{d_{1-2\tau}(n)}{n^{s}}ds.
\end{align*}
First, note that $\left[1-\tau+iT,2-2\tau+\lambda+iT\right)$ belongs to the angular sector defined by $|\arg(s)|<\frac{\pi}{2}$. We can then use Theorem \ref{Stirling}\eqref{StirlingGam} with $\sigma\in[1-\tau,2-2\tau+\lambda)$, $t=T>0$ and $\theta=\frac{\pi}{2}$ (so that $F_{\theta}=\frac{1}{90}$), along with the estimation $\log(|s|)=\log(T)+\frac{\sigma^2}{2T^2}+O^*\left(\frac{\sigma^4}{4T^4}\right)$ to obtain that
\begin{align}\label{gam}  
|\Gamma(s)|&=\sqrt{2\pi}T^{\sigma-\frac{1}{2}}e^{-\frac{\pi}{2}T}e^{O^{*}\left(\frac{\mathrm{R}(\sigma,T_{0})}{T^2}\right)},
\end{align}
where, by using that $\frac{1}{|s|^2}\leq\frac{1}{T^2}$ and that $\frac{1}{T^3}\leq\frac{1}{T_0 T^2}$,
\begin{align*}  
\mathrm{R}(\sigma,T_{0})=\left(\sigma-\frac{1}{2}\right)\left(\frac{\sigma^2}{2}+\frac{\sigma^4}{4T_{0}^2}\right)+\frac{\sigma}{12}+\frac{\sigma^3}{3}+\frac{1}{90T_{0}}.
\end{align*}
Moreover, as $\frac{1}{2}\leq 1-\tau\leq\sigma<2-2\tau+\lambda\leq\frac{3}{2}+\frac{\mathrm{c}}{\log(T_{0})}$, we have the uniform bound 
\begin{align*}\mathrm{R}(\sigma,T_{0})\leq\mathrm{R}\left(\frac{3}{2}+\frac{\mathrm{c}}{\log(T_{0})},T_{0}\right)\leq\mathrm{R}'\left(\frac{3}{2},\mathrm{c},T_{0}\right)=\sage{Rpc_i4},\end{align*} 
and specifically $\mathrm{R'}(1,\mathrm{c},T_{0})\leq\sage{Rpc_i2}$.

Therefore, by \eqref{functional}, \eqref{gam} and Proposition~\ref{cosine}, we conclude that for any $s=\sigma+it\in\left[1-\tau+iT,2-2\tau+\lambda+iT\right)$,  
\begin{align*}
|\chi(1-s)|=\left|2(2\pi)^{-s}\cos\left(\frac{\pi s}{2}\right)\Gamma(s)\right|=(2\pi)^{\frac{1}{2}-\sigma}\left(1+O^*\left(\frac{1}{e^{\pi T}}\right)\right)T^{\sigma-\frac{1}{2}}e^{O^{*}\left(\frac{\mathrm{R}'(3/2,\mathrm{c},T_{0})}{T^2}\right)},
\end{align*}
so that, since $T\geq T_{0}$, we derive the uniform bound on $\left[1-\tau+iT,2-2\tau+\lambda+iT\right)$,
\begin{align}\label{bound:chi}
|\chi(1-s)|\leq\kappa(\mathrm{c},T_0)\left(\frac{T}{2\pi}\right)^{\sigma-\frac{1}{2}},
\end{align}
where $\kappa(\mathrm{c},T_0)=\sage{kapc_i4}\geq e^{\frac{\mathrm{R}'(3/2,\mathrm{c},T_{0})}{{T_0}^2}}+e^{\frac{\mathrm{R}'(3/2,\mathrm{c},T_{0})}{{T_0}^2}-\pi T_0}$.

\footnotesize
\begin{mysage}
###
# Estimating J.
###
# All the following coefficients per order inside J
# are written as constants multiplying [1,1/tau,1/(1/2-tau)^2].
###
# Coefficients per order inside L_1.
def Fu_Jt_L1_1mtlq(simin,simax,ci,ta):
    simin = RIF( simin )
    simax = RIF( simax )
    ci = RIF( ci )
    ta = RIF( ta )
    result1 = Fu_kapJ(simin,simax,ci,ta)/2*Fu_omega(ta)^2*simax \
             *1/exp_int(log_int(2*pi_int)*(1-simax))
    result2 = 0
    result3 = 0
    return [RIF(result1),RIF(result2),RIF(result3)]
def Fu_Jt_L1_32m2tl(simin,simax,ci,ta):
    simin = RIF( simin )
    simax = RIF( simax )
    ci = RIF( ci )
    ta = RIF( ta )
    term1 = 2*Fu_omega(ta)*(1+log_int(2*pi_int)/log_int(ta/(2*pi_int))) \
            *1/exp_int(log_int(2*pi_int)*(1/2-simax))
    term2 = (exp_int(ci)-1)*(1+log_int(2*pi_int)/log_int(ta/(2*pi_int))) \
            *1/exp_int(log_int(2*pi_int)*(1/2-simax))
    if simin>=1/2:
        result1 = Fu_kapJ(simin,simax,ci,ta)/2*term2 \
                  *1/exp_int(log_int(2*pi_int)*(1-simax))
    else:
        result1 = Fu_kapJ(simin,simax,ci,ta)/2*(term1+term2) \
                  *1/exp_int(log_int(2*pi_int)*(1-simax))
    result2 = 0
    result3 = 0
    return [RIF(result1),RIF(result2),RIF(result3)]
###
# Coefficients per order inside L_2.
def Fu_Jt_L2_const(simin,simax,ci,ta):
    simin = RIF( simin )
    simax = RIF( simax )
    ci = RIF( ci )
    ta = RIF( ta )
    result = Fu_kapJ(simin,simax,ci,ta)*exp_int(ci)/2*(1-simin+ci/log_int(ta)) \
             *1/exp_int(log_int(2*pi_int)*(3/2-2*simax))
    return RIF(result)
def Fu_Jt_L2_32m2tl(simin,simax,ci,ta):
    simin = RIF( simin )
    simax = RIF( simax )
    ci = RIF( ci )
    ta = RIF( ta )
    modifier = 1/2-simin
    ze = [zea_2m2s[0],zea_2m2s[1],zea_2m2s[2]*modifier]
    result = cw_prod( Fu_Jt_L2_const(simin,simax,ci,ta) , ze )
    return result
def Fu_Jt_L2_32m2t(simin,simax,ci,ta):
    simin = RIF( simin )
    simax = RIF( simax )
    ci = RIF( ci )
    ta = RIF( ta )
    result = cw_prod( Fu_Jt_L2_const(simin,simax,ci,ta) , Fu_Ds(simin,simax) )
    return result
def Fu_J2_L2_12lq(ci,ta):
    ci = RIF( ci )
    ta = RIF( ta )
    result = Fu_Jt_L2_const(1/2,1/2,ci,ta)*1/2
    return RIF(result)
def Fu_J2_L2_12l(ci,ta):
    ci = RIF( ci )
    ta = RIF( ta )
    result = Fu_Jt_L2_const(1/2,1/2,ci,ta) \
             *max(0,2*gamma_int-log_int(2*pi_int))
    return RIF(result)
def Fu_J2_L2_12(ci,ta):
    ci = RIF( ci )
    ta = RIF( ta )
    result = Fu_Jt_L2_const(1/2,1/2,ci,ta)*(1/2*log_int(2*pi_int)^2 \
             -2*gamma_int*log_int(2*pi_int)+D12)
    return RIF(result)
###
# Coefficients per order of the whole J.
# For tau in (0,1/2), we round coefficients to the orders
# T^(3/2-2tau)*log(T) and T^(1/2-tau)*log(T)^2.
# For tau=1/2, we round coefficients to the orders
# T^(1/2)*log(T)^2 and T^(1/2)*log(T).
def Fu_Jt__32m2tl_final(simin,simax,ci,ta):
    simin = RIF( simin )
    simax = RIF( simax )
    ci = RIF( ci )
    ta = RIF( ta )
    x1_32m2tl = Fu_Jt_L1_32m2tl(simin,simax,ci,ta)
    x2_32m2tl = Fu_Jt_L2_32m2tl(simin,simax,ci,ta)
    x2_32m2t = cw_prod( 1/log_int(ta) , Fu_Jt_L2_32m2t(simin,simax,ci,ta) )
    x = cw_sum( x1_32m2tl , x2_32m2tl )
    x = cw_sum( x , x2_32m2t )
    return x
def Fu_Jt__1mtlq_final(simin,simax,ci,ta):
    simin = RIF( simin )
    simax = RIF( simax )
    ci = RIF( ci )
    ta = RIF( ta )
    x1_1mtlq = Fu_Jt_L1_1mtlq(simin,simax,ci,ta)
    return x1_1mtlq
def Fu_J2__12lq_final(ci,ta):
    ci = RIF( ci )
    ta = RIF( ta )
    x1_12lq = Fu_Jt_L1_1mtlq(1/2,1/2,ci,ta)
    if x1_12lq[2]!=0:
        return 'Error'
    x1_12lq = x1_12lq[0]+x1_12lq[1]/(1/2)
    x2_12lq = Fu_J2_L2_12lq(ci,ta)
    result = x1_12lq + x2_12lq
    return RIF(result)
def Fu_J2__12l_final(ci,ta):
    ci = RIF( ci )
    ta = RIF( ta )
    x1_12l = Fu_Jt_L1_32m2tl(1/2,1/2,ci,ta)
    if x1_12l[2]!=0:
        return 'Error'
    x1_12l = x1_12l[0]+x1_12l[1]/(1/2)
    x2_12l = Fu_J2_L2_12l(ci,ta)
    x2_12 = Fu_J2_L2_12(ci,ta)
    result = x1_12l + x2_12l + x2_12/log_int(ta)
    return RIF(result)
###
# Actual values for our choice of c,T0.
# There is the potential for a term of order T^(1-tau)*log(T)^2*1/(1/2-tau)^2,
# so we check at the end of the file that this term actually does not exist.
J0__32m2tl_final_vec = Fu_Jt__32m2tl_final(0,1/2,c_la,T0)
J0__1mtlq_final_vec = Fu_Jt__1mtlq_final(0,1/2,c_la,T0)
J4__32m2tl_final_vec = Fu_Jt__32m2tl_final(1/4,1/2,c_la,T0)
J4__32m2tl_final_c = roundup( J4__32m2tl_final_vec[0] \
                           + J4__32m2tl_final_vec[1]/(1/4) , digits )
J4__32m2tl_final_2q = roundup( J4__32m2tl_final_vec[2] , digits )
J4__1mtlq_final_vec = Fu_Jt__1mtlq_final(1/4,1/2,c_la,T0)
J4__1mtlq_final_c = roundup( J4__1mtlq_final_vec[0] \
                           + J4__1mtlq_final_vec[1]/(1/4) , digits )
J4__1mtlq_final_2q = roundup( J4__1mtlq_final_vec[2] , digits )
J2__12lq_final = roundup( Fu_J2__12lq_final(c_la,T0) , digits )
J2__12l_final = roundup( Fu_J2__12l_final(c_la,T0) , digits )
\end{mysage}
\normalsize

Secondly, we may derive an upper bound for $L_{1}$ by using the convexity bounds of $\zeta$ and the definition of $\omega$ given in Corollary~\ref{convexity}. Together with \eqref{bound:chi}, we conclude that, for all $s\in\left[1-\tau+iT,2-2\tau+\lambda+iT\right)$,
\begin{align}\label{estimation:abs2}
\left|\chi(1-s)\zeta(s)\zeta(2\tau-1+s)\right|\leq\kappa(\mathrm{c},T_0)\log^2(T)\left(\mathds{1}_{[1-\tau,1)}(\sigma)\ \omega^2 \left(\frac{T}{2\pi}\right)^{\frac{1}{2}-\frac{2\tau-1}{2}}\right.&\nonumber\\
\left.+\mathds{1}_{[1,2-2\tau)}(\sigma)\omega\left(\frac{T}{2\pi}\right)^{\frac{\sigma-(2\tau-1)}{2}} + \mathds{1}_{[2-2\tau,2-2\tau+\lambda)}(\sigma)\left(\frac{T}{2\pi}\right)^{\sigma-\frac{1}{2}}\right)&
\end{align}
so that, by integrating \eqref{estimation:abs2}, we have 
\begin{align}\label{estimation:L1}
|L_1|\leq& \ \frac{\kappa(\mathrm{c},T_0)}{2}\left(\omega^2\tau\log(T)+2\omega\left(1+\frac{\log(2\pi)}{\log\left(\frac{T_{0}}{2\pi}\right)}\right)\left(\left(\frac{T}{2\pi}\right)^{\frac{1}{2}-\tau}-1\right)\right. \nonumber\\
& \ \left.+(e^{\mathrm{c}}-1)\left(1+\frac{\log(2\pi)}{\log\left(\frac{T_{0}}{2\pi}\right)}\right)\left(\frac{T}{2\pi}\right)^{\frac{1}{2}-\tau}\right) \left(\frac{T}{2\pi}\right)^{1-\tau}\log(T) 
\end{align}
where we have used that $\left(\frac{T}{2\pi}\right)^\lambda<T^\lambda=e^{\mathrm{c}}$ and that $\frac{\log(T)}{\log\left(\frac{T}{2\pi}\right)}\leq 1+\frac{\log(2\pi)}{\log\left(\frac{T_{0}}{2\pi}\right)}$. Note that if $\tau=\frac{1}{2}$, the middle term in \eqref{estimation:L1} vanishes.

On the other hand, with respect to $L_2$, we recall \eqref{bound:chi} and Lemma~\ref{logg}\eqref{logg1}, as well as the facts that  $\lambda\leq\frac{\mathrm{c}}{\log(T_0)}$ and $\left(\frac{T}{2\pi n}\right)^{\lambda}<T^{\lambda}=e^{\mathrm{c}}$, and derive
\begin{align}
|L_{2}|&\leq\frac{\kappa(\mathrm{c},T_0)}{2}\sqrt{\frac{2\pi}{T}}\sum_{n\leq \frac{T}{2\pi}}d_{1-2\tau}(n)\int_{1-\tau}^{2-2\tau+\lambda}\left(\frac{T}{2\pi n}\right)^{\sigma}d\sigma\nonumber\\
&\leq\frac{\kappa(\mathrm{c},T_0)}{2}\left(\frac{T}{2\pi }\right)^{\frac{1}{2}-\tau}\sum_{n\leq\frac{T}{2\pi}}\frac{d_{1-2\tau}(n)}{n^{1-\tau}}\frac{\left(\frac{T}{2\pi n}\right)^{1-\tau+\lambda}-1}{\log\left(\frac{T}{2\pi n}\right)}\nonumber\\
&\leq\frac{\kappa(\mathrm{c},T_0)e^{\mathrm{c}}}{2}\left(1-\tau+\frac{\mathrm{c}}{\log(T_{0})}\right)\left(\frac{T}{2\pi }\right)^{\frac{3}{2}-2\tau}\sum_{n\leq\frac{T}{2\pi}}\frac{d_{1-2\tau}(n)}{n^{2-2\tau}}\label{estimation:L2}.
\end{align} 
We then apply Proposition~\ref{pr:sumdivwei_B} to the inner sum of \eqref{estimation:L2}: when $0<\tau<\frac{1}{2}$, we simplify the main term coefficient with the help of Proposition~\ref{pr:mvzeta}, whereas, when $\tau=\frac{1}{2}$, by ignoring negative coefficients, we merge the remainder terms to the order $\sqrt{T}\log(T)$. Thereupon, by using that $\tau\leq\frac{1}{2}$, $(2\pi)^{-\frac{3}{2}+2\tau}\leq\frac{1}{\sqrt{2\pi}}$ and that $(2\pi)^{-1+\tau}\leq\frac{1}{\sqrt{2\pi}}$ we combine the resulting bound with \eqref{estimation:L1}, obtaining the result.

Finally, observe that \eqref{estimation:L1} and \eqref{estimation:L2} also hold when bounding $J_{1}$, in which case $T$ is replaced by $|T|$.
\end{proofbold}

\subsection{The integral $K$}\label{sec:K}

Let us give the following tail estimation of an arithmetical function involving $d_{a}$ and the parameter $\lambda$.

\begin{proposition}\label{pr:sumdivwei_C}
Let $X\geq 1$, $0<\sigma\leq\frac{1}{2}$ and $\lambda>0$. Then, if $\sigma<\frac{1}{2}$,
\begin{align*}
\sum_{n>X}\frac{d_{1-2\sigma}(n)}{n^{2-2\sigma+\lambda}}= & \ \frac{\zeta(2-2\sigma+\lambda)}{\lambda X^{\lambda}}+O^{*}\left(\frac{\frac{1}{2-2\sigma}+\frac{1}{(1-2\sigma)(1-2\sigma+\lambda)}}{X^{\lambda}}\right. \\
 & \ \left.+\frac{\zeta(2-2\sigma+\lambda)+\frac{1}{1-2\sigma+\lambda}+1}{X^{1+\lambda}}\right),
\end{align*}
whereas 
\begin{equation*}
\sum_{n>X}\frac{d(n)}{n^{1+\lambda}}=\frac{\log(X)}{\lambda X^{\lambda}}+O^*\left(\left(1+\frac{\zeta(1+\lambda)+\gamma}{\lambda}+\frac{\zeta(1+\lambda)+\frac{2}{3\lambda}}{X}\right)\frac{1}{X^{\lambda}}\right).
\end{equation*}
\end{proposition}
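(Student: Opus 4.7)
The plan is to apply Dirichlet convolution in the form $d_{1-2\sigma}(n)=\sum_{d\mid n}d^{1-2\sigma}$, reparametrizing $n=dm$ so that
\begin{equation*}
\sum_{n>X}\frac{d_{1-2\sigma}(n)}{n^{2-2\sigma+\lambda}}=\sum_{d=1}^{\infty}\frac{1}{d^{1+\lambda}}\sum_{m>X/d}\frac{1}{m^{2-2\sigma+\lambda}},
\end{equation*}
and then evaluate the two nested tails via Lemma~\ref{le:sums} after splitting the outer sum at $d=X$.

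The contribution from $d>X$ is $\zeta(2-2\sigma+\lambda)\sum_{d>X}d^{-(1+\lambda)}$, and the tail form of Lemma~\ref{le:sums}\eqref{le:sums<0} gives $\sum_{d>X}d^{-(1+\lambda)}=\frac{1}{\lambda X^{\lambda}}+O^{*}(X^{-(1+\lambda)})$, already yielding the main term $\frac{\zeta(2-2\sigma+\lambda)}{\lambda X^{\lambda}}$ together with the $\zeta(2-2\sigma+\lambda)/X^{1+\lambda}$ piece of the stated error. For $d\leq X$, the same lemma applied to the inner tail gives
\begin{equation*}
\sum_{m>X/d}\frac{1}{m^{2-2\sigma+\lambda}}=\frac{d^{1-2\sigma+\lambda}}{(1-2\sigma+\lambda)X^{1-2\sigma+\lambda}}+O^{*}\!\left(\frac{d^{2-2\sigma+\lambda}}{X^{2-2\sigma+\lambda}}\right);
\end{equation*}
after dividing by $d^{1+\lambda}$ one is left with sums of $d^{-2\sigma}$ and $d^{1-2\sigma}$ over $d\leq X$, handled by Lemma~\ref{le:sums}\eqref{le:sums<0} and \eqref{le:sums>0} respectively.

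The one delicate step is that $\sum_{d\leq X}d^{-2\sigma}=\zeta(2\sigma)+\frac{X^{1-2\sigma}}{1-2\sigma}+O^{*}(X^{-2\sigma})$ brings in the negative quantity $\zeta(2\sigma)$. I would invoke Proposition~\ref{pr:mvzeta}, which gives $-\frac{1}{1-2\sigma}<\zeta(2\sigma)<-\frac{2\sigma}{1-2\sigma}$ and therefore $0\leq\zeta(2\sigma)+\frac{X^{1-2\sigma}}{1-2\sigma}\leq\frac{X^{1-2\sigma}}{1-2\sigma}$ for $X\geq 1$; the exact cancellation of the two leading pieces (rather than a crude triangle inequality) is what produces the sharp coefficient $\frac{1}{(1-2\sigma)(1-2\sigma+\lambda)}$ in the $X^{-\lambda}$ part of the error. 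The $\frac{1}{2-2\sigma}$ summand comes instead from the leading term of $\sum_{d\leq X}d^{1-2\sigma}$ paired with the $O^{*}$-piece of the inner expansion, and the remaining $\frac{1}{1-2\sigma+\lambda}+1$ in the $X^{-(1+\lambda)}$ error arises from the $O^{*}$-remainders of the two outer applications of Lemma~\ref{le:sums}.

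For $\sigma=\frac{1}{2}$ the decomposition is identical with $2-2\sigma+\lambda=1+\lambda$, but now $\sum_{d\leq X}d^{-1}$ falls under Lemma~\ref{le:sums}\eqref{le:sums=-1}, which introduces $\log(X)$ and $\gamma$; the $\gamma$ and the $\zeta(1+\lambda)$ from the $d>X$ regime, once $\frac{1}{\lambda X^{\lambda}}$ is factored out, combine into the $\frac{\zeta(1+\lambda)+\gamma}{\lambda}$ coefficient, while the $\frac{2}{3X}$ error from Lemma~\ref{le:sums}\eqref{le:sums=-1} contributes the $\frac{2/(3\lambda)}{X}$ summand. No analytic obstacle arises; the only subtle point is the sign-cancellation from Proposition~\ref{pr:mvzeta}, after which everything is straightforward bookkeeping.
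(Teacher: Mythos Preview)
Your proposal is correct and follows essentially the same approach as the paper: both use the Dirichlet-convolution decomposition $n=dm$, apply Lemma~\ref{le:sums} to the resulting nested sums, and invoke Proposition~\ref{pr:mvzeta} to absorb the negative $\zeta(2\sigma)$ term into the positive $\frac{X^{1-2\sigma}}{1-2\sigma}$ term. The only cosmetic difference is that the paper first writes the full series as $\zeta(1+\lambda)\zeta(2-2\sigma+\lambda)$ and subtracts the partial sum $\sum_{n\leq X}$, whereas you work with the tail $\sum_{n>X}$ directly and split the outer $d$-sum at $X$; the resulting computations are termwise identical.
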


\begin{proof}
Let $0<\sigma\leq\frac{1}{2}$. Observe that
\begin{equation}\label{convergence}
\sum_{n}\frac{d_{1-2\sigma}(n)}{n^{2-2\sigma+\lambda}}=\sum_{n}\sum_{d|n}\frac{1}{d^{1+\lambda}}\left(\frac{d}{n}\right)^{2-2\sigma+\lambda}=\zeta(1+\lambda)\zeta(2-2\sigma+\lambda).
\end{equation}
On the other hand, by Lemma~\ref{le:sums}\eqref{le:sums<0}, we have 
\begin{align*}
\sum_{n\leq X}\frac{d_{1-2\sigma}(n)}{n^{2-2\sigma+\lambda}}= & \ \sum_{d\leq X}\frac{1}{d^{1+\lambda}}\!\left(\zeta(2-2\sigma+\lambda)-\frac{d^{1-2\sigma+\lambda}}{(1-2\sigma+\lambda)X^{1-2\sigma+\lambda}}+O^{*}\!\left(\frac{d^{2-2\sigma+\lambda}}{X^{2-2\sigma+\lambda}}\right)\right) \nonumber \\
= & \ \zeta(2-2\sigma+\lambda)\left(\zeta(1+\lambda)-\frac{1}{\lambda X^{\lambda}}+O^*\left(\frac{1}{X^{1+\lambda}}\right)\right) \nonumber \\
 & \ -\frac{1}{(1-2\sigma+\lambda)X^{1-2\sigma+\lambda}}\sum_{d\leq X}\frac{1}{d^{2\sigma}}+O^{*}\left(\frac{1}{X^{2-2\sigma+\lambda}}\sum_{d\leq X}d^{1-2\sigma}\right).
\end{align*}
Then, for $0<\sigma<\frac{1}{2}$ we use Lemma~\ref{le:sums}\eqref{le:sums<0}-\eqref{le:sums>0} on the remaining sums, while for $\sigma=\frac{1}{2}$ we use Lemma~\ref{le:sums}\eqref{le:sums=-1} and $\sum_{d\leq X}1\leq X$. Subsequently, we subtract the resulting expression from \eqref{convergence}. When $0<\sigma<\frac{1}{2}$, the negative summand coming from the first term of Lemma~\ref{le:sums}\eqref{le:sums<0} is smaller in absolute value than the positive summand coming from the second term, by Proposition~\ref{pr:mvzeta}: thus, when combining everything into the error term, we can forget about the former. The result follows.
\end{proof}

\footnotesize
\begin{mysage}
###
# Constants P for the case sigma=1/2.
P2_1 = RIF( 1/(4*pi_int) )
P2_2 = RIF( (2*gamma_int-log_int(pi_int))/(4*pi_int) )
def Fu_P2_3(ta):
    ta = RIF( ta )
    result = A12*sqrt_int(2/pi_int)*sqrt_int(1+1/sqrt_int(ta))
    return RIF(result)
P2_4 = RIF( A12/(2*sqrt_int(2*pi_int)) )
P2_5 = RIF( 4*A12*sqrt_int(pi_int) )
###
# Constants P for the case 0<sigma<1/2.
# We write them as constants multiplying [1,1/sigma,1/(1/2-sigma)].
Ps_1 = [ RIF( 0 ) , RIF( 0 ) , RIF( 0 ) ]
def Fu_Ps_2(simin,simax):
    simin = RIF( simin )
    simax = RIF( simax )
    result = cw_prod( 1/(4*pi_int) , zea_2m2s )
    return result
def Fu_Ps_3(simin,simax):
    simin = RIF( simin )
    simax = RIF( simax )
    result = cw_prod( 2/exp_int(log_int(2*pi_int)*simin) , Fu_As(simin,simax) )
    return result
def Fu_Ps_4(simin,simax):
    simin = RIF( simin )
    simax = RIF( simax )
    result = cw_prod( (1-simin)/exp_int(log_int(2*pi_int)*simin) , Fu_As(simin,simax) )
    return result
def Fu_Ps_5(simin,simax):
    simin = RIF( simin )
    simax = RIF( simax )
    result = cw_prod( 2/exp_int(log_int(pi_int)*simin) , Fu_As(simin,simax) )
    return result
\end{mysage}
\normalsize

\begin{lemma}\label{le:sumKlog}
Let $T\geq T_{0}=\sage{T0}$ and $0<\sigma\leq\frac{1}{2}$. Then
\begin{equation*}
\sum_{\frac{T+\sqrt{T}}{2\pi}<n\leq\frac{T}{\pi}}\frac{d_{1-2\sigma}(n)}{n^{1-2\sigma}(2\pi n-T)}\leq P_{1,\sigma}\log^{2}(T)+P_{2,\sigma}\log(T)+\frac{P_{3,\sigma}}{T^{\frac{1}{2}-\sigma}}+\frac{P_{4,\sigma}\log(T)}{T^{1-\sigma}}+\frac{P_{5,\sigma}}{T^{1-\sigma}},
\end{equation*}
where for $0<\sigma<\frac{1}{2}$
\begin{align*}
P_{1,\sigma} & =0, & P_{2,\sigma} & =\frac{\zeta(2-2\sigma)}{4\pi}, & P_{3,\sigma} & =\frac{2A_{\sigma}}{(2\pi)^{\sigma}}, & P_{4,\sigma} & =\frac{A_{\sigma}(1-\sigma)}{(2\pi)^{\sigma}}, & P_{5,\sigma} & =\frac{2A_{\sigma}}{\pi^{\sigma}},
\end{align*}
with $A_{\sigma}$ as in Proposition~\ref{pr:sumdiv}, whereas for $\sigma=\frac{1}{2}$
\begin{align*}
P_{1,\frac{1}{2}} & =\frac{1}{4\pi}, & P_{2,\frac{1}{2}} & =\frac{2\gamma-\log(\pi)}{4\pi}, & P_{3,\frac{1}{2}} & =\frac{A_{\frac{1}{2}}\sqrt{2}}{\sqrt{\pi}}\sqrt{1+\frac{1}{\sqrt{T_{0}}}}, \\
P_{4,\frac{1}{2}} & =\frac{A_{\frac{1}{2}}}{2\sqrt{2\pi}}, & P_{5,\frac{1}{2}} & =4A_{\frac{1}{2}}\sqrt{\pi}. & & 
\end{align*}
\end{lemma}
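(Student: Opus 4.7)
\begin{proofbold}{Lemma~\ref{le:sumKlog}}

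The plan is to mirror the proof of Lemma~\ref{le:sumdivlog}, exchanging the role of the singularity: the pole of $1/(2\pi t - T)$ at $t = T/(2\pi)$ now lies below rather than above the range of integration. Letting $\mathrm{M}_\sigma$ and $\Xi_\sigma$ be the main and error terms from Proposition~\ref{pr:sumdiv}, summation by parts yields
\begin{equation*}
\sum_{\frac{T+\sqrt{T}}{2\pi} < n \leq \frac{T}{\pi}} \frac{d_{1-2\sigma}(n)}{n^{1-2\sigma}(2\pi n - T)} = \int_{(T+\sqrt{T})/(2\pi)}^{T/\pi} \frac{\mathrm{M}_\sigma(t)'\, dt}{t^{1-2\sigma}(2\pi t - T)} + \mathrm{r}'_\sigma(T),
\end{equation*}
where $\mathrm{r}'_\sigma(T)$ gathers the boundary contributions and the integral involving $\Xi_\sigma$ against the derivative of $g(t) = 1/(t^{1-2\sigma}(2\pi t - T))$.

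The function $g$ is monotonically decreasing on $(T/(2\pi), \infty)$, since $-g'(t)/g(t) = (1-2\sigma)/t + 2\pi/(2\pi t - T) > 0$, so its derivative can be integrated by the fundamental theorem of calculus without splitting the interval. Combining this with $|\Xi_\sigma(t)| \leq A_\sigma t^{1-\sigma}$ and evaluating the boundary contributions at $t = (T+\sqrt{T})/(2\pi)$ and $t = T/\pi$ produces the coefficients $P_{3,\sigma}, P_{4,\sigma}, P_{5,\sigma}$. For $\sigma = 1/2$, the lower endpoint estimate $((T+\sqrt{T})/(2\pi))^{-1/2} \leq \sqrt{2\pi/T}\sqrt{1+1/\sqrt{T_0}}$, valid since $T \geq T_0$, accounts for the factor $\sqrt{1+1/\sqrt{T_0}}$ inside $P_{3,1/2}$.

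The main integral is handled case by case. For $0 < \sigma < 1/2$, $\mathrm{M}_\sigma'(t) = \zeta(2\sigma) + \zeta(2-2\sigma) t^{1-2\sigma}$; the $\zeta(2\sigma)$ contribution is nonpositive by Proposition~\ref{pr:mvzeta} and can be dropped for an upper bound, while the other evaluates to $\frac{\zeta(2-2\sigma)}{4\pi} \log T$, giving $P_{1,\sigma} = 0$ and $P_{2,\sigma}$. For $\sigma = 1/2$, $\mathrm{M}_{1/2}'(t) = \log t + 2\gamma$; after substituting $u = 2\pi t - T$, writing $\log t = \log(u+T) - \log(2\pi)$, and bounding $\log(u+T) \leq \log T + \log 2$ on $[\sqrt{T}, T]$, we obtain
\begin{equation*}
\int_{(T+\sqrt{T})/(2\pi)}^{T/\pi} \frac{\log t + 2\gamma}{2\pi t - T}\, dt \leq \frac{\log^2 T}{4\pi} + \frac{(2\gamma - \log \pi) \log T}{4\pi},
\end{equation*}
producing $P_{1,1/2}$ and $P_{2,1/2}$.

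The main obstacle is purely bookkeeping: each of $P_{3,\sigma}, P_{4,\sigma}, P_{5,\sigma}$ is assembled from several contributions of the same order (two boundary terms plus the derivative integral, which itself splits into two positive pieces once $-g'$ is expanded), and tracking their coefficients through to the closed forms in the statement requires careful accounting of factors of $2\pi$ and $\pi$, as well as the uniform use of $T \geq T_0$ and of $(T+\sqrt{T})^\alpha$-simplifications analogous to those already used in the proof of Proposition~\ref{pr:I}.
\end{proofbold}
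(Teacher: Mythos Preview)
Your approach is essentially the paper's: Abel summation against $g(t)=1/(t^{1-2\sigma}(2\pi t-T))$, drop the nonpositive $\zeta(2\sigma)$ contribution when $\sigma<\tfrac12$, and control the $\Xi_\sigma$ terms by integration by parts using the monotonicity of $g$. Two small corrections: for $\sigma=\tfrac12$ the weight is simply $g(t)=1/(2\pi t-T)$, so $-g'$ does not split into two pieces, and the paper handles the resulting error integral $\int \sqrt{t}\,(t-T/2\pi)^{-2}\,dt$ via the explicit antiderivative $\sqrt{x}/(a-x)-\tfrac{1}{2\sqrt{a}}\log\!\big((\sqrt{x}+\sqrt{a})/(\sqrt{x}-\sqrt{a})\big)$ --- this is where both $P_{3,1/2}$ (from the lower-endpoint evaluation of $\sqrt{t}/(t-T/2\pi)$, which gives $\sqrt{2\pi}\sqrt{1+1/\sqrt{T}}$) and $P_{4,1/2}$ (from the logarithmic piece) actually arise, not from bounding $t^{-1/2}$ at the lower endpoint as you suggest.
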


\begin{proof}
We follow the same reasoning as in Lemma~\ref{le:sumdivlog}. By Proposition~\ref{pr:sumdiv}, we write 
\begin{align*}\sum_{\frac{T+\sqrt{T}}{2\pi}<n\leq t}d_{1-2\sigma}(n)=\mathrm{M}_{\sigma}(t)-\mathrm{M}_{\sigma}\left(\frac{T+\sqrt{T}}{2\pi}\right)+\Xi_{\sigma}(t)\end{align*} 
with $|\Xi_{\sigma}(t)|\leq 2A_{\sigma}t^{1-\sigma}$ and $\Xi_{\sigma}\left(\frac{T+\sqrt{T}}{2\pi}\right)=0$. For $0<\sigma<\frac{1}{2}$, the sum in the statement is bounded as
\begin{align}
& \frac{\zeta(2\sigma)}{2\pi}\int_{\frac{T+\sqrt{T}}{2\pi}}^{\frac{T}{\pi}}\frac{dt}{t^{1-2\sigma}\left(t-\frac{T}{2\pi}\right)}+\frac{\zeta(2-2\sigma)}{2\pi}\int_{\frac{T+\sqrt{T}}{2\pi}}^{\frac{T}{\pi}}\frac{dt}{t-\frac{T}{2\pi}} \nonumber \\
& +\frac{\Xi_{\sigma}\left(\frac{T}{\pi}\right)}{\left(\frac{T}{\pi}\right)^{1-2\sigma}T}-\frac{1}{2\pi}\int_{\frac{T+\sqrt{T}}{2\pi}}^{\frac{T}{\pi}}\Xi_{\sigma}(t)\left(\frac{1}{t^{1-2\sigma}\left(t-\frac{T}{2\pi}\right)}\right)'dt. \label{eq:Kloglong_s}
\end{align}
Since $\zeta(2\sigma)<0$, the first term in \eqref{eq:Kloglong_s} can be ignored. The integral in the second term is equal to $\frac{1}{2}\log(T)$. As for the terms involving $\Xi_{\sigma}$, they can be bounded by
\begin{align}
 & \ \frac{2A_{\sigma}}{\pi^{\sigma}T^{1-\sigma}}+\left|\frac{1}{2\pi}\left[\frac{2A_{\sigma}t^{\sigma}}{t-\frac{T}{2\pi}}\right|_{\frac{T+\sqrt{T}}{2\pi}}^{\frac{T}{\pi}}\right|+\left|\frac{1}{2\pi}\int_{\frac{T+\sqrt{T}}{2\pi}}^{\frac{T}{\pi}}\frac{2A_{\sigma}(1-\sigma)}{t^{1-\sigma}\left(t-\frac{T}{2\pi}\right)}dt\right| \nonumber\\
< & \ \frac{2A_{\sigma}}{\pi^{\sigma}T^{1-\sigma}}+\frac{A_{\sigma}}{\pi}(2\pi)^{1-\sigma}\left(\frac{(T+\sqrt{T})^{\sigma}}{\sqrt{T}}-\frac{2^{\sigma}}{T^{1-\sigma}}\right)+\frac{1}{2\pi\left(\frac{T}{2\pi}\right)^{1-\sigma}}\int_{\frac{T+\sqrt{T}}{2\pi}}^{\frac{T}{\pi}}\frac{2A_{\sigma}(1-\sigma)}{t-\frac{T}{2\pi}}dt \nonumber\\
< & \ \frac{2A_{\sigma}}{\pi^{\sigma}T^{1-\sigma}}+\frac{2A_{\sigma}}{(2\pi)^{\sigma}T^{\frac{1}{2}-\sigma}}+\frac{A_{\sigma}(1-\sigma)\log(T)}{(2\pi)^{\sigma}T^{1-\sigma}}, \label{eq:Xi}
\end{align}
where we used Lemma~\ref{logg}\eqref{loggexp} to show that $(T+\sqrt{T})^{\sigma}\leq T^{\sigma}(1+\sigma T^{-\frac{1}{2}})<T^{\sigma}+2^{\sigma}T^{\sigma-\frac{1}{2}}$. By putting \eqref{eq:Xi} back into \eqref{eq:Kloglong_s}, we obtain the result.

We can proceed similarly for $\sigma=\frac{1}{2}$. By Proposition~\ref{pr:sumdiv}, the sum in the statement is bounded as
\begin{equation}\label{eq:Kloglong}
\frac{1}{2\pi}\int_{\frac{T+\sqrt{T}}{2\pi}}^{\frac{T}{\pi}}\frac{(\log(t)+2\gamma)dt}{t-\frac{T}{2\pi}}+\frac{1}{2\pi}\int_{\frac{T+\sqrt{T}}{2\pi}}^{\frac{T}{\pi}}\frac{2A_{\frac{1}{2}}\sqrt{t}dt}{\left(t-\frac{T}{2\pi}\right)^{2}}+\frac{4A_{\frac{1}{2}}\sqrt{\pi}}{\sqrt{T}}.
\end{equation}
The first integral, coming from the main term, can in turn be easily bounded since by definition $\log(t)+2\gamma\leq\log(T)+2\gamma-\log(\pi)$ for $t\leq\frac{T}{\pi}$, which implies
\begin{align}\label{eq:Klogmain}
\frac{1}{2\pi}\int_{\frac{T+\sqrt{T}}{2\pi}}^{\frac{T}{\pi}}\frac{(\log(t)+2\gamma)dt}{t-\frac{T}{2\pi}} & \leq\frac{\log(T)+2\gamma-\log(\pi)}{2\pi}\left[\log\left(t-\frac{T}{2\pi}\right)\right|_{\frac{T+\sqrt{T}}{2\pi}}^{\frac{T}{\pi}} \nonumber \\
 & =\frac{1}{4\pi}\log^{2}(T)+\frac{2\gamma-\log(\pi)}{4\pi}\log(T).
\end{align}
For the second integrand in \eqref{eq:Kloglong} we use instead that, for $x>a>0$,
\begin{equation*}
\int\frac{\sqrt{x}dx}{(x-a)^{2}}=\frac{\sqrt{x}}{a-x}-\frac{1}{2\sqrt{a}}\log\left(\frac{\sqrt{x}+\sqrt{a}}{\sqrt{x}-\sqrt{a}}\right).
\end{equation*}
Then, the second term in \eqref{eq:Kloglong} is equal to
\begin{align}
 & \frac{A_{\frac{1}{2}}}{\pi}\left[-\frac{\sqrt{t}}{t-\frac{T}{2\pi}}-\sqrt{\frac{\pi}{2T}}\log\left(\frac{\sqrt{\frac{2\pi t}{T}}+1}{\sqrt{\frac{2\pi t}{T}}-1}\right)\right|_{\frac{T+\sqrt{T}}{2\pi}}^{\frac{T}{\pi}} \nonumber \\
\leq & \frac{A_{\frac{1}{2}}}{\sqrt{2\pi}}\!\left(2\sqrt{1+\frac{1}{\sqrt{T_{0}}}}-\frac{2\sqrt{2}}{\sqrt{T}}+\frac{\log\left(1+\sqrt{1+\frac{1}{\sqrt{T_{0}}}}\right)}{\sqrt{T}}+\frac{\log(T)}{2\sqrt{T}}+\frac{\log\left(\frac{7}{3}\right)}{\sqrt{T}}-\frac{\log\left(\frac{\sqrt{2}+1}{\sqrt{2}-1}\right)}{\sqrt{T}}\right) \nonumber \\
< & \frac{A_{\frac{1}{2}}\sqrt{2}}{\sqrt{\pi}}\sqrt{1+\frac{1}{\sqrt{T_{0}}}}+\frac{A_{\frac{1}{2}}}{2\sqrt{2\pi}}\frac{\log(T)}{\sqrt{T}}, \label{eq:Klogerr}
\end{align}
where in the second line we used the mean value theorem to obtain that for $T\geq T_0$,
\begin{align*}\sqrt{T+\sqrt{T}}-\sqrt{T}\geq\frac{\sqrt{T}}{2\sqrt{T+\sqrt{T}}}>\frac{3}{7}\Rightarrow -\log\left(\sqrt{1+\frac{1}{\sqrt{T}}}-1\right)\leq\frac{1}{2}\log(T)+\log\left(\frac{7}{3}\right),\end{align*} 
and where in the last line we dropped all the terms of order $\frac{1}{\sqrt{T}}$ since they amount to a negative contribution to the bound, given the choice of $T_{0}$. The result is concluded by putting \eqref{eq:Klogmain} and \eqref{eq:Klogerr} back into \eqref{eq:Kloglong}.
\end{proof}

\footnotesize
\begin{mysage}
###
# Functions for computing constants Q,R,R'.
def Fu_Q(simin,simax,ci,ta):
    simin = RIF( simin )
    simax = RIF( simax )
    ci = RIF( ci )
    ta = RIF( ta )
    if 2-2*simin+ci/log_int(ta)<=2:
        gammaterm = 1
    else:
        gammaterm = gam_int(2-2*simin+ci/log_int(ta))
    result = exp_int(pi_int/2)*gammaterm/exp_int(log_int(2*pi_int)*(2-2*simax))
    return RIF(result)
def Fu_R_const(simin,simax):
    simin = RIF( simin )
    simax = RIF( simax )
    result = (exp_int(pi_int)+1)/(2*exp_int(pi_int)*exp_int(log_int(2*pi_int)*(3/2-2*simax)))
    return RIF(result)
def Fu_FK(simin,simax,ci,ta):
    simin = RIF( simin )
    simax = RIF( simax )
    ci = RIF( ci )
    ta = RIF( ta )
    result = Exf(Fu_extrambda(ci,ta,2-2*simin),ta)
    return RIF(result)
def Fu_R_1(simin,simax,ci,ta):
    simin = RIF( simin )
    simax = RIF( simax )
    ci = RIF( ci )
    ta = RIF( ta )
    result = Fu_R_const(simin,simax)*exp_int(ci)/2 \
             *((2-2*simin+ci/log_int(ta))*(1-2*simin+ci/log_int(ta))+25/6)
    return RIF(result)
def Fu_R_2(simin,simax,ci,ta):
    simin = RIF( simin )
    simax = RIF( simax )
    ci = RIF( ci )
    ta = RIF( ta )
    result = Fu_R_const(simin,simax)*Fu_FK(simin,simax,ci,ta)/(1/2-ci/log_int(ta))
    return RIF(result)
def Fu_Rp_1(simin,simax,ci,ta):
    simin = RIF( simin )
    simax = RIF( simax )
    ci = RIF( ci )
    ta = RIF( ta )
    result = Fu_R_const(simin,simax)*exp_int(ci)/2 \
             *((2-2*simin+ci/log_int(ta))*(1-2*simin+ci/log_int(ta))+37/6)
    return RIF(result)
###
# Constants F,Q,R,R'.
FK4 = roundup( Fu_FK(1/4,1/2,c_la,T0) , digits )
FK2 = roundup( Fu_FK(1/2,1/2,c_la,T0) , digits )
Q2 = roundup( Fu_Q(1/2,1/2,c_la,T0) , digits )
Q4 = roundup( Fu_Q(1/4,1/2,c_la,T0) , digits )
R2_1 = roundup( Fu_R_1(1/2,1/2,c_la,T0) , digits )
R4_1 = roundup( Fu_R_1(1/4,1/2,c_la,T0) , digits )
R2_2 = roundup( Fu_R_2(1/2,1/2,c_la,T0) , digits )
R4_2 = roundup( Fu_R_2(1/4,1/2,c_la,T0) , digits )
Rp2_1 = roundup( Fu_Rp_1(1/2,1/2,c_la,T0) , digits )
Rp4_1 = roundup( Fu_Rp_1(1/4,1/2,c_la,T0) , digits )
###
# In the proof we assume that lambda<1/2.
# For a check about this, see the end of the file.
\end{mysage}
\normalsize

\begin{lemma}\label{Kn:estimation} Let $T>T_0=\sage{T0}$ and $\lambda=\frac{\mathrm{c}}{\log(T)}$. For any $n\in\mathbb{Z}_{>0}$ such that $n>\frac{T}{2\pi}$, we have the following estimation:
\begin{align*}
|K_{n}|=\left|\frac{1}{2i}\int_{2-2\tau+\lambda-iT}^{2-2\tau+\lambda+iT}\frac{\chi(1-s)}{n^s}ds\right|\leq 2|X_{n}|+2|Y_{n}|,
\end{align*}
with
\begin{align}
|Y_{n}| & \leq\frac{Q}{n^{2-2\tau+\lambda}}, & & \nonumber \\
|X_{n}| & \leq\frac{R_{1}T^{\frac{3}{2}-2\tau}}{n^{2-2\tau+\lambda}\log\left(\frac{2\pi n}{T}\right)}+\frac{R_{2}T^{1-2\tau}}{n^{2-2\tau+\lambda}} & & \text{if $n>\frac{T+\sqrt{T}}{2\pi}$,} \label{Xn:1} \\
|X_{n}| & \leq\frac{R'_{1}T^{2-2\tau}}{n^{2-2\tau+\lambda}}+\frac{R'_{2}T^{1-2\tau}}{n^{2-2\tau+\lambda}} & & \text{if $\frac{T}{2\pi}<n\leq\frac{T+\sqrt{T}}{2\pi}$,} \label{Xn:2}
\end{align}
where for $\tau=\frac{1}{2}$
\begin{align*}
Q & =\sage{Q2}, & R_{1} & =\sage{R2_1}, & R_{2} & =\sage{R2_2}, & R'_{1} & =\sage{Rp2_1}, & R'_{2} & =R_{2},
\end{align*}
whereas for $\frac{1}{4}\leq\tau<\frac{1}{2}$
\begin{align*}
Q & =\sage{Q4}, & R_{1} & =\sage{R4_1}, & R_{2} & =\sage{R4_2}, & R'_{1} & =\sage{Rp4_1}, & R'_{2} & =R_{2}.
\end{align*}
\end{lemma}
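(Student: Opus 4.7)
The plan is to follow the strategy of Lemma~\ref{In:estimation} adapted to a contour to the right of the critical strip. Using the identity $\chi(1-\bar s) = \overline{\chi(1-s)}$, I would reduce $K_n$ to (twice the real part of) the integral over the upper half of the contour, parametrized as $\int_{0}^{T}\chi(1-c-it)n^{-(c+it)}dt$, with $c = 2-2\tau+\lambda$. A natural split at $t=1$ then defines $Y_n$ as the contribution from $t \in [0,1]$ and $X_n$ as the contribution from $t \in [1,T]$, and the factor of $2$ in $|K_n| \leq 2|X_n| + 2|Y_n|$ accounts for the conjugate symmetry with the lower half.

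For $Y_n$ the estimation is essentially immediate from $|\chi(1-s)| = 2(2\pi)^{-c}|\cos(\pi s/2)||\Gamma(s)|$: Proposition~\ref{cosine} bounds $|\cos(\pi(c+it)/2)|$ by roughly $e^{\pi/2}/2$ on $|t|\leq 1$, and the standard inequality $|\Gamma(c+it)| \leq \Gamma(c)$ for real $c > 0$ yields the desired estimate, with $(2\pi)^{-\lambda}\leq 1$ absorbed to produce $Q$. For $X_n$ one exploits the oscillation of the integrand: since $c+i[1,T]$ lies in the sector $|\arg s| \leq \pi/2$, Theorem~\ref{Stirling}\eqref{StirlingGa}--\eqref{StirlingGaI} combined with the expansion \eqref{definitive} already used for $B_n$ in Lemma~\ref{In:estimation} produces a representation $h(t)e^{i\phi(t)}\bigl(1 + O^*(\mathrm{E}(c,T_0)/t^2)\bigr)$ with phase $\phi(t) = t\log(t/(2\pi n)) - t + O(1/t)$ and amplitude of size $(2\pi n)^{-c}t^{c-1/2}$. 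Because $n > T/(2\pi)$, the derivative $\phi'(t) = \log(t/(2\pi n)) + O(t^{-2})$ satisfies $|\phi'(t)| \geq \log(2\pi n/T) > 0$ throughout $[1,T]$, and in particular has constant sign.

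When $n > (T+\sqrt T)/(2\pi)$, this lower bound on $|\phi'|$ is strong enough to make integration by parts effective, following the template of \eqref{identity1}: the boundary term at $t = T$ gives the main contribution of order $(2\pi n)^{-c}T^{c-1/2}/\log(2\pi n/T)$, which after using $T^\lambda = e^{\mathrm c}$ becomes the $R_1$ term of \eqref{Xn:1}; the boundary at $t=1$, the integrated $(h/\phi')'$ contribution, and the Stirling remainder together collapse into the $R_2 T^{1-2\tau}/n^{2-2\tau+\lambda}$ piece. When $T/(2\pi) < n \leq (T+\sqrt T)/(2\pi)$, the factor $1/\log(2\pi n/T)$ blows up like $\sqrt T$ by Lemma~\ref{logg}\eqref{logg1} and integration by parts ceases to be useful, so I would fall back on the trivial estimate $|X_n| \leq \int_1^T |h(t)|\,dt \lesssim (2\pi n)^{-c}T^{c+1/2}$, producing the $R'_1$ term of \eqref{Xn:2}, with the Stirling error again giving $R'_2 = R_2$. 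The main technical difficulty will be carrying the explicit Stirling remainders $\mathrm{e}_1,\mathrm{e}_2,\mathrm{E}$ of \eqref{def:e1e2}--\eqref{E_def} through the integration by parts and verifying that the second-order phase correction does not destroy the sign of $\phi'$ near $t = T$, so that all lower-order contributions can indeed be absorbed into the advertised explicit constants.
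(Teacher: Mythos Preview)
Your overall strategy matches the paper's: the conjugate-symmetry reduction $K_n = X_n + Y_n - \overline{X_n} - \overline{Y_n}$, the split at $t=1$, the Stirling expansion \eqref{definitive} giving amplitude $\asymp (2\pi n)^{-c}\,t^{c-1/2}$ with $c = 2-2\tau+\lambda$, and the integration by parts in the style of \eqref{identity1} against $g(t)=t\log\!\bigl(\tfrac{2\pi n}{t}\bigr)+t$ for $n>\tfrac{T+\sqrt{T}}{2\pi}$ are all exactly as in the paper. Your worry about the second-order phase correction destroying the sign of $\phi'$ is unfounded, incidentally: the paper's integration-by-parts identity only requires $g'\neq 0$, with the $1/t$ correction $h_{\lambda,\tau}$ handled separately as an additive $h'/g'$ term, so one never needs $g'+h'\neq 0$.

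There is, however, a genuine gap in the transition range $\tfrac{T}{2\pi}<n\le\tfrac{T+\sqrt{T}}{2\pi}$. Your trivial estimate $\int_1^T t^{c-1/2}\,dt \asymp T^{c+1/2} = e^{\mathrm c}\,T^{5/2-2\tau}$ is a full factor of $\sqrt{T}$ larger than the target $R'_1 T^{2-2\tau}$ in \eqref{Xn:2}, so it cannot produce that bound. The paper does \emph{not} abandon integration by parts here. Instead it splits the $t$-integral at $T-\sqrt{T}$: on $[1,\,T-\sqrt{T}]$ one still has $|g'(t)|\ge\log\!\bigl(\tfrac{2\pi n}{T-\sqrt{T}}\bigr)\ge\log\!\bigl(\tfrac{T}{T-\sqrt{T}}\bigr)\ge T^{-1/2}$ by Lemma~\ref{logg}\eqref{logg2}, so the bound \eqref{ident} with $W=T-\sqrt{T}$ gives a contribution of order $(T-\sqrt{T})^{c-1/2}\cdot\sqrt{T}\le T^{c}=e^{\mathrm c}\,T^{2-2\tau}$. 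Only on the short interval $[T-\sqrt{T},\,T]$ is the trivial bound used, and there $\int_{T-\sqrt{T}}^{T}t^{c-1/2}\,dt\le\sqrt{T}\cdot T^{c-1/2}=T^{c}$ as well. Both pieces therefore land at the correct order, and their constants combine to give $R'_1$.
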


\begin{remark}\label{crucial2} 
Observe that if \eqref{Xn:1} is not as sharp as $n$ approaches $\frac{T}{2\pi}$ from the right, for if $T<2\pi n\leq T+\sqrt{T}$, then by Lemma~\ref{logg}\eqref{logg1} 
\begin{align*}\frac{\sqrt{T}}{\log\left(\frac{2\pi n}{T}\right)}\geq\frac{T^{\frac{3}{2}}}{2\pi n-T}\gg T,\end{align*}
 so that it is better to consider the bound \eqref{Xn:2}. Instead, if $2\pi n>T+\sqrt{T}$, we have 
 \begin{align*}\frac{\sqrt{T}}{\log\left(\frac{2\pi n}{T}\right)}\leq\frac{2\pi n\sqrt{T}}{2\pi n-T}\ll T,\end{align*} 
 and thus, in this case, it is better to consider the bound \eqref{Xn:1} over the one given in \eqref{Xn:2}. 
\end{remark}

\footnotesize
\begin{mysage}
###
# Constant F(c,T0) and kappa(c,T0) below.
F_const = roundup( Exf(Fu_extrambda(c_la,T0,1),T0) , digits )
\end{mysage}
\normalsize

\begin{proof}
By \eqref{functional} and using that $\chi(1-\overline{s})=\overline{\chi(1-s)}$, we may write $K_n=X_{n}+Y_{n}-\overline{X_{n}}-\overline{Y_{n}}$, where
\begin{align}
X_{n}&=\frac{1}{i}\int_{1}^{T}\Gamma(2-2\tau+\lambda+it)\cos\left(\frac{\pi(2-2\tau+\lambda+it)}{2}\right)(2\pi n)^{-(2-2\tau)-\lambda-it}dt,\label{eq:Xn}\\
Y_{n}&=\frac{1}{i}\int_{0}^{1}\Gamma(2-2\tau+\lambda+it)\cos\left(\frac{\pi(2-2\tau+\lambda+it)}{2}\right)(2\pi n)^{-(2-2\tau)-\lambda-it}dt.\label{eq:Yn} 
\end{align} 
By Theorem~\ref{Stirling}\eqref{StirlingGam}-\eqref{StirlingGaI}, we can obtain an expression analogous to \eqref{definitive}: we change variables via $t\leftrightarrow -t$, which does not change the absolute value of the integrands above, and, since $t>0$ for all $s\in 2-2\tau+\lambda+i(0,T]$, we have
\begin{equation*}
\frac{1}{2i}\frac{\chi(1-s)}{n^s}=\frac{e^{-\frac{\pi i}{4}}t^{\frac{3}{2}-2\tau+\lambda}e^{if_{\lambda,\tau}(t)}}{2(2\pi)^{\frac{3}{2}-2\tau+\lambda}n^{2-2\tau+\lambda}}\left(1+O^*\left(\frac{1}{e^{\pi t}}\right)\right)\left(1+\frac{O^*(\mathrm{F}(\mathrm{c},T_0,\tau))}{t^2}\right),
\end{equation*}
where
\begin{equation*}
f_{\lambda,\tau}(t)=f(2-2\tau+\lambda,-t)=t\log\left(\frac{2\pi n}{t}\right)+t+\frac{1}{2}\left((2-2\tau+\lambda)(1-2\tau+\lambda)+\frac{1}{6}\right)\frac{1}{t}
\end{equation*}
and where $\mathrm{F}(\mathrm{c},T_0,\tau)$ is a numerical upper bound of $\mathrm{E}(2-2\tau+\lambda,T_{0})$,  defined as 
\begin{equation*}
\mathrm{E}(2-2\tau+\lambda,T_{0})\leq T_{0}^2\left(e^{\mathrm{e}_1\left(2-2\tau+\frac{\mathrm{c}}{\log(T_0)},T_0\right)+\mathrm{e}_2\left(2-2\tau+\frac{\mathrm{c}}{\log(T_0)},T_0\right)}-1\right),
\end{equation*}
where $\mathrm{e_1}$, $\mathrm{e}_2$ are defined in \eqref{def:e1e2} and $\mathrm{E}$ is defined in \eqref{E_def}, and where we have used that $\lambda\leq\frac{\mathrm{c}}{\log(T_0)}$. Since $\sigma\mapsto\mathrm{E}(\sigma,T_{0})$ is increasing, we can define $\mathrm{F}(\mathrm{c},T_{0},\tau)\leq\mathrm{F}(\mathrm{c},T_{0},\frac{1}{4})=\sage{FK4}$ and $\mathrm{F}(\mathrm{c},T_{0},\frac{1}{2})=\sage{FK2}$.

Therefore, from \eqref{eq:Xn} we conclude that 
\begin{align}
&|X_{n}|=\left|\int_{1}^{T}\frac{e^{-\frac{\pi i}{4}}t^{\frac{3}{2}-2\tau+\lambda}e^{if_{\lambda,\tau}(t)}}{2(2\pi)^{\frac{3}{2}-2\tau+\lambda}n^{2-2\tau+\lambda}}\left(1+O^*\left(\frac{1}{e^{\pi t}}\right)\right)\left(1+\frac{O^*(\mathrm{F}(\mathrm{c},T_0,\tau))}{t^2}\right),dt\right|\nonumber\\
\leq\ &\frac{e^{\pi}+1}{2e^{\pi}(2\pi)^{\frac{3}{2}-2\tau}n^{2-2\tau+\lambda}}\left(\left|\int_{1}^{T}t^{\frac{3}{2}-2\tau+\lambda}e^{i f_{\lambda,\tau}\left(t\right)}dt\right|+\mathrm{F}\left(\mathrm{c},T_0,\tau\right)\left|\int_{1}^{T}\frac{t^{\frac{3}{2}-2\tau+\lambda}e^{i f_{\lambda,\tau}\left(t\right)}}{t^2}dt\right|\right), \label{Xn:est}
\end{align}
where we have used that for $t\in[1,T]$, $\frac{1}{e^{\pi t}}\leq\frac{1}{e^{\pi}}$ and that $\frac{3}{2}-2\tau+\lambda>\frac{3}{2}-2\tau$.

As $2\tau-1\leq 0$ and $\lambda<\frac{1}{2}$, the second integral in \eqref{Xn:est} is readily bounded by 
\begin{equation}\label{eq:inte2}
\int_{1}^{T}t^{-\frac{1}{2}-2\tau+\lambda}dt\leq T^{1-2\tau}\int_{1}^{T}t^{-\frac{3}{2}+\lambda}dt\leq\frac{T^{1-2\tau}}{\frac{1}{2}-\lambda}.
\end{equation}
As for the first integral, we may write $f_{\lambda,\tau}(t)=g(t)+h_{\lambda,\tau}(t)$, where  $g(t)=t\log\left(\frac{2\pi n}{t}\right)+t$ and $h_{\lambda,\tau}(t)=\frac{1}{2}\left((2-2\tau+\lambda)(1-2\tau+\lambda)+\frac{1}{6}\right)\frac{1}{t}$, similarly to the obtention of identity \eqref{identity1}. Moreover, we have that $g'(t)\neq 0$ for $t\in(0,T]$, since $|g'(t)|=\log\left(\frac{2\pi n}{t}\right)\geq\log\left(\frac{2\pi n}{T}\right)$ and, by hypothesis, $\frac{2\pi n}{T}>1$; then, for any $V,W$ such that $V<W$ and $\frac{2\pi n}{W}>1$, we may use
\begin{equation*}
\int_{V}^{W}l(t)e^{if_{\lambda,\tau}\left(t\right)}dt=\left[\frac{l(t)e^{if_{\lambda,\tau}(t)}}{ig'(t)}\right|_{V}^{W}-\int_{V}^{W}e^{if_{\lambda,\tau}(t)}\left(\left(\frac{l(t)}{ig'(t)}\right)'+\frac{l(t)h_{\lambda,\tau}'(t)}{g'(t)}\right)dt,
\end{equation*}
where $l(t)=t^{\frac{3}{2}-2\tau+\lambda}$, and derive
\begin{align}\label{ident}
&\left|\int_{V}^{W}t^{\frac{3}{2}-2\tau+\lambda}e^{i f_{\lambda,\tau}\left(t\right)}dt\right|\leq\frac{2W^{\frac{3}{2}-2\tau+\lambda}}{\log\left(\frac{2\pi n}{W}\right)}+\frac{(2-2\tau+\lambda)(1-2\tau+\lambda)+\frac{1}{6}}{2}\int_{V}^{W}\frac{t^{-\frac{1}{2}-2\tau+\lambda}}{\log\left(\frac{2\pi n}{t}\right)}dt\nonumber\\
&\phantom{xxxxxxxxxxx}\leq\left(2+\frac{(2-2\tau+\lambda)(1-2\tau+\lambda)+\frac{1}{6}}{2}\left(\frac{1}{V}-\frac{1}{W}\right)\right)\frac{W^{\frac{3}{2}-2\tau+\lambda}}{\log\left(\frac{2\pi n}{W}\right)},
\end{align}
where we have used that $\frac{3}{2}-2\tau>0$ and that $t\mapsto\frac{l(t)}{g'(t)}$ is increasing. Hence, by selecting $V=1$, $W=T$ inside \eqref{ident} and by using that $T^{\lambda}=e^{\mathrm{c}}$, $\lambda\leq\frac{\mathrm{c}}{\log(T_{0})}$, we observe from \eqref{Xn:est} and \eqref{eq:inte2} that
\begin{align}\label{Xn:est1}
|X_{n}|\leq\ &\frac{e^{\pi}+1}{2e^{\pi}(2\pi)^{\frac{3}{2}-2\tau}n^{2-2\tau+\lambda}}\left(\frac{e^{\mathrm{c}}}{2}\left((2-2\tau+\lambda)(1-2\tau+\lambda)+\frac{25}{6}\right)\frac{T^{\frac{3}{2}-2\tau}}{\log\left(\frac{2\pi n}{T}\right)}\right.\nonumber\\
&\phantom{xxxxxxxxxxxxxxxxxxxxx}\left.+\frac{\mathrm{F}\left(\mathrm{c},T_0,\tau\right)}{\frac{1}{2}-\frac{\mathrm{c}}{\log(T_0)}}T^{1-2\tau}\right).
\end{align}

As pointed out in Remark \ref{crucial2}, we can do better than \eqref{Xn:est1} when $n\in\left(\frac{T}{2\pi},\frac{T+\sqrt{T}}{2\pi}\right]$. In this range, by Lemma~\ref{logg}\eqref{logg2}, we have that 
\begin{align*}\frac{T^{\frac{3}{2}-2\tau+\lambda}}{\log\left(\frac{2\pi n}{T-\sqrt{T}}\right)}\leq\frac{T^{\frac{3}{2}-2\tau+\lambda}}{\log\left(\frac{T}{T-\sqrt{T}}\right)}\leq T^{2-2\tau+\lambda},\end{align*} 
so
\begin{align}
 & \ \left|\int_{1}^{T}t^{\frac{3}{2}-2\tau+\lambda}e^{i f_{\lambda}\left(t\right)}dt\right|\leq\left|\int_{1}^{T-\sqrt{T}}t^{\frac{3}{2}-2\tau+\lambda}e^{i f_{\lambda}\left(t\right)}dt\right|+\left|\int_{T-\sqrt{T}}^{T}t^{\frac{3}{2}-2\tau+\lambda}e^{i f_{\lambda}\left(t\right)}dt\right|\nonumber\\
\leq & \ \frac{1}{2}\left((2-2\tau+\lambda)(1-2\tau+\lambda)+\frac{25}{6}\right)T^{2-2\tau+\lambda}+T^{2-2\tau+\lambda}, \label{eq:intermest2}
\end{align}
where, in the first integral above, since $\frac{2\pi n}{T-\sqrt{T}}>1$, we used \eqref{ident} with $V=1$, $W=T-\sqrt{T}$ and, in the second one, we have bounded trivially. Thus, by using that $T^{\lambda}=e^{\mathrm{c}}$, $\lambda\leq\frac{\mathrm{c}}{\log(T_{0})}$, we plug \eqref{eq:inte2} and \eqref{eq:intermest2} into \eqref{Xn:est} and obtain
\begin{align}\label{Xn:est2}
|X_{n}|&\leq\frac{e^{\pi}+1}{2e^{\pi}(2\pi)^{\frac{3}{2}-2\tau}n^{2-2\tau+\lambda}}\left(\frac{e^{\mathrm{c}}}{2}\left((2-2\tau+\lambda)(1-2\tau+\lambda)+\frac{37}{6}\right)T^{2-2\tau}\right.\nonumber\\
&\phantom{xxxxxxxxxxxxxxxxxxxxxx}\left.+\frac{\mathrm{F}\left(\mathrm{c},T_0,\tau\right)}{\frac{1}{2}-\frac{\mathrm{c}}{\log(T_0)}}T^{1-2\tau}\right).
\end{align}

Finally, for $t\in[0,1]$, $\lambda<\frac{1}{2}$ and $\tau\geq\frac{1}{4}$, we can bound $|\Gamma(2-2\tau+\lambda+it)|$ by $\Gamma(2)=1$, and $\cos\left(\frac{\pi(2-2\tau+\lambda+it)}{2}\right)$ by $e^{\frac{\pi}{2}}$, since $e^{\frac{-\pi x}{2}}\leq e^{\frac{\pi}{2}}$ for $x\in[-1,1]$, so that by \eqref{eq:Yn}
\begin{equation}\label{Yn:est}
|Y_{n}|\leq\frac{e^{\frac{\pi}{2}}}{(2\pi)^{2-2\tau+\lambda}}\frac{1}{n^{2-2\tau+\lambda}}.
\end{equation} 
The results is concluded by combining $\eqref{Xn:est1}$, \eqref{Xn:est2} and $\eqref{Yn:est}$ together and using that $|K_n|\leq 2|X_{n}|+2|Y_{n}|$.
\end{proof}

\begin{proofbold}{Proposition~\ref{pr:K}}

\footnotesize
\begin{mysage}
###
# Estimating K.
###
# Constants multiplying the sums in {eq:Kalot}.
def Fu_Kxi_1_0(simin,simax,ci,ta):
    simin = RIF( simin )
    simax = RIF( simax )
    ci = RIF( ci )
    ta = RIF( ta )
    result = 2*Fu_Q(simin,simax,ci,ta)
    return RIF(result)
def Fu_Kxi_1_1m2t(simin,simax,ci,ta):
    simin = RIF( simin )
    simax = RIF( simax )
    ci = RIF( ci )
    ta = RIF( ta )
    result = 2*Fu_R_2(simin,simax,ci,ta)
    return RIF(result)
def Fu_Kxi_2_2m2t(simin,simax,ci,ta):
    simin = RIF( simin )
    simax = RIF( simax )
    ci = RIF( ci )
    ta = RIF( ta )
    result = 2*Fu_Rp_1(simin,simax,ci,ta)
    return RIF(result)
def Fu_Kxi_3_32m2t(simin,simax,ci,ta):
    simin = RIF( simin )
    simax = RIF( simax )
    ci = RIF( ci )
    ta = RIF( ta )
    result = 2*Fu_R_1(simin,simax,ci,ta)*exp_int(log_int(2*pi_int)*(1+ci/log_int(ta)))/exp_int(ci)
    return RIF(result)
def Fu_Kxi_4_32m2t(simin,simax,ci,ta):
    simin = RIF( simin )
    simax = RIF( simax )
    ci = RIF( ci )
    ta = RIF( ta )
    result = 2*Fu_R_1(simin,simax,ci,ta)/log_int(2)
    return RIF(result)
###
# All the following coefficients per order inside the sums for 0<tau<1/2
# are written as constants multiplying [1,1/tau,1/(1/2-tau)].
# The coefficients for tau=1/2 are still constants.
###
# Coefficients per order inside the first sum in {eq:Kalot}.
def Fu_K0_1_lq(ci,ta):
    ci = RIF( ci )
    ta = RIF( ta )
    result1 = exp_int(log_int(2*pi_int)*(ci/log_int(ta)))*(2+ci/log_int(ta)) \
             /(ci^2*exp_int(ci))
    result2 = 0
    result3 = 0
    return [RIF(result1),RIF(result2),RIF(result3)]
def Fu_K0_1_l(ci,ta):
    ci = RIF( ci )
    ta = RIF( ta )
    result1 = 0
    result2 = 0
    result3 = exp_int(log_int(2*pi_int)*(ci/log_int(ta)))*(1/2)/(ci*exp_int(ci))
    return [RIF(result1),RIF(result2),RIF(result3)]
def Fu_K0_1_0(ci,ta):
    ci = RIF( ci )
    ta = RIF( ta )
    result1 = exp_int(log_int(2*pi_int)*(ci/log_int(ta)))/exp_int(ci)
    result2 = 0
    result3 = 0
    return [RIF(result1),RIF(result2),RIF(result3)]
def Fu_K0_1_m1l(ci,ta):
    ci = RIF( ci )
    ta = RIF( ta )
    result1 = exp_int(log_int(2*pi_int)*(1+ci/log_int(ta)))*(3+ci/log_int(ta)) \
              /(ci*exp_int(ci))
    result2 = 0
    result3 = 0
    return [RIF(result1),RIF(result2),RIF(result3)]
def Fu_K0_1_m1(ci,ta):
    ci = RIF( ci )
    ta = RIF( ta )
    result1 = exp_int(log_int(2*pi_int)*(1+ci/log_int(ta)))/exp_int(ci)
    result2 = 0
    result3 = 0
    return [RIF(result1),RIF(result2),RIF(result3)]
def Fu_K4_1_lq(ci,ta):
    ci = RIF( ci )
    ta = RIF( ta )
    result1 = exp_int(log_int(2*pi_int)*(ci/log_int(ta)))*(2-2*1/4+ci/log_int(ta)) \
             /(ci^2*exp_int(ci))
    result2 = 0
    result3 = 0
    return [RIF(result1),RIF(result2),RIF(result3)]
def Fu_K4_1_l(ci,ta):
    ci = RIF( ci )
    ta = RIF( ta )
    result1 = 0
    result2 = 0
    result3 = exp_int(log_int(2*pi_int)*(ci/log_int(ta)))*(1/2)/(ci*exp_int(ci))
    return [RIF(result1),RIF(result2),RIF(result3)]
def Fu_K4_1_0(ci,ta):
    ci = RIF( ci )
    ta = RIF( ta )
    result1 = exp_int(log_int(2*pi_int)*(ci/log_int(ta)))/exp_int(ci)
    result2 = 0
    result3 = 0
    return [RIF(result1),RIF(result2),RIF(result3)]
def Fu_K4_1_m1l(ci,ta):
    ci = RIF( ci )
    ta = RIF( ta )
    result1 = exp_int(log_int(2*pi_int)*(1+ci/log_int(ta)))*(3-2*1/4+ci/log_int(ta)) \
              /(ci*exp_int(ci))
    result2 = 0
    result3 = 0
    return [RIF(result1),RIF(result2),RIF(result3)]
def Fu_K4_1_m1(ci,ta):
    ci = RIF( ci )
    ta = RIF( ta )
    result1 = exp_int(log_int(2*pi_int)*(1+ci/log_int(ta)))/exp_int(ci)
    result2 = 0
    result3 = 0
    return [RIF(result1),RIF(result2),RIF(result3)]
def Fu_K2_1_lq(ci,ta):
    ci = RIF( ci )
    ta = RIF( ta )
    result = exp_int(log_int(2*pi_int)*(ci/log_int(ta)))*(ci+1)/(ci^2*exp_int(ci))
    return RIF(result)
def Fu_K2_1_0(ci,ta):
    ci = RIF( ci )
    ta = RIF( ta )
    result = exp_int(log_int(2*pi_int)*(ci/log_int(ta)))/exp_int(ci)
    return RIF(result)
def Fu_K2_1_m1l(ci,ta):
    ci = RIF( ci )
    ta = RIF( ta )
    result = 5*exp_int(log_int(2*pi_int)*(1+ci/log_int(ta)))/(3*ci*exp_int(ci))
    return RIF(result)
def Fu_K2_1_m1(ci,ta):
    ci = RIF( ci )
    ta = RIF( ta )
    result = exp_int(log_int(2*pi_int)*(1+ci/log_int(ta)))/exp_int(ci)
    return RIF(result)
###
# Coefficients per order inside the second sum in {eq:Kalot}.
def Fu_K0_2_m12(ci,ta):
    ci = RIF( ci )
    ta = RIF( ta )
    result = cw_prod( exp_int(log_int(2*pi_int)*(ci/log_int(ta)))/exp_int(ci) , zea_2m2s )
    return result
def Fu_K0_2_tm1(ci,ta):
    ci = RIF( ci )
    ta = RIF( ta )
    result = cw_prod( 2*exp_int(log_int(2*pi_int)*(1+ci/log_int(ta)))/exp_int(ci) , As_i0 )
    return result
def Fu_K0_2_tm32(ci,ta):
    ci = RIF( ci )
    ta = RIF( ta )
    result = cw_prod( exp_int(log_int(2*pi_int)*(1+ci/log_int(ta)))/exp_int(ci) , As_i0 )
    return result
def Fu_K4_2_m12(ci,ta):
    ci = RIF( ci )
    ta = RIF( ta )
    result = cw_prod( exp_int(log_int(2*pi_int)*(ci/log_int(ta)))/exp_int(ci) , zea_2m2s )
    return result
def Fu_K4_2_tm1(ci,ta):
    ci = RIF( ci )
    ta = RIF( ta )
    result = cw_prod( 2*exp_int(log_int(2*pi_int)*(3/4+ci/log_int(ta)))/exp_int(ci) , As_i4 )
    return result
def Fu_K4_2_tm32(ci,ta):
    ci = RIF( ci )
    ta = RIF( ta )
    result = cw_prod( exp_int(log_int(2*pi_int)*(3/4+ci/log_int(ta)))*(3/4)/exp_int(ci) , As_i4 )
    return result
def Fu_K2_2_m12l(ci,ta):
    ci = RIF( ci )
    ta = RIF( ta )
    result = exp_int(log_int(2*pi_int)*(ci/log_int(ta)))/exp_int(ci)
    return RIF(result)
def Fu_K2_2_m12(ci,ta):
    ci = RIF( ci )
    ta = RIF( ta )
    result = ((2*gamma_int-log_int(2*pi_int))*exp_int(log_int(2*pi_int)*(ci/log_int(ta))) \
             +2*exp_int(log_int(2*pi_int)*(1/2+ci/log_int(ta)))*A12)/exp_int(ci)
    return RIF(result)
def Fu_K2_2_m1(ci,ta):
    ci = RIF( ci )
    ta = RIF( ta )
    result = (exp_int(log_int(2*pi_int)*(ci/log_int(ta))) \
              +1/2*exp_int(log_int(2*pi_int)*(1/2+ci/log_int(ta)))*A12)/exp_int(ci)
    return RIF(result)
###
# Coefficients per order inside the third sum in {eq:Kalot}.
K0_3_lq = Ps_1
K0_3_l = Fu_Ps_2(0,1/2)
K0_3_tm12 = Fu_Ps_3(0,1/2)
K0_3_tm1l = Fu_Ps_4(0,1/2)
K0_3_tm1 = Fu_Ps_5(0,1/2)
K4_3_lq = Ps_1
K4_3_l = Fu_Ps_2(1/4,1/2)
K4_3_tm12 = Fu_Ps_3(1/4,1/2)
K4_3_tm1l = Fu_Ps_4(1/4,1/2)
K4_3_tm1 = Fu_Ps_5(1/4,1/2)
K2_3_lq = P2_1
K2_3_l = P2_2
def Fu_K2_3_0(ta):
    return Fu_P2_3(ta)
K2_3_m12l = P2_4
K2_3_m12 = P2_5
###
# Coefficients per order inside the fourth sum in {eq:Kalot}.
def Fu_K0_4_lq(ci,ta):
    ci = RIF( ci )
    ta = RIF( ta )
    result1 = exp_int(log_int(pi_int)*(ci/log_int(ta)))*(2+ci/log_int(ta)) \
              /(ci^2*exp_int(ci))
    result2 = 0
    result3 = 0
    return [RIF(result1),RIF(result2),RIF(result3)]
def Fu_K0_4_l(ci,ta):
    ci = RIF( ci )
    ta = RIF( ta )
    result1 = 0
    result2 = 0
    result3 = exp_int(log_int(pi_int)*(ci/log_int(ta)))*(1/2)/(ci*exp_int(ci))
    return [RIF(result1),RIF(result2),RIF(result3)]
def Fu_K0_4_0(ci,ta):
    ci = RIF( ci )
    ta = RIF( ta )
    result1 = exp_int(log_int(pi_int)*(ci/log_int(ta)))/exp_int(ci)
    result2 = 0
    result3 = 0
    return [RIF(result1),RIF(result2),RIF(result3)]
def Fu_K0_4_m1l(ci,ta):
    ci = RIF( ci )
    ta = RIF( ta )
    result1 = exp_int(log_int(pi_int)*(1+ci/log_int(ta)))*(3+ci/log_int(ta)) \
              /(ci*exp_int(ci))
    result2 = 0
    result3 = 0
    return [RIF(result1),RIF(result2),RIF(result3)]
def Fu_K0_4_m1(ci,ta):
    ci = RIF( ci )
    ta = RIF( ta )
    result1 = exp_int(log_int(pi_int)*(1+ci/log_int(ta)))/exp_int(ci)
    result2 = 0
    result3 = 0
    return [RIF(result1),RIF(result2),RIF(result3)]
def Fu_K4_4_lq(ci,ta):
    ci = RIF( ci )
    ta = RIF( ta )
    result1 = exp_int(log_int(pi_int)*(ci/log_int(ta)))*(2-2*1/4+ci/log_int(ta)) \
             /(ci^2*exp_int(ci))
    result2 = 0
    result3 = 0
    return [RIF(result1),RIF(result2),RIF(result3)]
def Fu_K4_4_l(ci,ta):
    ci = RIF( ci )
    ta = RIF( ta )
    result1 = 0
    result2 = 0
    result3 = exp_int(log_int(pi_int)*(ci/log_int(ta)))*(1/2)/(ci*exp_int(ci))
    return [RIF(result1),RIF(result2),RIF(result3)]
def Fu_K4_4_0(ci,ta):
    ci = RIF( ci )
    ta = RIF( ta )
    result1 = exp_int(log_int(pi_int)*(ci/log_int(ta)))/exp_int(ci)
    result2 = 0
    result3 = 0
    return [RIF(result1),RIF(result2),RIF(result3)]
def Fu_K4_4_m1l(ci,ta):
    ci = RIF( ci )
    ta = RIF( ta )
    result1 = exp_int(log_int(pi_int)*(1+ci/log_int(ta)))*(3-2*1/4+ci/log_int(ta)) \
              /(ci*exp_int(ci))
    result2 = 0
    result3 = 0
    return [RIF(result1),RIF(result2),RIF(result3)]
def Fu_K4_4_m1(ci,ta):
    ci = RIF( ci )
    ta = RIF( ta )
    result1 = exp_int(log_int(pi_int)*(1+ci/log_int(ta)))/exp_int(ci)
    result2 = 0
    result3 = 0
    return [RIF(result1),RIF(result2),RIF(result3)]
def Fu_K2_4_lq(ci,ta):
    ci = RIF( ci )
    ta = RIF( ta )
    result = exp_int(log_int(pi_int)*(ci/log_int(ta)))*(ci+1)/(ci^2*exp_int(ci))
    return RIF(result)
def Fu_K2_4_l(ci,ta):
    ci = RIF( ci )
    ta = RIF( ta )
    result = exp_int(log_int(pi_int)*(ci/log_int(ta))) \
             *(gamma_int+1-log_int(pi_int))/(ci*exp_int(ci))
    return RIF(result)
def Fu_K2_4_0(ci,ta):
    ci = RIF( ci )
    ta = RIF( ta )
    result = exp_int(log_int(pi_int)*(ci/log_int(ta)))/exp_int(ci)
    return RIF(result)
def Fu_K2_4_m1l(ci,ta):
    ci = RIF( ci )
    ta = RIF( ta )
    result = 5*exp_int(log_int(pi_int)*(1+ci/log_int(ta)))/(3*ci*exp_int(ci))
    return RIF(result)
def Fu_K2_4_m1(ci,ta):
    ci = RIF( ci )
    ta = RIF( ta )
    result = exp_int(log_int(pi_int)*(1+ci/log_int(ta)))/exp_int(ci)
    return RIF(result)
###
# Coefficients per order of the whole K, for tau in (0,1/2).
def Fu_K0__32m2tlq(ci,ta):
    ci = RIF( ci )
    ta = RIF( ta )
    x3 = cw_prod( Fu_Kxi_3_32m2t(0,1/2,ci,ta) , K0_3_lq )
    x4 = cw_prod( Fu_Kxi_4_32m2t(0,1/2,ci,ta) , Fu_K0_4_lq(ci,ta) )
    x = cw_sum( x3 , x4 )
    return x
def Fu_K0__32m2tl(ci,ta):
    ci = RIF( ci )
    ta = RIF( ta )
    x3 = cw_prod( Fu_Kxi_3_32m2t(0,1/2,ci,ta) , K0_3_l )
    x4 = cw_prod( Fu_Kxi_4_32m2t(0,1/2,ci,ta) , Fu_K0_4_l(ci,ta) )
    x = cw_sum( x3 , x4 )
    return x
def Fu_K0__32m2t(ci,ta):
    ci = RIF( ci )
    ta = RIF( ta )
    x2 = cw_prod( Fu_Kxi_2_2m2t(0,1/2,ci,ta) , Fu_K0_2_m12(ci,ta) )
    x4 = cw_prod( Fu_Kxi_4_32m2t(0,1/2,ci,ta) , Fu_K0_4_0(ci,ta) )
    x = cw_sum( x2 , x4 )
    return x
def Fu_K0__1mt(ci,ta):
    ci = RIF( ci )
    ta = RIF( ta )
    x2 = cw_prod( Fu_Kxi_2_2m2t(0,1/2,ci,ta) , Fu_K0_2_tm1(ci,ta) )
    x3 = cw_prod( Fu_Kxi_3_32m2t(0,1/2,ci,ta) , K0_3_tm12 )
    x = cw_sum( x2 , x3 )
    return x
def Fu_K0__1m2tlq(ci,ta):
    ci = RIF( ci )
    ta = RIF( ta )
    x1 = cw_prod( Fu_Kxi_1_1m2t(0,1/2,ci,ta) , Fu_K0_1_lq(ci,ta) )
    return x1
def Fu_K0__1m2tl(ci,ta):
    ci = RIF( ci )
    ta = RIF( ta )
    x1 = cw_prod( Fu_Kxi_1_1m2t(0,1/2,ci,ta) , Fu_K0_1_l(ci,ta) )
    return x1
def Fu_K0__1m2t(ci,ta):
    ci = RIF( ci )
    ta = RIF( ta )
    x1 = cw_prod( Fu_Kxi_1_1m2t(0,1/2,ci,ta) , Fu_K0_1_0(ci,ta) )
    return x1
def Fu_K0__12mtl(ci,ta):
    ci = RIF( ci )
    ta = RIF( ta )
    x3 = cw_prod( Fu_Kxi_3_32m2t(0,1/2,ci,ta) , K0_3_tm1l )
    return x3
def Fu_K0__12mt(ci,ta):
    ci = RIF( ci )
    ta = RIF( ta )
    x2 = cw_prod( Fu_Kxi_2_2m2t(0,1/2,ci,ta) , Fu_K0_2_tm32(ci,ta) )
    x3 = cw_prod( Fu_Kxi_3_32m2t(0,1/2,ci,ta) , K0_3_tm1 )
    x = cw_sum( x2 , x3 )
    return x
def Fu_K0__12m2tl(ci,ta):
    ci = RIF( ci )
    ta = RIF( ta )
    x4 = cw_prod( Fu_Kxi_4_32m2t(0,1/2,ci,ta) , Fu_K0_4_m1l(ci,ta) )
    return x4
def Fu_K0__12m2t(ci,ta):
    ci = RIF( ci )
    ta = RIF( ta )
    x4 = cw_prod( Fu_Kxi_4_32m2t(0,1/2,ci,ta) , Fu_K0_4_m1(ci,ta) )
    return x4
def Fu_K0__lq(ci,ta):
    ci = RIF( ci )
    ta = RIF( ta )
    x1 = cw_prod( Fu_Kxi_1_0(0,1/2,ci,ta) , Fu_K0_1_lq(ci,ta) )
    return x1
def Fu_K0__l(ci,ta):
    ci = RIF( ci )
    ta = RIF( ta )
    x1 = cw_prod( Fu_Kxi_1_0(0,1/2,ci,ta) , Fu_K0_1_l(ci,ta) )
    return x1
def Fu_K0__0(ci,ta):
    ci = RIF( ci )
    ta = RIF( ta )
    x1 = cw_prod( Fu_Kxi_1_0(0,1/2,ci,ta) , Fu_K0_1_0(ci,ta) )
    return x1
def Fu_K0__m2tl(ci,ta):
    ci = RIF( ci )
    ta = RIF( ta )
    x1 = cw_prod( Fu_Kxi_1_1m2t(0,1/2,ci,ta) , Fu_K0_1_m1l(ci,ta) )
    return x1
def Fu_K0__m2t(ci,ta):
    ci = RIF( ci )
    ta = RIF( ta )
    x1 = cw_prod( Fu_Kxi_1_1m2t(0,1/2,ci,ta) , Fu_K0_1_m1(ci,ta) )
    return x1
def Fu_K0__m1l(ci,ta):
    ci = RIF( ci )
    ta = RIF( ta )
    x1 = cw_prod( Fu_Kxi_1_0(0,1/2,ci,ta) , Fu_K0_1_m1l(ci,ta) )
    return x1
def Fu_K0__m1(ci,ta):
    ci = RIF( ci )
    ta = RIF( ta )
    x1 = cw_prod( Fu_Kxi_1_0(0,1/2,ci,ta) , Fu_K0_1_m1(ci,ta) )
    return x1
###
# Coefficients per order of the whole K, for tau in [1/4,1/2).
def Fu_K4__32m2tlq(ci,ta):
    ci = RIF( ci )
    ta = RIF( ta )
    x3 = cw_prod( Fu_Kxi_3_32m2t(1/4,1/2,ci,ta) , K4_3_lq )
    x4 = cw_prod( Fu_Kxi_4_32m2t(1/4,1/2,ci,ta) , Fu_K4_4_lq(ci,ta) )
    x = cw_sum( x3 , x4 )
    return x
def Fu_K4__32m2tl(ci,ta):
    ci = RIF( ci )
    ta = RIF( ta )
    x3 = cw_prod( Fu_Kxi_3_32m2t(1/4,1/2,ci,ta) , K4_3_l )
    x4 = cw_prod( Fu_Kxi_4_32m2t(1/4,1/2,ci,ta) , Fu_K4_4_l(ci,ta) )
    x = cw_sum( x3 , x4 )
    return x
def Fu_K4__32m2t(ci,ta):
    ci = RIF( ci )
    ta = RIF( ta )
    x2 = cw_prod( Fu_Kxi_2_2m2t(1/4,1/2,ci,ta) , Fu_K4_2_m12(ci,ta) )
    x4 = cw_prod( Fu_Kxi_4_32m2t(1/4,1/2,ci,ta) , Fu_K4_4_0(ci,ta) )
    x = cw_sum( x2 , x4 )
    return x
def Fu_K4__1mt(ci,ta):
    ci = RIF( ci )
    ta = RIF( ta )
    x2 = cw_prod( Fu_Kxi_2_2m2t(1/4,1/2,ci,ta) , Fu_K4_2_tm1(ci,ta) )
    x3 = cw_prod( Fu_Kxi_3_32m2t(1/4,1/2,ci,ta) , K4_3_tm12 )
    x = cw_sum( x2 , x3 )
    return x
def Fu_K4__1m2tlq(ci,ta):
    ci = RIF( ci )
    ta = RIF( ta )
    x1 = cw_prod( Fu_Kxi_1_1m2t(1/4,1/2,ci,ta) , Fu_K4_1_lq(ci,ta) )
    return x1
def Fu_K4__1m2tl(ci,ta):
    ci = RIF( ci )
    ta = RIF( ta )
    x1 = cw_prod( Fu_Kxi_1_1m2t(1/4,1/2,ci,ta) , Fu_K4_1_l(ci,ta) )
    return x1
def Fu_K4__1m2t(ci,ta):
    ci = RIF( ci )
    ta = RIF( ta )
    x1 = cw_prod( Fu_Kxi_1_1m2t(1/4,1/2,ci,ta) , Fu_K4_1_0(ci,ta) )
    return x1
def Fu_K4__12mtl(ci,ta):
    ci = RIF( ci )
    ta = RIF( ta )
    x3 = cw_prod( Fu_Kxi_3_32m2t(1/4,1/2,ci,ta) , K4_3_tm1l )
    return x3
def Fu_K4__12mt(ci,ta):
    ci = RIF( ci )
    ta = RIF( ta )
    x2 = cw_prod( Fu_Kxi_2_2m2t(1/4,1/2,ci,ta) , Fu_K4_2_tm32(ci,ta) )
    x3 = cw_prod( Fu_Kxi_3_32m2t(1/4,1/2,ci,ta) , K4_3_tm1 )
    x = cw_sum( x2 , x3 )
    return x
def Fu_K4__12m2tl(ci,ta):
    ci = RIF( ci )
    ta = RIF( ta )
    x4 = cw_prod( Fu_Kxi_4_32m2t(1/4,1/2,ci,ta) , Fu_K4_4_m1l(ci,ta) )
    return x4
def Fu_K4__12m2t(ci,ta):
    ci = RIF( ci )
    ta = RIF( ta )
    x4 = cw_prod( Fu_Kxi_4_32m2t(1/4,1/2,ci,ta) , Fu_K4_4_m1(ci,ta) )
    return x4
def Fu_K4__lq(ci,ta):
    ci = RIF( ci )
    ta = RIF( ta )
    x1 = cw_prod( Fu_Kxi_1_0(1/4,1/2,ci,ta) , Fu_K4_1_lq(ci,ta) )
    return x1
def Fu_K4__l(ci,ta):
    ci = RIF( ci )
    ta = RIF( ta )
    x1 = cw_prod( Fu_Kxi_1_0(1/4,1/2,ci,ta) , Fu_K4_1_l(ci,ta) )
    return x1
def Fu_K4__0(ci,ta):
    ci = RIF( ci )
    ta = RIF( ta )
    x1 = cw_prod( Fu_Kxi_1_0(1/4,1/2,ci,ta) , Fu_K4_1_0(ci,ta) )
    return x1
def Fu_K4__m2tl(ci,ta):
    ci = RIF( ci )
    ta = RIF( ta )
    x1 = cw_prod( Fu_Kxi_1_1m2t(1/4,1/2,ci,ta) , Fu_K4_1_m1l(ci,ta) )
    return x1
def Fu_K4__m2t(ci,ta):
    ci = RIF( ci )
    ta = RIF( ta )
    x1 = cw_prod( Fu_Kxi_1_1m2t(1/4,1/2,ci,ta) , Fu_K4_1_m1(ci,ta) )
    return x1
def Fu_K4__m1l(ci,ta):
    ci = RIF( ci )
    ta = RIF( ta )
    x1 = cw_prod( Fu_Kxi_1_0(1/4,1/2,ci,ta) , Fu_K4_1_m1l(ci,ta) )
    return x1
def Fu_K4__m1(ci,ta):
    ci = RIF( ci )
    ta = RIF( ta )
    x1 = cw_prod( Fu_Kxi_1_0(1/4,1/2,ci,ta) , Fu_K4_1_m1(ci,ta) )
    return x1
###
# Coefficients per order of the whole K, for tau=1/2.
def Fu_K2__12lq(ci,ta):
    ci = RIF( ci )
    ta = RIF( ta )
    result = Fu_Kxi_3_32m2t(1/2,1/2,ci,ta)*K2_3_lq \
             + Fu_Kxi_4_32m2t(1/2,1/2,ci,ta)*Fu_K2_4_lq(ci,ta)
    return RIF(result)
def Fu_K2__12l(ci,ta):
    ci = RIF( ci )
    ta = RIF( ta )
    result = Fu_Kxi_2_2m2t(1/2,1/2,ci,ta)*Fu_K2_2_m12l(ci,ta) \
             + Fu_Kxi_3_32m2t(1/2,1/2,ci,ta)*K2_3_l + Fu_Kxi_4_32m2t(1/2,1/2,ci,ta)*Fu_K2_4_l(ci,ta)
    return RIF(result)
def Fu_K2__12(ci,ta):
    ci = RIF( ci )
    ta = RIF( ta )
    result = Fu_Kxi_2_2m2t(1/2,1/2,ci,ta)*Fu_K2_2_m12(ci,ta) \
             + Fu_Kxi_3_32m2t(1/2,1/2,ci,ta)*Fu_K2_3_0(ta) \
             + Fu_Kxi_4_32m2t(1/2,1/2,ci,ta)*Fu_K2_4_0(ci,ta)
    return RIF(result)
def Fu_K2__lq(ci,ta):
    ci = RIF( ci )
    ta = RIF( ta )
    result = (Fu_Kxi_1_0(1/2,1/2,ci,ta)+Fu_Kxi_1_1m2t(1/2,1/2,ci,ta)) \
             *Fu_K2_1_lq(ci,ta)
    return RIF(result)
def Fu_K2__l(ci,ta):
    ci = RIF( ci )
    ta = RIF( ta )
    result = Fu_Kxi_3_32m2t(1/2,1/2,ci,ta)*K2_3_m12l
    return RIF(result)
def Fu_K2__0(ci,ta):
    ci = RIF( ci )
    ta = RIF( ta )
    result = (Fu_Kxi_1_0(1/2,1/2,ci,ta)+Fu_Kxi_1_1m2t(1/2,1/2,ci,ta)) \
             *Fu_K2_1_0(ci,ta) \
             + Fu_Kxi_2_2m2t(1/2,1/2,ci,ta)*Fu_K2_2_m1(ci,ta) \
             + Fu_Kxi_3_32m2t(1/2,1/2,ci,ta)*K2_3_m12
    return RIF(result)
def Fu_K2__m12l(ci,ta):
    ci = RIF( ci )
    ta = RIF( ta )
    result = Fu_Kxi_4_32m2t(1/2,1/2,ci,ta)*Fu_K2_4_m1l(ci,ta)
    return RIF(result)
def Fu_K2__m12(ci,ta):
    ci = RIF( ci )
    ta = RIF( ta )
    result = Fu_Kxi_4_32m2t(1/2,1/2,ci,ta)*Fu_K2_4_m1(ci,ta)
    return RIF(result)
def Fu_K2__m1l(ci,ta):
    ci = RIF( ci )
    ta = RIF( ta )
    result = (Fu_Kxi_1_0(1/2,1/2,ci,ta)+Fu_Kxi_1_1m2t(1/2,1/2,ci,ta)) \
             *Fu_K2_1_m1l(ci,ta)
    return RIF(result)
def Fu_K2__m1(ci,ta):
    ci = RIF( ci )
    ta = RIF( ta )
    result = (Fu_Kxi_1_0(1/2,1/2,ci,ta)+Fu_Kxi_1_1m2t(1/2,1/2,ci,ta)) \
             *Fu_K2_1_m1(ci,ta)
    return RIF(result)
###
# Rounding coefficients to the order T^(3/2-2tau)*log(T)^2 for tau in (0,1/2).
def Fu_K0__32m2tlq_final(ci,ta):
    ci = RIF( ci )
    ta = RIF( ta )
    x_32m2tlq = Fu_K0__32m2tlq(ci,ta)
    x_32m2tl = cw_prod( 1/log_int(ta) , Fu_K0__32m2tl(ci,ta) )
    x_32m2t = cw_prod( 1/log_int(ta)^2 , Fu_K0__32m2t(ci,ta) )
    x_1mt = cw_prod( 1/log_int(ta)^2 , Fu_K0__1mt(ci,ta) )
    x_1m2tlq = cw_prod( 1/sqrt_int(ta) , Fu_K0__1m2tlq(ci,ta) )
    x_1m2tl = cw_prod( 1/(log_int(ta)*sqrt_int(ta)) , Fu_K0__1m2tl(ci,ta) )
    x_1m2t = cw_prod( 1/(log_int(ta)^2*sqrt_int(ta)) , Fu_K0__1m2t(ci,ta) )
    x_12mtl = cw_prod( 1/(log_int(ta)*sqrt_int(ta)) , Fu_K0__12mtl(ci,ta) )
    x_12mt = cw_prod( 1/(log_int(ta)^2*sqrt_int(ta)) , Fu_K0__12mt(ci,ta) )
    x_12m2tl = cw_prod( 1/(log_int(ta)*ta) , Fu_K0__12m2tl(ci,ta) )
    x_12m2t = cw_prod( 1/(log_int(ta)^2*ta) , Fu_K0__12m2t(ci,ta) )
    x_lq = cw_prod( 1/sqrt_int(ta) , Fu_K0__lq(ci,ta) )
    x_l = cw_prod( 1/(log_int(ta)*sqrt_int(ta)) , Fu_K0__l(ci,ta) )
    x_0 = cw_prod( 1/(log_int(ta)^2*sqrt_int(ta)) , Fu_K0__0(ci,ta) )
    x_m2tl = cw_prod( 1/(log_int(ta)*exp_int(3/2*log_int(ta))) , Fu_K0__m2tl(ci,ta) )
    x_m2t = cw_prod( 1/(log_int(ta)^2*exp_int(3/2*log_int(ta))) , Fu_K0__m2t(ci,ta) )
    x_m1l = cw_prod( 1/(log_int(ta)*exp_int(3/2*log_int(ta))) , Fu_K0__m1l(ci,ta) )
    x_m1 = cw_prod( 1/(log_int(ta)^2*exp_int(3/2*log_int(ta))) , Fu_K0__m1(ci,ta) )
    x = cw_sum( x_32m2tlq , x_32m2tl )
    x = cw_sum( x , x_32m2t )
    x = cw_sum( x , x_1mt )
    x = cw_sum( x , x_1m2tlq )
    x = cw_sum( x , x_1m2tl )
    x = cw_sum( x , x_1m2t )
    x = cw_sum( x , x_12mtl )
    x = cw_sum( x , x_12mt )
    x = cw_sum( x , x_12m2tl )
    x = cw_sum( x , x_12m2t )
    x = cw_sum( x , x_lq )
    x = cw_sum( x , x_l )
    x = cw_sum( x , x_0 )
    x = cw_sum( x , x_m2tl )
    x = cw_sum( x , x_m2t )
    x = cw_sum( x , x_m1l )
    x = cw_sum( x , x_m1 )
    return x
###
# Rounding coefficients to the order T^(3/2-2tau)*log(T)^2 for tau in [1/4,1/2).
def Fu_K4__32m2tlq_final(ci,ta):
    ci = RIF( ci )
    ta = RIF( ta )
    x_32m2tlq = Fu_K4__32m2tlq(ci,ta)
    x_32m2tl = cw_prod( 1/log_int(ta) , Fu_K4__32m2tl(ci,ta) )
    x_32m2t = cw_prod( 1/log_int(ta)^2 , Fu_K4__32m2t(ci,ta) )
    x_1mt = cw_prod( 1/log_int(ta)^2 , Fu_K4__1mt(ci,ta) )
    x_1m2tlq = cw_prod( 1/sqrt_int(ta) , Fu_K4__1m2tlq(ci,ta) )
    x_1m2tl = cw_prod( 1/(log_int(ta)*sqrt_int(ta)) , Fu_K4__1m2tl(ci,ta) )
    x_1m2t = cw_prod( 1/(log_int(ta)^2*sqrt_int(ta)) , Fu_K4__1m2t(ci,ta) )
    x_12mtl = cw_prod( 1/(log_int(ta)*sqrt_int(ta)) , Fu_K4__12mtl(ci,ta) )
    x_12mt = cw_prod( 1/(log_int(ta)^2*sqrt_int(ta)) , Fu_K4__12mt(ci,ta) )
    x_12m2tl = cw_prod( 1/(log_int(ta)*ta) , Fu_K4__12m2tl(ci,ta) )
    x_12m2t = cw_prod( 1/(log_int(ta)^2*ta) , Fu_K4__12m2t(ci,ta) )
    x_lq = cw_prod( 1/sqrt_int(ta) , Fu_K4__lq(ci,ta) )
    x_l = cw_prod( 1/(log_int(ta)*sqrt_int(ta)) , Fu_K4__l(ci,ta) )
    x_0 = cw_prod( 1/(log_int(ta)^2*sqrt_int(ta)) , Fu_K4__0(ci,ta) )
    x_m2tl = cw_prod( 1/(log_int(ta)*exp_int(3/2*log_int(ta))) , Fu_K4__m2tl(ci,ta) )
    x_m2t = cw_prod( 1/(log_int(ta)^2*exp_int(3/2*log_int(ta))) , Fu_K4__m2t(ci,ta) )
    x_m1l = cw_prod( 1/(log_int(ta)*exp_int(3/2*log_int(ta))) , Fu_K4__m1l(ci,ta) )
    x_m1 = cw_prod( 1/(log_int(ta)^2*exp_int(3/2*log_int(ta))) , Fu_K4__m1(ci,ta) )
    x = cw_sum( x_32m2tl , x_32m2t )
    x = cw_sum( x , x_1mt )
    x = cw_sum( x , x_1m2tl )
    x = cw_sum( x , x_1m2t )
    x = cw_sum( x , x_12mtl )
    x = cw_sum( x , x_12mt )
    x = cw_sum( x , x_12m2tl )
    x = cw_sum( x , x_12m2t )
    x = cw_sum( x , x_l )
    x = cw_sum( x , x_0 )
    x = cw_sum( x , x_m2tl )
    x = cw_sum( x , x_m2t )
    x = cw_sum( x , x_m1l )
    x = cw_sum( x , x_m1 )
    return x
###
# Rounding coefficients to the order sqrt(T)*log(T) for tau=1/2.
# Since T0>=e^2, we can use log(T)/sqrt(T)<=log(T0)/sqrt(T0).
def Fu_K2__12lq_final(ci,ta):
    return Fu_K2__12lq(ci,ta)
def Fu_K2__12l_final(ci,ta):
    ci = RIF( ci )
    ta = RIF( ta )
    result = Fu_K2__12l(ci,ta) + Fu_K2__12(ci,ta)/log_int(ta) \
             + Fu_K2__lq(ci,ta)*log_int(ta)/sqrt_int(ta) \
             + Fu_K2__l(ci,ta)/sqrt_int(ta) \
             + Fu_K2__0(ci,ta)/(log_int(ta)*sqrt_int(ta)) + Fu_K2__m12l(ci,ta)/ta \
             + Fu_K2__m12(ci,ta)/(log_int(ta)*ta) \
             + Fu_K2__m1l(ci,ta)/exp_int(3/2*log_int(ta)) \
             + Fu_K2__m1(ci,ta)/(log_int(ta)*exp_int(3/2*log_int(ta)))
    return RIF(result)
###
# Actual values for our choice of c,T0.
K0__32m2tlq_final_vec = Fu_K0__32m2tlq_final(c_la,T0)
K4__32m2tlq_final_vec = Fu_K4__32m2tlq_final(c_la,T0)
K4__32m2tlq_final_c = roundup( K4__32m2tlq_final_vec[0] \
                              + K4__32m2tlq_final_vec[1]/(1/4) , digits )
K4__32m2tlq_final_2 = roundup( K4__32m2tlq_final_vec[2] , digits )
K2__12lq_final = roundup( Fu_K2__12lq_final(c_la,T0) , digits )
K2__12l_final = roundup( Fu_K2__12l_final(c_la,T0) , digits )
\end{mysage}
\normalsize

We have that $K=\sum_{n>\frac{T}{2\pi}}d_{1-2\tau}(n)K_{n}$. As per Lemma \ref{Kn:estimation} and Remark \ref{crucial2}, we split that sum into three parts, according to whether $\frac{T}{2\pi}<n\leq\frac{T+\sqrt{T}}{2\pi}$, $\frac{T+\sqrt{T}}{2\pi}<n\leq\frac{T}{\pi}$ or $n>\frac{T}{\pi}$. For the first interval we use \eqref{Xn:2}, in the second interval we use \eqref{Xn:1} with the simplification $\log\left(\frac{2\pi n}{T}\right)\geq\frac{2\pi n-T}{2\pi n}$ from Lemma~\ref{logg}\eqref{logg1}, while in the third interval we use \eqref{Xn:1} as well with $\log\left(\frac{2\pi n}{T}\right)\geq\log(2)$. Thus,
\begin{align}
K\leq & \ \xi_{1}(T,\tau)\sum_{n>\frac{T}{2\pi}}\frac{d_{1-2\tau}(n)}{n^{2-2\tau+\lambda}}+\xi_{2}(T,\tau)\!\!\!\!\sum_{\frac{T}{2\pi}<n\leq\frac{T+\sqrt{T}}{2\pi}}\frac{d_{1-2\tau}(n)}{n^{2-2\tau+\lambda}} \nonumber \\
& \ +\xi_{3}(T,\tau)\!\!\!\!\sum_{\frac{T+\sqrt{T}}{2\pi}<n\leq\frac{T}{\pi}}\frac{d_{1-2\tau}(n)}{n^{1-2\tau}(2\pi n-T)}+\xi_{4}(T,\tau)\sum_{n>\frac{T}{\pi}}\frac{d_{1-2\tau}(n)}{n^{2-2\tau+\lambda}}. \label{eq:Kalot}
\end{align}
with
\begin{align*}
\xi_{1}(T,\tau) & =2Q+2R_{2}T^{1-2\tau}, & \xi_{3}(T,\tau) & =\frac{2R_{1}(2\pi)^{1+\lambda}}{e^{\mathrm{c}}}T^{\frac{3}{2}-2\tau}, \\
\xi_{2}(T,\tau) & =2R'_{1}T^{2-2\tau}, & \xi_{4}(T,\tau) & =\frac{2R_{1}}{\log(2)}T^{\frac{3}{2}-2\tau}.
\end{align*}

We bound the first summation in \eqref{eq:Kalot} via Proposition~\ref{pr:sumdivwei_C}. For $\frac{1}{4}\leq\tau<\frac{1}{2}$, choosing $\lambda=\frac{\mathrm{c}}{\log(T)}$, we have
\begin{align*}
\sum_{n>\frac{T}{2\pi}}\frac{d_{1-2\tau}(n)}{n^{2-2\tau+\lambda}}\leq & \ \frac{\zeta(2-2\tau+\lambda)}{\lambda\left(\frac{T}{2\pi}\right)^{\lambda}}+\frac{\frac{1}{2-2\tau}+\frac{1}{(1-2\tau)(1-2\tau+\lambda)}}{\left(\frac{T}{2\pi}\right)^{\lambda}}+\frac{\zeta(2-2\tau+\lambda)+\frac{1}{1-2\tau+\lambda}+1}{\left(\frac{T}{2\pi}\right)^{1+\lambda}} \\
\leq & \ \frac{(2\pi)^{\lambda}\left(\frac{3}{2}+\lambda\right)}{\mathrm{c}^{2}e^{\mathrm{c}}}\log^{2}(T)+\frac{(2\pi)^{\lambda}}{(1-2\tau)\mathrm{c}e^{\mathrm{c}}}\log(T)+\frac{(2\pi)^{\lambda}}{e^{\mathrm{c}}} \\
 & \ +\frac{(2\pi)^{1+\lambda}\left(\frac{5}{2}+\lambda\right)\log(T)}{\mathrm{c}e^{\mathrm{c}}T}+\frac{(2\pi)^{1+\lambda}}{e^{\mathrm{c}}T}.
\end{align*}
where we used $T^{\lambda}=e^{\mathrm{c}}$ and Proposition~\ref{pr:mvzeta}. For $\tau=\frac{1}{2}$, we get instead
\begin{align}
\sum_{n>\frac{T}{2\pi}}\frac{d(n)}{n^{1+\lambda}}\leq & \ \frac{\log\left(\frac{T}{2\pi}\right)}{\lambda\left(\frac{T}{2\pi}\right)^{\lambda}}+\left(1+\frac{\zeta(1+\lambda)+\gamma}{\lambda}+\frac{\zeta(1+\lambda)+\frac{2}{3\lambda}}{\frac{T}{2\pi}}\right)\frac{1}{\left(\frac{T}{2\pi}\right)^{\lambda}} \nonumber \\
\leq & \ \frac{(2\pi)^{\lambda}(\mathrm{c}+1)}{\mathrm{c}^{2}e^{\mathrm{c}}}\log^{2}(T)+\frac{(2\pi)^{\lambda}(\gamma+1-\log(2\pi))}{\mathrm{c}e^{\mathrm{c}}}\log(T)+\frac{(2\pi)^{\lambda}}{e^{\mathrm{c}}} \nonumber \\
 & \ +\frac{5(2\pi)^{1+\lambda}\log(T)}{3\mathrm{c}e^{\mathrm{c}}T}+\frac{(2\pi)^{1+\lambda}}{e^{\mathrm{c}}T}, \label{eq:Kalot1}
\end{align}
where we can then forget the term of order $\log(T)$ since $\gamma+1-\log(2\pi)<0$. The last summation in \eqref{eq:Kalot} is bounded analogously, replacing every $2\pi$ with $\pi$; this time, $\gamma+1-\log(\pi)>0$ means that we cannot forget the term of order $\log(T)$ when $\tau=\frac{1}{2}$.

The second sum in \eqref{eq:Kalot} can be bounded through Proposition~\ref{pr:sumdiv} as follows: for $\frac{1}{4}\leq\tau<\frac{1}{2}$, we have
\begin{align*}
 & \ \sum_{\frac{T}{2\pi}<n\leq\frac{T+\sqrt{T}}{2\pi}}\frac{d_{1-2\tau}(n)}{n^{2-2\tau+\lambda}}\leq\frac{(2\pi)^{2-2\tau+\lambda}}{e^{\mathrm{c}}T^{2-2\tau}}\sum_{\frac{T}{2\pi}<n\leq\frac{T+\sqrt{T}}{2\pi}}d_{1-2\tau}(n) \\
\leq & \ \frac{(2\pi)^{2-2\tau+\lambda}}{e^{\mathrm{c}}T^{2-2\tau}}\left(\frac{\zeta(2\tau)\sqrt{T}}{2\pi}+\frac{\zeta(2-2\tau)}{2-2\tau}\left(\frac{T}{2\pi}\right)^{2-2\tau}\frac{2-2\tau}{\sqrt{T}}+A_{\tau}\left(\frac{T}{2\pi}\right)^{1-\tau}\left(2+\frac{1-\tau}{\sqrt{T}}\right)\right) \\
\leq & \ \frac{(2\pi)^{\lambda}}{e^{\mathrm{c}}\sqrt{T}}\left(1+\frac{1}{1-2\tau}\right)+\frac{2A_{\tau}(2\pi)^{\frac{3}{4}+\lambda}}{e^{\mathrm{c}}T^{1-\tau}}+\frac{3A_{\tau}(2\pi)^{\frac{3}{4}+\lambda}}{4e^{\mathrm{c}}T^{\frac{3}{2}-\tau}},
\end{align*}
where we dropped the negative first term in the proposition and simplified through Lemma~\ref{logg}\eqref{loggexp} and Proposition~\ref{pr:mvzeta}. Similarly, for $\tau=\frac{1}{2}$ we obtain
\begin{align*}
 & \ \sum_{\frac{T}{2\pi}<n\leq\frac{T+\sqrt{T}}{2\pi}}\frac{d(n)}{n^{1+\lambda}}\leq\frac{(2\pi)^{1+\lambda}}{e^{\mathrm{c}}T}\sum_{\frac{T}{2\pi}<n\leq\frac{T+\sqrt{T}}{2\pi}}d(n) \\
\leq & \ \frac{(2\pi)^{\lambda}\log(T)}{e^{\mathrm{c}}\sqrt{T}}+\frac{(2\gamma-\log(2\pi))(2\pi)^{\lambda}+2(2\pi)^{\frac{1}{2}+\lambda}A_{\frac{1}{2}}}{e^{\mathrm{c}}\sqrt{T}}+\frac{(2\pi)^{\lambda}+\frac{1}{2}(2\pi)^{\frac{1}{2}+\lambda}A_{\frac{1}{2}}}{e^{\mathrm{c}}T}.
\end{align*}

Finally, we use Lemma~\ref{le:sumKlog} directly on the third sum. Putting all terms together, we conclude the proof.
\end{proofbold}

\section{Numerical considerations}\label{sec:numerical}

\textbf{Bounding an integral in an area.} In Corollary~\ref{co:main}, we bound $\int_{0}^{\sage{T0}}|\zeta(\tau+it)|^{2}dt$ for all $\frac{1}{2}<\tau\leq\frac{3}{4}$. This is computed directly via Sage \cite{Sag19}. First, we retrieve bounds for $|\zeta(\tau+it)|^{2}$ in small square areas covering the rectangle $\mathrm{R}=\left[\frac{1}{2},\frac{3}{4}\right]+[0,\sage{T0}]i$. Such bounds define a piecewise constant real-valued function  $\mathrm{p}$ on $\mathrm{R}$, whose integral on any path contained in  $\mathrm{R}$ is an upper bound for the integral of $|\zeta(\tau+it)|^{2}$ on the same path. Then, we only need to check the finitely many possibilities that arise from the definition of  $\mathrm{p}$.

\footnotesize
\begin{mysage}
# Function to compute an upper bound for the constant i'.
# See also similar considerations for i in {co:main}.
# In the following, we assert that ipint_const is the output value
# for ip_upper_bound(1/2,1,100,1/400). For a verification,
# change the value of check_ip to True at the beginning of the paper.
# Warning: if one proceeds with the verification with these inputs,
# the computation of this value takes about 10 minutes!
def Fu_zetasq_mod(s):
    s = CBF(s)
    zetasq = ((s.zeta())^2).abs()
    zetasq_mod = zetasq*(1-s.real())
    return RIF(CBF(zetasq_mod).real())
def ip_upper_bound(simin,simax,ta,prec):
    if prec>=1/40:
        return 'Error'
    if check_ip==False:
        if simin>=1/2:
            if simax<=1:
                if ta<=100:
                    if prec>=1/400:
                        return 159.693638459782
    simin = RIF( simin )
    simax = RIF( simax )
    ta = RIF( ta )
    Sgen = RIF( 0 )
    si = simin+prec
    while si-prec<simax:
        S = RIF( 0 )
        point = CBF( si+prec*i )
        while RIF(point.imag()-prec)<ta:
            sphere = CBF(point).add_error(prec)
            if sphere.contains_exact(1):
                maxim = 5
                S = S+maxim
            else:
                maxim = Fu_zetasq_mod(sphere)
                S = S+maxim*2*prec
            point = point+2*prec*i
        if S.upper()>Sgen.upper():
            Sgen = S
        si = si+2*prec
    return Sgen.upper()
###
# Constant i' below.
ipint_const = roundup( ip_upper_bound(1/2,1,T0,1/400) , digits )
###
\end{mysage}
\normalsize

If we are interested in widening the range of $\tau$ (see below) from $\left(\frac{1}{2},\frac{3}{4}\right)$ to $\left(\frac{1}{2},1\right)$, we have to deal separately with the pole of $\zeta$ in $1$. Considering the Laurent expansion of $\zeta$ and the bounds on its coefficients given by Lavrik \cite[Lemma~4]{Lav76}, we can bound $|\zeta(s)|^{2}$ by $\frac{5}{(1-\tau)^{2}}$ for $s=\tau+it$ such that $|1-s|\leq\frac{1}{10}$. Thus, $\int_{0}^{1-\tau}|\zeta(\tau+it)|^{2}dt\leq\frac{5}{1-\tau}$ for $1-\tau\leq\frac{1}{20}$. Then, in the rest of the rectangle we bound the function $|\zeta(\tau+it)|^{2}(1-\tau)$ numerically as we did in $\mathbf{i}$ for $|\zeta(\tau+it)|^{2}$, so that $\int_{0}^{\sage{T0}}|\zeta(\tau+it)|^{2}dt\leq\mathbf{i}'=\frac{\sage{ipint_const}}{1-\tau}$.
\medskip

\textbf{Widening the range of $\tau$.} As mentioned in \S\ref{Int}, the strategy in proving Theorem~\ref{th:main} may be extended to the whole critical strip. We chose not to do so because the error term of order $T^{\frac{3}{2}-2\tau}\log^{2}(T)$ in the range $0<\tau<\frac{1}{4}$ becomes larger than the second main term of order $T$, and restricting the range of $\tau$ allows us to approximate constants more tightly, yielding a better quantitative result.

\footnotesize
\begin{mysage}
###
# Putting together the estimates for the range 0<tau<1/2.
# We have already computed vectors of coefficients for I,J,K
# in this range, and we need only sum them together with
# the term pi/2: as we did for tau>=1/4, we want I+J1+J2+K+pi/2.
# Note that the constants in I,K feature [1,1/tau,1/(1/2-tau)]
# while the constant in J features [1,1/tau,1/(1/2-tau)^2]:
# thus, we can absorb all third entries into 1/(1/2-tau)^2,
# multiplying by the appropriate max(1/2-tau).
modifier = 1/2-0
I0_c = RIF( I0__32m2tl_final_vec[0]/log_int(T0) )
I0_0 = RIF( I0__32m2tl_final_vec[1]/log_int(T0) )
I0_2q = RIF( I0__32m2tl_final_vec[2]*modifier/log_int(T0) )
J0_c = RIF( J0__32m2tl_final_vec[0]/log_int(T0) \
            + J0__1mtlq_final_vec[0] )
J0_0 = RIF( J0__32m2tl_final_vec[1]/log_int(T0)
            + J0__1mtlq_final_vec[1] )
J0_2q = RIF( J0__32m2tl_final_vec[2]/log_int(T0)
             + J0__1mtlq_final_vec[2] )
K0_c = RIF( K0__32m2tlq_final_vec[0] )
K0_0 = RIF( K0__32m2tlq_final_vec[1] )
K0_2q = RIF( K0__32m2tlq_final_vec[2]*modifier )
other0_c = RIF( pi/2/(sqrt_int(T0)*log_int(T0)^2) )
other0_0 = RIF( 0 )
other0_2q = RIF( 0 )
Main0_2q = RIF( I0_2q+2*J0_2q+K0_2q+other0_2q )
Main0_0 = RIF( I0_0+2*J0_0+K0_0+other0_0 )
Main0_c = RIF( I0_c+2*J0_c+K0_c+other0_c )
Main0_2q = roundup( Main0_2q , digits )
Main0_0 = roundup( Main0_0 , digits )
Main0_c = roundup( Main0_c , digits )
###
# Coefficients per order of the process analogous to {co:main}.
# We need to use i' instead of i,
# and |z|<=(1+1/4*1/(1/2-tau)^2+1/4*1/(1-tau)^2)*T0
# instead of |z|<=(1+1/8*1/(1/2-tau)^2)*T0.
# Also, note that we want 1/(1-tau)^2 here,
# instead of the 1/tau in the range (0,1/2).
###
# Coefficients per order of the whole result.
def Fu_Cor0__12lq_2q(ta):
    ta = RIF( ta )
    result = exp_int(log_int(2*pi_int)*(1-2*1/4))*Main0_2q*4*(1-1/4)
    return RIF(result)
def Fu_Cor0__12lq_1q(ta):
    ta = RIF( ta )
    modifier = 1-1/2
    result = exp_int(log_int(2*pi_int)*(1-2*1/4))*Main0_0*modifier*4*(1-1/4)
    return RIF(result)
def Fu_Cor0__12lq_c(ta):
    ta = RIF( ta )
    result = exp_int(log_int(2*pi_int)*(1-2*1/4))*Main0_c*4*(1-1/4)
    return RIF(result)
def Fu_Cor0__lq_2q(ta):
    ta = RIF( ta )
    result = exp_int(log_int(2*pi_int)*(1-2*1/4))*Main0_2q \
             *2/3*(3-2*1/4)*Fu_Z(1/4,1/2,ta)*exp_int(log_int(ta)*(-3/2))
    return RIF(result)
def Fu_Cor0__lq_1q(ta):
    ta = RIF( ta )
    modifier = 1-1/2
    result = exp_int(log_int(2*pi_int)*(1-2*1/4))*Main0_0*modifier \
             *2/3*(3-2*1/4)*Fu_Z(1/4,1/2,ta)*exp_int(log_int(ta)*(-3/2))
    return RIF(result)
def Fu_Cor0__lq_c(ta):
    ta = RIF( ta )
    result = exp_int(log_int(2*pi_int)*(1-2*1/4))*Main0_c \
             *2/3*(3-2*1/4)*Fu_Z(1/4,1/2,ta)*exp_int(log_int(ta)*(-3/2))
    return RIF(result)
def Fu_Cor0__0_2q(ta):
    ta = RIF( ta )
    result = 1/4*ta*(1+Fu_Z(1/4,1/2,ta)/ta^2)
    return RIF(result)
def Fu_Cor0__0_1q(ta):
    ta = RIF( ta )
    modifier = 1-1/2
    result = ip_upper_bound(1/2,3/4,ta,1/400)*modifier+1/4*ta*(1+Fu_Z(1/4,1/2,ta)/ta^2)
    return RIF(result)
def Fu_Cor0__0_c(ta):
    ta = RIF( ta )
    result = ta*(1+Fu_Z(1/4,1/2,ta)/ta^2)
    return RIF(result)
def Fu_Cor0__m32lq_2q(ta):
    ta = RIF( ta )
    result = exp_int(log_int(2*pi_int)*(1-2*1/4))*Main0_2q*2*Fu_Z(1/4,1/2,ta)
    return RIF(result)
def Fu_Cor0__m32lq_1q(ta):
    ta = RIF( ta )
    modifier = 1-1/2
    result = exp_int(log_int(2*pi_int)*(1-2*1/4))*Main0_0*modifier*2*Fu_Z(1/4,1/2,ta)
    return RIF(result)
def Fu_Cor0__m32lq_c(ta):
    ta = RIF( ta )
    result = exp_int(log_int(2*pi_int)*(1-2*1/4))*Main0_c*2*Fu_Z(1/4,1/2,ta)
    return RIF(result)
###
# Rounding coefficients to the order T^(1/2)*log(T)^2.
def Fu_Cor0__final_2q(ta):
    ta = RIF( ta )
    x_12lq = Fu_Cor0__12lq_2q(ta)
    x_lq = Fu_Cor0__lq_2q(ta)
    x_0 = Fu_Cor0__0_2q(ta)
    x_m32lq = Fu_Cor0__m32lq_2q(ta)
    x = x_12lq + x_lq/sqrt_int(ta) + x_0/(sqrt_int(ta)*log_int(ta)^2) + x_m32lq/ta^2
    return RIF(x)
def Fu_Cor0__final_1q(ta):
    ta = RIF( ta )
    x_12lq = Fu_Cor0__12lq_1q(ta)
    x_lq = Fu_Cor0__lq_1q(ta)
    x_0 = Fu_Cor0__0_1q(ta)
    x_m32lq = Fu_Cor0__m32lq_1q(ta)
    x = x_12lq + x_lq/sqrt_int(ta) + x_0/(sqrt_int(ta)*log_int(ta)^2) + x_m32lq/ta^2
    return RIF(x)
def Fu_Cor0__final_c(ta):
    ta = RIF( ta )
    x_12lq = Fu_Cor0__12lq_c(ta)
    x_lq = Fu_Cor0__lq_c(ta)
    x_0 = Fu_Cor0__0_c(ta)
    x_m32lq = Fu_Cor0__m32lq_c(ta)
    x = x_12lq + x_lq/sqrt_int(ta) + x_0/(sqrt_int(ta)*log_int(ta)^2) + x_m32lq/ta^2
    return RIF(x)
###
# Actual values for our choice of T0.
Cor0__final_2q = roundup( Fu_Cor0__final_2q(T0) , digits )
Cor0__final_1q = roundup( Fu_Cor0__final_1q(T0) , digits )
Cor0__final_c = roundup( Fu_Cor0__final_c(T0) , digits )
\end{mysage}
\normalsize

For the interest of the reader, however, we report here a version of the main result valid for the whole strip. If $T\geq T_{0}=\sage{T0}$ and $0<\tau<\frac{1}{2}$, then
\begin{align*}
\int_{0}^{T}\left|\zeta\left(\tau+it\right)\right|^{2}dt= & \ \frac{\zeta(2-2\tau)}{(2-2\tau)(2\pi)^{1-2\tau}}T^{2-2\tau}+\zeta(2\tau)T \\
& \ +O^{*}\left(\left(\frac{\sage{Main0_2q}}{\left(\frac{1}{2}-\tau\right)^{2}}+\frac{\sage{Main0_0}}{\tau}+\sage{Main0_c}\right)T^{\frac{3}{2}-2\tau}\log^{2}(T)\right).
\end{align*}
A process like the one in the proof of Corollary~\ref{co:main} holds in this range too. Thus, if $T\geq T_{0}=\sage{T0}$ and $\frac{1}{2}<\tau<1$, then
\begin{align*}
\int_{0}^{T}\left|\zeta\left(\tau+it\right)\right|^{2}dt= & \ \zeta(2\tau)T+\frac{(2\pi)^{2\tau-1}\zeta(2-2\tau)}{2-2\tau}T^{2-2\tau} \\
 & \ +O^{*}\left(\left(\frac{\sage{Cor0__final_2q}}{\left(\tau-\frac{1}{2}\right)^{2}}+\frac{\sage{Cor0__final_1q}}{(1-\tau)^{2}}+\sage{Cor0__final_c}\right)\sqrt{T}\log^{2}(T)\right).
\end{align*}
provided that, in the corresponding proof, we use the value $\mathbf{i}'$ above instead of $\mathbf{i}$. For the aforementioned reasons, the error terms in the bounds in \cite{DHA19} are asymptotically worse than the ones of our main result for $\frac{1}{4}<\tau<\frac{3}{4}$, but better than the ones presented in this section for $\tau\leq\frac{1}{4}$ and $\tau\geq\frac{3}{4}$.
\medskip

\textbf{Increasing $T_{0}$.} The choice of $T_{0}=\sage{T0}$ was made for the sake of convenience. Indeed, we needed to choose $T\geq 50$, because we relied upon Theorem~\ref{convexitygeneral} to bound $|\zeta(s)|$ on horizontal lines in \S\ref{sec:J}, and we have asked for various largeness conditions to simplify many computations. For instance, during the proof of Proposition~\ref{pr:I} we required the negative term from Proposition~\ref{pr:sumdivwei} to be smaller in absolute value than the positive one for $X=\frac{T}{2\pi}$; furthermore, for the purpose of properly rounding constants, we asked for some functions such as $t\mapsto\frac{\log(t)}{\sqrt{t}}$ to be decreasing in the interval $[T_{0},\infty)$ so that we are able to absorb the terms that are asymptotically of smaller order, via inequalities like $\frac{\log(T)}{\sqrt{T}}\leq\frac{\log(T_{0})}{\sqrt{T_{0}}}$ for $T\geq T_{0}$.

One can repeat the same calculations with a higher $T_{0}$ and expect to improve on the error terms in Theorems~\ref{th:mainpure} and \ref{th:main}. Say that the following are the error terms in the various cases.
\begin{center}
\begin{tabular}{|l|l|l|}
\hline
$\tau$ & Theorem~\ref{th:mainpure} & Theorem~\ref{th:main} \\ \hline
$\frac{1}{2}$ & $\mathfrak{e}_{1}\sqrt{T}\log^{2}(T)$ & $\mathfrak{m}_{11}\sqrt{T}\log^{2}(T)+\mathfrak{m}_{12}\sqrt{T}\log(T)$ \\
$\left[\frac{1}{4},\frac{1}{2}\right)$ & $\frac{\mathfrak{e}_{2}}{(1/2-\tau)^{2}}T^{\frac{3}{2}-2\tau}\log^{2}(T)$ & $\left(\frac{\mathfrak{m}_{21}}{(1/2-\tau)^{2}}+\mathfrak{m}_{22}\right)T^{\frac{3}{2}-2\tau}\log^{2}(T)$ \\ \hline
\end{tabular}
\end{center}
Then, see the table below for different values of $T_{0}$.

\footnotesize
\begin{mysage}
###
# Functions for computing the main constants of {th:main}:
# our integral of interest is in particular I+J1+J2+K+pi/2.
###
# Case tau=1/2.
def mainco2_b1(ci,ta):
    ci = RIF( ci )
    ta = RIF( ta )
    result = 2*Fu_J2__12lq_final(ci,ta) + Fu_K2__12lq_final(ci,ta)
    return RIF(result)
def mainco2_b2(ci,ta):
    ci = RIF( ci )
    ta = RIF( ta )
    result = Fu_I2__12l_final(ta) + 2*Fu_J2__12l_final(ci,ta) \
             + Fu_K2__12l_final(ci,ta) + pi_int/2/(sqrt_int(ta)*log_int(ta))
    return RIF(result)
###
# Case tau in [1/4,1/2).
def mainco4_b1(ci,ta):
    ci = RIF( ci )
    ta = RIF( ta )
    modifier = 1/2-1/4
    I4 = Fu_I4__32m2tl_final(ta)[2]
    J4 = Fu_Jt__32m2tl_final(1/4,1/2,ci,ta)[2]
    K4 = Fu_K4__32m2tlq_final(ci,ta)[2]
    result = I4*modifier/log_int(ta) + 2*J4/log_int(ta) + K4*modifier
    return RIF(result)
def mainco4_b2(ci,ta):
    ci = RIF( ci )
    ta = RIF( ta )
    modifier = 1/(1/4)
    I4__32m2tl_both = Fu_I4__32m2tl_final(ta)
    I4 = I4__32m2tl_both[0] + I4__32m2tl_both[1]*modifier
    J4__32m2tl_both = Fu_Jt__32m2tl_final(1/4,1/2,ci,ta)
    J4 = J4__32m2tl_both[0] + J4__32m2tl_both[1]*modifier
    J4p = Fu_Jt__1mtlq_final(1/4,1/2,ci,ta)
    if J4p[1]!=0 or J4p[2]!=0:
        return 'Error'
    K4__32m2tl_both = Fu_K4__32m2tlq_final(ci,ta)
    K4 = K4__32m2tl_both[0] + K4__32m2tl_both[1]*modifier
    other = pi_int/2/(sqrt_int(ta)*log_int(ta)^2)
    result = I4/log_int(ta) + 2*J4/log_int(ta) + 2*J4p[0] + K4 + other
    return RIF(result)
###
# Functions for computing the main constants of {th:mainpure}:
# we absorb the constants of the previous functions in one.
###
# For 1/2, we absorb everything to the order
# sqrt(T)*log(T)^2.
def mainco2(ci,ta):
    ci = RIF( ci )
    ta = RIF( ta )
    result = mainco2_b1(ci,ta) + mainco2_b2(ci,ta)/log_int(ta)
    return RIF(result)
###
# For [1/4,1/2), we absorb everything to the order
# T^(3/2-2tau)*log(T)^2/(1/2-tau)^2.
def mainco4(ci,ta):
    ci = RIF( ci )
    ta = RIF( ta )
    modifier = 1/2-1/4
    result = mainco4_b1(ci,ta) + mainco4_b2(ci,ta)*modifier^2
    return RIF(result)
\end{mysage}
\normalsize

\begin{center}
\begin{tabular}{|l|l|l|l|l|l|l|l|}
\hline
$T_{0}$ & \!\!\!\!\! & $\mathfrak{e}_{1}(T_{0})$ & $\mathfrak{m}_{11}(T_{0})$ & $\mathfrak{m}_{12}(T_{0})$ & $\mathfrak{e}_{2}(T_{0})$ & $\mathfrak{m}_{21}(T_{0})$ & $\mathfrak{m}_{22}(T_{0})$ \\ \hline
$10^{3}$ & \!\!\!\!\! & $\sage{roundup(mainco2(c_la,10^3),digits)}$ & $\sage{roundup(mainco2_b1(c_la,10^3),digits)}$ & $\sage{roundup(mainco2_b2(c_la,10^3),digits)}$ & $\sage{roundup(mainco4(c_la,10^3),digits)}$ & $\sage{roundup(mainco4_b1(c_la,10^3),digits)}$ & $\sage{roundup(mainco4_b2(c_la,10^3),digits)}$  \\ \hline
$10^{4}$ & \!\!\!\!\! & $\sage{roundup(mainco2(c_la,10^4),digits)}$ & $\sage{roundup(mainco2_b1(c_la,10^4),digits)}$ & $\sage{roundup(mainco2_b2(c_la,10^4),digits)}$ & $\sage{roundup(mainco4(c_la,10^4),digits)}$ & $\sage{roundup(mainco4_b1(c_la,10^4),digits)}$ & $\sage{roundup(mainco4_b2(c_la,10^4),digits)}$ \\ \hline
$10^{6}$ & \!\!\!\!\! & $\sage{roundup(mainco2(c_la,10^6),digits)}$ & $\sage{roundup(mainco2_b1(c_la,10^6),digits)}$ & $\sage{roundup(mainco2_b2(c_la,10^6),digits)}$ & $\sage{roundup(mainco4(c_la,10^6),digits)}$ & $\sage{roundup(mainco4_b1(c_la,10^6),digits)}$ & $\sage{roundup(mainco4_b2(c_la,10^6),digits)}$ \\ \hline
$10^{10}$ & \!\!\!\!\! & $\sage{roundup(mainco2(c_la,10^10),digits)}$ & $\sage{roundup(mainco2_b1(c_la,10^10),digits)}$ & $\sage{roundup(mainco2_b2(c_la,10^10),digits)}$ & $\sage{roundup(mainco4(c_la,10^10),digits)}$ & $\sage{roundup(mainco4_b1(c_la,10^10),digits)}$ & $\sage{roundup(mainco4_b2(c_la,10^10),digits)}$ \\ \hline
$10^{15}$ & \!\!\!\!\! & $\sage{roundup(mainco2(c_la,10^15),digits)}$ & $\sage{roundup(mainco2_b1(c_la,10^15),digits)}$ & $\sage{roundup(mainco2_b2(c_la,10^15),digits)}$ & $\sage{roundup(mainco4(c_la,10^15),digits)}$ & $\sage{roundup(mainco4_b1(c_la,10^15),digits)}$ & $\sage{roundup(mainco4_b2(c_la,10^15),digits)}$ \\ \hline
$10^{20}$ & \!\!\!\!\! & $\sage{roundup(mainco2(c_la,10^20),digits)}$ & $\sage{roundup(mainco2_b1(c_la,10^20),digits)}$ & $\sage{roundup(mainco2_b2(c_la,10^20),digits)}$ & $\sage{roundup(mainco4(c_la,10^20),digits)}$ & $\sage{roundup(mainco4_b1(c_la,10^20),digits)}$ & $\sage{roundup(mainco4_b2(c_la,10^20),digits)}$ \\ \hline
$10^{30}$ & \!\!\!\!\! & $\sage{roundup(mainco2(c_la,10^30),digits)}$ & $\sage{roundup(mainco2_b1(c_la,10^30),digits)}$ & $\sage{roundup(mainco2_b2(c_la,10^30),digits)}$ & $\sage{roundup(mainco4(c_la,10^30),digits)}$ & $\sage{roundup(mainco4_b1(c_la,10^30),digits)}$ & $\sage{roundup(mainco4_b2(c_la,10^30),digits)}$ \\ \hline
$10^{40}$ & \!\!\!\!\! & $\sage{roundup(mainco2(c_la,10^40),digits)}$ & $\sage{roundup(mainco2_b1(c_la,10^40),digits)}$ & $\sage{roundup(mainco2_b2(c_la,10^40),digits)}$ & $\sage{roundup(mainco4(c_la,10^40),digits)}$ & $\sage{roundup(mainco4_b1(c_la,10^40),digits)}$ & $\sage{roundup(mainco4_b2(c_la,10^40),digits)}$ \\ \hline
\end{tabular}
\end{center}

For intervals of integration with extremum lower than $\sage{T0}$, one can estimate it directly using rigorous numerical integration implemented in the ARB package \cite{Joh18}. Computing the integral up to $T=1000$, for example, takes a couple of seconds using the function ``CBF.integral''.
\medskip

\textbf{Choice of $\lambda$.} Another significant choice that we have made concerns the parameter $\lambda$ appearing in \S\ref{sec:J} and \S\ref{sec:K}. First of all, the order of $\lambda$ as a function of $T$ has been chosen to give the optimal order of error in the main theorem for the case $\tau=\frac{1}{2}$. We could not have chosen $\lambda=o\left(\frac{1}{\log(T)}\right)$, or else, in \S\ref{sec:K}, \eqref{eq:Kalot1} would have been too large. Nor could we have chosen $\frac{1}{\lambda}=o(\log(T))$, or else, in \S\ref{sec:J}, \eqref{estimation:L2} would have been too large. It is noteworthy that an error of order $\sqrt{T}\log^{2}(T)$ emerges also as consequence of \eqref{estimation:L1}, regardless of the choice of $\lambda$, as this comes from the use of the convexity bounds described in Corollary \ref{convexity}.

\footnotesize
\begin{mysage}
###
# Function for finding an optimal c, given T0: for us, what we want
# to optimize is the first error term in {th:main}, of order sqrt(T)*log(T)^2.
# Since 0<lambda<1/2, we search by brute force among all 0<c<log(T0)/2,
# and we find the best among them (with as many digits as we use elsewhere).
def best_c(ta):
    ta = RIF( ta )
    pace = 10^(-digits)
    limit = rounddown(log_int(ta)/2,digits)
    best_c = 0
    best_c_yields = infinity
    c = pace
    while c<=limit:
        c_yields=mainco2_b1(c,ta)
        if c_yields<best_c_yields:
            best_c=c
            best_c_yields=c_yields
        c=c+pace
    return best_c
###
# In the following, we assert that these are the values that
# the function best_c retrieves for certain choices of T0:
# T0=100 -> c=1.501
# T0=10^3 -> c=1.622
# T0=10^4 -> c=1.688
# T0=10^6 -> c=1.758
# T0=10^10 -> c=1.819
# For verifications about the truth of these assertions,
# change the value of check_c to True at the beginning
# of the paper and then see the list of checks at the end.
\end{mysage}
\normalsize

Upon fixing $\lambda$ as function of $T$, it remains to choose the optimal value of $\mathrm{c}$ according to the expression $\lambda=\frac{\mathrm{c}}{\log(T)}$. For simplicity, since the optimal $\mathrm{c}$ may vary with $\tau$, we chose to optimize only with respect to the case $\tau=\frac{1}{2}$. Hence, we have selected $\mathrm{c}=\sage{c_la}$ after numerical experimentation through a computer search. The only constraint we are facing is that $0<\lambda<\frac{1}{2}$, so our goal is to minimize the coefficient $\mathfrak{m}_{11}(T_{0})$ of the error term of order $\sqrt{T}\log^{2}(T)$ inside the more precise Theorem~\ref{th:main} (when $\tau=\frac{1}{2}$), given the fact that we are bounding $\lambda$ by its maximum value $\frac{\mathrm{c}}{\log(T_{0})}$ whenever necessary. For this matter, we have considered the range $\mathrm{c}\in(0,\sage{rounddown(log_int(T0)/2,digits)})$ and checked for an optimal $\mathrm{c}\in\frac{1}{\sage{10^digits}}\mathbb{N}$. Moreover, it is clear that changing $T_{0}$ and $\tau$ may change the best $\mathrm{c}$ to select. Here follows a table featuring an analogous optimization of $\mathrm{c}$ and its effect on the error terms of Theorems~\ref{th:mainpure} and \ref{th:main} in the case $\tau=\frac{1}{2}$.

\begin{center}
\begin{tabular}{|l|l|l|l|l|l|}
\hline
$T_{0}$ & $\mathrm{c}$ & \!\!\!\!\! & $\mathfrak{e}_{1}(T_{0})$ & $\mathfrak{m}_{11}(T_{0})$ & $\mathfrak{m}_{12}(T_{0})$ \\ \hline
$10^{3}$ & $1.622$ & \!\!\!\!\! & $\sage{roundup(mainco2(1.622,10^3),digits)}$ & $\sage{roundup(mainco2_b1(1.622,10^3),digits)}$ & $\sage{roundup(mainco2_b2(1.622,10^3),digits)}$ \\ \hline
$10^{4}$ & $1.688$ & \!\!\!\!\! & $\sage{roundup(mainco2(1.688,10^4),digits)}$ & $\sage{roundup(mainco2_b1(1.688,10^4),digits)}$ & $\sage{roundup(mainco2_b2(1.688,10^4),digits)}$ \\ \hline
$10^{6}$ & $1.758$ & \!\!\!\!\! & $\sage{roundup(mainco2(1.758,10^6),digits)}$ & $\sage{roundup(mainco2_b1(1.758,10^6),digits)}$ & $\sage{roundup(mainco2_b2(1.758,10^6),digits)}$ \\ \hline
$10^{10}$ & $1.819$ & \!\!\!\!\! & $\sage{roundup(mainco2(1.819,10^10),digits)}$ & $\sage{roundup(mainco2_b1(1.819,10^10),digits)}$ & $\sage{roundup(mainco2_b2(1.819,10^10),digits)}$ \\ \hline
\end{tabular}
\end{center}

The fact that the values $\mathfrak{e}_{1}(T_{0})$ are worse than in the previous table is due to our choice of optimizing only $\mathfrak{m}_{11}(T_{0})$ in Theorem~\ref{th:main}, thus not taking into consideration the contribution of the smaller terms.

\section*{Acknowledgements}

The authors would like to thank Harald Helfgott for his valuable suggestions.
       
Daniele Dona was supported by the European Research Council under Programme H2020-EU.1.1., ERC Grant ID: 648329 (codename GRANT) during his permanence at Georg-August-Universit\"at G\"ottingen. He was also supported by the Israel Science Foundation Grants No. 686/17 and 700/21 of A. Shalev, and the Emily Erskine Endowment Fund during his permanence at the Hebrew University of Jerusalem; he has been a postdoc at the Hebrew University of Jerusalem under A. Shalev in 2020/21 and 2021/22.

Sebastian Zuniga Alterman was supported by the postdoctoral grant  of the DAMSI University Centre of Excellence during his permanence at the Nicolaus Copernicus University at Toru\'n.

\Addresses

\footnotesize
\begin{mysage}
###
# Safety checks about the computations in the paper are performed here.
###
# Here we check that T0>=e^2, to ensure that we absorb errors correctly
# for log(T)/sqrt(T).
String_T0=''
if T0<=exp_int(2):
    String_T0=String_T0+'Error in T0 too small. '
# Then we check that T0/2pi>1, for {pr:sumdivwei}-{pr:sumdivwei_B} to work.
if T0/(2*pi_int)<=1:
    String_T0=String_T0+'Error in T0 too small. '
# Then we check that T0/(4pi)<(T0-sqrt(T0))/(2pi), for {le:sumdivlog} to be neater.
if T0/(4*pi_int)>=(T0-sqrt_int(T0))/(2*pi_int):
    String_T0=String_T0+'Error in T0 too small. '
# Then we check that the negative term in the first sum
# in the error term of {eq:alot} is smaller than the positive one,
# so as to be able to eliminate it and simplify the reasoning.
if 2*log_int(T0/(2*pi_int))<=4*(1-gamma_int):
    String_T0=String_T0+'Error in T0 too small. '
# Then we check that the term in sqrt(T) in {eq:Klogerr} is negative,
# so as to be able to eliminate it and simplify the reasoning.
if -2*sqrt_int(2)+log_int(1+sqrt_int(1+1/sqrt_int(T0)))+log_int(7/3) \
   -log_int((sqrt_int(2)+1)/(sqrt_int(2)-1))>=0:
    String_T0=String_T0+'Error in T0 too small. '
# Then we check that T0>=50, for Backlund's result to hold for T.
if T0<50:
    String_T0=String_T0+'Error in T0 too small. '
###
# Here we check that the main theorem has the correct constant.
String_Main=''
Main_true = RIF( Main_a + Main_b/log_int(T0) )
Main_true = roundup( Main_true , digits )
modifier = 1/2-1/4
Main14_true = RIF( Main4_2q+Main4_c*modifier^2 )
Main14_true = roundup( Main14_true , digits )
Main34_true = RIF( Cor4__final_2q+Cor4__final_c*modifier^2 )
Main34_true = roundup( Main34_true , digits )
if Main!=Main_true:
    String_Main=String_Main+'Error in main result for 1/2. It should be '+str(Main_true)+'. '
if Main14!=Main14_true:
    String_Main=String_Main+'Error in main result before 1/2. It should be '+str(Main14_true)+'. '
if Main34!=Main34_true:
    String_Main=String_Main+'Error in main result after 1/2. It should be '+str(Main34_true)+'. '
###
# Here we check that the values in the statement of Prop. 3.1 match our computations.
String_I=''
if I_a!=I2__12l_final:
    String_I=String_I+'Error in main I. It should be '+str(I2__12l_final)+' for 1/2. '
if I4_2!=I4__32m2tl_final_2:
    String_I=String_I+'Error in main I. It should be '+str(I4__32m2tl_final_2) \
             +' for [1/4,1/2) and the 1/2-tau term. '
if I4_c!=I4__32m2tl_final_c:
    String_I=String_I+'Error in main I. It should be '+str(I4__32m2tl_final_c) \
             +' for [1/4,1/2) and the constant term. '
###
# Here we check that Prop. 3.2 for tau in [1/4,1/2) has no term T^(1-tau)*log(T)^2*1/(1/2-tau)^2.
String_Jorder=''
if J4__1mtlq_final_2q!=0:
    String_Jorder=String_Jorder+'Error in J. Wrong order for tau in [1/4,1/2). '
###
# Here we check that the values in the statement of Prop. 3.2 match our computations.
String_J=''
if J_a!=J2__12lq_final:
    String_J=String_J+'Error in main J. 1st should be '+str(J2__12lq_final)+'. '
if J_b!=J2__12l_final:
    String_J=String_J+'Error in main J. 2nd should be '+str(J2__12l_final)+'. '
if J4_2q!=J4__32m2tl_final_2q:
    String_J=String_J+'Error in main J. 1/(1/2-tau)^2 should be '+str(J4__32m2tl_final_2q)+'. '
if J4_c!=J4__32m2tl_final_c:
    String_J=String_J+'Error in main J. Constant in tau case should be '+str(J4__32m2tl_final_c)+'. '
if J4_cp!=J4__1mtlq_final_c:
    String_J=String_J+'Error in main J. Constant in 2nd tau case should be '+str(J4__1mtlq_final_c)+'. '
###
# Here we check that lambda<1/2, for the second integral in {Xn:est}
# to be bounded correctly.
String_lambda=''
if c_la/log_int(T0)>=1/2:
    String_lambda=String_lambda+'Error in lambda too large. '
###
# Here we check that the values in the statement of Prop. 3.3 match our computations.
String_K=''
if K_a!=K2__12lq_final:
    String_K=String_K+'Error in main K. 1st should be '+str(K2__12lq_final)+'. '
if K_b!=K2__12l_final:
    String_K=String_K+'Error in main K. 2nd should be '+str(K2__12l_final)+'. '
if K4_2!=K4__32m2tlq_final_2:
    String_K=String_K+'Error in main K. 1/(1/2-tau) should be '+str(K4__32m2tlq_final_2)+'. '
if K4_c!=K4__32m2tlq_final_c:
    String_K=String_K+'Error in main K. Constant in tau case should be '+str(K4__32m2tlq_final_c)+'. '
###
# Here we check the values of the optimal c in the last section are correct.
# This check is performed only if check_c is set to True
# at the beginning of the paper.
String_bestc=''
if check_c:
    if best_c(T0)!=1.501:
        String_bestc=String_bestc+'Error in c. Best(T0) is '+str(best_c(T0))+'. '
    if best_c(10^3)!=1.622:
        String_bestc=String_bestc+'Error in c. Best(10^3) is '+str(best_c(10^3))+'. '
    if best_c(10^4)!=1.688:
        String_bestc=String_bestc+'Error in c. Best(10^4) is '+str(best_c(10^4))+'. '
    if best_c(10^6)!=1.758:
        String_bestc=String_bestc+'Error in c. Best(10^6) is '+str(best_c(10^6))+'. '
    if best_c(10^10)!=1.819:
        String_bestc=String_bestc+'Error in c. Best(10^10) is '+str(best_c(10^10))+'. '
\end{mysage}
\normalsize

$\sage{String_T0}\sage{String_Main}\sage{String_I}\sage{String_Jorder}\sage{String_J}\sage{String_lambda}\sage{String_K}\sage{String_bestc}$


\begin{thebibliography}{111}
\bibitem{AS72}
 M. Abramowitz, I. A. Stegun, \textit{Handbook of Mathematical Functions, Tenth Printing}, National Bureau of Standards Applied Mathematics Series, Washington D.C., 1972.
\bibitem{Atk39}
 F. V. Atkinson, \textit{The mean value of the zeta-function on the critical line}, Q. J. Math. 10(1) (1939), 122--128.
\bibitem{Atk49}
 F. V. Atkinson, \textit{The mean value of the Riemann zeta function}, Acta Math. 81 (1949), 353--376.
\bibitem{Bac18}
 R. J. Backlund, \textit{\"Uber die Nullstellen der Riemannschen Zetafunktion}, Acta Math. 41 (1918), 345--375 (in German).
\bibitem{Bal78}
 R. Balasubramanian, \textit{An improvement on a theorem of Titchmarsh on the mean square of $|\zeta(\frac{1}{2}+it)|$}, Proc. Lond. Math. Soc. (3) 36 (1978), 540--576.
\bibitem{Bou17}
 J. Bourgain, \textit{Decoupling, exponential sums and the Riemann zeta function}, J. Amer. Math. Soc. 30(1) (2017), 205--224.
\bibitem{DHA19}
 D. Dona, H. Helfgott, S. Zuniga Alterman, \textit{Explicit $L^2$ bounds for the Riemann $\zeta$ function.} Journal de Th\'eorie des Nombres de Bordeaux. Accepted.
\bibitem{For02}
 K. Ford, \textit{Vinogradov's integral and bounds for the Riemannn zeta function}, Proc. Lond. Math. Soc. (3) 85 (2002), 565--633.
\bibitem{Goo77}
 A. Good, \textit{Ein $\Omega$-Resultat f\"ur das quadratische Mittel der Riemannschen Zetafunktion auf der kritische Linie}, Invent. Math. 41 (1977), 233--251 (in German).
\bibitem{Hia16}
 G. A. Hiary, \textit{An explicit van der Corput estimate for $\zeta(1/2+it)$}, Indag. Math. (N.S.) 27(2) (2016), 524--533.
\bibitem{Ing28}
 A. E. Ingham, \textit{Mean-value theorems in the theory of the Riemann
 zeta-function}, Proc. Lond. Math. Soc. (2) 27 (1928), 273--300.
\bibitem{Ivi85}
 A. Ivi\'c, \textit{The Riemann zeta-function}, John Wiley \& Sons, New York (1985).
\bibitem{Joh13}
 F. Johansson, \textit{Arb: a C library for ball arithmetic}, ACM Communications in Computer Algebra 47(4) (2013), 166--169.
\bibitem{Joh18}
 F. Johansson, \textit{Numerical integration in arbitrary-precision ball arithmetic}, in: International Congress on Mathematical Software, Springer, 2018, 255--263.
\bibitem{Lan09}
 E. Landau, \textit{Handbuch der Lehre von der Verteilung der Primzahlen}, Teubner, Leipzig, 1909 (in German).
\bibitem{Lav76}
 A. F. Lavrik, \textit{On the principal term in the divisor problem and the power series of the Riemann zeta-function in a neighborhood of its pole}, Tr. Mat. Inst. Steklova 142 (1976), 165--173 (in Russian), translated in Proc. Steklov Inst. Math. 1979(3) (1979), 175--183.
\bibitem{Leh70}
 R. S. Lehman, \textit{On the distribution of zeros of the Riemann zeta-function}, Proc. Lond. Math. Soc. (3) 20 (1970), 303--320.
\bibitem{Lit22}
 J. E. Littlewood, \textit{Researches in the theory of the Riemann $\zeta$-function}, Proc. Lond. Math. Soc. (2) 20 (1922), xxii--xxviii.
\bibitem{Mat89}
 K. Matsumoto, \textit{The mean square of the Riemann zeta-function in the critical strip}, Jpn. J. Math. 15 (1989), 1--13.
\bibitem{Mat00}
 K. Matsumoto, \textit{Recent Developments in the Mean Square Theory of the Riemann Zeta and Other Zeta-Functions}, in: Number Theory, Birkhauser, 2000, 241--286.
\bibitem{MM93}
 K. Matsumoto, T. Meurman, \textit{The mean square of the Riemann zeta-function in the critical strip III}, Acta Arith. 64(4) (1993), 357--382.
\bibitem{MV07}
 H. L. Montgomery, R. C. Vaughan, \textit{Multiplicative number theory: I. Classical theory}, Cambridge University Press, Cambridge, 2007.
\bibitem{Ram13}
 O. Ramar\'e, \textit{Some elementary explicit bounds for two mollifications of the Moebius function}, Funct. Approx. Comment. Math. Volume 49, Number 2 (2013), 229--240.
\bibitem{Rem98}
 R. Remmert, \textit{Classical Topics in Complex Function Theory}, Springer-Verlag, New York, 1998.
\bibitem{Sag19}
 Sage Developers, \textit{SageMath, the Sage Mathematics Software System
 (Version 8.9)}, \texttt{http://www.sagemath.org} (2019).
\bibitem{Sim20}
 A. Simoni\v{c}, \textit{Explicit zero density estimate for the {R}iemann zeta-function near the critical line}, J. Math. Anal. Appl. 491(1) (2020).
\bibitem{Sim22}
 A. Simoni\v{c}, \textit{On explicit estimates for $S(t)$, $S_{1}(t)$, and $\zeta(1/2+it)$ under the Riemann hypothesis}, J. Number Theory 231 (2022), 464--491.
\bibitem{SS21}
 A. Simoni\v{c}, V. V. Starichkova, \textit{Atkinson's formula for the mean square of $\zeta(s)$ with an explicit error term}, \texttt{arXiv:2105.06821} (2021).
\bibitem{Tit34}
 E. C. Titchmarsh, \textit{On van der Corput's method and the zeta-function of Riemann (V)}, Q. J. Math. Ser. 5(1) (1934), 195--210.
\bibitem{Tit86}
 E. C. Titchmarsh, \textit{The Theory of the Riemann Zeta-function. 2nd Edition}, Oxford University Press, New York, 1986.
\bibitem{Vor04}
 G. Vorono\"i, \textit{Sur une fonction transcendante et ses applications \`a la sommation de quelques s\'eries}, Ann. Sci. \'Ec. Norm. Sup\'er. (3) 21 (1904), 207--267, 459--533 (in French).
\end{thebibliography}
\end{document}